\newcommand{\indicator}[1]{\ensuremath{\mathbf{1}_{\{#1\}}}}
\newcommand{\oindicator}[1]{\ensuremath{\mathbf{1}_{#1}}}
\renewcommand{\Re}{\mathop{\rm Re}}
\renewcommand{\Im}{\mathop{\rm Im}}
\newcommand\eps{{\varepsilon}}
\newcommand\R{{\mathbb{R}}}
\newcommand\C{{\mathbb{C}}}
\newcommand\Z{{\mathbb{Z}}}
\newcommand\HQ{{\mathbb{H}}}
\newcommand\SQ{{\mathbb{S}}}
\newcommand\row{{\mathbf{r}}}
\DeclareMathOperator{\Vol}{Vol}
\DeclareMathOperator{\rank}{rank}
\DeclareMathOperator{\dist}{dist}
\DeclareMathOperator{\var}{Var}
\DeclareMathOperator{\tr}{tr}
\newcommand\Prob{{ \mathbb{P}}}
\renewcommand\P{{ \mathbb{P}}}
\newcommand\E{{ \mathbb{E} }}
\newcommand\Ba{{\mathbf a}}
\newcommand\Bc{{\mathbf c}}
\newcommand\Bi{{\mathbf i}}
\newcommand\Bj{{\mathbf j}}
\newcommand\Bk{{\mathbf k}}
\newcommand\Br{{\mathbf r}}
\newcommand\Bs{{\mathbf s}}
\newcommand\Bt{{\mathbf t}}
\newcommand\Bu{{\mathbf u}}
\newcommand\Bv{{\mathbf v}}
\newcommand\Bw{{\mathbf w}}
\newcommand\Bx{{\mathbf x}}
\newcommand\By{{\mathbf y}}
\newcommand\BA{{\mathbf A}}
\newcommand\BB{{\mathbf B}}
\newcommand\BC{{\mathbf C}}
\newcommand\BD{{\mathbf D}}
\newcommand\BG{{\mathbf G}}
\newcommand\BH{{\mathbf H}}
\newcommand\BI{{\mathbf I}}
\newcommand\BM{{\mathbf M}}
\newcommand\BN{{\mathbf N}}
\newcommand\BQ{{\mathbf Q}}
\newcommand\BR{{\mathbf R}}
\newcommand\BX{{\mathbf X}}
\newcommand\Bzero{{\mathbf 0}}
\newcommand\ep{\varepsilon}
\numberwithin{equation}{section}
\theoremstyle{plain}
  \newtheorem{theorem}[subsection]{Theorem}
  \newtheorem{conjecture}[subsection]{Conjecture}
  \newtheorem{assumption}[subsection]{Assumption}
  \newtheorem{fact}[subsection]{Fact}
  \newtheorem{lemma}[subsection]{Lemma}
  \newtheorem{corollary}[subsection]{Corollary}
  \newtheorem{example}[subsection]{Example}
  \newtheorem{claim}[subsection]{Claim}
\theoremstyle{definition}
  \newtheorem{definition}[subsection]{Definition}
\begin{document}

\title[Anti-concentration and universality]{On the concentration of random multilinear forms and the universality of random block matrices}

\author{Hoi H. Nguyen}
\thanks{H. Nguyen is partly supported by research grant DMS-1358648}
\address{Department of Mathematics, The Ohio State University, Columbus, OH 43210, USA}
\email{nguyen.1261@math.osu.edu}

\author{Sean O'Rourke}
\thanks{S. O'Rourke is supported by grant AFOSAR-FA-9550-12-1-0083.}
\address{Department of Mathematics, Yale University, New Haven , CT 06520, USA  }
\email{sean.orourke@yale.edu}

\subjclass[2000]{15A52, 15A63, 11B25}

\begin{abstract}
The circular law asserts that if $\BX_n$ is a $n \times n$ matrix with iid complex entries of mean zero and unit variance, then the empirical spectral distribution of $\frac{1}{\sqrt{n}} \BX_n$ converges almost surely to the uniform distribution on the unit disk  as $n$ tends to infinity.  Answering a question of Tao, we prove the circular law for a general class of random block matrices with dependent entries.  The proof relies on an inverse-type result for the concentration of linear operators and multilinear forms.  
\end{abstract}

\maketitle

\section{Introduction}\label{section:introduction}

The \emph{eigenvalues} of a $n \times n$ matrix $\BM$ are the roots in $\mathbb{C}$ of the characteristic polynomial $\det(\BM-z\BI)$, where $\BI$ is the identity matrix.  We let $\lambda_1(\BM), \ldots, \lambda_n(\BM)$ denote the eigenvalues of $\BM$.  In this case, the \emph{empirical spectral measure} of $\BM$ is given by
$$ \mu_{\BM} := \frac{1}{n} \sum_{i = 1}^n \delta_{\lambda_i(\BM)}. $$
The corresponding \emph{empirical spectral distribution} (ESD) is given by
$$ F^{\BM}(x,y) := \frac{1}{n} \# \left\{1 \leq i \leq n: \Re(\lambda_i(\BM)) \leq x, \Im(\lambda_i(\BM)) \leq y \right\}. $$
Here $\# E$ denotes the cardinality of the set $E$.  If the matrix $\BM$ is Hermitian, then the eigenvalues $\lambda_1(\BM), \ldots, \lambda_n(\BM)$ are real.  In this case the ESD is given by 
$$ F^{\BM}(x) := \frac{1}{n} \# \left\{ 1 \leq i \leq n : \lambda_i(\BM) \leq x \right\}. $$

Given a random $n \times n$ matrix $\BX_n$, an important problem in random matrix theory is to study the limiting distribution of the empirical spectral measure as $n$ tends to infinity.  We consider one of the simplest random matrix ensembles, when the entries of $\BX_n$ are iid copies of the random variable $\xi$.  We refer to $\xi$ as the {\it atom variable} of $\BX_n$.  

When $\xi$ is a standard complex Gaussian random variable, $\BX_n$ can be viewed as a random matrix drawn from the probability distribution
$$ \Prob(d \BM) = \frac{1}{\pi^{n^2}} e^{- \tr (\BM \BM^\ast)} d \BM $$
on the set of complex $n \times n$ matrices.  Here $d \BM$ denotes the Lebesgue measure on the $2n^2$ real entries
$$ \{ \Re(m_{ij}) : 1 \leq i, j \leq n\} \cup \{ \Im(m_{ij}) : 1 \leq i, j \leq n\} $$
of $\BM=(m_{ij})_{i,j=1}^n$.  
This is known as the {\it complex Ginibre ensemble}.  The {\it real Ginibre ensemble} and {\it quaternionic Ginibre ensemble} are defined analogously.  

Following Ginibre \cite{Gi}, one may compute the joint density of the eigenvalues of a random matrix $\BX_n$ drawn from the complex Ginibre ensemble.  Mehta  \cite{M,M:B} used this joint density function to compute the limiting spectral measure of the complex Ginibre ensemble.  In particular, he showed that if $\BX_n$ is drawn from the complex Ginibre ensemble, then the ESD of $\frac{1}{\sqrt{n}} \BX_n$ converges to the {\it circular law} $F_{\mathrm{circ}}$, where
$$ F_{\mathrm{circ}}(x,y) := \mu_{\mathrm{circ}} \left( \left\{ z \in \mathbb{C} : \Re(z) \leq x, \Im(z) \leq y \right\} \right) $$
and $\mu_{\mathrm{circ}}$ is the uniform probability measure on the unit disk in the complex plane.  Edelman \cite{Ed-cir} verified the same limiting distribution for the real Ginibre ensemble.

For the general (non-Gaussian) case, there is no formula for the joint distribution of the eigenvalues and the problem appears much more difficult.  The universality phenomenon in random matrix theory asserts that the spectral behavior of a random matrix does not depend on the distribution of the atom variable $\xi$ in the limit $n \rightarrow \infty$.  In other words, one expects that the circular law describes the limiting ESD of a large class of random matrices (not just Gaussian matrices); Figure \ref{fig:universal} presents a numerical simulation depicting this universality phenomenon.     

\begin{figure}
	\begin{center}
	\includegraphics[width=6.25cm]{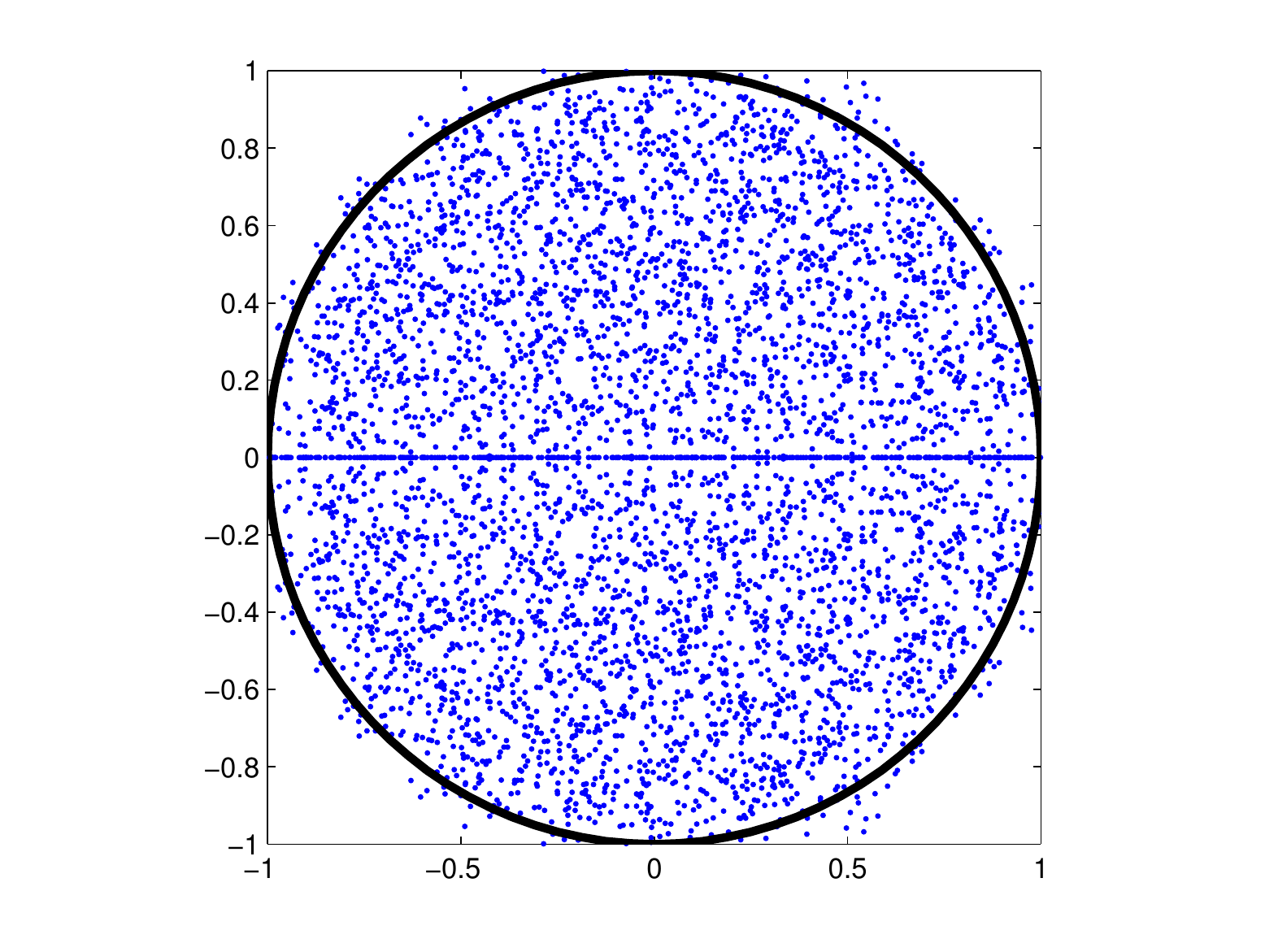}
	\includegraphics[width=6.25cm]{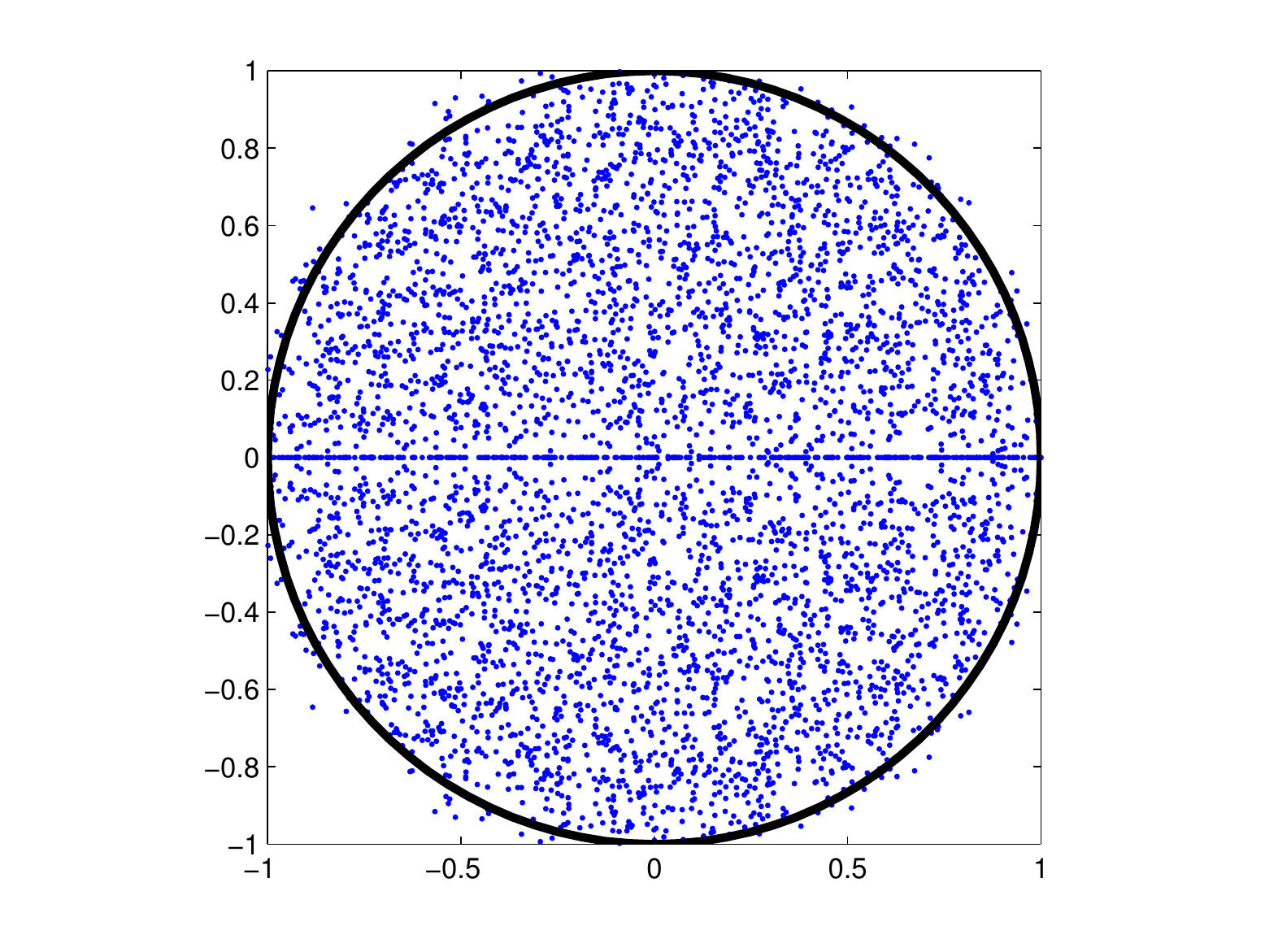}
	\caption{The eigenvalues of random matrices with iid entries.  The first plot contains the eigenvalues of $50$ samples of $100 \times 100$ random matrices drawn from the real Ginibre ensemble.  The second plot contains the eigenvalues of $50$ samples of $100 \times 100$ random matrices whose entries are Bernoulli random variables (i.e. each entry takes values $\pm 1$ with equal probability).  The black circle in each plot is the unit circle of radius one centered at the origin.}
   	\label{fig:universal}
	\end{center}
\end{figure}

In the 1950's, Wigner \cite{W} proved a version of the universality phenomenon for Hermitian random matrices, now known as Wigner matrices.  However, the random matrix ensemble described above is not Hermitian.  In fact, many of the techniques used to deal with Hermitian random matrices do not apply to non-Hermitian matrices \cite[Section 11.1]{BSbook}.  

An important result was obtained by Girko \cite{G1,G2} who related the empirical spectral measure of non-Hermitian matrices to that of Hermitian matrices.  Building upon this Hermitization technique, Bai \cite{B,BSbook} gave the first rigorous proof of the circular law for general (non-Gaussian) distributions under a number of moment and smoothness assumptions on the atom variable $\xi$.  Important results were obtained more recently by Pan and Zhou \cite{PZ} and G\"otze and Tikhomirov \cite{GTcirc}.  Tao and Vu \cite{TVcirc} were able to prove the circular law under the assumption that $\E|\xi|^{2+\eps} < \infty$, for some $\eps > 0$.  Recently, Tao and Vu \cite{TVbull,TVesd} established the law assuming only that $\xi$ has finite variance.

For any $m \times n$ matrix $\BM$, we denote the Hilbert-Schmidt norm $\|\BM\|_2$ by the formula
\begin{equation} \label{eq:def:hs}
	\|\BM\|_2 := \sqrt{ \tr (\BM \BM^\ast) } = \sqrt{ \tr (\BM^\ast \BM)}. 
\end{equation}

\begin{theorem}[Tao-Vu, \cite{TVesd}] \label{thm:tvcirc}
Let $\xi$ be a complex random variable with mean zero and unit variance.  For each $n \geq 1$, let $\BX_n$ be a $n \times n$ matrix whose entries are iid copies of $\xi$, and let $\BN_n$ be a $n \times n$ deterministic matrix.  If $\rank(\BN_n) = o(n)$ and $\sup_{n \geq 1} \frac{1}{n^2} \| \BN_n \|_2^2 < \infty$, then the ESD of $\frac{1}{\sqrt{n}} (\BX_n + \BN_n)$ converges almost surely to the circular law $F_{\mathrm{circ}}$ as $n \rightarrow \infty$\footnote{Here, and throughout the paper, we use asymptotic notation such as $O,o$ under the assumption that $n \to \infty$.  See Section \ref{sec:notation} for a complete description of our asymptotic notation.}.  
\end{theorem}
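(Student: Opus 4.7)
The plan is to follow the Hermitization strategy of Girko, which reduces the spectral question for the non-Hermitian matrix $\BY_n := \frac{1}{\sqrt{n}}(\BX_n+\BN_n)$ to a pair of questions about the singular values of its shifts $\BY_n - z\BI$. Concretely, writing $\nu_{n,z}$ for the empirical distribution of singular values of $\BY_n - z \BI$, Girko's formula expresses the log potential
\[
  U_{\mu_{\BY_n}}(z) = -\int_0^\infty \log t \, d\nu_{n,z}(t),
\]
so almost sure convergence of $\mu_{\BY_n}$ to $\mu_{\mathrm{circ}}$ follows from (i) almost sure weak convergence $\nu_{n,z} \to \nu_z$ for (Lebesgue) almost every $z \in \C$, where $\nu_z$ is the deterministic measure whose log potential is that of the circular law, together with (ii) almost sure uniform integrability of $\log t$ under $\nu_{n,z}$, which in turn requires control on both the very small and the very large singular values of $\BY_n - z\BI$.

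For step (i), the deterministic limit $\nu_z$ is already known in the unperturbed ($\BN_n = 0$) setting from earlier work of Bai and Tao--Vu, obtained via Stieltjes transform / resolvent analysis of the $2n\times 2n$ Hermitian dilation of $\BY_n - z\BI$ and a self-consistent fixed-point equation. To upgrade to the perturbed matrix, I would use the fact that $\rank(\BN_n) = o(n)$ and the Cauchy interlacing inequalities for singular values under low-rank additive perturbations: the Levy distance between $\nu_{n,z}(\BX_n)$ and $\nu_{n,z}(\BX_n + \BN_n)$ is bounded by $\rank(\BN_n)/n = o(1)$, so the limit is unaffected. The bulk large-singular-value truncation needed in (ii) is controlled by the second-moment bound $\E \|\BY_n - z\BI\|_2^2 = O(n)$, combined with the hypothesis $\frac{1}{n^2}\|\BN_n\|_2^2 = O(1)$.

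The heart of the argument, and the main obstacle, is the lower tail bound on the least singular value $s_n(\BY_n - z\BI)$, together with polynomial lower bounds on a vanishing fraction of the intermediate singular values $s_{n-k}(\BY_n - z\BI)$ for $k \leq n^{1-\delta}$. These are required to ensure that $\int_0^\eps \log t \, d\nu_{n,z}(t) \to 0$ almost surely, so that the log-potential computations can be passed to the limit. For the bound on the smallest singular value one must show that with probability $1 - o(1)$,
\[
  s_n\!\left(\tfrac{1}{\sqrt{n}}(\BX_n + \BN_n) - z\BI\right) \geq n^{-A}
\]
for some constant $A$. The standard route is via an $\eps$-net on the sphere together with an inverse Littlewood--Offord / least common denominator analysis of $\Prob(\| (\BX_n + \BN_n - \sqrt{n} z\BI) \Bv \|_2 \leq \eta)$; compressible directions are controlled by operator-norm bounds on $\BX_n$, and incompressible ones by small-ball estimates for random linear forms. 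The deterministic perturbation $\BN_n - \sqrt{n} z \BI$ is harmless here because the anti-concentration step only uses the random part; the role of $\rank(\BN_n) = o(n)$ and the Hilbert-Schmidt bound is in conjunction with eigenvalue interlacing to propagate the bound to the intermediate singular values. These are precisely the ingredients assembled in Tao--Vu \cite{TVesd}, and combining them with the Hermitization reduction above yields the theorem.
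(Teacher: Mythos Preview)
This theorem is not proved in the present paper: it is quoted from Tao--Vu \cite{TVesd} as background, and the paper's contribution is the block-matrix analogue (Theorem~\ref{thm:main}) together with the least-singular-value bound (Theorem~\ref{thm:least-sing-value}). So there is no ``paper's own proof'' to compare against here.

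That said, your outline is an accurate high-level summary of the strategy in \cite{TVesd} (and, in adapted form, of Section~\ref{sec:universality} of this paper for the block case): Girko Hermitization reduces to (i) almost sure convergence of the singular-value law $\nu_{n,z}$ to a deterministic $\nu_z$ and (ii) uniform integrability of $\log$ against $\nu_{n,z}$; (i) is stable under the low-rank perturbation $\BN_n$ by rank interlacing, and (ii) splits into a tail bound from the Hilbert--Schmidt hypothesis and a least/intermediate singular value bound via inverse Littlewood--Offord. One minor sharpening: for the almost-sure conclusion you need the least-singular-value failure probability to be summable (polynomially small, e.g.\ $O(n^{-B})$ as in Theorem~\ref{theorem:least-sing:TV}), not merely $o(1)$, so that Borel--Cantelli applies after Fubini over $z$.
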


One of the key steps in proving Theorem \ref{thm:tvcirc} is controlling the largest and smallest singular values of $\BX_n + \BN_n$.  We recall that the \emph{singular values} of a $m \times n$ matrix $\BM$ are the eigenvalues of $|\BM| := \sqrt{ \BM^\ast \BM}$.  We let $\sigma_1(\BM) \geq \cdots \geq \sigma_n(\BM) \geq 0$ denote the singular values of $\BM$.  In particular, the largest and smallest singular values are given by 
$$ \sigma_1(\BM) := \sup_{\|x\| = 1} \| \BM x \|, \qquad \sigma_n(\BM) := \inf_{\|x \| = 1} \| \BM x \|, $$
where $\|v\|$ denotes the Euclidean norm of the vector $v$.  We let $\| \BM\| := \sigma_1(\BM)$ denote the spectral norm of the matrix $\BM$.  

While the behavior of the largest singular value is well studied (e.g. see \cite{BSY, Suniv}), bounds for the smallest singular value appear more difficult.  Using techniques from additive combinatorics, Tao and Vu established the following bound on the least singular value of $\BX_n + \BN_n$.  

\begin{theorem}[Tao-Vu, \cite{TVcirc}]  \label{theorem:least-sing:TV} Assume that $\BX_n$ is an $n \times n$ random matrix whose entries are iid copies of a random variable with mean zero and variance one. Assume that $\BN_n$ is a deterministic $n \times n$ matrix whose entries are bounded by $n^\alpha$ in absolute value. Then for any $B>0$, there exists $A>0$ (depending on $B$ and $\alpha$) such that 
$$\P(\sigma_{n}(\BX_n +\BN_n)\le n^{-A}) = O(n^{-B}).$$
\end{theorem}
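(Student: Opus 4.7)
My plan is to follow the Rudelson--Vershynin / Tao--Vu strategy: reduce the least singular value to a distance-to-hyperplane estimate, then handle the distance via a compressible/incompressible dichotomy culminating in an inverse Littlewood--Offord theorem.

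\textbf{Reduction to distance.} First truncate the entries of $\BX_n$ at level $n^C$ (the error is negligible by the finite-variance hypothesis and Chebyshev) and use the crude bound $\|\BX_n + \BN_n\| = O(n^{(1+\alpha)/2})$ with overwhelming probability, which allows one to replace "$\sigma_n$ small" by a genuine near-singularity event. Next, exploit the standard inequality
\[
\sigma_n(\BM) \;\geq\; \frac{1}{\sqrt{n}}\,\min_{1 \le k \le n}\dist\!\bigl(\row_k(\BM), H_k\bigr),
\]
where $H_k$ is the span of the other rows. So it suffices to show, for each fixed $k$, that $\dist(\row_k(\BX_n + \BN_n), H_k) \geq n^{-A+1/2}$ with probability $1 - O(n^{-B-1})$, and to union bound over $k$. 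Conditioning on the other rows makes $H_k$ deterministic; writing $P$ for the projection onto $H_k^\perp$, the distance equals $\|PX + PN_k\|$ where $X$ is an iid vector and $N_k$ is a deterministic shift, reducing the problem to a quantitative small-ball estimate for a fixed linear image of iid data.

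\textbf{Compressible versus incompressible normals.} Partition the unit sphere of $H_k^\perp$ into compressible vectors (well approximated by $n^{o(1)}$-sparse vectors) and incompressible ones. For the compressible case, an $\eps$-net argument together with tensorization and Hoeffding-type tail bounds for sums $\sum c_i \xi_i$ with few large coordinates yields the required bound; the deterministic shift $PN_k$ is irrelevant since we only seek a small-ball estimate. For the incompressible case, pick a unit normal $v \in H_k^\perp$ that is incompressible; then
\[
\dist(\row_k(\BM), H_k) \;\geq\; |\langle v, X\rangle + \langle v, N_k\rangle|,
\]
so it suffices to bound $\sup_{c \in \C} \P(|\sum_i v_i \xi_i - c| \le n^{-A+1/2})$ by $n^{-B-1}$ for an incompressible $v$ that is, in addition, "typical" in the sense that it arises as the normal of a random hyperplane.

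\textbf{Inverse Littlewood--Offord and counting.} Here invoke the Tao--Vu inverse theorem: if $v \in S^{n-1}$ satisfies $\sup_c \P(|\sum_i v_i \xi_i - c| \le n^{-A+1/2}) \ge n^{-B'}$, then most coordinates of $v$ must lie in a generalized arithmetic progression of polynomial size and polynomial rank. A covering / net argument then bounds the number of such structured $v$'s by something like $n^{O(1)} \cdot n^{-n/2}$-many unit vectors in a well-chosen net, and one shows that the event "the unit normal of $H_k$ is this structured" happens with probability $O(n^{-B-1})$ by exhibiting an independent random row of $H_k$ whose orthogonality to the structured $v$ is itself a small-ball event with a good bound.

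The main obstacle is the last step, i.e.\ the quantitative matching between (a) the density parameter $n^{-B'}$ in the inverse theorem, (b) the cardinality of the structured set of unit vectors, and (c) the independent randomness available in $H_k$ to rule out structured normals. This balancing act is precisely the inverse-Littlewood--Offord technology developed by Tao and Vu, and it is the technical heart of the argument; once it is in place, the previous steps are routine and the union bound over $k$ finishes the proof.
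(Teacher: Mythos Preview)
The paper does not prove this theorem: it is stated as a cited result of Tao and Vu from \cite{TVcirc}, so there is no proof in the paper to compare against. (The paper's own contribution is the block-matrix generalization, Theorem~\ref{thm:least-sing-value}, whose proof in Sections~\ref{section:singularity:approach}--\ref{section:singularity:step2} does follow an inverse Littlewood--Offord strategy broadly similar in spirit to what you sketch, but adapted to handle the multilinear dependence coming from the block structure.)

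As for the sketch itself: the overall architecture---reduce to a distance problem, dichotomize on structure, invoke an inverse Littlewood--Offord theorem to count structured vectors, and balance counting against probability---is indeed the Tao--Vu route and is sound at the level you present it. Two places would need tightening in a real proof. First, the phrase ``partition the unit sphere of $H_k^\perp$'' is off: generically $H_k^\perp$ is one-dimensional, so there is a single unit normal $v$ and nothing to partition. The compressible/incompressible dichotomy in the Rudelson--Vershynin formulation applies to the near-null vector of the full matrix (equivalently, one shows separately that the normal $v$ to $H_k$ is, with overwhelming probability over the \emph{other} rows, incompressible). Second, the operator-norm bound $\|\BX_n+\BN_n\|=O(n^{(1+\alpha)/2})$ is neither justified under a bare second-moment hypothesis nor actually needed for the distance reduction; you can drop it. These are cosmetic rather than structural issues---your identification of the ``main obstacle'' (the quantitative matching of the inverse-theorem parameters with the net cardinality and the available independence) is exactly right and is where all the work lies.
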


\section{Universality of random block matrices}

The goal of this note is to study a class of random matrices that generalizes the random matrix ensemble discussed above.  In particular, we consider random block matrices whose entries are not necessarily independent.  We will show that, under some moment assumptions, the limiting ESD of these block matrices is also given by the circular law.

\subsection{Quaternions and matrices of quaternions}

One of the prototypical examples of a block matrix is that of a quaternionic matrix.  We now review some preliminary facts on quaternions and matrices of quaternions.  Most of the these results can be found in the detailed survey by Zhang \cite{Z}.  Let $\HQ$ denote the non-commutative field of quaternions.  As a real vectors space $\HQ$ admits a basis $\{1,\Bi,\Bj,\Bk\}$ with the usual multiplicative table: $1$ is the identity element and
$$ \Bi^2 = \Bj^2 = \Bk^2 = -1, \quad \Bi \Bj = -\Bj \Bi = \Bk, \quad \Bj \Bk = - \Bk \Bj = \Bi, \quad \Bk \Bi = - \Bi \Bk = \Bj. $$
For $q = q_0 + \Bi q_1 + \Bj q_2 + q_3 \Bk \in \HQ$, we have $q^\ast := q_0 - q_1 \Bi - q_2 \Bj - q_3 \Bk$, $\Re(q) := q_0$, and $\Im(q) := q_1 \Bi + q_2 \Bj + q_3 \Bk$.  Then
$$ q q^\ast = q_0^2 + q_1^2 + q_2^2 + q_3^2, $$
and thus any nonzero quaternion is invertible.  Define the norm $|q| : = \sqrt{q q^\ast}$.  It follows that for any $q,q' \in \HQ$, $|q q'| = |q| |q'|$.  Real numbers and complex numbers can be thought of as quaternions in the natural way, and one has $\R \subset \C \subset \HQ$.  Every quaternion $q = q_0 + q_1 \Bi + q_2 \Bj + q_3 \Bk$ can be written uniquely as $q = c_1 + c_2\Bj$ where $c_1 = q_0 + q_1\Bi$, $c_2 = q_2 + q_3 \Bi$ are complex numbers.  

We say that two quaternions $q, q'$ are {\it similar} if there exists a nonzero quaternion $x$ such that $q = x q' x^{-1}$.  We let $ \SQ(\HQ)$ denote the group of quaternions with norm one.  It follows that $q,q'$ are similar if and only if there exists $x \in \SQ(\HQ)$ with $q = x q' x^\ast$.  The following lemma shows that every quaternion is similar to a complex number.  

\begin{lemma}[\cite{Z}] \label{lemma:similarquaternion}
If $q = q_0 + q_1 \Bi + q_2 \Bj + q_3 \Bk \in \HQ$, then $q$ and $\Re(q) + |\Im(q)| \Bi$ are similar. 
\end{lemma}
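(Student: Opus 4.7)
The plan is to reduce the claim to producing a unit quaternion that rotates $\Im(q)$ onto the $\Bi$-axis. First I would record two invariants of conjugation by a unit quaternion $x \in \SQ(\HQ)$. Since $x^\ast = x^{-1}$ and $q + q^\ast = 2\Re(q)$ is real (hence commutes with $x$), one obtains
\[
(xqx^\ast) + (xqx^\ast)^\ast \;=\; x(q+q^\ast)x^\ast \;=\; 2\Re(q),
\]
so $\Re(xqx^\ast) = \Re(q)$ and, in particular, pure imaginaries are sent to pure imaginaries. Multiplicativity of the quaternion norm then yields $|xqx^\ast| = |q|$, so $|\Im(xqx^\ast)| = |\Im(q)|$. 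Thus $q$ and $\Re(q) + |\Im(q)|\Bi$ already share both the real part and the imaginary norm, and it suffices to find $x \in \SQ(\HQ)$ with $x\,\Im(q)\,x^\ast = |\Im(q)|\Bi$.

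If $\Im(q) = 0$ the identity works. Otherwise set $u := \Im(q)/|\Im(q)|$, a unit pure imaginary quaternion (so $u^\ast = -u$ and $u^2 = -1$); the task becomes finding $x \in \SQ(\HQ)$ with $xux^\ast = \Bi$. Assuming $u \neq -\Bi$, I would take
\[
x \;:=\; \frac{u + \Bi}{|u+\Bi|},
\]
which is again a unit pure imaginary. Writing $u = u_1\Bi + u_2\Bj + u_3\Bk$, a direct expansion using the multiplication table for $\{1,\Bi,\Bj,\Bk\}$ gives $-(u+\Bi)\,u\,(u+\Bi) = 2(1+u_1)\Bi$, while $|u+\Bi|^2 = 2(1+u_1)$ because $|u|=1$; hence $xux^\ast = \Bi$. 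Geometrically, $x$ points along the bisector of $u$ and $\Bi$, and conjugation by it is the $180^\circ$ rotation that swaps those two axes. In the remaining degenerate case $u = -\Bi$, a separate check gives $\Bj(-\Bi)\Bj^\ast = \Bi$, so $x = \Bj$ works.

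There is no real obstacle here beyond this case split; the content of the lemma is the classical fact that conjugation by unit quaternions realizes all rotations of $\R^3 \cong \Im(\HQ)$ and therefore acts transitively on the unit sphere of pure imaginary quaternions. The only subtlety is isolating the antipodal direction $u = -\Bi$, where the bisector construction degenerates (the denominator $|u+\Bi|$ vanishes) and must be replaced by the explicit choice $x = \Bj$.
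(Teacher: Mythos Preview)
Your proof is correct. The paper does not give its own proof of this lemma; it simply cites Zhang \cite{Z}. Your argument is a clean, self-contained version of the standard proof: you identify the two conjugation invariants (real part and imaginary norm), reduce to rotating a unit pure imaginary onto $\Bi$, and then write down the explicit reflection $x = (u+\Bi)/|u+\Bi|$ with the degenerate case $u=-\Bi$ handled separately. The computation $-(u+\Bi)u(u+\Bi) = 2(1+u_1)\Bi$ and $|u+\Bi|^2 = 2(1+u_1)$ checks out, so nothing is missing.
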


Let $\BM$ be a $n \times n$ matrix with quaternion entries.  Then $\lambda \in \HQ$ is called a {\it right eigenvalue} of $\BM$ if there exists a nonzero vector $X \in \HQ^n$ such that $\BM X = X \lambda$.  If $\lambda$ is a right eigenvalue of $\BM$, one finds that $q \lambda q^{-1}$ is also a right eigenvalue of $\BM$ for any nonzero quaternion $q$.  Hence the right spectrum of $\BM$ is either infinite or contained in $\R$.  From Lemma \ref{lemma:similarquaternion}, we restrict our attention to complex right eigenvalues.  We consider the (unique) decomposition $\BM = \BM_1 + \BM_2 \Bj$.  Then for any $\lambda \in \C$ and $X = Y + Z \Bj$ with $Y,Z \in \C^n$, the following are equivalent:
\begin{enumerate}[(i)]
\item $\BM X = X \lambda$,
\item $\begin{bmatrix} \BM_1 & \BM_2 \\ -\overline{\BM}_2 & \overline{\BM}_1 \end{bmatrix} \begin{bmatrix} Y \\ - \overline{Z} \end{bmatrix} = \lambda \begin{bmatrix} Y \\ -\overline{Z} \end{bmatrix}$,
\item $\begin{bmatrix} \BM_1 & \BM_2 \\ -\overline{\BM}_2 & \overline{\BM}_1 \end{bmatrix} \begin{bmatrix} Z \\ \overline{Y} \end{bmatrix} = \bar{\lambda} \begin{bmatrix} Z \\ \overline{Y} \end{bmatrix}$.
\end{enumerate}
Thus, the right spectrum of $\BM$, when restricted to complex numbers, is given by the $2n$ eigenvalues of the complex matrix 
$$ \begin{bmatrix} \BM_1 & \BM_2 \\ -\overline{\BM}_2 & \overline{\BM}_1 \end{bmatrix}. $$
Moreover, the complex eigenvalues appear as conjugate pairs.  The whole set of right eigenvalues of $\BM$ is then the union of all similarity classes of the complex right eigenvalues of $\BM$.

\subsection{Random quaternionic matrices}

Let $\xi$ be a real random variable with mean zero and variance $1/4$.  We study the right eigenvalues of random quaternion matrices whose entries are iid copies of $\xi_0 + \xi_1 \Bi + \xi_2 \Bj + \xi_3 \Bk$, where $\xi_0, \xi_1, \xi_2, \xi_3$ are iid copies of $\xi$.  From the discussion above, we find that this is equivalent to studying the eigenvalues of random complex block matrices.  Indeed, the problem reduces to studying the eigenvalues of the $2n \times 2n$ matrix
\begin{equation} \label{eq:xnquat}
	\BX_n = \begin{bmatrix} \BA_n & \BB_n \\ - \overline{\BB}_n & \overline{\BA}_n \end{bmatrix}, 
\end{equation}
where $\BA_n, \BB_n$ are independent $n \times n$ complex matrices whose entries are iid copies of $\xi_0 + \xi_1 \Bi$.  We note, however, that the entries of $\BX_n$ are not independent.  Thus, Theorem \ref{thm:tvcirc} cannot be applied to the block matrix $\BX_n$.

In the case that $\xi$ is Gaussian (i.e. the quaternionic Ginibre ensemble), the circular law was established by Benaych-Georges and Chapon \cite{BCquat} using logarithmic potential theory.  We will verify the circular law for random quaternionic matrices when the atom variable $\xi$ is non-Gaussian.  

\begin{theorem}[Universality for quaternion random matrices] \label{thm:quaternion}
Let $\xi$ be a complex random variable with mean zero and variance $1/2$, and suppose $\E[\xi^2] = 0$ and $\E|\xi|^{2+\eta} < \infty$ for some $\eta > 0$.  For each $n \geq 1$, let $\BA_n, \BB_n$ be independent $n \times n$ matrices whose entries are iid copies of $\xi$, and let $\BX_n$ be the $2n \times 2n$ matrix defined in \eqref{eq:xnquat}.  For each $n \geq 1$, let $\BN_n$ be a deterministic $2n \times 2n$ matrix, and suppose the sequence $\{\BN_n\}_{n \geq 1}$ satisfies $\rank(\BN_n) = O(n^{1-\eps})$ and $\sup_{n \geq 1} \frac{1}{n^2} \|\BN_n\|_2^2 < \infty$, for some $\eps > 0$. Then the ESD of $\frac{1}{\sqrt{n}} (\BX_n + \BN_n)$ converges almost surely to the circular law $F_{\mathrm{circ}}$ as $n \rightarrow \infty$.  
\end{theorem}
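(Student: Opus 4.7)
The plan is to adapt Girko's Hermitization approach together with the Tao-Vu framework for the circular law to the block-quaternion ensemble. For $z \in \C$, set $\BY_n(z) := \frac{1}{\sqrt{n}}(\BX_n + \BN_n) - z \BI$; convergence of the ESD of $\frac{1}{\sqrt{n}}(\BX_n+\BN_n)$ to the circular law is equivalent (via the logarithmic potential method) to two facts: (a) for a.e.\ $z \in \C$ the ESD of $\BY_n(z)^\ast\BY_n(z)$ converges almost surely to a deterministic measure $\nu_z$, and (b) $\log(\cdot)$ is uniformly integrable against the singular value distribution of $\BY_n(z)$ with high probability. Once these are in place, the limit measure---identified in the Gaussian (quaternionic Ginibre) case by Benaych-Georges and Chapon---must coincide with the circular law.

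Step (a) can be handled either by a Lindeberg swapping argument comparing the Stieltjes transform of $\BY_n(z)^\ast\BY_n(z)$ to its Gaussian counterpart, or by a direct moment computation. The key point is that the dependence in $\BX_n$ is finite range: each entry is paired with exactly one other entry (up to conjugation and sign) via the block identification. Consequently the contribution of the dependence to any fixed moment of the Stieltjes transform is $O(1/n)$, the limit coincides with the iid case, and this part reduces to routine bookkeeping.

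Step (b) splits into three ingredients. The operator norm bound $\|\BY_n(z)\| = O(1)$ holds with overwhelming probability by standard moment truncation combined with classical spectral norm estimates for the iid blocks $\BA_n$ and $\BB_n$. The intermediate singular value bound $\sigma_{2n-k}(\BY_n(z)) \gtrsim k/n$ for $k \geq n^{1-\delta}$ follows from the Tao-Vu negative second moment argument: write $\sum_i \sigma_i(\BY_n(z))^{-2} = \sum_i \dist(\row_i, H_{-i})^{-2}$ and invoke Talagrand concentration of row-to-subspace distances; the paired-row dependence only affects two rows at a time, so this argument adapts with cosmetic changes. The crucial and hardest ingredient is the polynomial lower bound on the smallest singular value $\sigma_{2n}(\BY_n(z))$: one cannot directly imitate Theorem \ref{theorem:least-sing:TV}, because the rows $\row_i$ and $\row_{i+n}$ of $\BY_n(z)$ share atoms and hence $\row_i$ is not independent of the subspace spanned by the remaining $2n-1$ rows.

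The approach to this last step, which is the main obstacle, is as follows. Condition on the subspace $H$ spanned by all rows except the pair $(\row_i, \row_{i+n})$; this pair is jointly independent of the remaining $2n-2$ rows. The event $\{\sigma_{2n}(\BY_n(z)) \leq \eps\}$ forces both $\dist(\row_i, H)$ and $\dist(\row_{i+n}, H)$ to be small simultaneously. These two distances can be expressed as a $\C$-linear operator $T \colon \C^n \to \C^2$ applied to the atom vector $(\xi_{i,1},\ldots,\xi_{i,n})$, with $T$ determined by a pair of normal vectors to $H$ inside the orthogonal complement of the other rows. The required lower tail bound then becomes an anti-concentration statement for $T\xi$ on $\C^2$, whose converse---an inverse-type theorem asserting that exceptional concentration of $T\xi$ forces the columns of $T$ to jointly lie in a short generalized arithmetic progression---is precisely the inverse concentration theorem for linear operators and multilinear forms promised in the abstract. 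Granting this inverse theorem, and combining it in the usual Tao-Vu fashion with a net argument over structured normal vectors and an $\eps$-net over $z$, the polynomial least singular value bound follows, and the remainder of the proof of Theorem \ref{thm:quaternion} proceeds essentially as in Tao-Vu.
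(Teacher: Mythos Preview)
The paper does not prove Theorem~\ref{thm:quaternion} directly: it deduces it as an immediate corollary of the general block-matrix result Theorem~\ref{thm:main} with $d=2$, after checking that the quaternionic ensemble satisfies condition~{\bf C0}. The hypothesis $\E[\xi^2]=0$ is exactly what makes the uncorrelation condition $\E[\xi_{st}\overline{\xi_{uv}}]=0$ hold for the two nontrivial pairs $\E[\xi_{11}\overline{\xi_{22}}]=\E[a\cdot a]=\E[\xi^2]$ and $\E[\xi_{12}\overline{\xi_{21}}]=-\E[b^2]=-\E[\xi^2]$. Your outline is instead a sketch of the proof of Theorem~\ref{thm:main} itself in the special case $d=2$, and at the architectural level (Hermitization, limiting singular-value law, least singular value) it agrees with Section~\ref{sec:universality}.

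Your least singular value step, however, has a real gap. The assertion that $\sigma_{2n}(\BY_n(z))\le\eps$ forces \emph{both} $\dist(\row_i,H)$ and $\dist(\row_{i+n},H)$ to be small (with $H$ the span of the other $2n-2$ rows) is false: one can have $\row_i=\row_{i+n}+h$ with $h\in H$, so that $\dist(\row_i,H)$ is of order one while $\sigma_{2n}=0$. What a small least singular value actually yields, after removing the dependent pair, is that $P_{H^\perp}\row_i$ and $P_{H^\perp}\row_{i+n}$ are nearly \emph{linearly dependent} as vectors in $H^\perp\cong\C^2$; equivalently, the $2\times2$ determinant $\det[\,P_{H^\perp}\row_i\mid P_{H^\perp}\row_{i+n}\,]$ is small. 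That determinant is bilinear in the two rows and hence is a genuinely quadratic expression in the underlying atoms $a_{ij},b_{ij}$ and their conjugates---it is not the value of a linear operator $T:\C^n\to\C^2$. This is precisely why the paper passes through the Laplace expansion of $\det(\BM_n)$ along the first $d$ rows (Section~\ref{section:singularity:approach}) and invokes the inverse theorem for \emph{multilinear} forms (Theorem~\ref{theorem:dlinear:weak}) together with the decoupling machinery of Section~\ref{section:step1}; the operator-valued inverse theorem (Theorem~\ref{theorem:linear:operator}) is used only in the counting step (Theorem~\ref{theorem:step2}), not as the primary anti-concentration input.
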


\subsection{Random block matrices} More generally, we will study random block matrices of the form 
\begin{equation} \label{eq:2x2block}
	\BX_n = \begin{bmatrix} \BA_n & \BB_n \\ \BC_n & \BD_n \end{bmatrix}, 
\end{equation}
where $\BA_n = (a_{ij})_{i,j=1}^n, \BB_n = (b_{ij})_{i,j=1}^n, \BC_n = (c_{ij})_{i,j=1}^n, \BD_n = (d_{ij})_{i,j=1}^n$, and 
$$ \{(a_{ij}, b_{ij}, c_{ij}, d_{ij}) : 1 \leq i,j \leq n \} $$
is a collection of iid copies of the random vector $(\xi_1,\xi_2,\xi_3,\xi_4)$.  Here the random variables $\xi_1, \xi_2, \xi_3, \xi_4$ are not required to be independent.  

This ensemble of block matrices was proposed by Tao at the AIM Workshop on Random Matrices as a matrix model with dependent entries in which the circular law is still expected to hold\footnote{See \url{http://www.aimath.org/WWN/randommatrices/randommatrices.pdf}.}.  We will prove the circular law for this ensemble of random block matrices under some moment assumptions on the atom variables $\xi_1,\xi_2,\xi_3,\xi_4$.  

The matrix $\BX_n$ in \eqref{eq:2x2block} can be viewed as a $2 \times 2$ block matrix.  More generally, we will study $d \times d$ block matrices for any $d \geq 2$.  We begin with the following definition.  

\begin{definition} [Random block matrices with dependent entries; Condition {\bf C0}]  \label{def:C0} \label{cond:covariance}
Let $d \geq 2$.  Let $(\xi_{st})_{s,t=1}^d$ be a complex random matrix where each entry $\xi_{st}$ has mean zero and variance $1/d$.  For each $s,t \in \{1,\ldots,d\}$, let $\{ x_{st;ij}\}_{i,j \geq 1}$ be an infinite double array of complex random variables all defined on the same probability space.  For each $n \geq 1$ and all $s,t \in \{1,\ldots,d\}$, define the $n \times n$ random matrix $\BX_{n,st} := (x_{st;ij})_{i,j=1}^n$.  Define the $dn \times dn$ random block matrix 
$$ \BX_n := \begin{bmatrix} \BX_{n,11} & \ldots & \BX_{n,1d} \\ \vdots & \ddots & \vdots \\ \BX_{n,d1} & \ldots & \BX_{n,dd} \end{bmatrix} = \left(\BX_{n,st} \right)_{s,t=1}^d. $$
We say the sequence of matrices $\{\BX_n\}_{n \geq 1}$ satisfies condition {\bf C0} with parameter $d$ and atom variables $(\xi_{st})_{s,t=1}^d$ if the following conditions hold:
\begin{enumerate}[(i)]
\item $\{ (x_{st;ij})_{s,t=1}^d : 1 \leq i,j \}$ is a collection of iid copies of $(\xi_{st})_{s,t=1}^d$,  
\item We have $\E\left[ \xi_{st} \overline{\xi_{uv}}\right] = 0$ for all $(s,t) \neq (u,v)$.  \label{def:C0:uncorr}
\end{enumerate}
\end{definition}

In Theorem \ref{thm:main} below, we establish the circular law for a class of random block matrices that satisfy condition {\bf C0}.  In particular, Theorem \ref{thm:quaternion} is a corollary of the following theorem in the case that $d=2$.

\begin{theorem}[Universality for random block matrices] \label{thm:main} 
Let $\{\BX_n\}_{n \geq 1}$ be a sequence of random matrices that satisfies condition {\bf C0} with parameter $d \geq 2$ and atom variables $(\xi_{st})_{s,t=1}^d$, and assume that
$$ \max_{1 \leq s,t \leq d} \E|\xi_{st}|^{2+\eta} < \infty $$
for some $\eta > 0$.  For each $n \geq 1$, let $\BN_n$ be a deterministic $dn \times dn$ matrix, and suppose the sequence $\{\BN_n\}_{n \geq 1}$ satisfies $\rank(\BN_n) = O(n^{1-\eps})$ and $\sup_{n\geq 1} \frac{1}{n^2} \|\BN_n\|_2^2 < \infty$ for some $\eps > 0$.  Then the ESD of $\frac{1}{\sqrt{n}} (\BX_n + \BN_n)$ converges almost surely to the circular law $F_\mathrm{circ}$ as $n \rightarrow \infty$.
\end{theorem}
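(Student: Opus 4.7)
The plan is to follow the Hermitization framework of Girko, as implemented by Tao and Vu in \cite{TVcirc, TVesd}. Setting $\BH_n(z) := \frac{1}{\sqrt{n}}(\BX_n + \BN_n) - z\BI$, the convergence of the ESD to $F_{\mathrm{circ}}$ reduces to two ingredients: \textbf{(a)} almost sure weak convergence, for a.e.\ $z \in \C$, of the empirical singular value distribution of $\BH_n(z)$ to a deterministic limit $\nu^{(z)}$ whose logarithmic potential equals that of the circular law; and \textbf{(b)} a polynomial lower bound on $\sigma_{dn}(\BX_n + \BN_n - z\sqrt{n}\BI)$ with overwhelming probability, together with tail control on the small singular values that prevents the log-potential from concentrating its mass near $0$. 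Given (a) and (b), the replacement principle of \cite{TVesd} identifies the limit of the logarithmic potentials with $\int \log|z-w|\,d\mu_{\mathrm{circ}}(w)$ and hence the ESD limit with $\mu_{\mathrm{circ}}$.

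For ingredient (a) I would use the moment method on $\BH_n(z)\BH_n(z)^*$. The crucial structural observation is that Condition \textbf{C0}(\ref{def:C0:uncorr}), combined with $\E|\xi_{st}|^2 = 1/d$, yields the global covariance
\begin{equation*}
\E\bigl[(\BX_n)_{IJ}\overline{(\BX_n)_{I'J'}}\bigr] = \tfrac{1}{d}\,\delta_{II'}\delta_{JJ'},
\end{equation*}
identical to that of a $dn \times dn$ iid matrix with entry variance $1/d$. Higher-order cross-moments within a single $(i,j)$-group contribute only on a diagonal of index-space of smaller volume and are negligible in the moment expansion after the standard truncation. Consequently the empirical singular value distribution of $\BH_n(z)$ converges to the same limit as in the iid case treated in \cite{TVcirc}, and the deterministic perturbation $\BN_n$ is absorbed by interlacing inequalities using the hypotheses $\rank(\BN_n) = O(n^{1-\eps})$ and $\|\BN_n\|_2^2 = O(n^2)$.

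The main obstacle is ingredient (b), which requires extending Theorem \ref{theorem:least-sing:TV} to block matrices with the dependence pattern of Condition \textbf{C0}. After the usual decomposition of the unit sphere in $\C^{dn}$ into compressible and incompressible vectors and an $\eps$-net argument on the latter, one must bound, uniformly over unit vectors $\Bv = (\Bv_1,\ldots,\Bv_d) \in \C^{dn}$, the small-ball probability
\begin{equation*}
\Prob\bigl( \|(\BX_n + \BN_n - z\sqrt{n}\BI)\Bv\| \le \rho \bigr).
\end{equation*}
The block structure makes the $n$ row-groups independent, so this probability factorizes over $i = 1, \ldots, n$ into small-ball probabilities in $\C^d$ of the random vector
\begin{equation*}
L^{(i)}(\Bv) \;:=\; \Bigl(\, \sum_{t=1}^d \sum_{j=1}^n x_{st;ij}(\Bv_t)_j \,\Bigr)_{s=1}^d,
\end{equation*}
which is a $d$-dimensional linear combination of the dependent random vectors $(x_{st;ij})_{s,t=1}^d$. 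The hard part is therefore an \emph{inverse Littlewood--Offord theorem for linear operators}: scalar inverse Littlewood--Offord cannot be applied directly because the $d$ coordinates of $L^{(i)}(\Bv)$ share randomness. The required result must assert that if the $\C^d$-valued small-ball probability of $L^{(i)}$ substantially exceeds the generic rate $\rho^{2d}$, then $\Bv$ is additively structured block-by-block (most entries of each $\Bv_t$ lying in a generalized arithmetic progression of bounded rank). A union bound over the correspondingly small net of structured $\Bv$, together with the routine treatment of compressible vectors, then yields the desired polynomial anti-concentration and completes (b). This vector-valued inverse Littlewood--Offord estimate is the new analytic ingredient flagged in the abstract, and delivering it is where the bulk of the work lies.
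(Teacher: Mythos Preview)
Your Hermitization scaffold and the split into ingredients (a) and (b) match the paper.  For (a) the paper does not use the moment method; it works with the Stieltjes transform of the linearized $2dn\times 2dn$ matrix $\hat{\BH}_n(z)$, derives via the Schur complement and a bilinear-form concentration lemma the cubic relation
\[
m + \frac{m+w}{(m+w)^2-|z|^2}=o(1),
\]
and extracts from this an explicit polynomial convergence rate (Lemma~\ref{lemma:rate}).  Your moment-method sketch would reach the same limit law, but note that the paper actually needs a quantitative rate, not just weak convergence, so that the log-integral near~$0$ can be controlled; you would have to supply that rate by other means.

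For (b) your route diverges more substantially from the paper and the sketch has a gap.  You propose a Rudelson--Vershynin geometric scheme: compressible/incompressible decomposition, $\eps$-net on incompressible vectors, and for each fixed $\Bv$ a small-ball bound on the $\C^d$-valued linear image $L^{(i)}(\Bv)$ over the $n$ independent row-groups.  Two issues: first, the probability $\P(\|\BM\Bv\|\le\rho)$ does not ``factorize'' over row-groups; you would need a tensorization step.  Second, and more seriously, the incompressible case is not normally handled by a net at all---the net of incompressible vectors is too large---but by the distance argument, bounding $\dist(\text{row-group }i,\ \mathrm{span}(\text{other row-groups}))$.  In the block setting that distance is the least singular value of a $d\times d$ matrix whose entries are inner products of the $d$ rows in group~$i$ with the $d$ normal directions determined by the remaining groups.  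Controlling \emph{that} is intrinsically a $d$-multilinear (determinantal) anticoncentration problem, not a linear one in a fixed $\Bv$.

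This is exactly why the paper abandons the direct geometric route and instead expands $\det(\BM_n)$ along the first $d$ rows, reducing to the concentration of a $d$-multilinear form $\sum a_{i_1\cdots i_d}m_{1i_1}\cdots m_{di_d}$ with normalized cofactor coefficients.  The within-block dependencies are then removed by a decoupling-plus-randomization machinery (Section~\ref{section:step1}) leading to a weak inverse theorem for multilinear forms (Theorem~\ref{theorem:decoupling:weak}).  The operator-valued inverse Littlewood--Offord result you correctly anticipate (Theorem~\ref{theorem:linear:operator}) does appear in the paper, but only in Step~2, to \emph{count} the structured normal vectors produced by Step~1---not as the primary anticoncentration tool.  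So your instinct about the needed new analytic ingredient is right, but the architecture in which it is deployed is different, and your linear-operator formulation of (b) does not by itself reach the multilinear problem that actually arises.
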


In Definition \ref{def:C0}, we require the atom variables $(\xi_{st})_{s,t=1}^d$ to be uncorrelated.  In this note, we will not deal with the correlated case.  However, when there is a correlation among the atom variables, we do not always expect the circular law to be the limiting distribution.  In Figure \ref{fig:unknown}, we plot the eigenvalues of $\frac{1}{\sqrt{2n}} \BX_n$ in the case that 
\begin{equation} \label{eq:unknown}
	\BX_n = \begin{bmatrix} \BA_n & \BA_n \\ \BA_n & \BB_n \end{bmatrix}, 
\end{equation}
where $\BA_n, \BB_n$ are independent $n \times n$ random matrices drawn from the real Ginibre ensemble.  In particular, $\BX_n$ does not satisfy condition \eqref{def:C0:uncorr} of Definition \ref{def:C0}.  Figure \ref{fig:unknown} predicts that more of the eigenvalues will concentrate near the origin, and so we do not believe the limiting distribution will be uniform on the unit disk.  

\begin{figure} 
	\begin{center}
  	\includegraphics[width=10cm]{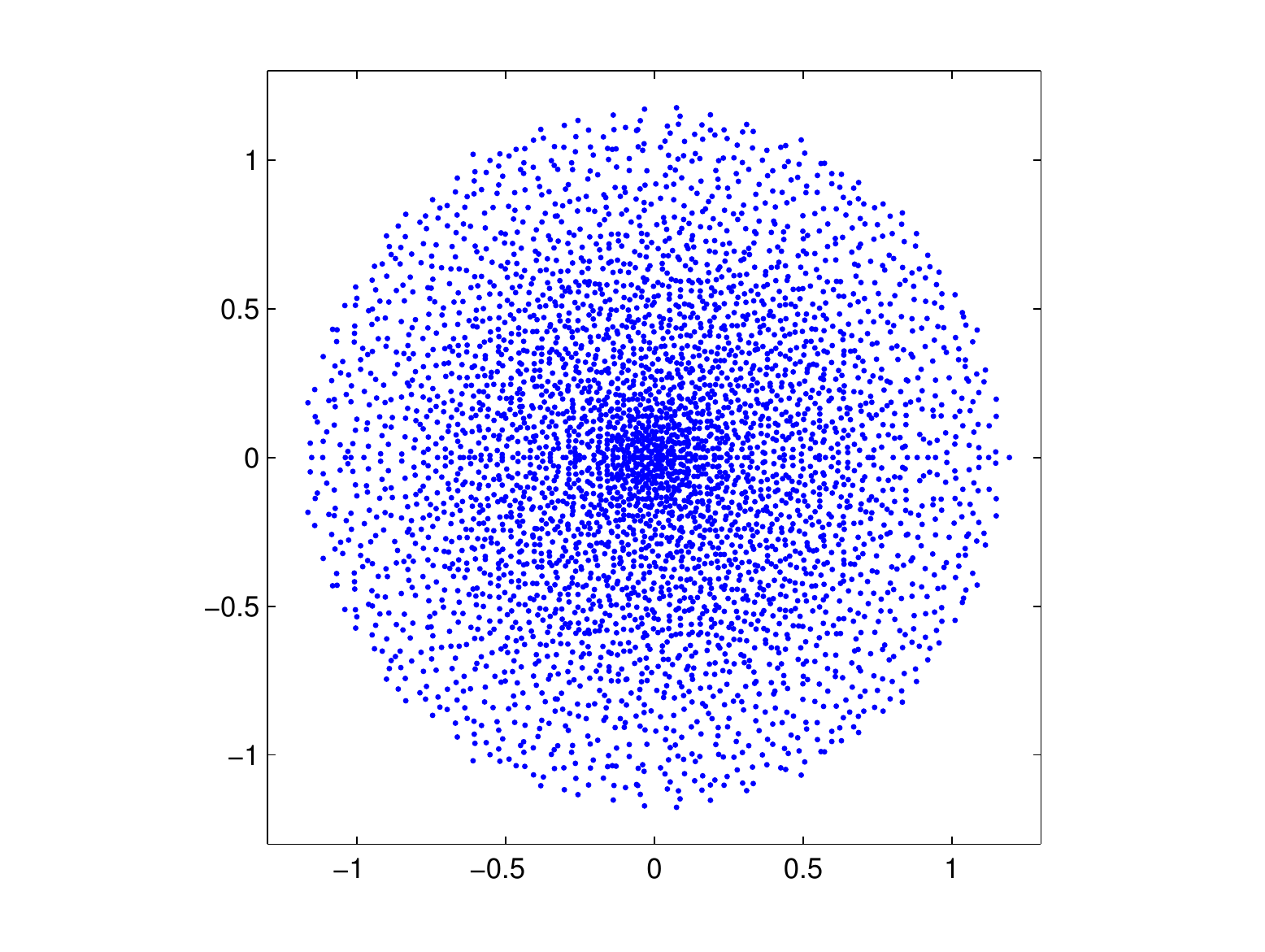}
   	\caption{The eigenvalue plot of $\frac{1}{\sqrt{2n}} \BX_n$, when $n=2000$, $\BX_n$ is defined in \eqref{eq:unknown}, and $\BA_n, \BB_n$ are independent $n \times n$ random matrices drawn from the real Ginibre ensemble.  The eigenvalues appear to concentrate near the origin.}
   	\label{fig:unknown}
 	\end{center}
\end{figure}

\subsection{Least singular value bound}
One of the key ingredients in the proof of Theorem \ref{thm:main} is a bound on the least singular value of random matrices $\{\BX_n\}_{n \geq 1}$ that satisfy condition {\bf C0}.  In particular, we establish the following result.

\begin{theorem}[Least singular value bound] \label{thm:least-sing-value}
Let $\{\BX_n\}_{n \geq 1}$ be a sequence of random matrices that satisfies condition {\bf C0} with parameter $d \geq 2$ and atom variables $(\xi_{st})_{s,t=1}^d$, and assume that
$$ \max_{1 \leq s,t \leq d} \E|\xi_{st}|^{2+\eta} < \infty $$
for some $\eta > 0$.  For each $n \geq 1$, let $\BN_n$ be a deterministic $dn \times dn$ matrix whose entries are bounded by $n^{\alpha}$ for some $\alpha > 0$.  Then, for every $B > 0$, there exist $A>0$  (depending only on $d, B, \alpha$) such that
$$ \Prob ( \sigma_{dn} (\BX_n + \BN_n) \leq n^{-A} )=O(n^{-B}) .$$  
\end{theorem}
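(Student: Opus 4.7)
\emph{Proof plan.} The plan is to adapt the Tao--Vu least singular value framework (Theorem~\ref{theorem:least-sing:TV}) to the block setting. Two modifications are needed: working with ``super-columns'' rather than individual columns (to exploit independence across the column index $j$), and invoking a new small-ball estimate for matrix-valued linear combinations of iid random vectors.

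\emph{Truncation and dichotomy.} First I would use the $(2+\eta)$-moment hypothesis to truncate the entries of $\BX_n$ at scale $n^{\alpha'}$ for some small $\alpha'>0$, which loses only $O(n^{-B})$ in probability and perturbs the matrix by at most $n^{-B'}$ in spectral norm. The same moment assumption yields $\|\BX_n\|=O(n^{1/2+o(1)})$ with overwhelming probability. Following Rudelson--Vershynin, one then decomposes the unit sphere into compressible vectors (well-approximated by vectors supported on at most $n^{1-c}$ coordinates) and incompressible ones. The compressible case is handled by a standard $\eps$-net argument combining the spectral norm bound with the polynomial bound on the entries of $\BN_n$.

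\emph{Super-column distance.} The key new idea is to group the $dn$ columns of $\BX_n+\BN_n$ into $n$ super-columns $\BC_1,\dots,\BC_n$, where $\BC_j$ consists of the $d$ columns indexed by $j$ across all block-column positions. By condition \textbf{C0}, the super-columns are mutually independent. For an incompressible unit vector $v$ with block decomposition $v=(v_j)_{j=1}^n$, $v_j\in\C^d$, one has $\|(\BX_n+\BN_n)v\|\ge \sigma_d(P_{V_j^{\perp}}\BC_j)\|v_j\|$, where $V_j$ is the span of the columns of the other super-columns; incompressibility guarantees some $j$ with $\|v_j\|\gtrsim n^{-1/2}$. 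Conditioning on $\{\BC_k:k\ne j\}$ fixes $V_j$, and a further $\eps$-net over unit $w\in\C^d$ reduces matters to a small-ball estimate of the form $\Prob(\|P_{V_j^{\perp}}\BC_j w\|\le t)\le t^d\, n^{O(1)} + n^{-B}$.

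\emph{Small-ball for matrix-valued sums and main obstacle.} Writing $P_{V_j^{\perp}}\BC_j w = \sum_{i=1}^n A_i(w)\,\xi^{(i)} + (\text{deterministic})$, where the $d^2$-dimensional random vectors $\xi^{(i)}=(x_{st;ij})_{s,t=1}^d$ are iid over $i\in[n]$ and the $A_i(w)$ are deterministic matrices read off from $V_j^{\perp}$, the desired estimate becomes a small-ball inequality for a sum of iid random vectors with matrix coefficients. This is precisely the ``inverse-type result for linear operators and multilinear forms'' advertised in the abstract: either the coefficient family $\{A_i(w)\}$ has strong additive structure, which limits the possible subspaces $V_j$ and is dealt with by a separate net argument over such subspaces, or the small-ball probability is polynomially small in $t$. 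The main technical obstacle is extending the scalar inverse Littlewood--Offord theorem of Tao--Vu and Nguyen--Vu to this matrix-valued setting, using the uncorrelated condition \eqref{def:C0:uncorr} to ensure that $\xi^{(i)}$ genuinely spans $d^2$ dimensions. Once that inverse theorem is established, a union bound over $j\in[n]$ yields the claimed polynomial probability bound.
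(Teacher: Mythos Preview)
Your plan is viable but takes a genuinely different route from the paper's proof. The paper does not use the Rudelson--Vershynin distance framework at all. Instead it reduces the least singular value question to a statement about $\det(\BM_n)$ (Claim~\ref{claim:singularity:cofactordeduction} and Theorem~\ref{theorem:singularvalue''}) and expands the determinant along the first $d$ rows as a $d$-\emph{multilinear} form $\sum a_{i_1\dots i_d}\,m_{1i_1}\cdots m_{di_d}$. Because those $d$ rows share atoms (entries $m_{1i_1},\dots,m_{di_d}$ lying in the same $d\times d$ block are correlated), the multilinear inverse Littlewood--Offord theorem (Theorem~\ref{theorem:dlinear:weak}) cannot be applied directly, and most of the paper's effort in Section~\ref{section:step1} is a chain of decoupling and randomization lemmas (Lemmas~\ref{lemma:decoupling}--\ref{lemma:random:A_U}) that strip away this dependence before one can extract the structured near-null vector $\Bu$ of Theorem~\ref{theorem:step1}. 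Only the counting step (Theorem~\ref{theorem:step2}) uses the operator-valued inverse theorem, Theorem~\ref{theorem:linear:operator}, which is essentially the tool you identified.

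Your super-column idea sidesteps the multilinear layer entirely: since the $\BC_j$ are independent under \textbf{C0}, conditioning on all but one makes $P_{V_j^\perp}\BC_j w$ \emph{linear} in the iid atoms $\xi^{(i)}$, so only the mixing/operator inverse theorems (Theorems~\ref{theorem:ILOlinear:new} and~\ref{theorem:linear:operator}) are needed in both the anti-concentration and counting steps. This is arguably cleaner than the paper's route. Two points to tighten: (i) incompressibility should give $\Omega(n)$ blocks $j$ with $\|v_j\|\asymp n^{-1/2}$, and one averages over these rather than union-bounding over all $j$; (ii) the $t^d$ exponent is optimistic---projecting onto a single unit $u\in V_j^\perp$ already gives a scalar mixing form and a bound of shape $t\,n^{O(1)}+n^{-B}$, which suffices. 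The substantive remaining work, as in the paper's Step~2, is the net-and-count argument showing that $V_j^\perp$ contains a GAP-structured vector with probability $\exp(-\Omega(n))$ over the other super-columns; you mention this only in passing, but it is no shorter than the paper's Section~\ref{section:singularity:step2}.
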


\subsection{Overview}
The proof of Theorem \ref{thm:least-sing-value} requires studying an inverse Littlewood-Offord problem for random multilinear forms.  To this end, we introduce the Littlewood-Offord problem and random multilinear forms in Section \ref{section:anti-concentration}.  Sections \ref{section:singularity:approach}, \ref{section:step1}, and \ref{section:singularity:step2} contain the proof of Theorem \ref{thm:least-sing-value}.  Finally, we prove Theorem \ref{thm:main} in Section \ref{sec:universality}.  A number of auxiliary results are contained in the appendix.  

\subsection{Notation} \label{sec:notation}
We use asymptotic notation (such as $O,o,\Omega, \asymp$) under the assumption that $n \rightarrow \infty$.  We use $X \ll Y, Y \gg X, Y=\Omega(X)$, or $X = O(Y)$ to denote the bound $X \leq CY$ for all sufficiently large $n$ and for some constant $C$.  Notations such as $X \ll_k Y$ and $X=O_k(Y)$ mean that the hidden constant $C$ depends on another constant $k$.  $X=o(Y)$ or $Y=\omega(X)$ means that $X/Y \rightarrow 0$ as $n \rightarrow \infty$.  

We let $\|\BM\|_2$ denote the Hilbert-Schmidt norm of $\BM$ (defined in \eqref{eq:def:hs}), and let $\|\BM\|$ denote the spectral norm of $\BM$.  For a vector $\Bv$, we let $\|\Bv\| = \|\Bv\|_2$ denote the Euclidean norm of $\Bv$.  

We let $\BI_n$ denote the $n \times n$ identity matrix.  Often we will just write $\BI$ for the identity matrix when the size can be deduced from the context.  Similarly, we let $\Bzero$ denote the zero matrix.  

For an event $E$, we let $\oindicator{E}$ denote the indicator function of the event $E$.  We write a.s., a.a., and a.e. for almost surely, Lebesgue almost all, and Lebesgue almost everywhere respectively.  We use $\sqrt{-1}$ to denote the imaginary unit and reserve $i$ as an index.

\subsection*{Acknowledgments}
The authors are grateful to the anonymous referees for valuable comments and suggestions.

\section{The Littlewood-Offord problem and random multilinear forms}\label{section:anti-concentration}

In this section, we introduce the Littlewood-Offord problem and some anti-concentration results for random multilinear forms, which will be used to prove Theorem \ref{thm:least-sing-value}.  

\subsection{The Littlewood-Offord problem}
Consider $\xi$ a real random variable with mean zero and unit variance.  A large portion of classical probability theory is devoted to studying random sums $S_\xi(A) := \sum_{i=1}^n a_i x_i$,  where $A =\{a_1,\ldots, a_n\}$ is a multiset of complex vectors in $\C^d$ and $x_1,\ldots,x_n$ are iid copies of $\xi$. The {\it Littlewood-Offord problem} is to estimate the {\it small ball probability}
$$ \rho_{\beta,\xi}(A) := \sup_{z \in \C^d} \Prob ( \|S_\xi(A) - z\| \leq \beta). $$
In particular, if $\rho_{\beta,\xi}(A)$ is small, then the random sum $S_\xi(A)$ is well spread.  Conversely, if $\rho_{\beta,\xi}(A)$ is large, then the random sum concentrates near a point. 

A classical result of Littlewood and Offord \cite{LO}, which was strengthened by Erd\H{o}s \cite{E}, gives an estimate for the small ball probability when $\xi$ is a Bernoulli random variable (takes values $\pm 1$ each with probability $1/2$) and $d=1$.  

\begin{theorem}[Erd\H{o}s, \cite{E}] \label{thm:erdos}
Let $\xi$ be a Bernoulli random variable.  If the complex numbers $a_i$ satisfy $|a_i| \geq 1$ for all $i$, then 
$$ \rho_{1,\xi}(A) = O(n^{-1/2}). $$
\end{theorem}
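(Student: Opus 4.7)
The plan is to reduce the complex anti-concentration problem to the classical real-valued Littlewood--Offord inequality, and then apply a Sperner-type anti-chain argument in the Boolean lattice.

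First, I would project onto real and imaginary parts: write $a_j = u_j + \sqrt{-1}\, v_j$. Since $|a_j|^2 = u_j^2 + v_j^2 \geq 1$, at least one of $|u_j|, |v_j|$ is at least $1/\sqrt{2}$. By pigeonhole one of the index sets $I_u := \{j : |u_j| \geq 1/\sqrt{2}\}$ or $I_v := \{j \notin I_u : |v_j| \geq 1/\sqrt{2}\}$ has size at least $n/2$; without loss of generality assume $|I_u| \geq n/2$. Since $\|S_\xi(A) - z\| \leq 1$ forces $|\Re(S_\xi(A)) - \Re(z)| \leq 1$, conditioning on the variables $x_j$ with $j \notin I_u$ reduces matters to showing that, for any real coefficients $b_j$ with $|b_j| \geq 1/\sqrt{2}$ indexed by $j \in I_u$,
$$ \sup_{y \in \R}\, \P\!\left( \Big| \sum_{j \in I_u} b_j x_j - y \Big| \leq 1 \right) = O\!\left( n^{-1/2}\right). $$

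Next, for this real-valued problem, the symmetry $x_j \mapsto -x_j$ allows me to assume $b_j > 0$. Setting $m := |I_u|$ and parameterizing by sign vectors $\eps \in \{-1,+1\}^m$, the event in question corresponds to the set $\CE := \{\eps : |\sum_j b_j \eps_j - y| \leq 1\}$, and I write $S_\eps := \sum_j b_j \eps_j$. The key observation is that if $\eps, \eps' \in \CE$ are comparable in the coordinate-wise order and differ in exactly $k$ positions, then $|S_{\eps'} - S_{\eps}| = 2\sum_{j : \eps'_j > \eps_j} b_j \geq k\sqrt{2}$, while this difference is at most $2$; hence $k \leq 1$. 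Thus $\CE$ contains no three-term chain in the Boolean lattice.

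The final step invokes the standard generalization of Sperner's theorem (also due to Erd\H{o}s) bounding the size of any family in $\{-1,+1\}^m$ containing no chain of length three by the sum of the two largest binomial coefficients, which gives $|\CE| \leq \binom{m}{\lfloor m/2 \rfloor} + \binom{m}{\lfloor m/2\rfloor - 1} = O(2^m/\sqrt{m})$. Dividing by $2^m$ yields the desired bound $O(m^{-1/2}) = O(n^{-1/2})$, since $m \geq n/2$.

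The main obstacle is the complex-to-real reduction: the $1/\sqrt{2}$ loss in coefficient magnitude, combined with the unit-length tolerance $|{\cdot}| \leq 1$, forces one to allow two-step chains rather than antichains in the Sperner step. This slightly degrades constants but preserves the $n^{-1/2}$ rate; in the pure real case with $|b_j|\geq 1$ and interval length strictly less than $2$, the argument recovers Erd\H{o}s's sharp constant $\binom{n}{\lfloor n/2\rfloor}/2^n$ directly via Sperner's theorem applied to an antichain.
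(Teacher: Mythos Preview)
Your proof is correct. Note, however, that the paper does not actually supply a proof of this statement: Theorem~\ref{thm:erdos} is quoted as a classical result with a citation to Erd\H{o}s~\cite{E}, and no argument is given in the paper itself. Your write-up is essentially the standard proof: the real-coefficient case is Erd\H{o}s's original Sperner-theorem argument, and your reduction from complex coefficients to real ones via projection onto one coordinate (at the cost of a constant factor in the coefficient lower bound, hence allowing $2$-chains rather than antichains) is the usual way to extend the $O(n^{-1/2})$ bound to $\C$. One minor remark: the sharp constant in the complex case (and more generally in $\R^d$) requires Kleitman's extension rather than mere projection, but since the theorem as stated only asserts the asymptotic rate $O(n^{-1/2})$, your argument is entirely adequate.
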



The reader is invited to consult \cite{NgV-survey} and references therein for further extensions of this result. Motivated by inverse theorems from additive combinatorics, Tao and Vu \cite{TVinverse} consider the following phenomenon: 
\begin{quote}
	If $\rho_{\beta,\xi}(A)$ is large, then most of the elements of $A$ are additively correlated.
\end{quote}

In order to introduce the precise result, we recall the notion of a \emph{generalized arithmetic progression} (GAP).  A set $Q\subset \C^d$ is a \emph{GAP of rank $r$} if it can be expressed in the form
$$Q= \{g_0+ k_1g_1 + \dots +k_r g_r : K_i \le k_i \le K_i', k_i\in \Z \hbox{ for all } 1 \leq i \leq r\}$$ for some $g_0,\ldots,g_r\in \C^d$, and some integers $K_1,\ldots,K_r,K'_1,\ldots,K'_r$.


The vectors $g_i$ are the \emph{generators} of $Q$, the numbers $K_i'$ and $K_i$ are the \emph{dimensions} of $Q$, and $\Vol(Q) := |B|$ is the \emph{volume} of $Q$. We say that $Q$ is \emph{proper} if $|Q| = \Vol(Q)$. If $g_0=0$ and $-K_i=K_i'$ for all $i\ge 1$, we say that $Q$ is {\it symmetric}.

Consider a proper symmetric GAP $Q= \{\sum_{i=1}^r k_ig_i : -K_i \le k_i \le K_i\}$ of rank $r=O(1)$ and size $N=n^{O(1)}$ in $\C$.  Assume that $\xi$ has Bernoulli distribution and for each $a_i$ there exists $q_i\in Q$ such that $|a_i-q|\le \delta$. Then, because the random sum $\sum_i q_ix_i$ takes values in the GAP $nQ:=\{\sum_{i=1}^r k_ig_i : -nK_i \le k_i \le nK_i\}$, a GAP of size $|nQ| \le n^r N=n^{O(1)}$, the pigeon-hole principle implies that $\sum_i q_ix_i$ takes some value in $nQ$ with probability $n^{-O(1)}$. Thus we have
\begin{equation}\label{bound2} 
\rho_{n\delta, \xi}(A)   = n^{-O(1)}.
\end{equation}

This example shows that if $A$ is {\it close} to a $GAP$ of rank $O(1)$ and size $n^{O(1)}$, then $A$ has large small ball probability. It was shown  by Tao and Vu in \cite{TVbull,TVinverse,TVcirc,TVcomp} that this is essentially the only example which has large small ball probability. We recite here an explicit version from \cite{NgV} which will be used later on.



We say that a vector $a$ is {\it $\delta$-close} to a set $Q$ if there exists $q\in Q$ such that $\|a-q\|\le \delta$.

\begin{theorem}[Inverse Littlewood-Offord theorem for linear forms, \cite{NgV}]\label{theorem:linear} Let $0 <\ep < 1$ and $B>0$. Let 
$ \beta >0$ be a parameter that may depend on $n$. Suppose that $\sum_i \|a_i\|^2 =1$ and 
$$\rho:=\rho_{\beta,\xi}(A) \ge n^{-B},$$ 
where $x_1,\ldots,x_n$ are iid copies of a random variable $\xi$ having bounded $(2+\eta)$-moment. Then, for any number $n'$ between $n^\ep$ and $n$, there exists a proper symmetric GAP $Q=\{\sum_{i=1}^r k_ig_i : |k_i|\le K_i \}$ such that

\begin{itemize}

\item At least $n-n'$ elements of $a_i$ are $\beta$-close to $Q$.

\item $Q$ has small rank, $r=O_{B,\ep}(1)$, and small size

$$|Q| \le \max \{ O_{B,\ep}(\frac{\rho^{-1}}{\sqrt{n'}}),1\}.$$

\item There is a non-zero integer $p=O_{B,\ep}(\sqrt{n'})$ such that all
 generators $g_i$ of $Q$ have the form  $g_i=(g_{i1},\dots,g_{id})$, where $g_{ij}=\beta\frac{p_{ij}} {p} $ with $p_{ij} \in \Z$ and $|p_{ij}|=O_{B,\ep}(\beta^{-1} \sqrt{n'}).$

\end{itemize}
\end{theorem}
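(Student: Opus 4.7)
The plan is to follow the Halász--Tao--Vu Fourier-analytic strategy, adapted to vectors in $\C^d \simeq \R^{2d}$ and to the $(2+\eta)$-moment hypothesis on $\xi$. First, I would apply an Esseen-type concentration inequality to dominate the small ball probability by a Fourier integral,
\[ \rho \ll_d \beta^{2d} \int_{\|t\|\le 1/\beta} \prod_{i=1}^n |\phi_\xi(\langle t,a_i\rangle_\R)|\, dt, \]
where $\phi_\xi$ is the characteristic function of $\xi$ and $\langle\cdot,\cdot\rangle_\R$ is the real inner product on $\R^{2d}$. Symmetrizing with an independent copy $\xi'$ and combining $1-\cos(2\pi x)\ge c\|x\|_{\R/\Z}^2$ with the $(2+\eta)$-moment condition (which ensures $|\xi-\xi'|\ge c$ with positive probability) yields the Halász-type bound $|\phi_\xi(s)|^2\le \exp(-c\,\E\|s(\xi-\xi')\|_{\R/\Z}^2)$, and hence
\[ \rho \ll_d \beta^{2d} \int_{\|t\|\le 1/\beta} \exp\!\Bigl(-c\sum_{i=1}^n \E\,\|\langle t,a_i\rangle_\R(\xi-\xi')\|_{\R/\Z}^2\Bigr)\, dt. \]

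Next I would perform a dyadic pigeonholing on the exponent. Using $\rho\ge n^{-B}$, this forces the ``resonant set''
\[ T_\delta := \Bigl\{t\in B(0,1/\beta) : \#\{i:\|\langle t,a_i\rangle_\R\|_{\R/\Z}\le \delta\}\ge n-n'\Bigr\} \]
to satisfy $|T_\delta|\gg_{B,\ep}\rho\,\beta^{-2d}$ for a suitable $\delta \asymp (n')^{-1/2}$. The triangle inequality for $\|\cdot\|_{\R/\Z}$ ensures that $T_\delta + T_\delta$ lies in $T_{2\delta}$ up to losing at most $2n'$ indices, giving $T_\delta$ small doubling. Invoking a continuous Pl\"unnecke--Ruzsa inequality followed by a Freiman-type structure theorem in $\R^{2d}$ then places $T_\delta$ inside a proper symmetric GAP $P^*$ of rank $r^*=O_{B,\ep}(1)$ and volume $O_{B,\ep}(\rho^{-1}\beta^{-2d})$, whose generators can be chosen to lie on the rational lattice $\tfrac{1}{p\beta}\Z^{2d}$ for some integer $p=O_{B,\ep}(\sqrt{n'})$.

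Finally I would dualize. For almost every $t \in P^*$, at least $n-n'$ of the inner products $\langle t,a_i\rangle_\R$ lie within $\delta$ of an integer; testing $P^*$ against additive characters and applying Fourier duality forces the corresponding $a_i$ to be $\beta$-close to the dual GAP $Q := (P^*)^\vee$. The rank bound $r = O_{B,\ep}(1)$ persists under duality, and the volume estimate $|Q|\le O_{B,\ep}(\rho^{-1}/\sqrt{n'})$ emerges from the relation $\Vol(P^*)\cdot\Vol(Q)\asymp 1$ after canceling the $\beta^{2d}$ scaling and exploiting the $\sqrt{n'}$ improvement built into the averaging step. Clearing denominators in the rational generators of $P^*$ produces the claimed representation $g_{ij}=\beta p_{ij}/p$ with $|p_{ij}|=O_{B,\ep}(\beta^{-1}\sqrt{n'})$.

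The main obstacle is the second step: tracking the $n'$-dependence through the continuous Freiman theorem so as to obtain the sharp $\rho^{-1}/\sqrt{n'}$ bound on $|Q|$. Generic Pl\"unnecke--Ruzsa yields only a polynomial loss in the doubling constant; one must carefully balance the Fourier concentration strength (governing $\delta$) against the number of tolerated exceptional indices ($n'$), using an averaging argument as in \cite{NgV} to pin down the exponent $1/2$ while simultaneously preserving the rank bound $r=O_{B,\ep}(1)$.
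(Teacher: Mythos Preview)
Your Fourier-analytic setup (the Esseen inequality, symmetrization, and the level-set pigeonholing on the exponent) matches the paper's approach and that of \cite{NgV} essentially verbatim. The divergence begins after you isolate the large resonant set $T_\delta$ in frequency space: you propose to apply Pl\"unnecke--Ruzsa and Freiman to $T_\delta$ itself, and then dualize to obtain the GAP $Q$ containing the $a_i$. The paper (following \cite{NgV}) does the reverse: it uses the large frequency set $S$ (a discretized version of your $T_\delta$) only as a \emph{witness}, and applies the Freiman-type theorem directly to the set $V$ of good coefficients $a_i$. The mechanism is that the $k$-fold sumset $kV$, with $k\asymp\sqrt{n'/m}$, remains ``near-orthogonal'' to the large set $S$ (via Cauchy--Schwarz on $\|\cdot\|_{\R/\Z}$), which caps $|kV|$ and feeds a Freiman--Ruzsa theorem applied to $V$. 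The $\sqrt{n'}$ saving then comes for free from the choice of $k$.

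Your route has a genuine gap at the dualization step. The ``dual GAP'' $(P^*)^\vee$ is not a standard object: the set of $a$ with $\langle t,a\rangle_\R$ near $\Z$ for all $t\in P^*$ is a Bohr set, not a GAP, and converting it to a GAP costs additional rank and volume factors that would destroy the sharp bound $|Q|\le O(\rho^{-1}/\sqrt{n'})$. Relatedly, the relation $\Vol(P^*)\cdot\Vol(Q)\asymp 1$ you invoke holds for lattices, not for GAPs of bounded rank inside $\R^{2d}$ --- there is no clean volume duality here. This is exactly the obstacle you flag at the end, and it is not merely technical: it is the reason \cite{NgV} abandoned the ``structure-on-frequencies-then-dualize'' scheme of earlier Tao--Vu papers in favor of applying Freiman directly to the coefficients. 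If you rework your argument so that Freiman is applied to $V=\{a_i:i\text{ good}\}$ after bounding $|kV|$ via near-orthogonality to $T_\delta$, the $\sqrt{n'}$ factor and the rational form of the generators fall out without any dualization.
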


\subsection{Random multilinear forms}
One can view the sum  $S_\xi(A) =a_1 x_1+\dots+a_n x_n$ as a linear function of the random variables $x_1,\dots, x_n$. It is natural to study 
general polynomials of higher degree.

Let $D$ be a fixed positive integer. Let $x_{1i_1},x_{2i_2},\dots,x_{Di_D}$, $1\le i_1,\dots,i_D \le n$, be iid copies of a random variable $\xi$, and let $A=(a_{i_1i_2 \dots i_D})_{1\le i_1,\dots,i_D\le n}$ be an $n^D$-array of complex numbers. We define the {\it $D$-multilinear concentration probability} of $A$ by
$$\rho_{\beta,\xi}(A):= \sup_{a \in \C, L_{D-1}}\P \Big( \sum_{1\le i_1,\dots,i_D\le n} a_{i_1i_2\dots i_D}x_{1i_1}x_{2i_2} \dots x_{Di_D} + L_{D-1}(\Bx_1,\Bx_2,\dots,\Bx_D)\in B(a,\beta) \Big),$$
where $\Bx_i=(x_{i1},\dots,x_{in})$ and $L_{D-1}(\Bx_1,\dots, \Bx_D)$ is any $(D-1)$-multilinear form of $(\Bx_1,\dots,\Bx_D)$.

We would like to characterize $A$ with large $\rho_{\beta,\xi}(A)$. The following examples serve as good candidates. 


\begin{example}\label{example:dlinear} In what follows $\xi$ has Bernoulli distribution and for each $a_{i_1i_2\dots i_D}$ there exists $q_{i_1i_2\dots i_D}$ such that $|a_{i_1i_2\dots i_D}-q_{i_1i_2\dots i_D}|\le \delta.$
\begin{enumerate}
\item  Let $Q$ be a proper symmetric GAP of rank $r=O(1)$ and size $n^{O(1)}$. Assume that the approximated values $q_{i_1i_2\dots i_D}$ belong to $Q$. Then, the pigeon-hole principle implies that $\sum_{i_1,\dots,i_D}q_{i_1i_2\dots i_D}x_{1i_1}\dots x_{Di_D}$ takes some value in $n^2Q$ with probability $n^{-O(1)}$. Passing back to $a_{i_1i_2\dots i_D}$, we obtain $\rho_{n^2\delta,\xi}(A) =n^{-O(1)}$. 


\item Assume that $q_{i_1i_2\dots i_D}$ can be written as $q_{i_1i_2\dots i_D}=k_{i_1}b_{\bar{i}_1i_2\dots i_D}+l_{i_2}b_{i_1\bar{i}_2\dots i_D}+\dots + m_{i_D} b_{i_1i_2\dots \bar{i}_D}$, where $b_{\bar{i}_1i_2\dots i_D},\ldots,b_{i_1i_2\dots \bar{i}_D}$ are arbitrary sequences in $\R^d$ without indices $i_1,\dots,i_D$ respectively, and $k_{i_1},l_{i_2},\dots, m_{i_D}$ are integers bounded by $n^{O(1)}$ such that 
\begin{align*}
&\P_{\Bx_1}(\sum_{i_1} k_{i_1}x_{1i_1}= 0)=n^{-O(1)},\dots, \\ 
&\P_{\Bx_D}(\sum_{i_D} m_{i_D}x_{Di_D}= 0)=n^{-O(1)}.
\end{align*}

Then, as $\sum_{i_1,i_2,\dots,i_D}q_{i_1i_2\dots i_D}x_{1i_1}x_{2i_2}\dots x_{Di_D}$ factors out, we have 
$$\P(\sum_{i_1,i_2,\dots,i_D}q_{i_1i_2\dots i_D}x_{1i_1}x_{2i_2}\dots x_{di_D} =\mathbf{0}) =n^{-O(1)}.$$ 
\noindent Passing back to $a_{i_1i_2\dots i_D}$, we hence obtain $\rho_{n^2\delta,\xi}(A) =n^{-O(1)}.$ 


\item  Assume that $q_{i_1i_2\dots i_D}=q_{i_1i_2\dots i_D}' +q_{i_1i_2\dots i_D}''$, where $q_{i_1i_2\dots i_D}'\in Q$, a proper symmetric GAP of rank $O(1)$ and size $n^{O(1)}$, and $q_{i_1i_2\dots i_D}''$ is a sum of a few forms from (2) in such a way that the linear factors are zero with high probability. As such, we have 

$$\sup_{q\in n^2Q}\P_{\Bx_1,\dots,\Bx_D}(\sum_{i_1,\dots,i_D}q_{i_1i_2\dots i_D}x_{1i_1}x_{2i_2}\dots x_{di_D} =q)=n^{-O(1)}.$$ 

Hence we also have  $\rho_{n^2\delta,\xi}(A) =n^{-O(1)}$ in this case. 
\end{enumerate}
\end{example}

The above examples demonstrate that if the $a_{i_1i_2\dots i_D}$ can be decomposed into additive and algebraic structural parts, then $\rho_{\xi,\beta}(A)$ is large. We conjecture that these are essentially the only cases that have large  concentration probability. 

\begin{conjecture}\label{conjecture:dlinear} Assume that  $\rho_{\xi,\beta}(A) \ge n^{-B}$ for some generic $\xi$ and small $\beta$, then most of the elements of $A$ can be $\beta$-approximated by a set of $q_{i_1i_2\dots i_D}$ from (3) of Example \ref{example:dlinear}.
\end{conjecture}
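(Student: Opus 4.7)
My plan is to proceed by induction on $D$, using Theorem \ref{theorem:linear} as the base case $D=1$. For the inductive step, the core idea is to condition on $\Bx_2,\ldots,\Bx_D$ so as to reduce the $D$-multilinear form to a random linear form in $\Bx_1$ whose coefficients are themselves $(D-1)$-multilinear in the conditioned variables. Explicitly, set
$$ c_{i_1}(\Bx_2,\ldots,\Bx_D) := \sum_{i_2,\ldots,i_D} a_{i_1 i_2 \ldots i_D}\, x_{2i_2}\cdots x_{Di_D}, $$
and split the auxiliary form $L_{D-1}$ into a part linear in $\Bx_1$ (to be absorbed into the $c_{i_1}$) and a part independent of $\Bx_1$ (which plays the role of the target shift $z$ in the linear small-ball problem).

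By Fubini and a standard Markov / pigeonhole argument, the hypothesis $\rho_{\beta,\xi}(A)\ge n^{-B}$ forces that on a set of conditioning outcomes $(\Bx_2,\ldots,\Bx_D)$ of probability at least $n^{-O(B)}$, the conditional small-ball probability of $\sum_{i_1} c_{i_1} x_{1i_1}$ is again at least $n^{-O(B)}$. For each such outcome, Theorem \ref{theorem:linear} produces a proper symmetric GAP $Q(\Bx_2,\ldots,\Bx_D)$ of rank $O(1)$, volume $n^{O(1)}$, and with generators of the controlled rational form prescribed there, such that all but $n^{\ep}$ of the $c_{i_1}$'s are $\beta$-close to it.

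Next I would run a quantization/pigeonhole step at the level of GAPs: after rounding the generators and dimensions to a $n^{-O(1)}$-net, the number of relevant GAPs is at most $n^{O(1)}$, so a single GAP $Q^{(1)}$ must arise for a positive-density subfamily of conditioning outcomes. This yields a \emph{uniform} structural statement: for many $(\Bx_2,\ldots,\Bx_D)$, most coordinates $c_{i_1}$ lie near a common $Q^{(1)}$. Fixing $i_1$'s in the majority set and re-conditioning, each coordinate function $c_{i_1}(\cdot)$ is a $(D-1)$-multilinear form whose conditional small-ball probability (with target inside $Q^{(1)}$) is $n^{-O(1)}$; the inductive hypothesis applied to $c_{i_1}$ then delivers, for each such $i_1$, a decomposition of the coefficient slice of the type described in item (2) of Example \ref{example:dlinear}, together with a GAP contribution coming from $Q^{(1)}$. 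Reassembling these data over $i_1$ produces the full decomposition (3).

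The main obstacle will be the \emph{coherent transfer} of structure through the induction: GAPs are not canonical objects, and the structural data produced by Theorem \ref{theorem:linear} on different conditioning slices need not be literally the same, only closely related. Making this step rigorous seems to require either a Freiman-type passage to a lattice-like representative that depends measurably on the slice, or a strengthening of the inverse theorem whose outputs compose cleanly under tensor products. A secondary difficulty is controlling the rank and size blow-up at each inductive layer, since a naive iteration produces constants that depend on $D$ in an uncontrolled way; the $L_{D-1}$ perturbation is fully adversarial and must be absorbed without spoiling the additive structure extracted at earlier levels. These obstructions are precisely why the statement is only conjectural beyond the linear case.
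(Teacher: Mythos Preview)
This statement is a \emph{conjecture}, and the paper does not prove it. Immediately after stating it, the authors write: ``Due to its nature, we believe that any justification of Conjecture~\ref{conjecture:dlinear} would be highly technical. In this note we prove a weak version of it as follows'' --- and then prove only Theorem~\ref{theorem:dlinear:weak}, which yields structure on one-dimensional slices $a_{j_1\cdots j_{D-1}i_D}$, $1\le i_D\le n$, rather than a full decomposition of type (3) from Example~\ref{example:dlinear}.

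Your proposed approach is essentially the strategy the paper uses to prove the weak Theorem~\ref{theorem:dlinear:weak}: condition on $\Bx_2,\ldots,\Bx_D$, use Fubini/Markov to find a large set of good conditioning outcomes (Lemma~\ref{lemma:roworthogonal}), apply Theorem~\ref{theorem:linear} to the resulting linear form, then pigeonhole over the bounded family of GAP parameters and spanning-index tuples to extract a common skeleton (Claim~\ref{claim:dlinear:common} and Lemma~\ref{lemma:reduction}), and iterate. The paper carries this through and obtains, for each fixed tuple $(i_1,\ldots,i_{D-1})$ in a large set, an integer relation among the slices $B_i$ that survives with polynomial probability; it does \emph{not} succeed in reassembling these slice-by-slice relations into a global decomposition $q_{i_1\cdots i_D}=q'+q''$ as in (3).

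You have correctly located the genuine gap. The ``coherent transfer'' problem you flag --- that the GAPs and integer coefficients produced at different conditioning outcomes and different indices $i_1$ need not be compatible, and that pigeonholing only aligns finitely many parameters at a time --- is exactly why the paper stops at the weak version. Your proposal does not close this gap, and neither does the paper; your final paragraph is an accurate diagnosis of why the full conjecture remains open.
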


Due to its nature, we believe that any justification of Conjecture \ref{conjecture:dlinear} would be highly technical. In this note we prove a weak version of it as follows.

\begin{theorem}[Weak inverse-type theorem for multilinear forms]\label{theorem:dlinear:weak}

Let $0 <\ep < 1$ and $C>0$. Let $ \beta >0$ be a parameter that may depend on $n$. Assume that  
$$\rho =\sup_{a,L_{D-1}} \P_{\Bx_1,\dots,\Bx_D}\Big(\sum_{i_1,i_2,\dots,i_D}a_{i_1i_2\dots i_D}x_{1i_1}x_{2i_2}\dots x_{Di_D} -L_{D-1}(\Bx_1,\dots,\Bx_D) \in B(a,\beta)\Big)\ge n^{-C},$$
where $\Bx_1=(x_{11},\dots,x_{1n}),\dots,\Bx_D=(x_{D1},\dots,x_{Dn})$, and $x_{1i_1},\dots,x_{Di_D}$ are iid copies of a random variable $\xi$ having bounded $(2+\eta)$-moment. Then there exist index sets $I_1,I_1^0$ with $|I_1|=n-n^\ep$ and $|I_1^0|=O_{C,\ep}(1)$ such that for any $i_1\in I_1$, there exist index sets $I_2,I_2^0$ depending on $i_1$ with $|I_2|=n-n^\ep$ and $|I_2^0|=O_{C,\ep}(1)$  such that \dots there exist index sets $I_{D-1},I_{D-1}^0$ depending on $i_1,\dots,i_{D-2}$ with  $|I_{D-1}|=n-n^\ep$ and $|I_{D-1}^0|=O_{C,\ep}(1)$  such that the following holds: for any $i_{D-1}\in I_{D-1}$, there exist integers $k_{j_1 \dots j_{D-1}}$, where each index $j_k$ with $1\le k\le D-1$ either takes value $i_k$ or belongs to the thin sets $I_k^0$, such that $k_{j_1 \dots j_{D-1}}=n^{O_{C,d,\ep}(1)}$ and $k_{i_1,\dots,i_{D-1}}\neq 0$, as well as 
$$\P_{\Bx_D}\Big(|\sum_{1\le i_D \le n}\sum_{j_1 \dots j_{D-1}}k_{j_1 \dots j_{D-1}}a_{j_1 \dots j_{D-1} i_D}x_{i_D}|\le \beta n^{O_{C,\ep}(1)}\Big)\ge n^{-O_{C,\ep}(1)}.$$
\end{theorem}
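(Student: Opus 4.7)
I proceed by induction on $D$, with the base case $D=2$ and the inductive step handled by the same ``peeling'' move. This peeling, applied to a $D$-multilinear concentration statement, produces the outermost sets $I_1, I_1^0$ together with a first batch of integer coefficients $\mu_0,\ldots,\mu_r$, and leaves a $(D-1)$-multilinear concentration statement to be fed back into the same argument. For $D=2$ the resulting ``$(D-1)$-form'' is already linear in $\Bx_2$ and supplies the stated conclusion directly; for $D>2$ one iterates, and the inductive hypothesis applied to the residual form produces the nested inner sets $I_2(i_1), I_2^0(i_1), \ldots, I_{D-1}(i_1,\ldots,i_{D-2}), I_{D-1}^0(i_1,\ldots,i_{D-2})$.

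\textbf{Peeling step.} Conditioning on $(\Bx_2,\ldots,\Bx_D)$ displays $T := \sum a_{i_1\ldots i_D} x_{1i_1}\cdots x_{Di_D} + L_{D-1}$ as a random linear form in $\Bx_1$,
$$T = \sum_{i_1} b_{i_1}(\Bx_2,\ldots,\Bx_D)\, x_{1i_1} + (\text{term independent of }\Bx_1),$$
where $b_{i_1}=\sum_{i_2,\ldots,i_D} a_{i_1 i_2\ldots i_D} x_{2i_2}\cdots x_{Di_D}$ plus a $(D-2)$-multilinear contribution from $L_{D-1}$. Fubini and Markov, applied to the hypothesis $\rho \ge n^{-C}$, yield a set $\Omega$ of $(\Bx_2,\ldots,\Bx_D)$-outcomes of measure $\ge n^{-C}/2$ on which the conditional small-ball probability over $\Bx_1$ is $\ge n^{-C}/2$. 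Applying Theorem \ref{theorem:linear} fiberwise produces, for each $\omega\in\Omega$, a proper symmetric GAP $Q_\omega$ of rank $r=O_{C,\ep}(1)$ and polynomial size such that all but $n^\ep$ of the $b_{i_1}(\omega)$ are $\beta$-close to $Q_\omega$. Choose control indices $J_\omega=(j_1^\omega,\ldots,j_r^\omega)$ whose $b$-values realize the full $\Z$-rank of $Q_\omega$ in the integer-coordinate embedding; then for any other $i_1$ in the good set, the $r+1$ vectors $b_{i_1}(\omega), b_{j_1^\omega}(\omega),\ldots,b_{j_r^\omega}(\omega)$ admit a nontrivial integer relation with polynomially bounded coefficients via Siegel's lemma,
$$\mu_0 b_{i_1}(\omega) + \sum_{s=1}^{r} \mu_s b_{j_s^\omega}(\omega) = O(\beta\, n^{O(1)}),$$
with $\mu_0\neq 0$ forced by the genericity of $J_\omega$. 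Since $J_\omega\in \{1,\ldots,n\}^r$ and $(\mu_0,\ldots,\mu_r)$ each range over $n^{O(1)}$ possibilities, a double pigeonhole fixes $I_1^0:=\{j_1^0,\ldots,j_r^0\}$ and $(\mu_0,\ldots,\mu_r)$ uniformly on a subset $\Omega^*\subseteq\Omega$ of measure $\ge n^{-O(C)}$. A final Fubini swap produces $I_1\subset\{1,\ldots,n\}$ with $|I_1|\ge n - n^\ep$ such that for each $i_1\in I_1$, the $(D-1)$-multilinear form
$$T'_{i_1}(\Bx_2,\ldots,\Bx_D) := \mu_0\, b_{i_1} + \sum_{s=1}^{r} \mu_s\, b_{j_s^0}$$
satisfies $\P(|T'_{i_1}|\le \beta\, n^{O(1)})\ge n^{-O(C)}$.

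\textbf{Induction and the main obstacle.} Since $T'_{i_1}$ is a $(D-1)$-multilinear form in $\Bx_2,\ldots,\Bx_D$ (the residual $(D-2)$-multilinear piece from $L_{D-1}$ absorbs into the $L_{D-2}$ slot of the inductive statement), the inductive hypothesis applied to $T'_{i_1}$ delivers the inner nested sets together with integers $k^{(i_1)}_{j_2\ldots j_{D-1}}$. Combining gives the required global integers $k_{i_1 j_2\ldots j_{D-1}} := \mu_0 k^{(i_1)}_{j_2\ldots j_{D-1}}$ and $k_{j_s^0 j_2\ldots j_{D-1}} := \mu_s k^{(i_1)}_{j_2\ldots j_{D-1}}$ (zero otherwise); non-vanishing of $k_{i_1\ldots i_{D-1}}$ follows from $\mu_0\neq 0$ combined with the inductive non-vanishing of $k^{(i_1)}_{i_2\ldots i_{D-1}}$. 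The step I expect to be the main obstacle is guaranteeing $\mu_0\neq 0$: the controls $J_\omega$ must be chosen generically so that their $b$-values actually span the lattice image of $Q_\omega$, otherwise the Siegel relation could be supported entirely on the controls and carry no information about $b_{i_1}(\omega)$; handling the case where the lattice rank of $Q_\omega$ drops or varies with $\omega$ will require an extra pigeonhole on the rank value itself. A secondary technical point is bookkeeping on the constants: each peeling layer inflates the concentration exponent by an additive $O(1)$ via the double pigeonhole, so one must verify that after $D-1$ layers the final bounds remain of the form $O_{C,D,\ep}(1)$.
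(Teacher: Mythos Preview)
Your approach is essentially the same as the paper's: condition on $(\Bx_2,\ldots,\Bx_D)$, apply the linear inverse Littlewood--Offord theorem fiberwise, pigeonhole on the spanning/control indices to fix $I_1^0$, extract an integer relation between $b_{i_1}$ and the control values, and iterate on the resulting $(D-1)$-form. Your identification of the main obstacle (ensuring the leading coefficient $\mu_0\neq 0$ via genericity of the controls) is exactly right.

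There is, however, an ordering error in your pigeonhole/Fubini chain. You propose to fix $I_1^0$ \emph{and} $(\mu_0,\ldots,\mu_r)$ simultaneously on a single $\Omega^*$, and only afterwards Fubini to produce $I_1$. This cannot work as stated: the coefficients $(\mu_0,\ldots,\mu_r)$ necessarily depend on the particular $i_1$ being tested, so they cannot be fixed before $i_1$ is chosen. The correct order, which is what the paper does, is: (i) pigeonhole on $J_\omega$ alone to fix $I_1^0$ and pass to $\Omega'\subset\Omega$; (ii) average over $\Omega'$ to obtain $I_1$, the set of indices $i_1$ that lie in $I_\omega$ for a positive fraction of $\omega\in\Omega'$; (iii) for each fixed $i_1\in I_1$, pigeonhole on the coefficient tuple to pass to $\Omega^*_{i_1}\subset\Omega'$. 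The paper also sharpens your Siegel step by an explicit Cramer's-rule argument: it first pigeonholes on the integer coordinate vectors of the spanning $q_{j_s}$ inside $Q_\omega$, so that the leading coefficient becomes the determinant $k$ of this fixed coordinate matrix, automatically nonzero and, crucially, independent of $i_1$. This makes the non-vanishing of $\mu_0$ automatic and removes any need to pigeonhole on the rank separately.
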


Notice that while in Example \ref{example:dlinear} most of the $D-1$ dimensional arrays of $A$ have structure, Theorem \ref{theorem:dlinear:weak} just asserts that most of the 1-dimension arrays $a_{j_1 \dots j_{D-1} i_D}, 1\le i_D\le n$, with fixed $j_1,\dots,j_{D-1}$, have structure.

For the rest of this section we give a proof of Theorem \ref{theorem:dlinear:weak}. Our argument heavily relies on the following simple fact about generalized arithmetic progressions of small rank.

\begin{fact}\label{fact:GAP}
Assume that $q_1,\dots,q_{r+1}$ are elements of a GAP of rank $r$ and of cardinality $n^C$, then there exists integer coefficients $\alpha_1,\dots,\alpha_r$ with $|\alpha_i|\le n^{rC}$ and such that
$$\sum_i \alpha_i q_i =0.$$
\end{fact}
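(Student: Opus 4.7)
The plan is to view each $q_j$ in its GAP coordinates and exploit the pigeonhole fact that $r+1$ vectors in an $r$-dimensional lattice must admit a non-trivial integer linear relation. In the setting the fact is actually used (Theorem \ref{theorem:linear}), the GAP is proper and symmetric, so I will assume that WLOG; hence every element $q\in Q$ has a unique representation $q=\sum_{i=1}^{r} k_i g_i$ with $k_i\in\Z$, $|k_i|\le K_i$, and $\prod_i (2K_i+1)=|Q|=n^C$.

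Writing $q_j=\sum_{i=1}^{r} k_i^{(j)} g_i$, I associate to each $q_j$ the integer vector $v_j:=(k_1^{(j)},\dots,k_r^{(j)})\in \Z^r$. Since there are $r+1$ such vectors in an $r$-dimensional $\Q$-vector space, they are $\Q$-linearly dependent, and by clearing denominators I obtain a non-trivial integer relation $\sum_{j=1}^{r+1}\alpha_j v_j=\mathbf{0}$. This immediately gives
$$\sum_{j=1}^{r+1}\alpha_j q_j \;=\;\sum_{i=1}^{r}\Bigl(\sum_{j=1}^{r+1}\alpha_j k_i^{(j)}\Bigr) g_i \;=\;0,$$
as required.

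To get the size bound on the $\alpha_j$, I would take the explicit choice coming from Cramer's rule: set $\alpha_j:=(-1)^j \det M_j$, where $M_j$ is the $r\times r$ matrix obtained from the $(r+1)\times r$ matrix with rows $v_1^\top,\dots,v_{r+1}^\top$ by deleting the $j$-th row. From properness, $|k_i^{(j)}|\le K_i\le n^C$, so Hadamard's inequality yields $|\alpha_j|=|\det M_j|\le r^{r/2}(n^C)^r$. Since $r$ is a fixed constant ($r=O_{B,\ep}(1)$ in every application), the factor $r^{r/2}$ is absorbed and we obtain the claimed bound $|\alpha_j|\le n^{rC}$ for $n$ large.

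There is no substantial obstacle here; this is essentially a one-line linear-algebra observation. The only care needed is the reduction to a proper symmetric GAP (trivial, since Theorem \ref{theorem:linear} produces $Q$ in exactly that form) and the minor bookkeeping of constants when passing from Hadamard's bound $r^{r/2}(n^C)^r$ to the stated $n^{rC}$.
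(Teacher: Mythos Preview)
The paper does not prove Fact~\ref{fact:GAP}; it is stated as a ``simple fact'' and used without justification. Your argument --- pass to GAP coordinates and use linear dependence of $r+1$ integer vectors in $\Z^r$ --- is exactly the intended one-line observation, and is correct in outline.

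There is one small gap in the size-bound step. Your explicit choice $\alpha_j:=(-1)^j\det M_j$ is always a valid integer relation, but it can be the \emph{trivial} relation: if the coordinate matrix with rows $v_1,\dots,v_{r+1}$ has rank $s<r$, then every $r\times r$ minor vanishes and all your $\alpha_j$ are zero. Your first paragraph (existence of a nontrivial relation by clearing denominators) and your second paragraph (Cramer construction) are therefore not yet linked. The fix is immediate: pick a maximal linearly independent subset, say $v_1,\dots,v_s$ after reordering, choose $s$ column indices on which they remain independent, and apply the Cramer construction to the resulting $(s{+}1)\times s$ submatrix. The top $s\times s$ block is then invertible, so $\alpha_{s+1}\neq 0$, and uniqueness of the rational expansion of $v_{s+1}$ in terms of $v_1,\dots,v_s$ forces the relation to hold in all $r$ coordinates, not just the chosen $s$. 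Setting $\alpha_j=0$ for $j>s+1$ finishes it.

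As a bonus, this refinement also dissolves your bookkeeping worry about the factor $r^{r/2}$: each column $i$ of the coordinate matrix has entries bounded by $K_i$, so the Leibniz expansion gives $|\alpha_j|\le s!\prod_i K_i\le r!\,|Q|=r!\,n^{C}$, which is comfortably at most $n^{rC}$ for all $n\ge 2$ and $r\ge 1$. Hadamard's inequality is not needed.
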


\subsection{Proof of Theorem \ref{theorem:dlinear:weak}}

Without loss of generality, we assume that $\xi$ has discrete distribution. The continuous case can be easily extended by a standard limiting argument. We begin by applying Theorem \ref{theorem:linear}.

\begin{lemma}\label{lemma:roworthogonal} 
Let $\ep<1$ and $C$ be positive constants. Assume that $\rho_{\xi,\beta}(A)=\rho \ge n^{-C}$. Then the following holds with probability at least $\frac{3\rho}{4}$ with respect to $\Bx_2,\dots, \Bx_D$. There exist a proper symmetric GAP $Q_{\Bx_2\dots \Bx_D}$ of rank $O_{C,\ep}(1)$ and size $O_{C,\ep}(1/\rho)$ and a set $I_{\Bx_2,\dots, \Bx_D}$ of $n-n^\ep$ indices such that for each $i\in I_{\Bx_2,\dots,\Bx_D}$, there exists $q_i\in Q_{\Bx_2,\dots,\Bx_D}$ so that
$$|\sum_{i_2,\dots, i_D}a_{ii_2\dots i_D}x_{2i_2}\dots x_{Di_D}-q_i| \le \beta.$$
\end{lemma}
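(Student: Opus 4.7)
The plan is to reduce the multilinear anti-concentration hypothesis to a linear one by conditioning on the last $D-1$ vectors $\Bx_2,\ldots,\Bx_D$ and then invoking the inverse Littlewood-Offord theorem (Theorem \ref{theorem:linear}). First I would fix a near-maximizer $(a,L_{D-1})$ of the supremum defining $\rho$, and view $P(\Bx_1,\ldots,\Bx_D):=\sum a_{i_1\ldots i_D} x_{1i_1}\cdots x_{Di_D}-L_{D-1}(\Bx_1,\ldots,\Bx_D)$ as a polynomial in $\Bx_1$. Since both the $D$-multilinear main term and the $(D-1)$-multilinear $L_{D-1}$ are linear in each $\Bx_k$, this polynomial has the form
\[
P \;=\; \sum_{i_1} c_{i_1}(\Bx_2,\ldots,\Bx_D)\, x_{1 i_1} \;+\; \kappa(\Bx_2,\ldots,\Bx_D),
\]
where, after a preliminary reduction that arranges for $L_{D-1}$ to carry no $\Bx_1$-dependence, one has $c_{i_1}(\Bx_2,\ldots,\Bx_D) = \sum_{i_2,\ldots,i_D} a_{i_1 i_2\ldots i_D}\, x_{2 i_2}\cdots x_{D i_D}$, which is exactly the quantity appearing in the lemma's conclusion.

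Next, set $p(\Bx_2,\ldots,\Bx_D) := \P_{\Bx_1}\!\bigl(P\in B(a,\beta)\mid \Bx_2,\ldots,\Bx_D\bigr)$. The hypothesis gives $\E[p]\ge \rho$, and since $0\le p\le 1$ a routine first-moment argument ($\E[p]\le \Pr(p\ge \rho/4)+\rho/4$) yields $\Pr(p\ge \rho/4) \ge 3\rho/4$. On the event $\{p\ge \rho/4\}$, normalize the coefficients by $S:=(\sum_{i_1}|c_{i_1}|^2)^{1/2}$, apply Theorem \ref{theorem:linear} to $\tilde c_{i_1}:=c_{i_1}/S$ with $B:=C+1$, parameter $n':=n^\ep$, and ball radius $\beta/S$. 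Since $\rho/4\ge n^{-B}$ for large $n$, the theorem furnishes a proper symmetric GAP $Q_0$ of rank $O_{C,\ep}(1)$ and cardinality at most $O_{C,\ep}(\rho^{-1}/\sqrt{n^\ep})=O_{C,\ep}(\rho^{-1})$, together with an index set of size $\ge n-n^\ep$ on which $\tilde c_{i_1}$ is $(\beta/S)$-close to $Q_0$. Dilating by $S$ produces the required GAP $Q_{\Bx_2,\ldots,\Bx_D}:=S\cdot Q_0$, of the same rank and size, to which $n-n^\ep$ of the $c_{i_1}$'s are $\beta$-close.

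The main obstacle is the preliminary reduction to $L_{D-1}$ not involving $\Bx_1$. This is needed so that the coefficients $c_{i_1}$ in the conditional linear form coincide with the quantities $\sum_{i_2,\ldots,i_D} a_{i i_2\ldots i_D}\, x_{2 i_2}\cdots x_{D i_D}$ featured in the lemma, rather than those quantities corrected by the coefficient of $x_{1 i_1}$ in $L_{D-1}$. A generic $(D-1)$-multilinear form $L_{D-1}$ does contain monomials of total degree $D-1$ that involve $\Bx_1$, and these cannot simply be absorbed into the degree-$D$ array $(a_{i_1\ldots i_D})$. One expects to handle this by splitting $L_{D-1}$ into its $\Bx_1$-dependent and $\Bx_1$-independent pieces, viewing the former as a conditional deterministic shift (which is harmless, since the ball radius in Theorem \ref{theorem:linear} already allows an arbitrary centering) and noting that the latter can be folded directly into $\kappa$. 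With this reduction in place the three steps above assemble into the conclusion.
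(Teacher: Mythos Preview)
Your approach --- condition on $\Bx_2,\ldots,\Bx_D$, use the first-moment bound $\E[p]\le\Pr(p\ge\rho/4)+\rho/4$ to get $\Pr(p\ge\rho/4)\ge 3\rho/4$, then invoke Theorem~\ref{theorem:linear} on the conditional linear form with $n'=n^\ep$ --- is exactly the paper's argument; their averaging computation is the same inequality written out longhand.

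The one place your write-up goes wrong is the final paragraph. After conditioning on $\Bx_2,\ldots,\Bx_D$, the $\Bx_1$-dependent part of $L_{D-1}$ becomes $\sum_i \ell_i(\Bx_2,\ldots,\Bx_D)\,x_{1i}$: a \emph{linear} form in $\Bx_1$ with now-deterministic coefficients $\ell_i$. This is not a shift; it changes the coefficients of the conditional linear form from $B_i$ to $c_i=B_i+\ell_i$. The arbitrary centering in Theorem~\ref{theorem:linear} absorbs only the additive constant $\kappa$, not perturbations of the coefficients, so what Theorem~\ref{theorem:linear} actually delivers is a GAP $\beta$-close to the $c_i$'s rather than to the $B_i$'s. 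Your ``harmless shift'' justification does not close this gap.

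That said, the paper's own proof elides precisely the same point (it writes the good-tuple condition in terms of $\sum_i x_{1i}B_i$ alone, silently dropping $L_{D-1}$). The honest reading is that the lemma produces GAP structure for the full conditional coefficients $c_i$; this is all that Lemma~\ref{lemma:reduction} and the subsequent recursion actually use, since the discrepancy $\ell_i$ is of lower multilinear degree in $\Bx_2,\ldots,\Bx_D$ and is absorbed into the next-stage lower-order term. So your proof is correct once you either state the conclusion for the $c_i$'s, or restrict the hypothesis to $L_{D-1}$ free of $\Bx_1$ --- which is the form in which the lemma is in fact invoked throughout the paper.
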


\begin{proof}(of Lemma \ref{lemma:roworthogonal})
For short we write
$$\sum_{i_1,i_2,\dots,i_D} a_{i_1i_2\dots i_D}x_{1i_1}x_{2i_2}\dots x_{Di_D} = \sum_{i=1}^n x_{1i} B_i(\Bx_2,\dots,\Bx_D),$$
where 
$$B_i(\Bx_2,\dots,\Bx_D):=\sum_{i_2,\dots,i_D}a_{ii_2\dots i_D}x_{2i_2}\dots x_{Di_D}.$$
We call a vector tuple $(\Bx_2,\dots,\Bx_D)$ {\it good} if
$$\P_{\Bx_1}(|\sum_{i=1}^n x_{1i}B_i(\Bx_2,\dots,\Bx_D)\in B(a,\beta)|)\ge \rho/4.$$
We call $\Bx_2,\dots,\Bx_D$ {\it bad} otherwise. Let $G$ be the collection of good tuples.

First, we estimate the probability $p$ of randomly chosen vectors $(\Bx_2,\dots,\Bx_D)$ being bad by an averaging method.
\begin{align*}
\P_{\Bx_2,\dots,\Bx_D} \P_{\Bx_1} (\sum_{i=1}^n x_{1i}B_i(\Bx_2,\dots,\Bx_D)\in B(a,\beta))&=\rho\\
p \rho/4 + 1-p &\ge \rho.\\
(1-\rho)/(1-\rho/4) &\ge p.
\end{align*}

Thus, the probability of randomly chosen vectors $(\Bx_2,\dots,\Bx_D)$ being good is at least 
$$1-p \ge (3\rho/4)/(1-\rho/4) \ge 3\rho/4.$$
Next, we consider good vectors $(\Bx_2,\dots,\Bx_D)\in G$. By definition, we have
$$\P_{\Bx_1} (\sum_{i=1}^n x_{1i} B_i(\Bx_2,\dots,\Bx_D) \in B(a,\beta)) \ge \rho/4 .$$
Observe that this is a high concentration of a linear form of $x_{1i}$. A direct application of Theorem \ref{theorem:linear} to the sequence $B_i(\Bx_2,\dots,\Bx_D)$, $i=1,\dots,n$ yields the desired result.
\end{proof}

By a useful property of GAP containment (see for instance \cite[Section 8]{TVsing} and \cite[Theorem 6.1]{Ng}), we may assume that the $q_i(\Bx_2,\dots,\Bx_D)$ span $Q_{\Bx_2,\dots,\Bx_D}$. From now on we fix such a $Q_{\Bx_2,\dots,\Bx_D}$ for each $\Bx_2,\dots,\Bx_D$. Let $G$ be the collection of good vectors $(\Bx_2,\dots,\Bx_D)$. Thus, 
\begin{equation}
\P_{\Bx_2,\dots,\Bx_D}((\Bx_2,\dots,\Bx_D)\in G)\ge 3\rho/4.
\end{equation}

Now we state our crucial lemma for the proof of Theorem \ref{theorem:dlinear:weak}.

\begin{lemma}\label{lemma:reduction} There exits an index set $I$ of size at least $n-2n^\ep$, an index set $I_0$ of size $O_{C,\ep}(1)$, and an integer $k\neq 0$ with $|k|\le n^{O_{C,\ep}(1)}$ such that for any index $i$ from $I$, there are numbers $k_{ii_0} \in \Z, i_0\in I_0$, all bounded by $n^{O_{C,\ep}(1)}$, such that 
$$\P_{\Bx_2,\dots,\Bx_D}\Big(k B_i(\Bx_2,\dots,\Bx_D)+ \sum_{i_0\in I_0} k_{ii_0} B_{i_0}(\Bx_2,\dots,\Bx_D)\in B(a,\beta)\Big)=\rho/n^{O_{C,\ep}(1)}.$$
\end{lemma}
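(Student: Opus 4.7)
The strategy combines the GAP structure provided by Lemma \ref{lemma:roworthogonal} with integer dependencies in low-rank GAPs, applying pigeonhole in two stages---once to fix the set $I_0$, once to fix the integer coefficients---each at a cost of a polynomial factor in probability. Starting from Lemma \ref{lemma:roworthogonal} together with the spanning reduction recalled immediately after it, we have a set $G$ of good tuples with $\P(G)\ge 3\rho/4$ and, for each $\Bx=(\Bx_2,\dots,\Bx_D)\in G$, a proper symmetric GAP $Q_\Bx$ of rank $r=O_{C,\ep}(1)$ and size $n^{O_{C,\ep}(1)}$, together with an index set $I_\Bx$ of size at least $n-n^\ep$ such that $|B_i(\Bx)-q_i(\Bx)|\le\beta$ for each $i\in I_\Bx$, and with $\{q_i(\Bx):i\in I_\Bx\}$ generating $Q_\Bx$.

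To \emph{fix $I_0$}, for each $\Bx\in G$ choose $r$ indices $j_1(\Bx),\dots,j_r(\Bx)$ in $I_\Bx$ so that $(q_{j_s}(\Bx))_{s=1}^r$ is a $\Z$-basis of the lattice generated by $Q_\Bx$; this is possible because any generating set of a rank-$r$ free $\Z$-module contains such a basis. There are only $n^r=n^{O_{C,\ep}(1)}$ possible ordered tuples, so pigeonhole produces a fixed $I_0=\{j_1,\dots,j_r\}$ and a sub-collection $G_0\subseteq G$ with $\P(G_0)\ge \rho\cdot n^{-O_{C,\ep}(1)}$ on which this fixed $I_0$ plays the role of the basis. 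A Markov-style averaging over indices then yields a set $I\subseteq\{1,\dots,n\}\setminus I_0$ of size at least $n-2n^\ep$ such that $\P(\Bx\in G_0,\ i\in I_\Bx)\ge \tfrac12 \P(G_0)$ for every $i\in I$.

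Now fix $i\in I$. For each such $\Bx$, there is a unique integer expansion $q_i(\Bx)=\sum_{s=1}^{r} c_s(i,\Bx)\, q_{j_s}(\Bx)$. The key quantitative point is that $|c_s(i,\Bx)|\le n^{O_{C,\ep}(1)}$: this follows from Cramer's rule applied to the $\Z$-invertible change-of-basis matrix between the formal generators of $Q_\Bx$ and $(q_{j_s}(\Bx))_{s=1}^r$, whose entries are bounded by the polynomial GAP dimensions. The tuple $(c_s(i,\Bx))_{s=1}^r$ then takes only $n^{O_{C,\ep}(1)}$ values, and a second pigeonhole fixes a specific $(c_1^*(i),\dots,c_r^*(i))$ realized on an event of probability $\rho\cdot n^{-O_{C,\ep}(1)}$. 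Taking $k=1$ and $k_{ij_s}=-c_s^*(i)$, the triangle inequality together with $|B_i-q_i|,|B_{j_s}-q_{j_s}|\le\beta$ yields
$$\Big|B_i(\Bx)-\sum_{s=1}^r c_s^*(i)\, B_{j_s}(\Bx)\Big| \le \Big(1+\sum_s|c_s^*(i)|\Big)\beta \le n^{O_{C,\ep}(1)}\beta$$
on this event, which is the desired conclusion (the polynomial factor on $\beta$ being absorbed into the notation, as elsewhere in the paper).

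The main technical obstacle is the polynomial bound on the change-of-basis coefficients $c_s(i,\Bx)$: this relies on the existence of a genuine $\Z$-basis among the $q_i(\Bx)$'s, which in turn depends on the properness and polynomially-bounded dimensions of $Q_\Bx$ together with the spanning reduction. Once that bound is in place, the rest of the argument reduces to two clean rounds of pigeonhole followed by the triangle inequality.
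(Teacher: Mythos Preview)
Your argument contains a genuine gap at the point where you extract a $\Z$-basis. The assertion ``any generating set of a rank-$r$ free $\Z$-module contains such a basis'' is false: for instance, $\{(2,0),(3,0),(0,1)\}$ generates $\Z^2$, yet no two of these vectors form a basis (the three $2\times 2$ minors are $0$, $2$, $3$). In the present setting, even granting the spanning reduction so that the coordinate vectors $\{(c_{i1}(\Bx),\dots,c_{ir}(\Bx)):i\in I_\Bx\}$ generate $\Z^r$, there is no reason an $r$-element subset should have determinant $\pm 1$. Consequently you cannot take $k=1$, and the integer expansion $q_i=\sum_s c_s\, q_{j_s}$ with bounded integer $c_s$ need not exist.

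The repair is exactly what the paper does: settle for $r$ indices whose coordinate vectors are merely $\mathbb{Q}$-linearly independent, so that Cramer's rule yields $k\, q_i(\Bx)+\sum_s k_s(i,\Bx)\,q_{j_s}(\Bx)=0$ with $k=\det(c_{j_s t})_{s,t}$ a nonzero integer and the $k_s$ signed minors, all bounded by $n^{O_{C,\ep}(1)}$. The new difficulty is that $k$ now depends on $\Bx$ through the matrix $(c_{j_s t}(\Bx))$. Your two rounds of pigeonhole do not control this: after fixing $I_0$ you pigeonhole only on $(c_s(i,\Bx))_s$ for each $i$ separately, which would allow $k$ to vary with $i$. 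One must insert an additional pigeonhole, \emph{before} passing to any individual $i$, on the entire $r\times r$ coordinate matrix $(c_{j_s t}(\Bx))$ of the $I_0$-elements (this is Claim~\ref{claim:dlinear:common} in the paper). That freezes $k$ once and for all; only then does one average to produce $I$ and perform the final per-$i$ pigeonhole on the minors $k_s$. With this correction the rest of your outline goes through.
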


\begin{proof}(of Lemma \ref{lemma:reduction}) 
For each $(\Bx_2,\dots,\Bx_D)\in G$, we choose from $I_{\Bx_2,\dots,\Bx_D}$ $s$ indices $i_{(1,\Bx_2,\dots,\Bx_D)},\dots,i_{{(s,\Bx_2,\dots,\Bx_D)}}$ such that $q_{i_{{(j,\Bx_2,\dots,\Bx_D)}}}(\Bx_2,\dots,\Bx_D)$ span $Q_{\Bx_2,\dots,\Bx_D}$, where $s$ is the rank of $Q_{\Bx_2,\dots,\Bx_D}$. We note that $s=O_{C,\ep}(1)$ for all $(\Bx_2,\dots,\Bx_D)\in G$. 

Consider the tuples $(i_{{(1,\Bx_2,\dots,\Bx_D)}},\dots,i_{{(s,\Bx_2,\dots,\Bx_D)}})$ for all $(\Bx_2,\dots,\Bx_D)\in G$. Because there are $\sum_{s} O_{C,\ep,\mu}(n^s) = n^{O_{C,\ep,\mu}(1)}$ possibilities these tuples can take, there exists a tuple, say $(1,\dots,r)$ (by rearranging the rows of $A$ if needed), such that $(i_{{(1,\Bx_2,\dots,\Bx_D)}},\dots,i_{{(s,\Bx_2,\dots,\Bx_D)}})=(1,\dots,r)$ for all $(\Bx_2,\dots,\Bx_D)\in G'$, where $G'$ is a subset of $G$ satisfying 
\begin{equation}
\P_{\Bx_2,\dots,\Bx_D}((\Bx_2,\dots,\Bx_D)\in G')\ge \P_{\Bx_2,\dots,\Bx_D}((\Bx_2,\dots,\Bx_D)\in G)/n^{O_{C,\ep}(1)} =\rho/n^{O_{C,\ep}(1)}.
\end{equation}

For each $1\le i\le r$, we express $q_i(\Bx_2,\dots,\Bx_D)$ in terms of the generators of $Q_{\Bx_2,\dots,\Bx_D}$ for each $(\Bx_2,\dots,\Bx_D)\in G'$, 
$$q_{i}(\Bx_2,\dots,\Bx_D) = c_{i1}(\Bx_2,\dots,\Bx_D)g_{1}(\Bx_2,\dots,\Bx_D)+\dots + c_{ir}(\Bx_2,\dots,\Bx_D)g_{r}(\Bx_2,\dots,\Bx_D),$$ 
where $c_{i1}(\Bx_2,\dots,\Bx_D),\dots c_{ir}(\Bx_2,\dots,\Bx_D)$ are integers bounded by $n^{O_{C,\ep}(1)}$, and $g_{i}(\Bx_2,\dots,\Bx_D)$ are the generators of $Q_{\Bx_2,\dots,\Bx_D}$.

We show that there are many $(\Bx_2,\dots,\Bx_D)$ that correspond to the same coefficients $c_{i_1i_2}$. 

\begin{claim}\label{claim:dlinear:common} There exists a (``dense'') subset  $G''\subset G'$ such that the following holds

\begin{itemize}
\item  $\P_{\Bx_2,\dots,\Bx_D}((\Bx_2,\dots,\Bx_D)\in G'')\ge \P_{\Bx_2,\dots,\Bx_D}((\Bx_2,\dots,\Bx_D)\in G')/n^{O_{C,\ep}(1)} \ge \rho/n^{O_{C,\ep}(1)};$
\item (common tuples) there exist $r$ tuples  $(c_{11},\dots,c_{1r}),\dots, (c_{r1},\dots c_{rr})$, whose components are integers bounded by $n^{O_{C,\ep,\mu}(1)}$, such that the following hold for all $(\Bx_2,\dots,\Bx_D)\in G''$:
\begin{enumerate}
\item $q_{i}(\Bx_2,\dots,\Bx_D) = c_{i1}g_{1}(\Bx_2,\dots,\Bx_D)+\dots + c_{ir}g_{r}(\Bx_2,\dots,\Bx_D)$, for $i=1,\dots,r$;
\item The vectors  $(c_{11},\dots,c_{1r}),\dots, (c_{r1},\dots c_{rr})$ span $\Z^{\rank(Q_{\Bx_2,\dots,\Bx_D})}$.
\end{enumerate} 
\end{itemize}
\end{claim}

\begin{proof}(of Claim \ref{claim:dlinear:common})
Consider the collection $\mathcal{C}$ of the coefficient-tuples 
$$\mathcal{C}:=\Big\{\Big(\big(c_{11}(\Bx_2,\dots,\Bx_D),\dots,c_{1r}(\Bx_2,\dots,\Bx_D)\big);\dots; \big(c_{r1}(\dots),\dots c_{rr}(\dots)\big)\Big), (\Bx_2,\dots,\Bx_D)\in G'\Big\} .$$ 
Because the number of possibilities these tuples can take is at most $(n^{O_{C,\ep}(1)})^{r^2} =n^{O_{C,\ep}(1)}$, by the pigeon-hole principle there exists a coefficient-tuple, say  $\Big((c_{11},\dots,c_{1r}),\dots, (c_{r1},\dots c_{rr})\Big)\in \mathcal{C}$, such that  
\begin{align*}
&\quad \Big(\big(c_{11}(\Bx_2,\dots,\Bx_D),\dots,c_{1r}(\Bx_2,\dots,\Bx_D)\big);\dots; \big(c_{r1}(\Bx_2,\dots,\Bx_D),\dots c_{rr}(\Bx_2,\dots,\Bx_D)\big)\Big)\\ 
&=\Big((c_{11},\dots,c_{1r}),\dots, (c_{r1},\dots c_{rr})\Big)
\end{align*}
for all $(\Bx_2,\dots,\Bx_D)$ from a subset $G''$ of $G'$ which satisfies 
\begin{equation}
  \P_{\Bx_2,\dots,\Bx_D}((\Bx_2,\dots,\Bx_D)\in G'')\ge \P_{\Bx_2,\dots,\Bx_D}((\Bx_2,\dots,\Bx_D)\in G')/n^{O_{C,\ep}(1)} \ge \rho/n^{O_{C,\ep}(1)}.
\end{equation}
\end{proof}

Now we focus on the elements of $G''$. Because $|I_{\Bx_2,\dots,\Bx_D}|\ge n-n^\ep$ for each $(\Bx_2,\dots,\Bx_D)\in G''$, we obtain the following.

\begin{claim}\label{claim:dlinear:I}
There is a set $I$ of size $n-3n^\ep$ such that $I \cap \{1,\dots,r\} =\emptyset$ and for each $i\in I$ we have 
\begin{equation}\label{eqn:optional}
\P_{\Bx_2,\dots,\Bx_D}(i\in I_{\Bx_2,\dots,\Bx_D}, (\Bx_2,\dots,\Bx_D)\in G'') \ge \P_{\Bx_2,\dots,\Bx_D}((\Bx_2,\dots,\Bx_D)\in G'')/2.
\end{equation}  
\end{claim}

\begin{proof}(of Claim \ref{claim:dlinear:I})
The result easily follows by an elementary averaging argument, 
\end{proof}


{\bf Lemma \ref{lemma:reduction}: proof conclusion.} Now we fix an arbitrary index $i$ from $I$. We concentrate on those $(\Bx_2,\dots,\Bx_D)\in G''$ where the index $i$ belongs to $I_{\Bx_2,\dots,\Bx_D}$. Because $q_{i}(\Bx_2,\dots,\Bx_D) \in Q_{\Bx_2,\dots,\Bx_D}$, we can write 
$$q_{i}(\Bx_2,\dots,\Bx_D)= c_{1}(\Bx_2,\dots,\Bx_D)g_{1}(\Bx_2,\dots,\Bx_D)+\dots c_{r}(\Bx_2,\dots,\Bx_D)g_{r}(\Bx_2,\dots,\Bx_D),$$ 
where $c_{1}(\Bx_2,\dots,\Bx_D),\dots,c_r(\Bx_2,\dots,\Bx_D)$ are integers bounded by $n^{O_{C,\ep}(1)}$.

For short, we denote by $\mathbf{v}_{i,\Bx_2,\dots,\Bx_D}$ the vector $(c_{1}(\Bx_2,\dots,\Bx_D),\dots c_{r}(\Bx_2,\dots,\Bx_D))$, we also use the shorthand $\Bv_j$ for the vectors $(c_{j1},\dots,c_{jr})$ obtained from Claim \ref{claim:dlinear:common}. 

Because $Q_{\Bx_2,\dots,\Bx_D}$ is spanned by $q_{1}(\Bx_2,\dots,\Bx_D),\dots, q_{r}(\Bx_2,\dots,\Bx_D)$, we must have $k:=\det(\mathbf{v}_1,\dots \mathbf{v}_r)\neq 0$ and that
\begin{align*}
&k q_i(\Bx_2,\dots,\Bx_D) + \det(\mathbf{v}_{i,y},\mathbf{v}_2,\dots,\mathbf{v}_r)q_{1}(\Bx_2,\dots,\Bx_D)+\dots \\
&+ \det(\mathbf{v}_{i,y},\mathbf{v}_1,\dots,\mathbf{v}_{r-1})q_{r}(\Bx_2,\dots,\Bx_D)=0. 
\end{align*}

Furthermore, because each coefficient of the identity above is bounded by $n^{O_{C,\ep,\mu}(1)}$, there exists a subset $G_{i}''$ of $G''$ such that all $(\Bx_2,\dots,\Bx_D)\in G_{i}''$ correspond to the same identity, and
\begin{align*}
\P_{(\Bx_2,\dots,\Bx_D)}((\Bx_2,\dots,\Bx_D)\in G_{i}'') &\ge (\P_{\Bx_2,\dots,\Bx_D}((\Bx_2,\dots,\Bx_D)\in G'')/2)/(n^{O_{C,\ep}(1)})^r\\
&\ge \rho/n^{O_{C,\ep}(1)}.
\end{align*}
In other words, there exist integers $k_1,\dots,k_r$, all bounded by $n^{O_{C,\ep}(1)}$, such that 
$$k q_i(\Bx_2,\dots, \Bx_D) + k_1 q_{1}(\Bx_2,\dots,\Bx_D)+ \dots + k_r q_{r}(\Bx_2,\dots,\Bx_D)=0$$
for all $(\Bx_2,\dots, \Bx_D)\in G_{i}''$. 

Note that $k$ is independent of the choice of $i$ and $(\Bx_2,\dots,\Bx_D)$. By passing from $q_i$ to $B_i$ by approximation, we thus complete the proof of Lemma \ref{lemma:reduction}.
\end{proof}

We are now ready to complete the proof of our inverse result.

{\bf Theorem \ref{theorem:dlinear:weak}: proof conclusion}. From Lemma \ref{lemma:reduction}, for any fixed $i\in I$, we consider the following $(D-1)$-multilinear form
\begin{align*}
B_i'(\Bx_2,\dots,\Bx_D):&= k B_i(\Bx_2,\dots,\Bx_D)+ \sum_{i_0\in I_0} k_{ii_0} B_{i_0}(\Bx_2,\dots,\Bx_D)\\
&=(k a_{ii_2\dots i_D} + \sum_{i_0} k_{ii_0} a_{i_0i_2 \dots i_D}) x_{2i_2}\dots x_{Di_D}\\
&:=\sum_{i_2,\dots, i_D} b_{i_2\dots i_D}' x_{2i_2}\dots x_{Di_D}.
\end{align*}
By the conclusion of Lemma \ref{lemma:reduction}, we have $\sup_a\P_{\Bx_2,\dots,\Bx_D}(B_i'(\Bx_2,\dots,\Bx_D)\in B(a,\beta))\ge \rho/n^{O_{C,\ep}(1)}$. Thus Lemma \ref{lemma:reduction} is applicable again for this new $(D-1)$-multilinear form. By iterating the process $D-1$ times, we obtain the conclusion of Theorem \ref{theorem:dlinear:weak}.

\section{Singularity of block matrices: the approach to prove Theorem \ref{thm:least-sing-value}} \label{section:singularity:approach}

As the singular values do not change under row and column permutations,  for the sake of convenience, we will restrict our analysis to matrices of the form $\BM_n= \BX_n+\BN_n$, where $\BN_n$ is any deterministic matrix of polynomially bounded norm and $\BX_n$ is a $dn \times dn$ matrix whose $ij$-th block takes the form 
$$ \begin{pmatrix} x_{11;ij} & x_{12;ij} & \dots & x_{1d;ij} \\  \dots & \dots & \dots & \dots  \\ x_{d1;ij} & x_{d2;ij} & \dots & x_{dd;ij} \end{pmatrix}, $$ 
where $(x_{11;ij},\dots,x_{dd;ij}), 1\le i,j\le n$, are iid copies of $(\xi_{11},\dots,\xi_{dd})$ which satisfy the following conditions from Definition \ref{cond:covariance} and Theorem \ref{thm:least-sing-value}: 
\begin{align}\label{eqn:covariance}
&\E \xi_{st}=0, \E |\xi_{st}|^2=1, \E|\xi_{st}|^{2+\eta}<\infty \mbox{ for some } \eta>0 \nonumber \\
&\E\left[ \xi_{st} \overline{\xi_{uv}}\right] = 0 \mbox{ for all } (s,t) \neq (u,v).
\end{align}

We now restate Theorem \ref{thm:least-sing-value} as follows.

\begin{theorem}\label{theorem:least-sing}
For any $B>0$, there exists $A>0$ depending on $B$ and $\alpha$ such that 
$$\P(\sigma_{dn}(\BM_n)\le n^{-A})\le n^{-B}.$$
\end{theorem}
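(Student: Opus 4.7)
My plan is to follow the Rudelson--Vershynin / Tao--Vu scheme for bounding the least singular value, adapted to the block setting via the multilinear inverse Littlewood--Offord result (Theorem \ref{theorem:dlinear:weak}). First I would truncate the atoms $\xi_{st}$ at level $n^\gamma$ (losing only $n^{-B}$ probability by the $(2+\eta)$-moment hypothesis) and establish the crude norm bound $\|\BM_n\| \leq n^{C}$, which reduces the question to the existence of a unit vector $v \in \C^{dn}$ with $\|\BM_n v\| \leq n^{-A}$ lying in a polynomial-size net. I would then apply the compressible/incompressible dichotomy: write $v=(v_1,\dots,v_n)$ with $v_j \in \C^d$, call $v$ compressible if it is $\delta$-close to a vector supported on $\leq \delta n$ blocks, and dispatch the compressible case through a net argument combined with independence of the $n$ block-rows and trivial small-ball bounds (each independent $d\times d$ block supplies non-degenerate noise in the component equations).

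\textbf{Incompressible reduction.} The real work is for incompressible $v$. Here I would exploit that the $n$ block-rows of $\BM_n$ are independent: writing $(\BM_n v)_i = \sum_{j=1}^n \BY_{ij} v_j \in \C^d$ where $\BY_{ij}=\BX_{n,ij}+\BN_{n,ij}$, the event $\|\BM_n v\|\leq n^{-A}$ forces $\|(\BM_n v)_i\| \leq n^{-A+O(1)}$ for a positive fraction of the $i$'s. The governing quantity is therefore the small-ball probability, over the iid blocks $(\BX_{n,ij})_{j=1}^n$, that a fixed $\C^d$-valued linear statistic of these blocks lands in a ball of radius $\beta$. A block-version of the Tao--Vu distance-to-subspace identity $\sigma_{dn}(\BM_n)^{-2} = \sum_k \dist(r_k,H_k)^{-2}$, applied either row-by-row or in groups of $d$ rows, lets me convert a lower bound on $\sigma_{dn}$ into an upper bound on these per-row small-ball probabilities, with the complementary rows supplying the side information (the normal vector to $H_k$) that plays the role of $v$.

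\textbf{Invoking the multilinear inverse theorem.} If the small-ball probability for a typical row exceeds $n^{-B}$, then the normal vector has to be highly structured. Because the entries within each $d \times d$ block are uncorrelated but not independent, the joint small-ball event on the $d$ coordinates of $(\BM_n v)_i$ cannot in general be reduced to a single linear Littlewood--Offord problem; instead, expanding the squared norm or taking products of the $d$ component forms produces an honest bilinear/multilinear form in independent random variables (one family per ``row-type'' $s$). This is exactly the regime of Theorem \ref{theorem:dlinear:weak}: applying it, I obtain that for most indices $i_1$, and recursively in the remaining variables, the coefficient array built out of $v$ has a GAP-type additive structure of rank $O_{B,\eps}(1)$ and size $n^{O_{B,\eps}(1)}$. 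A net-counting argument on the class of structured $v$'s, combined with the independent noise supplied by the blocks not touched by the structure, yields $\P(\exists\,\text{structured }v,\;\|\BM_n v\|\leq n^{-A}) \leq n^{-B}$ once $A$ is chosen large enough relative to $B$ and $\alpha$.

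\textbf{Main obstacle.} The chief difficulty is the incompressible step and, within it, the extraction of multilinear Littlewood--Offord information from the joint smallness of the $d$ coordinates of $(\BM_n v)_i$. In the classical iid setting of Theorem \ref{theorem:least-sing:TV} the corresponding object is a single linear form in iid atoms and Theorem \ref{theorem:linear} suffices; the block setting genuinely forces the weak, iterative structure provided by Theorem \ref{theorem:dlinear:weak}, and the structural conclusion is correspondingly weaker (only one-dimensional slices of the coefficient array are controlled). Converting this one-slice-at-a-time structure into a global covering bound on admissible $v$, while keeping the losses polynomial so as to absorb them into the final exponent $A$, is the delicate part of the argument and is presumably what occupies Sections \ref{section:step1} and \ref{section:singularity:step2}.
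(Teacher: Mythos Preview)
Your overall architecture—truncate, reduce to near-null vectors, run a structure/randomness dichotomy, and close with a net count—is the right shape, and indeed the paper's Step~2 (Section~\ref{section:singularity:step2}) does carry out a compressible/incompressible split on the structured vector $\Bu$ using an operator inverse Littlewood--Offord result (Theorem~\ref{theorem:linear:operator}). But the route you sketch to the multilinear inverse theorem has a genuine gap.

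The per-block-row object $(\BM_n v)_i = \sum_j X^{(j)} v_j \in \C^d$ is a \emph{linear} statistic in the iid $d\times d$ blocks $X^{(j)}$; its small-ball probability is governed by Theorem~\ref{theorem:linear:operator}, not the multilinear Theorem~\ref{theorem:dlinear:weak}. Your proposal to ``take products of the $d$ component forms'' does not produce a multilinear form in \emph{independent} families: all $d$ components of $(\BM_n v)_i$ are linear in the \emph{same} random entries $x_{st;ij}$ (varying $s$, fixed $(i,j)$), so their product is a degree-$d$ polynomial in a single family, not a $D$-linear form in $D$ separate iid families. Theorem~\ref{theorem:dlinear:weak} as stated requires $x_{1i_1},\dots,x_{Di_D}$ to be iid copies of one $\xi$, which the correlated block atoms $(\xi_{st})$ are not; you cannot invoke it directly.

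The paper avoids this by working not with a generic near-null vector but with the cofactor vector. Via Claim~\ref{claim:singularity:cofactordeduction} and Theorem~\ref{theorem:singularvalue''}, the event $\sigma_{dn}\le n^{-A}$ is converted into smallness of $\det(\BM_n)/c$, and the cofactor expansion of $\det(\BM_n)$ along the first $d$ rows is a genuine $d$-multilinear form in the $d$ rows of one block. Section~\ref{section:step1} then spends most of its effort removing the residual within-block dependence: a mixed-linear inverse theorem (Theorem~\ref{theorem:ILOlinear:new}, Corollary~\ref{cor:ILOlinear:new}) handles linear forms in the correlated atoms, and a decoupling/randomization scheme over partitions $U_1,\dots,U_d$ (Lemmas~\ref{lemma:decoupling}--\ref{lemma:random:A_U}) reduces to a setting where an analogue of Theorem~\ref{theorem:dlinear:weak} actually applies, yielding Theorem~\ref{theorem:decoupling:weak} and hence Theorem~\ref{theorem:step1}. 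Only after this structured vector $\Bu$ is produced does the paper run the compressible/incompressible dichotomy you describe, now using Theorem~\ref{theorem:linear:operator} on the block-rows of $\BM_{n-1}$ against $\Bu$. In short: the multilinear form arises from a determinant, not from products of row components, and making the multilinear inverse theorem applicable to the block-correlated atoms is itself the bulk of the proof.
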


In the sequel we sketch the proof of Theorem \ref{theorem:least-sing}. In general, our approach will resemble that of \cite{NgO,Ng,RV,TVcirc,Ver} where the main ingredient is an inverse-type argument. However, as our matrix now consists of large blocks of correlated entries, we need to elaborate more on the algebraic and technical side. For the sake of simplicity, we will prove our result under the following condition. 


\begin{assumption}\label{condition:bound}
With probability one, $|x_{11;ij}| \le n^{B+1} \wedge \dots \wedge |x_{dd;ij}| \le n^{B+1}$ for all $i,j$.
\end{assumption}


In what follows we assume that $\BM_{n}$ has full rank. This is the main case to consider as most random matrices are non-singular with very high probability. The case that $\BM_n$ is singular can be deduced by a standard argument (see for instance \cite[Appendix A]{NgV-det}).

Assume that $\sigma_{nd}(\BM_n)\le n^{-A}$. Thus $\BM_n\Bx=\By$ for some $\|\Bx\|=1$ and $\|\By\| \le n^{-A}$. Let $\BC=(c_{i,j}(\BM_n))$, $1\le i,j\le dn$, be the matrix of cofactors of $\BM_n$. By definition, $\BC\By = \det(\BM_n) \cdot \Bx$, and thus we have $\|\BC\By\| = |\det(\BM_n)|$. 

By paying a factor of $dn$ in probability, without loss of generality we can assume that the first component of $\BC\By$ is greater than $\det(\BM_n)/(dn)^{1/2}$, 
\begin{equation}\label{eqn:firstrow}
|c_{1,1}(\BM_n)y_1+\dots c_{1,dn}(\BM_n)y_{dn}|\ge |\det(\BM_n)|/(dn)^{1/2}.
\end{equation}

\begin{claim}\label{claim:singularity:cofactordeduction} Let $\BM_{n-1}$ be the matrix obtained from $\BM_n$ by removing its first $d$ rows, and $ c_{i_1i_2\dots i_d}(\BM_{n-1}), 1\le i_1,\dots,i_d\le nd$ be the sign determinant of the minor obtained from $\BM_{n-1}$ by removing its $i_1,\dots,i_d$-th columns. We have
\begin{eqnarray}\label{eqn:intro:0}
\sum_{1\le i_1,i_2,\dots,i_d \le dn} |c_{i_1i_2\dots i_d}(\BM_{n-1})|^2 \ge n^{2A-O(B+\alpha)}|\det(\BM_n)|^2
\end{eqnarray}
\end{claim}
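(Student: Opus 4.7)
The plan is to combine one Cauchy--Schwarz applied to \eqref{eqn:firstrow} with a Laplace expansion of each cofactor $c_{1,j}(\BM_n)$ along rows $2,\ldots,d$ of $\BM_n$; this converts each $c_{1,j}(\BM_n)$ into a short linear combination of the quantities $c_{i_1\cdots i_d}(\BM_{n-1})$, so that a second Cauchy--Schwarz then delivers \eqref{eqn:intro:0}. Because $d$ is a fixed constant, I expect all polynomial losses to collapse into a single $n^{O(B+\alpha)}$ factor.

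For the lower bound, applying Cauchy--Schwarz directly to \eqref{eqn:firstrow} with $\|\By\|\le n^{-A}$ yields
$$\sum_{j=1}^{dn}|c_{1,j}(\BM_n)|^2 \;\ge\; \frac{n^{2A}}{dn}\,|\det(\BM_n)|^2.$$
For the matching upper bound I would view $c_{1,j}(\BM_n)$ as (up to sign) the determinant of the $(dn-1)\times(dn-1)$ submatrix of $\BM_n$ with row $1$ and column $j$ deleted. Its top $d-1$ rows are exactly rows $2,\ldots,d$ of $\BM_n$, and its remaining $d(n-1)$ rows are all of $\BM_{n-1}$. Expanding by the Laplace formula along the top $d-1$ rows gives
$$c_{1,j}(\BM_n) \;=\; \sum_{\substack{S\subset\{1,\ldots,dn\}\setminus\{j\} \\ |S|=d-1}} \pm\, m_S\cdot c_{\{j\}\cup S}(\BM_{n-1}),$$
where $m_S$ is the $(d-1)\times(d-1)$ minor of rows $2,\ldots,d$ of $\BM_n$ restricted to the columns in $S$, and $c_{\{j\}\cup S}(\BM_{n-1})$ is precisely a signed $d$-column-deletion cofactor of $\BM_{n-1}$ as defined in the statement.

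It remains to control $m_S$ and then sum. By Assumption \ref{condition:bound} together with the entrywise bound $n^\alpha$ on $\BN_n$, every entry of $\BM_n$ has modulus at most $n^{B+1}+n^\alpha$, and since $d-1=O(1)$ each minor satisfies $|m_S|\le n^{O(B+\alpha)}$. A second Cauchy--Schwarz, combined with the binomial bound $\binom{dn-1}{d-1}=n^{O(1)}$, yields
$$|c_{1,j}(\BM_n)|^2 \;\le\; n^{O(B+\alpha)}\sum_S |c_{\{j\}\cup S}(\BM_{n-1})|^2,$$
and summing over $j\in\{1,\ldots,dn\}$ (each unordered $d$-subset $T$ arises as $\{j\}\cup S$ in at most $d$ ways, once per choice of $j\in T$) produces
$$\sum_{j=1}^{dn}|c_{1,j}(\BM_n)|^2 \;\le\; n^{O(B+\alpha)}\sum_{1\le i_1,\ldots,i_d\le dn}|c_{i_1\cdots i_d}(\BM_{n-1})|^2.$$
Chaining this with the lower bound is \eqref{eqn:intro:0}. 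No substantive obstacle is anticipated: the argument is purely algebraic, and the only care required is sign and index bookkeeping in the Laplace expansion, which is routine because the number of omitted top rows $d-1$ is a fixed constant.
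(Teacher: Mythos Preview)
Your proof is correct and follows essentially the same route as the paper: a Cauchy--Schwarz on \eqref{eqn:firstrow} to get $\sum_j |c_{1,j}(\BM_n)|^2 \ge n^{2A-O(1)}|\det(\BM_n)|^2$, then a Laplace expansion of each $c_{1,j}(\BM_n)$ along rows $2,\ldots,d$ followed by Cauchy--Schwarz with the entrywise bound from Assumption~\ref{condition:bound} and $\|\BN_n\|_\infty\le n^\alpha$. The only cosmetic difference is that the paper writes the expansion as a sum over ordered tuples $(i_2,\ldots,i_d)$ with coefficients $m_{2i_2}\cdots m_{di_d}$, whereas you group by the $(d-1)\times(d-1)$ minor $m_S$; both yield the same $n^{O(B+\alpha)}$ loss since $d$ is fixed.
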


\begin{proof}(of Claim \ref{claim:singularity:cofactordeduction})
As $\|\By\| \le n^{-A}$, it follows from \eqref{eqn:firstrow} that
\begin{equation}\label{eqn:intro:1}
\sum_{i_1=1}^{dn} |c_{1,i_1}(\BM_n)|^2 \ge n^{2A-2} |\det(\BM_n)|^2.
\end{equation}
Next, as each cofactor $c_{1i_1}(\BM_n)$, as a sign determinant of a $(dn-1)\times (dn-1)$ block of $\BM_n$, can be expressed as
$$c_{1,i_1}(\BM_n)=(-1)^{(1+i_1)\dots+(d+i_d)}\sum_{1\le i_2,\dots,i_d \le dn} m_{2i_2}\dots m_{di_d} c_{i_1i_2\dots i_d}(\BM_{n-1}).$$

The claim then follows by applying Cauchy-Schwarz inequality together with Condition \ref{condition:bound} and the upper bound $n^\alpha$ on the entries of $\BN_n$.
\end{proof}



By Claim \ref{claim:singularity:cofactordeduction}, in order to prove Theorem \ref{theorem:least-sing} it suffices to justify the following result.


\begin{theorem}\label{theorem:singularvalue''}
For any $B>0$, there exists $A>0$ such that 
$$\P \big((\sum_{1\le i_1,i_2,\dots,i_d \le dn} |c_{i_1i_2\dots i_d}(\BM_{n-1})|^2)^{1/2} \ge n^A|\det(\BM_n)| \big)\le n^{-B}.$$
\end{theorem}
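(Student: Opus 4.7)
The plan is to recast $(\sum_{i_1,\dots,i_d}|c_{i_1\cdots i_d}(\BM_{n-1})|^2)^{1/2}/|\det(\BM_n)|$ as one concrete linear-algebraic quantity and then analyse it with the inverse Littlewood-Offord machinery of Section \ref{section:anti-concentration}. Decompose $\BM_n=\begin{pmatrix}\BR\\ \BM_{n-1}\end{pmatrix}$, where $\BR$ is the $d\times nd$ block of the top $d$ rows. By the Cauchy-Binet formula applied to the $(n-1)d\times nd$ matrix $\BM_{n-1}$,
$$\sum_{1\le i_1,\dots,i_d\le dn}|c_{i_1i_2\cdots i_d}(\BM_{n-1})|^2=d!\,\det(\BM_{n-1}\BM_{n-1}^{\ast}),$$
while the Schur-complement identity applied to $\det(\BM_n\BM_n^{\ast})$ with this block partition gives
$$|\det(\BM_n)|^2=\det(\BM_{n-1}\BM_{n-1}^{\ast})\cdot|\det(\BR\BU)|^2,$$
where $\BU$ is an $nd\times d$ matrix whose columns form an orthonormal basis of the orthogonal complement $V^{\perp}$ of the row span $V$ of $\BM_{n-1}$. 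Cancelling the common factor reduces Theorem \ref{theorem:singularvalue''} to the small-ball estimate
$$\P\bigl(|\det(\BR\BU)|\le n^{-A}\bigr)\le n^{-B}$$
for a suitable $A$ depending only on $B$, $d$, and $\alpha$.

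Condition on $\BM_{n-1}$, which determines $\BU$. Since the first block-row and the remaining block-rows of $\BX_n$ draw on disjoint families of iid block copies, $\BR$ is independent of $\BU$ and still distributed as a fresh block-row. Using the deterministic inequality $|\det(\BR\BU)|\ge\sigma_d(\BR\BU)^d$ together with the bound $\|\BR\|=n^{O(1)}$ (immediate from Assumption \ref{condition:bound} and the polynomial control on $\BN_n$), the task is further reduced to a uniform least-singular-value estimate: $\|\BR\By\|\ge n^{-A'}$ for every unit $\By\in V^{\perp}$. A standard $\varepsilon$-net argument on the $2d$-real-dimensional unit sphere of $V^{\perp}$, combined with the operator-norm control on $\BR$, in turn reduces this to the pointwise anti-concentration bound $\P(\|\BR\By\|\le n^{-A''})\le n^{-B'}$ for each fixed unit $\By\in V^{\perp}$.

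The analytical core of the argument is this pointwise anti-concentration. Splitting $\By\in V^{\perp}$ into $n$ blocks $\bar\By_1,\dots,\bar\By_n\in\C^d$, the $s$-th coordinate of $\BR\By$ is the linear form
$$\BR_s\cdot\By=\sum_{j=1}^n\sum_{t=1}^d x_{s,t;1,j}\,(\bar\By_j)_t+C_s,$$
where $C_s$ is a deterministic shift coming from $\BN_n$. For fixed $s$ the $n$ summands are independent across $j$, but across different $s$ they share randomness through the common atom block at position $(1,j)$; consequently the joint small-ball behaviour of $(\BR_s\cdot\By)_{s=1}^d$ is naturally treated by the multilinear framework of Section \ref{section:anti-concentration}. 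If the pointwise bound fails along some $\By$, iterating Theorem \ref{theorem:dlinear:weak} peels off one direction at a time and ultimately forces a non-trivial integer combination of the block coordinates $\bar\By_j$ to be $\beta$-close, in most indices, to a proper symmetric GAP of rank $O_{B,\varepsilon}(1)$ and size $n^{O_{B,\varepsilon}(1)}$.

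The main obstacle---and the bulk of the technical work---is the enumeration step that closes the net argument: one must show that the random subspace $V^{\perp}$, determined by the independent block-rows of $\BM_{n-1}$, rarely contains such a structured direction. Following the template of \cite{TVcirc,NgV}, this is handled by explicitly enumerating GAPs with the allowed rank, volume, and step-size parameters, covering the structured directions by an $\varepsilon$-net of polynomial size, and for each net element $\By$ bounding, via the independence of $\BM_{n-1}$ from $\BR$ and standard estimates for the probability that a random subspace contains a specified vector, the probability that $V^{\perp}$ lies within $\varepsilon$ of $\By$. A union bound then combines these ingredients and produces Theorem \ref{theorem:singularvalue''}. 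Carefully tracking the polynomial losses accumulated through the $d-1$ rounds of iteration of Theorem \ref{theorem:dlinear:weak}, together with the GAP and net enumeration, is the delicate bookkeeping point.
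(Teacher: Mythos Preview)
Your Cauchy--Binet/Schur reduction to $|\det(\BR\BU)|$ is correct and clean; up to the constant $\sqrt{d!}$ it recovers the paper's normalized multilinear form \eqref{eqn:bilinear}. Passing further to $\sigma_d(\BR\BU)=\min_{\By\in V^\perp,\|\By\|=1}\|\BR\By\|$ and running an $\varepsilon$-net over the $d$-dimensional sphere of $V^\perp$ is a genuinely different route from the paper's. The paper attacks the multilinear expansion $\sum a_{i_1\cdots i_d}m_{1i_1}\cdots m_{di_d}$ head-on; because the $d$ rows of $\BR$ are block-dependent this forces the decoupling and randomization apparatus of Section~\ref{section:step1} (Lemmas \ref{lemma:decoupling}--\ref{lemma:random:A_U}, Theorem \ref{theorem:decoupling:weak}) followed by the case analysis of Section~5.4 to produce the structured vector of Theorem \ref{theorem:step1}. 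Your linearization sidesteps all of that: for fixed $\By$, the quantity $\BR\By=\sum_{j=1}^n X^{(j)}\bar\By_j+\text{(deterministic)}$ is a $\C^d$-valued \emph{linear} form in iid $d\times d$ block matrices. The enumeration step you sketch is then exactly Theorem \ref{theorem:step2} (Section~\ref{section:singularity:step2}), whose proof already uses Theorem \ref{theorem:linear:operator} to control the event that the kernel of $\BM_{n-1}$ contains a structured direction.

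The one real gap in your writeup is the identification of the inverse tool. The joint small ball $\P(\|\BR\By\|\le\beta)$ is \emph{not} a multilinear problem: there is a single group of random variables (the entries of the first block-row), and the $d$ output coordinates share randomness only through the within-block correlations of each $X^{(j)}$. Theorem \ref{theorem:dlinear:weak} concerns $D$-multilinear forms in $D$ \emph{independent} vectors $\Bx_1,\dots,\Bx_D$ and is not applicable here; there is no ``peeling off one direction at a time'' to do. The correct instrument is Theorem \ref{theorem:linear:operator} (the inverse Littlewood--Offord theorem for random operators), which is precisely tailored to $\sum_j X^{(j)}\Bu^{(j)}$ and delivers directly the conclusion you state---that the blocks $\bar\By_j$ are, after a bounded invertible linear transform, mostly $\beta$-close to a small proper GAP. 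With this correction your route is sound, and it buys real economy: the entire dependent-multilinear Step~1 of the paper (Theorem \ref{theorem:step1} and its proof) is replaced by a single invocation of Theorem \ref{theorem:linear:operator}, after which the counting argument of Section~\ref{section:singularity:step2} closes the proof unchanged.
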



Next, express $\det(\BM_n)$ as a $d$-multilinear form of its first $d$ rows 
$$\det(\BM_n) = \sum_{1\le i_1,i_2,\dots,i_d \le dn} c_{i_1i_2\dots i_d}(\BM_{n-1}) m_{1i_1}\dots m_{di_d}.$$
With $c:=(\sum_{1\le i_1,i_2,\dots,i_d \le dn} |c_{i_1i_2\dots i_d}(\BM_{n-1})|^2)^{1/2}$ and $a_{i_1\dots i_d}:=c_{i_1\dots i_d}(\BM_{n-1})/c$,
\begin{equation}\label{eqn:bilinear}
\frac{1}{c}\det(\BM_n) =\sum_{1\le i_1,i_2,\dots,i_d \le dn} a_{i_1i_2\dots i_d}(\BM_{n-1}) m_{1i_1}\dots m_{di_d}.
\end{equation}
 
Heuristically, conditioning on $\BM_{n-1}$, the $d$-multilinear form in the RHS of \eqref{eqn:bilinear} is comparable to 1 in absolute value with probability extremely close to one. Thus the assumption $\P(|\det(M_n)|/c\le n^{-A})\ge n^{-B}$ of Theorem \ref{theorem:singularvalue''}, with  an appropriately large value $A$, must yield a high cancelation of the multilinear form. Based on this observation, our rough approach will consist of two main steps.
  
\begin{itemize}
\item  {\it Step 1}. Assume that for an appropriately large value $A>0$ we have 
$$\P_{x_{11;11},\dots, x_{dd;1n}}\big( |\sum_{1\le i_1,i_2,\dots,i_d \le dn} a_{i_1i_2\dots i_d}(\BM_{n-1}) m_{1i_1}\dots m_{di_d}|\le n^{-A}|\BM_{n-1}\big)\ge n^{-B}.$$ 
Then the normalized cofactors $a_{i_1\dots i_d}$ of $\BM_{n-1}$ must satisfy a very special property.
\item {\it Step 2}. The probability, with respect to $\BM_{n-1}$, that the $a_{i_1\dots i_d}$ satisfy this special property is negligible.
\end{itemize}

Although the setting of Step 1 is identical to our inverse problem discussed in Section \ref{section:anti-concentration}, the dependencies of the entries make the problem substantially harder. We will remove these dependencies using a series of decoupling tricks to arrive at a conclusion as useful as Theorem \ref{theorem:dlinear:weak}.



\begin{theorem}[Step 1]\label{theorem:step1}
Let $0<\ep<1$ be given constant. Assume that
$$\sup_a\P_{x_{11;11},\dots, x_{dd;1n}}\big(|\sum_{1\le i_1,i_2,\dots,i_d \le dn} a_{i_1i_2\dots i_d}(\BM_{n-1}) m_{1i_1}\dots m_{di_d}-a |\le n^{-A}\big)\ge n^{-B}.$$ 
for some sufficiently large integer $A$, where $a_{i,j}=c_{i,j}(\BM_{n-1})/c$. Then there exists $k=O(d)$ indices $i_1<\dots<i_k$ and a complex vector $\Bu=(u_1,\dots,u_{nd})$ which satisfies the following properties.

\begin{itemize}
\item \textup{(}orthogonality \textup{)} $\|\Bu\|_2\asymp 1$ and   $|\langle \Bu_1,\row_i^{(1)}(\BM_{n-1})\rangle + \langle \Bu_2,\row_i^{(2)}(\BM_{n-1})\rangle| \le n^{-A/2+O_{B,\ep}(1)}$ for $n-O_{B,\ep}(1)$ rows $\row_i$ of $\BM_{n-1}$, where $\Bu_1$ and $\row_i^{(1)}$ are the subvectors corresponding to the components indexed by $i_1,\dots,i_k$ of $\Bu$ and $\row_i$ respectively, and $\Bu_2$ and $\row_i^{(2)}$ are the subvectors corresponding to the remaining components  of $\Bu$ and $\row_i$ respectively;
\item \textup{(}additive structure\textup{)} there exists a generalized arithmetic progression $Q$ of rank $O_{B,\ep}(1)$ and size $n^{O_{B,\ep}(1)}$ that contains at least $dn-2n^\ep$ components $u_i$;
\item \textup{(}controlled form\textup{)} all the components $u_i$, and all the generators of the generalized arithmetic progression are rational complex numbers of the form $\frac{p}{q}+ \sqrt{-1} \frac{p'}{q'} $, where $|p|,|q|,|p'|,|q'| \le n^{A/2+O_{B,\ep}(1)}$.
\end{itemize}

\end{theorem}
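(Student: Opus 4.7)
The plan is to adapt the iterative inverse-Littlewood--Offord argument underlying Lemma~\ref{lemma:reduction} to our block setting. Peeling off the $d$ rows of $\BM_n$ one at a time and iterating $d-1$ times in the style of that lemma, I aim to extract, for most tuples $(i_1,\dots,i_{d-1})$ in nested generic index sets (with the structural indices drawn from thin sets $I_1^0,\dots,I_{d-1}^0$ of size $O_{B,\eps}(1)$), an integer linear combination
\[
\tilde u_{i_d} \;:=\; \sum_{j_1,\dots,j_{d-1}} \kappa_{j_1\dots j_{d-1}}\,a_{j_1\dots j_{d-1} i_d},\qquad |\kappa_{j_1\dots j_{d-1}}|\le n^{O_{B,\eps}(1)},
\]
whose associated linear form $\sum_{i_d}\tilde u_{i_d}m_{d\,i_d}$ concentrates to within $n^{-A/2+O(1)}$ on the last row with probability $\ge n^{-O(1)}$.

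The main obstacle is that the entries of each row $s$ of $\BM_n$ are not iid scalars: within a single column-block $j$, the $d$ entries $x_{st;1j}$, $t=1,\dots,d$, are only guaranteed to be pairwise uncorrelated, so Theorem~\ref{theorem:linear} and Lemma~\ref{lemma:reduction} do not apply verbatim. Rows $1,\dots,d$ themselves \emph{are} mutually independent, because row $s$ sees only the atom variables $\xi_{s\bullet}$ and different rows draw on disjoint families; the only nuisance is the intra-row, intra-block dependence. I would dispose of this by a decoupling argument exploiting condition~(ii) of Definition~\ref{def:C0}: either one upgrades Theorem~\ref{theorem:linear} to a vector-valued statement for the iid $\C^d$-valued block vectors $\BX_j=(x_{st;1j})_{t=1}^d$ (running the argument block-by-block and projecting onto the dominant coordinate inside each block), or one performs a Lindeberg swap replacing each correlated block by an independent block with matching first two moments---the uncorrelatedness $\E\xi_{st}\overline{\xi_{uv}}=0$ is precisely what ensures the swap costs only a $\mathrm{poly}(n)$ factor in the small-ball probability. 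Either route makes the iteration of Lemma~\ref{lemma:reduction} available, and this is where the bulk of the technical work lies.

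With the iteration in place, the key algebraic input is the cofactor identity: for every fixed $(j_1,\dots,j_{d-1})$ the $1$-slice $(a_{j_1\dots j_{d-1} i_d})_{i_d}$ is an \emph{exact} null vector of $\BM_{n-1}$, because setting $\row_1=\row_i$ (for any $i\ge d+1$) and $\row_s=e_{j_{s-1}}$ for $s=2,\dots,d$ in the multilinear expansion $\det(\BM_n)=\sum a_{i_1\dots i_d}\prod_s(\row_s)_{i_s}$ forces the determinant to vanish. Hence $\tilde\Bu=(\tilde u_{i_d})$ is an exact null vector of $\BM_{n-1}$. A final application of Theorem~\ref{theorem:linear} to $\sum\tilde u_{i_d}m_{d\,i_d}$ yields a proper symmetric GAP $Q$ of rank $O_{B,\eps}(1)$ and size $n^{O_{B,\eps}(1)}$, with generators of the rational form $\beta p_i/p$ prescribed in its third bullet, such that all but $2n^\eps$ components of $\tilde\Bu$ are $\beta$-close to $Q$ with $\beta=n^{-A/2}$. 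Define $\Bu$ to be the $Q$-rounded version of $\tilde\Bu$, keep the $O_{B,\eps}(1)$ exceptional coordinates as the structural indices $i_1<\dots<i_k$ ($k=O(d)$), and rescale so that $\|\Bu\|_2\asymp 1$; this is valid because $\sum|a_{i_1\dots i_d}|^2=1$ and the $\kappa$'s are polynomially bounded force $\|\tilde\Bu\|=n^{O(1)}$. The additive-structure and controlled-form bullets follow immediately from Theorem~\ref{theorem:linear}. For the orthogonality bullet, write $\langle\Bu_1,\row_i^{(1)}\rangle+\langle\Bu_2,\row_i^{(2)}\rangle=\langle\Bu,\row_i\rangle=\langle\Bu-\tilde\Bu,\row_i\rangle$ (since $\langle\tilde\Bu,\row_i\rangle=0$), and bound this by $\beta\,\|\row_i\|\sqrt{dn}\le n^{-A/2+O_{B,\eps}(1)}$ for all rows $\row_i$ of $\BM_{n-1}$ outside a small exceptional set using the polynomial entry bound from Condition~\ref{condition:bound}.
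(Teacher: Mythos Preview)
Your outline captures the right skeleton---iterate a reduction lemma $d-1$ times to collapse the $d$-linear form to a linear one, then invoke Theorem~\ref{theorem:linear}---but two steps are genuine gaps.

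\textbf{No lower bound on $\|\tilde\Bu\|$.} You argue that $\|\tilde\Bu\|=n^{O(1)}$ and then ``rescale so that $\|\Bu\|_2\asymp 1$''. The upper bound is correct, but nothing you wrote rules out $\|\tilde\Bu\|\le n^{-A/2}$. In that regime, after normalising you apply Theorem~\ref{theorem:linear} with effective radius $\beta/\|\tilde\Bu\|\gg 1$, and the GAP conclusion becomes vacuous: every number is $O(1)$-close to $\{0\}$. The controlled-form bullet then fails as well, since the rational denominators explode under the rescaling. The paper handles exactly this via a degenerate/non-degenerate dichotomy (its Case~1 versus Case~2): if for all $i$ the combination $ka_{ii_2\dots i_d}-\sum k_{ii_0}a_{i_0i_2\dots i_d}$ has tiny $\ell^2$ norm over $(i_2,\dots,i_d)$, one does \emph{not} pass to $\tilde\Bu$ at all but instead reads off directly that the slices $(a_{ii_2\dots i_d})_i$ for a well-chosen $(i_2,\dots,i_d)$ already sit in a GAP generated by the $O(1)$ pivot slices $(a_{i_0i_2\dots i_d})_i$, and uses \emph{that} vector. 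Your argument needs this branch.

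\textbf{The Lindeberg swap does not control small-ball probabilities.} Lindeberg-type exchanges compare expectations of smooth test functions and incur an error governed by third derivatives; the indicator of a ball of radius $n^{-A}$ is far from smooth at the relevant scale, and there is no reason the swap costs only $\mathrm{poly}(n)$. The paper's actual fix for the intra-row block dependence is different: it first decouples via a random ordered partition of the column-blocks (Lemma~\ref{lemma:decoupling}), then applies a vector-valued inverse Littlewood--Offord theorem tailored to the correlated tuple $(\xi_{s1},\dots,\xi_{sd})$ (Theorem~\ref{theorem:ILOlinear:new} and Corollary~\ref{cor:ILOlinear:new}), and finally removes the dependence on the partition by randomising it (Lemma~\ref{lemma:random:A_U}). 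Your ``vector-valued upgrade'' suggestion is the right instinct, but it needs to be combined with the decoupling/randomisation steps, not used as a standalone replacement for Theorem~\ref{theorem:linear}.

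A smaller point: your claim $\langle\tilde\Bu,\row_i\rangle=0$ is not literally true. Each slice $(a_{j_1\dots j_{d-1}\,i_d})_{i_d}$, once zeroed out at $i_d\in\{j_1,\dots,j_{d-1}\}$, is orthogonal to every row; but $\tilde\Bu$ is a sum over many tuples $(j_1,\dots,j_{d-1})$, and on the thin index set $J=\bigcup J_k$ the zeroing patterns differ. What survives is $\langle\tilde\Bu|_{\bar J},\row_i|_{\bar J}\rangle+\langle\Bw|_J,\row_i|_J\rangle=0$ for a modified vector $\Bw$ on $J$. This is precisely why the theorem is stated with the split $\Bu_1,\Bu_2$ rather than plain orthogonality, and your final displayed identity $\langle\Bu,\row_i\rangle=\langle\Bu-\tilde\Bu,\row_i\rangle$ needs the extra $J$-term.
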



In the second step, we  show that the probability that $\BM_{n-1}$ has the above properties is negligible.


\begin{theorem}[Step 2]\label{theorem:step2}
With respect to $\BM_{n-1}$, the probability that there exists a vector $\Bu$ as in Theorem \ref{theorem:step1} is $\exp(-\Omega(n))$.
\end{theorem}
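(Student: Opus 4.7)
The plan is an $\ep$-net-plus-union-bound argument. First I would enumerate the candidate $\Bu$'s. The controlled-form condition forces $\Bu$ and the $O_{B,\ep}(1)$ generators of its GAP $Q$ to be Gaussian rationals of height at most $n^{A/2+O_{B,\ep}(1)}$; the GAP $Q$ has size $n^{O_{B,\ep}(1)}$; and the at most $2n^\ep$ exceptional coordinates are Gaussian rationals of the same height. Since each of the $dn$ coordinates of $\Bu$ then takes at most $n^{O_{B,\ep}(1)}$ values (the $O(A)$-dependent contribution from generators and exceptional coordinates being of lower order $\exp(O(An^\ep\log n))$), this produces a finite net $\CN$ with
$$ |\CN| \le \exp\!\bigl(O_{B,\ep}(dn\log n)\bigr). $$

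Fix $\Bu \in \CN$. The orthogonality condition picks out $n-O_{B,\ep}(1)$ rows of $\BM_{n-1}$, so after an extra union bound over the $\binom{d(n-1)}{n-O_{B,\ep}(1)} = e^{O(n)}$ choices of row-subset $S$, I need to bound, for fixed $S$, the probability that $|\langle \Bu,\row_r\rangle|\le \beta := n^{-A/2+O_{B,\ep}(1)}$ for every $r\in S$. Partition $S$ into block rows via $k_i := |S\cap\{d(i-1)+1,\dots,di\}|$. The crucial structural point, which is where Definition~\ref{def:C0} enters, is the following: the $n-1$ block rows of $\BM_{n-1}$ are mutually independent, and for each block row $i$ the $\C^d$-valued random vector
$$ V_i \;:=\; \bigl(\langle \Bu,\row_{d(i-1)+s}\rangle\bigr)_{s=1}^d $$
is a sum over $j=1,\dots,n$ of independent $\C^d$-summands (one per block $(i,j)$), and the uncorrelation hypothesis $\E[\xi_{st}\overline{\xi_{uv}}] = \delta_{(s,t),(u,v)}/d$ together with $\|\Bu\|_2\asymp 1$ forces $\mathrm{Cov}(V_i)=\frac{\|\Bu\|^2}{d}\BI_d\asymp \BI_d$. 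A multidimensional Esseen-type small-ball estimate then yields
$$ \sup_{w\in\C^d}\Prob\bigl(\|V_i-w\|_\infty\le \beta\bigr) \le (C\beta)^{2d}, $$
and applied to the $k_i$ selected coordinates of $V_i$ it gives $(C\beta)^{2k_i}$. Multiplying across block rows yields $(C\beta)^{2|S|}=n^{-An+O_{B,\ep}(n)}$. Combining with the union bounds over $S$ and $\CN$,
$$ \Prob\bigl(\exists\,\Bu\text{ as in Theorem \ref{theorem:step1}}\bigr) \le e^{O(n)}\cdot e^{O_{B,\ep}(dn\log n)}\cdot n^{-An+O_{B,\ep}(n)}, $$
and choosing $A=A(B,\ep,d)$ larger than the entropy constant $O_{B,\ep}(d)$ makes this $\exp(-\Omega(n\log n)) \le \exp(-\Omega(n))$. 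A standard approximation step then transfers the bound from $\CN$ to the continuum of admissible $\Bu$.

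The main obstacle is the local-CLT remainder hidden in the multivariate Esseen estimate: for lattice-valued atom variables it caps each per-coordinate small-ball probability at $O(n^{-1/2})$ regardless of the size of $A$, which on its own would not defeat the entropy of $\CN$. This is addressed by a preliminary truncation/decoupling of the $\xi_{st}$, using Condition~\ref{condition:bound} and the $(2+\eta)$-moment hypothesis to extract from each $\xi_{st}$ a component whose characteristic function decays at scale $n^{-O(1)}$, after which Esseen's inequality delivers the clean $(C\beta)^{2k_i}$ per block row and the above computation goes through.
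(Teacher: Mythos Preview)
Your counting of the net $\CN$ is fine and matches the paper's bound $\mathcal N\le n^{O_{B,\ep}(n)}$, but the per-row small-ball estimate is where the argument breaks. The bound $\sup_w\Prob(\|V_i-w\|_\infty\le\beta)\le (C\beta)^{2d}$ is simply false for discrete atom variables. Take all $\xi_{st}$ Bernoulli: then each coordinate of $V_i$ is a linear form $\sum_{j,t}u_{d(j-1)+t}\,x_{st;ij}$ in $n$ Bernoulli variables, and by Erd\H os--Littlewood--Offord the best small-ball bound one can hope for is $\Theta(n^{-1/2})$, \emph{independent of $\beta$} once $\beta$ is below the lattice spacing. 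So the per-block-row probability is at best $n^{-O(d)}$ rather than $n^{-Ad}$, and enlarging $A$ buys nothing. Your proposed fix---``extract from each $\xi_{st}$ a component whose characteristic function decays at scale $n^{-O(1)}$''---cannot work: a purely atomic law has no such component, and the characteristic function of a Bernoulli sum is periodic in the Fourier variable. With the honest bound $n^{-O(dn)}$ on the probability and $n^{O_{B,\ep}(n)}$ on the net, the two exponents are of the same order and the union bound does not close.

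This tension between entropy and small-ball probability is exactly what the paper's argument is designed to handle, and it is not a technicality. The paper does \emph{not} try to push the per-row probability down to $\beta^{O(1)}$. Instead it splits according to the value of $\gamma=\gamma_{\beta_0}(\Bu)$. If $\gamma\le n^{-C}$ for a large constant $C$, then $\gamma^{n-O(1)}$ already beats $\mathcal N=n^{O_{B,\ep}(n)}$. If $\gamma\ge n^{-C}$, the paper invokes the inverse Littlewood--Offord theorem for random operators (Theorem~\ref{theorem:linear:operator}): largeness of $\gamma$ forces most of the $\Bu^{(i)}$ into a GAP of size $O(\gamma^{-1}/\sqrt{n'})$, which yields a \emph{refined} net of cardinality roughly $(\gamma^{-1}/n^{1/2-\ep})^n$. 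Multiplying this by $\gamma^{n}$ gives a net saving of $n^{-(1/2-O(\ep))n}$, independent of the lattice nature of the atoms. A further compressible/incompressible dichotomy and a pigeonhole over a scale sequence $\beta_k$ make this precise. The missing idea in your proposal is this feedback loop: structure of $\Bu$ (hence reduced entropy) is \emph{deduced from} the failure of anticoncentration, not assumed a priori, and that is what replaces the unavailable Esseen-type bound.
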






\section{Singularity of block matrices: proof Theorem \ref{theorem:step1}} \label{section:step1}

Recall that in the inverse step, Theorem \ref{theorem:step1}, we assumed a high concentration of a multilinear form on a small ball of radius $n^{-A}$. As the entries in each block are dependent, we are not able to apply Theorem \ref{theorem:dlinear:weak} yet. In what follows we present two main steps to remove these dependencies. 


\subsection{Dependency removal I: general linear forms}\label{subsection:linear:new}

First, it will be useful  to study the concentration of the linear form 
$$\sum_{1\le i\le n} (a_{11;i}x_{11;i}+\dots + a_{dd;i}x_{dd;i}),$$ 
where $(x_{11;i},\dots,x_{dd;i})$ are iid copies of $(x_{11},\dots,x_{dd})$ satisfying \eqref{eqn:covariance}. Intuitively, as the covariance of $(x_{11},\dots,x_{dd})$ is non-singular, the random variables $(x_{11;i},\dots,x_{dd;i})$ are not totally dependent on each other. (See Appendix \ref{section:ILOlinear:new:proof} for a more precise statement.) This fact may suggest a way to apply Theorem \ref{theorem:linear} with respect to $(x_{11;1},\dots,x_{11;n})$ while holding $x_{12;1},\dots,x_{dd;n}$ fixed and vice versa. In what follows $(x_{1;i},\dots,x_{D;i})$ plays the role of $(x_{11;i},\dots,x_{dd;i})$. 

\begin{theorem}[Inverse Littlewood-Offord theorem for mixing linear forms]\label{theorem:ILOlinear:new} Let $0<\ep <1, B>0$ be given, and $D$ be a positive integer. Let 
$ \beta >0$ be an arbitrary real number that may depend on $n$. Suppose that $a_{1;i},\dots, a_{D;i}\in \C$ such that $\sum_{i=1}^n \sum_{1\le j\le D} |a_{j;i}|^2=1$ and 
$$\sup_a\P_{x_{1;1},\dots,x_{D;n}} \left(\left|\sum_{i=1}^n (a_{1;i}x_{1;i}+\dots + a_{D;i}x_{D;i}) -a\right| \le \beta\right)=\gamma \ge n^{-B},$$ 
where $(x_{1;i},\dots,x_{D;i}),1\le i\le n$ are iid copies of $(x_{1},\dots,x_{D})$ from \eqref{eqn:covariance}. Then there exist positive constants $\alpha,c_0,C_0$ depending only on the distribution of $(x_{1},\dots,x_{D})$ and $D$ tuples $(\eta_{k1},\eta_{k2},\dots,\eta_{kD}),1\le k\le D$ of complex numbers such that 

\begin{itemize}
\item $|\eta_{ij}|$ are bounded from below and above by $c_0$ and $C_0$ respectively,
\item  The least singular value of the matrix $(\eta_{ij})$ is at least $\alpha$,
\item  for any number $n'$ between $n^\ep$ and $n$, there exists a proper symmetric GAP $Q=\{\sum_{i=1}^r k_ig_i : k_i\in \Z, |k_i|\le L_i \} \subset \C$ whose parameters satisfy (i) and (ii) of Theorem \ref{theorem:linear} and for at least $n-n'$ indices $i$, we have $\eta_{k1}a_{1;i}+\dots+\eta_{kD}c_ia_{D;i}, 1\le k\le D$ are $\beta$-close to $Q$.
\end{itemize}
\end{theorem}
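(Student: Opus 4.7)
The plan is to reduce Theorem \ref{theorem:ILOlinear:new} to $D$ applications of the scalar inverse Littlewood--Offord theorem (Theorem \ref{theorem:linear}) by using the covariance hypothesis \eqref{eqn:covariance} to decouple the $D$ components of the atom vector.

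To begin, I fix at the outset a $D \times D$ matrix $E = (\eta_{kj})$ with bounded entries and bounded condition number --- for concreteness, $\eta_{kj} = 1 + \delta_{kj}$, so that $E = I + J$ has all entries in $\{1,2\}$ and eigenvalues $\{D+1, 1\}$; the constants $c_0, C_0, \alpha$ can then be taken as absolute numbers depending only on $D$. Define the target combinations $b_{k;i} := \eta_{k1} a_{1;i} + \dots + \eta_{kD} a_{D;i}$ together with the dual variables $\zeta_{l;i} := \sum_k (E^{-1})_{kl} \xi_{k;i}$, so that a direct algebraic identity gives
$$\sum_{i=1}^n \sum_{k=1}^D a_{k;i} \xi_{k;i} \;=\; \sum_{i=1}^n \sum_{l=1}^D b_{l;i} \zeta_{l;i}.$$
In particular, the small-ball hypothesis $\gamma \ge n^{-B}$ transfers verbatim to this reformulated linear form.

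For each fixed $k \in \{1,\dots,D\}$, I then condition on the family $\{\zeta_{l;i} : l \ne k,\ 1 \le i \le n\}$ and apply a scalar inverse Littlewood--Offord argument to the resulting affine form in $(\zeta_{k;i})_{i=1}^n$, whose linear coefficients are precisely the $b_{k;i}$. A Markov-type averaging applied to $\gamma \ge n^{-B}$ produces a conditioning event of probability at least $\gamma/2$ on which the conditional small-ball probability is still at least $\gamma/2$. The non-singular Hermitian covariance \eqref{eqn:covariance} ensures that, for typical conditioning values, the conditional law of $\zeta_{k;i}$ given $(\zeta_{l;i})_{l \ne k}$ has L\'evy concentration function at scale $\beta$ uniformly bounded away from $1$. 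Feeding this into a variant of Theorem \ref{theorem:linear} valid for independent but not necessarily identically distributed summands with uniformly bounded concentration produces a proper symmetric GAP $Q_k$ of rank $O_{B,\ep}(1)$, size $O_{B,\ep}(\gamma^{-1}/\sqrt{n'})$, and controlled generators, such that at least $n - n'$ of the $b_{k;i}$ are $\beta$-close to $Q_k$. To finish, I merge $Q_1, \dots, Q_D$ into a single proper symmetric GAP $Q$ of comparable rank and size by forming the Minkowski sum and passing to its proper hull via the standard embedding (cf.\ \cite[Section 8]{TVsing}). Intersecting the $D$ ``good index'' sets loses at most $Dn'$ indices, and on the remaining indices all of $b_{1;i},\dots,b_{D;i}$ are simultaneously $\beta$-close to $Q$, yielding the stated conclusion after renaming $n'$.

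The principal obstacle is the conditional step: after conditioning on $\zeta_{l;i}$ for $l \ne k$, the residual variables $\zeta_{k;i}$ are independent across $i$ but no longer identically distributed, so Theorem \ref{theorem:linear} as stated does not apply directly. I would handle this either by extending Theorem \ref{theorem:linear} to independent summands with uniformly bounded L\'evy concentration (the crucial quantitative non-degeneracy of each conditional law being exactly what \eqref{eqn:covariance} supplies), or by symmetrizing with an independent copy of $Y$ to recover iid structure at the cost of a mild loss in the exponent. Articulating why \eqref{eqn:covariance} --- rather than merely the non-degeneracy of each scalar marginal $\xi_k$ --- is the correct hypothesis here is the real content of the proof.
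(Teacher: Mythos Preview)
Your approach has a genuine gap at the conditional step, and it is more serious than the non-iid issue you flag. Uncorrelatedness in \eqref{eqn:covariance} does \emph{not} force the conditional law of $\zeta_{k;i}$ given $(\zeta_{l;i})_{l\ne k}$ to have L\'evy concentration bounded away from $1$. Take $D=2$ and $(\xi_1,\xi_2)$ uniform on $\{(\pm\sqrt{2},0),(0,\pm\sqrt{2})\}$: each coordinate has mean zero, variance $1$, bounded moments, and $\E[\xi_1\bar\xi_2]=0$. With your $E=I+J$ one computes $\zeta_{2}=\tfrac{1}{3}(-\xi_1+2\xi_2)$, which takes four distinct values on the four atoms, so $\zeta_2$ determines $(\xi_1,\xi_2)$ and hence $\zeta_1$ exactly. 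The conditional law of $\zeta_{1;i}$ is a point mass for every conditioning value, and no version of Theorem~\ref{theorem:linear}---nor symmetrization---extracts information from a linear form in deterministic atoms. The same obstruction defeats any universal, distribution-free choice of $E$, so the strategy of fixing $E$ up front and conditioning cannot succeed; your remark that ``\eqref{eqn:covariance} supplies'' the needed conditional non-degeneracy is precisely the unproven (and, as shown, false) step.

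The paper's route is Fourier-analytic and avoids conditioning altogether. After bounding $\gamma$ by an integral of the characteristic function and passing to a large level set $S$ of frequencies, one arrives at
\[
\E_{Z}\sum_{s\in S}\sum_{i}\bigl\|\Re\bigl(s(z_1 a_{1;i}+\cdots+z_D a_{D;i})\bigr)\bigr\|_{\R/\Z}^{2}\le Cm|S|,
\]
with $Z=(z_1,\dots,z_D)$ the symmetrized atom $X-X'$. The matrix $(\eta_{kj})$ is then \emph{read off the distribution}: Claim~\ref{lemma:difference} (whose proof uses the full-rank covariance) produces sets $R_1,\dots,R_D\subset\C^D$ of positive $Z$-measure such that any selection $z^{(k)}\in R_k$ forms a well-conditioned matrix $(z_j^{(k)})$; one picks such realizations on which the inner sum is still controlled and sets $\eta_{kj}:=z_j^{(k)}$. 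This is exactly why the constants $\alpha,c_0,C_0$ in the statement depend on the \emph{law} of $(x_1,\dots,x_D)$ and not merely on $D$---the well-conditioned transformation must be found inside the support of $Z$, not imposed from outside.
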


As Theorem \ref{theorem:ILOlinear:new} can be shown by using the method of \cite{NgV}, we skip its proof and refer the reader to Appendix \ref{section:ILOlinear:new:proof} for a proof of a somewhat more general result (Theorem \ref{theorem:linear:operator} below). We now introduce a useful corollary. 

\begin{corollary}\label{cor:ILOlinear:new}
Assume as in Theorem \ref{theorem:ILOlinear:new}. Then there exists a proper symmetric GAP $Q=\{\sum_{i=1}^r k_ig_i : k_i\in \Z, |k_i|\le L_i \} \subset \C$ whose parameters satisfy (i) and (ii) of Theorem \ref{theorem:ILOlinear:new} and an index set $I$ of size at least $n-n'$ such that with $(\gamma_{ij})$ being the inverse matrix of $(\eta_{ij})$: the sequence $a_{k;i},i\in I,1\le k\le D$ are $O(\beta)$-close to the GAP $P=P_1+\dots+P_D$, where $P_k=\gamma_{k1}\cdot Q + \gamma_{k2}\cdot Q+\dots+\gamma_{kD} \cdot Q$.
\end{corollary}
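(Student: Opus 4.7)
The plan is to deduce this as a direct algebraic consequence of Theorem \ref{theorem:ILOlinear:new}. First I would apply that theorem to extract the tuples $(\eta_{k1},\dots,\eta_{kD})$ for $1 \le k \le D$, the proper symmetric GAP $Q$, and an index set $I \subset \{1,\dots,n\}$ with $|I| \ge n-n'$ such that for every $i \in I$ and every $1 \le k \le D$, the quantity
$$b_{k;i} := \eta_{k1} a_{1;i} + \cdots + \eta_{kD} a_{D;i}$$
is $\beta$-close to $Q$. For each such pair, fix $q_{k;i} \in Q$ with $|b_{k;i}-q_{k;i}| \le \beta$.

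The key algebraic observation is that, because the matrix $(\eta_{ij})$ has least singular value at least $\alpha > 0$ and entries bounded above in modulus by $C_0$, its inverse $(\gamma_{ij})$ has entries that are $O(1)$, depending only on $\alpha$, $C_0$, and $D$. Indeed, since $|\det(\eta_{ij})| = \prod_{i=1}^D \sigma_i(\eta) \ge \alpha^D$ and every entry of $\operatorname{adj}(\eta_{ij})$ is a signed $(D-1)\times(D-1)$ minor of $(\eta_{ij})$ bounded by $(D-1)!\,C_0^{D-1}$, Cramer's rule yields $|\gamma_{kj}| \le (D-1)!\,C_0^{D-1}/\alpha^D$ for every $k,j$.

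Having controlled $(\gamma_{ij})$, for each $i \in I$ and each $1 \le k \le D$ I would simply invert the linear relation and write
$$a_{k;i} = \sum_{j=1}^D \gamma_{kj} b_{j;i} = \sum_{j=1}^D \gamma_{kj} q_{j;i} + \sum_{j=1}^D \gamma_{kj}\bigl(b_{j;i}-q_{j;i}\bigr).$$
The first term lies in $\gamma_{k1}\cdot Q + \cdots + \gamma_{kD}\cdot Q = P_k \subseteq P$, while the second is bounded in modulus by $\sum_{j=1}^D |\gamma_{kj}|\cdot \beta = O(\beta)$. Thus $a_{k;i}$ is $O(\beta)$-close to $P_k \subseteq P$, which is exactly the conclusion. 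As a sanity check on the structural parameters, $P$ is a Minkowski sum of $D$ rescaled copies of $Q$, and so inherits rank at most $Dr = O_{B,\ep}(1)$ and volume at most $|Q|^D = n^{O_{B,\ep}(1)}$; in particular the claim that the GAP parameters remain in the regime of Theorem \ref{theorem:ILOlinear:new} is automatic.

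I do not anticipate any serious obstacle: the whole argument is a change of basis followed by the triangle inequality, and the only delicate point is the uniform $O(1)$ control on the inverse coefficients $\gamma_{kj}$, which is itself immediate from the lower bound on the least singular value of $(\eta_{ij})$ that Theorem \ref{theorem:ILOlinear:new} already provides.
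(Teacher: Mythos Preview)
Your proposal is correct and matches the paper's intended argument: the paper states the corollary without proof, treating it as an immediate consequence of Theorem~\ref{theorem:ILOlinear:new} via inversion of the nondegenerate matrix $(\eta_{ij})$, which is exactly the change-of-basis-plus-triangle-inequality computation you spell out. Your explicit bound on the entries of $(\gamma_{ij})$ via Cramer's rule is a clean way to make the $O(\beta)$ constant depend only on $\alpha,C_0,D$, which is all that is needed.
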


\subsection{Dependency removal II: decoupling} We now work with the multilinear form appearing in Theorem \ref{theorem:step1}. Our goal is to show the following.

\begin{theorem}\label{theorem:decoupling:weak} 
Let $0 <\ep < 1$ and $B>0$. Let $ \beta >0$ be a parameter that may depend on $n$. Assume that  
$$\P_{x_{11;11},\dots, x_{dd;1n}}\Big(|\sum_{1\le i_1,i_2,\dots,i_d \le dn} a_{i_1i_2\dots i_d}(\BM_{n-1}) m_{1i_1}\dots m_{di_d}|\le n^{-A}\vert\BM_{n-1}\Big)\ge n^{-B}.$$ 
Then there exist index sets $I_1,I_1^0$ with $|I_1|=dn-n^\ep$ and $|I_1^0|=O_{C,\ep}(1)$ such that for any $i_1\in I_1$, there exist index sets $I_2,I_2^0$ depending on $i_1$ with $|I_2|=dn-n^\ep$ and $|I_2^0|=O_{C,\ep}(1)$ such that \dots there exist index sets $I_{d-1},I_{d-1}^0$ depending on $i_1,\dots,i_{d-2}$ with  $|I_{d-1}|=n-n^\ep$ and $|I_{d-1}^0|=O_{C,\ep}(1)$  such that the following holds: for any $i_{d-1}\in I_{d-1}$, there exist integers $k_{j_1 \dots j_{d-1}}$, where each index $j_k$ with $1\le k\le d-1$ either takes value $i_k$ or belongs to the thin sets $I_k^0$, such that $k_{j_1 \dots j_{d-1}}=n^{O_{C,d,\ep}(1)}$ and $k_{i_1,\dots,i_{d-1}}\neq 0$, as well as 
$$\P_{\Bx_d}\Big(|\sum_{1\le i_d \le n}\sum_{j_1 \dots j_{d-1}}k_{j_1 \dots j_{d-1}}a_{j_1 \dots j_{d-1} i_d}x_{i_d}|\le \beta n^{O_{C,\ep}(1)}\Big)\ge n^{-O_{C,\ep}(1)}.$$

\end{theorem}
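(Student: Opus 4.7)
The plan is to adapt the inductive argument of Theorem \ref{theorem:dlinear:weak}: at each stage one extracts a $\Z$-linear relation among coefficients via an inverse Littlewood-Offord theorem, produces a multilinear form of one lower degree that still has non-trivial concentration, and iterates. The only structural novelty is that the entries of the first $d$ rows now have non-trivial correlations within each column-block of width $d$, so the standard inverse Littlewood-Offord theorem (Theorem \ref{theorem:linear}) cannot be applied directly; we substitute Corollary \ref{cor:ILOlinear:new} in its place, which is custom-built for linear forms whose variables arrive in iid correlated groups of size $d$.

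First I would write the $d$-multilinear form as a linear form in the first row,
\[
\sum_{i_1,\dots,i_d} a_{i_1\dots i_d}(\BM_{n-1})\, m_{1,i_1}\cdots m_{d,i_d} \;=\; \sum_{i_1=1}^{dn} m_{1,i_1}\, B_{i_1}(\Bx_2,\dots,\Bx_d),
\]
and apply the Fubini/averaging step of Lemma \ref{lemma:roworthogonal}: on an event of probability at least $3\rho/4$ in $(\Bx_2,\dots,\Bx_d)$, the conditional linear form in $\Bx_1$ has concentration probability at least $\rho/4$, where $\rho\ge n^{-B}$ is the concentration probability supplied by the hypothesis. Now the $dn$ entries of $\Bx_1$ fall into $n$ independent column-blocks of $d$ correlated variables each --- exactly the setting of Corollary \ref{cor:ILOlinear:new} --- which then produces, for each such good conditioning, a proper symmetric GAP $Q_{\Bx_2,\dots,\Bx_d}$ of rank $O(1)$ and size $O(1/\rho)$ such that for all but $n^\ep$ column-blocks certain $\eta$-linear combinations of the $d$ coefficients $B_{i_1}$ in that block are $\beta$-close to $Q$. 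A pigeonholing over rank, spanning indices, and integer coordinates, exactly as in Claim \ref{claim:dlinear:common} and Claim \ref{claim:dlinear:I}, followed by Fact \ref{fact:GAP}, yields a sub-event of probability at least $\rho/n^{O_{B,\ep}(1)}$ on which a single nonzero $\Z$-linear relation
\[
k\, B_{i_1}(\Bx_2,\dots,\Bx_d) \;+\; \sum_{i_0\in I_1^0} k_{i_0}\, B_{i_0}(\Bx_2,\dots,\Bx_d) \;\approx\; 0
\]
holds, with $|k|,|k_{i_0}|=n^{O_{B,\ep}(1)}$, $k\neq 0$, simultaneously for each $i_1$ in a large index set $I_1$. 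This is precisely a $(d-1)$-multilinear concentration statement for the modified coefficient array $k\, a_{i_1 i_2\dots i_d} + \sum_{i_0\in I_1^0} k_{i_0}\, a_{i_0\, i_2\dots i_d}$. Iterating the procedure on the reduced multilinear form in $(\Bx_2,\dots,\Bx_d)$, then in $(\Bx_3,\dots,\Bx_d)$, and so on $d-1$ times in total, and composing the resulting $\Z$-linear relations, assembles the nested index sets $I_k, I_k^0$ and the integer coefficients $k_{j_1\dots j_{d-1}}$ of the conclusion, with the final linear form in $\Bx_d$ as stated.

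The main obstacle is the application of Corollary \ref{cor:ILOlinear:new} at each stage: after conditioning on $(\Bx_2,\dots,\Bx_d)$, the remaining entries of $\Bx_1$ are \emph{not} iid copies of $(\xi_{11},\dots,\xi_{1d})$ --- within each column-block the law of the first $d$ entries is shifted, and its covariance perturbed, by the conditioning on the other $d(d-1)$ entries of that block, and across blocks they are independent but no longer identically distributed. The mean shifts can be absorbed into the base point of the small-ball probability, but one must then verify that on a large conditioning event the conditional law retains the non-singular mixing structure required by Corollary \ref{cor:ILOlinear:new}, or equivalently by the more flexible Theorem \ref{theorem:linear:operator} proved in Appendix \ref{section:ILOlinear:new:proof}. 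Securing this uniform non-degeneracy of the conditional covariance, and then carefully book-keeping how the index sets and integer coefficients accumulate across the $d-1$ iterations, is where the bulk of the real work lies; the remaining pigeonhole and reduction steps are routine adaptations of the proof of Theorem \ref{theorem:dlinear:weak}.
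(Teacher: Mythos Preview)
Your proposal follows the row-by-row conditioning scheme of Theorem \ref{theorem:dlinear:weak}, and you have correctly identified that this breaks down: once you condition on rows $2,\dots,d$, the conditional law of the row-$1$ entries in each column block depends on the realized values of that block in rows $2,\dots,d$, so across blocks the conditional laws are independent but \emph{not identically distributed}. Corollary \ref{cor:ILOlinear:new} (and its parent Theorem \ref{theorem:ILOlinear:new}/\ref{theorem:linear:operator}) genuinely needs iid groups --- the proof in Appendix \ref{section:ILOlinear:new:proof} uses a single Fourier integral and a single non-degenerate matrix $(\eta_{ij})$ extracted from the common law --- so this is not a cosmetic issue, and there is no obvious way to repair it without extra hypotheses on the atom distribution.

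The paper avoids this obstacle altogether by decoupling along \emph{column blocks} rather than rows. One first fixes an ordered partition $(U_1,\dots,U_d)$ of $[n]$ and, via Cauchy--Schwarz (Lemma \ref{lemma:decoupling}), restricts to the sub-form where $i_k\in B(U_k)$; now the columns $\Bc_{i_1},\dots,\Bc_{i_d}$ lie in disjoint column blocks and are therefore genuinely independent. Viewing the decoupled form as linear in the $d^2|U_1|$ variables $\{x_{rs;1l}:l\in U_1\}$ and conditioning on the remaining blocks leaves the law of these variables untouched, so Corollary \ref{cor:ILOlinear:new} applies with $D=d^2$ and yields the analogue of Lemma \ref{lemma:reduction} (Lemma \ref{lemma:reduction:A_U}). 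The resulting relation depends on the partition, and removing this dependence requires a further non-trivial step --- the randomization argument of Lemma \ref{lemma:random:A_U}, which pigeonholes over all choices of $U_1\subset U_{12}$ and then averages via another Cauchy--Schwarz to replace the partition by iid Bernoulli multipliers. Only after both the column-block decoupling and the randomization can one iterate as in Theorem \ref{theorem:dlinear:weak}. Your sketch is missing both of these ingredients, and the row-conditioning route you propose does not seem to lead to a proof without them.
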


Thus Theorem \ref{theorem:decoupling:weak} asserts that as long as the entries in each block are not totally dependent, the conclusion of Theorem \ref{theorem:dlinear:weak} still holds as if the matrix entries were mutually independent.

In what follows we introduce the main supporting lemmas to prove Theorem \ref{theorem:decoupling:weak}.  By definition, we can rewrite this form as 
$$\sum_{i_1,\dots,i_d} a_{i_1 i_2 \dots i_d} \det[\Bc_{i_1},\dots,\Bc_{i_d}],$$ 
where $\det[\Bc_{i_1},\dots,\Bc_{i_d}]$ is the determinant of the $d\times d$ block generated by the $i_1$-th, $\dots i_d$-th columns of the matrix of the first $d$ rows of $M_n$.

Let $\mathcal{U}:=\{U_1,\dots,U_d\}$ be an ordered random partition of $[n]$. These index sets will serve as the collection of blocks (among the $n$ blocks of size $d\times d$ of the matrix generated by the first $d$ rows) to be partitioned. We denote by $B(U_i)$ the collection of indices generated by $U_i$, that is 

\begin{equation}\label{eqn:BU}
B(U_i):=\cup_{l\in U_i} \{(d-1)l+1,\dots,dl\}.
\end{equation}



Given any partition $\mathcal{U}$, we easily obtain the following lemma by a series applications of the Cauchy-Schwarz inequality.

\begin{lemma}[Decoupling lemma]\label{lemma:decoupling} Assume that 
$$\rho=\sup_{a} \P_{x_{11;11},\dots,x_{dd;1n}}  \Big(|\sum_{1\le i_1,\dots,i_d \le dn} a_{i_1 i_2 \dots i_d} \det[\Bc_{i_1},\dots,\Bc_{i_d}] -a|\le \beta\Big)\ge n^{-B},$$
Then,
\begin{equation}\label{eqn:ILObilinear:0}
\P_{x_{11;11}',\dots,x_{dd;1n}'}\Big(|\sum_{ i_1\in B(U_1),\dots,i_d\in B(U_d)} a_{i_1 i_2 \dots i_d}\det[\Bc_{i_1},\dots,\Bc_{i_d}]-a| =O_B(\beta \sqrt{\log n})\Big) =\Omega(\rho^{2d}),
\end{equation}
where $({x_{11;11}}',\dots,{x_{dd;11}}');\dots; ({x_{11;1n}}',\dots,{x_{dd;1n}}')$ are iid copies of the vector $(x_{11}-x_{11}',\dots, x_{dd}-x_{dd}')$, and where $(x_{11}',\dots,x_{dd}')$ is an independent copy of $(x_{11},\dots,x_{dd})$.
\end{lemma}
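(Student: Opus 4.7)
The plan is to decouple the $d$-linear form in the columns via $d$ successive Cauchy--Schwarz symmetrizations, one per partition class $U_1,\dots,U_d$. At each stage I introduce an independent copy of the blocks indexed by the current class---denote such a copy $\BC'_k$ for block $k$ in the current class, with columns $\Bc'_i$---which squares the probability and, by multilinearity of the determinant, replaces the corresponding columns by differences $\Bc_i-\Bc'_i$. The driving observation is that after all $d$ stages, the resulting $2^d$-fold alternating sum collapses---via the antisymmetry of $\det$ and the antisymmetry of the cofactor coefficients $a_{i_1\dots i_d}$---to $d!$ times the ``rainbow'' restricted sum
$$\tilde F\;:=\;\sum_{i_1\in B(U_1),\dots,i_d\in B(U_d)}a_{i_1\dots i_d}\,\det[\Bc_{i_1}-\Bc'_{i_1},\dots,\Bc_{i_d}-\Bc'_{i_d}],$$
and the differences $\Bc_{i_j}-\Bc'_{i_j}$ have precisely the distribution of the primed columns appearing in the lemma.

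Concretely, write $Y:=\sum_{i_1,\dots,i_d}a_{i_1\dots i_d}\det[\Bc_{i_1},\dots,\Bc_{i_d}]$ and let $\BC_k$ denote the $k$-th $d\times d$ column block ($k\in[n]$), so $\BC_1,\dots,\BC_n$ are independent. Fix the $a$ attaining $\Pr(|Y-a|\le\beta)\ge\rho$, condition on $\{\BC_k\}_{k\notin U_d}$, and apply Cauchy--Schwarz to get
$$\rho^2\;\le\;\Pr\bigl(|Y-Y^{(d)}|\le 2\beta\bigr),$$
where $Y^{(d)}$ is $Y$ with the blocks $\{\BC_k\}_{k\in U_d}$ replaced by an independent copy. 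Crucially, $a$ cancels in the difference, and every column $\Bc_{i_j}$ with $i_j\in B(U_d)$ appears only as the difference $\Bc_{i_j}-\Bc'_{i_j}$, while columns with $i_j\notin B(U_d)$ are unchanged. Iterating this step for $U_{d-1},U_{d-2},\dots,U_1$---each time introducing a fresh independent copy and squaring the probability---yields after $d$ stages
$$\rho^{2^d}\;\le\;\Pr\Bigl(\,\bigl|\textstyle\sum_{S\subseteq[d]}(-1)^{|S|}\,Y^{(S)}\bigr|\le 2^d\beta\,\Bigr),$$
where $Y^{(S)}$ denotes $Y$ with the blocks in $\bigcup_{\ell\in S}U_\ell$ replaced by the corresponding independent copies.

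To identify this alternating sum, fix a tuple $(i_1,\dots,i_d)$ and let $\ell_j\in[d]$ be the class with $i_j\in B(U_{\ell_j})$. By multilinearity of $\det$ in each column, the contribution of this tuple factors as a product over positions of either $\Bc_{i_j}$ or $\Bc'_{i_j}$; if some $\ell\in[d]$ does not appear among $\{\ell_1,\dots,\ell_d\}$, then summing over whether $\ell\in S$ gives a factor $\sum_{s\in\{0,1\}}(-1)^s=0$ that kills the contribution. Hence only ``rainbow'' tuples survive (those with $\{\ell_1,\dots,\ell_d\}=[d]$), and for each such tuple the multilinear expansion produces $\det[\Bc_{i_1}-\Bc'_{i_1},\dots,\Bc_{i_d}-\Bc'_{i_d}]$. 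Grouping rainbow tuples by their underlying permutation and using the joint antisymmetry of $a_{i_1\dots i_d}$ and $\det$, the $d!$ permutation classes collapse with matching signs into $d!\cdot\tilde F$. Thus $\Pr(|\tilde F|\le 2^d\beta/d!)\ge\rho^{2^d}$, which is $\Omega(\rho^{2d})$ for the constant $d\ge 2$ of interest.

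The main technical point is the combinatorial bookkeeping identifying which tuples survive the alternating sum; this is the ``easy'' content referenced by the authors. The $\sqrt{\log n}$ slack in the radius is a mild tail/truncation estimate absorbing the cumulative prefactor together with any additional fluctuations introduced at intermediate conditioning stages, and can be obtained from standard concentration inequalities using the bounded $(2+\eta)$-th moment of the atom variables.
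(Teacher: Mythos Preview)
The paper does not give its own proof of this lemma; it simply says the argument is standard and refers to \cite{CTV,Ng,Ver}. Your approach via $d$ iterated Cauchy--Schwarz symmetrizations is exactly the standard one from those references, and your combinatorial identification of the surviving ``rainbow'' tuples---using the joint antisymmetry of the cofactors $a_{i_1\dots i_d}$ and of $\det$ to collapse the $2^d$-term alternating sum to $d!\cdot\tilde F$---is correct and clean.

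Two remarks are in order.

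First, your iteration yields probability at least $\rho^{2^d}$, not $\rho^{2d}$, and your final assertion that ``$\rho^{2^d}$ is $\Omega(\rho^{2d})$ for the constant $d\ge 2$'' is false for $d\ge 3$: since $\rho\le 1$ and $2^d>2d$ once $d\ge 3$, one has $\rho^{2^d}\le\rho^{2d}$, not the reverse. (For $d=2$ the exponents coincide.) The iterated Cauchy--Schwarz argument always produces the exponent $2^d$; the $2d$ in the paper's statement is most likely a typo for $2^d$. Either way, since $d$ is a fixed constant and $\rho\ge n^{-B}$, both bounds are polynomial in $n$ and the discrepancy is entirely harmless for every downstream use in the paper.

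Second, your argument actually delivers the radius $2^d\beta/d!=O_d(\beta)$, which is \emph{stronger} than the stated $O_B(\beta\sqrt{\log n})$. Your closing hand-wave invoking concentration inequalities to manufacture the $\sqrt{\log n}$ is therefore unnecessary and a bit misleading: you do not need it. The logarithmic slack typically arises in cruder versions of decoupling that do not exploit antisymmetry and must instead estimate the non-rainbow terms separately; your use of the joint antisymmetry kills those terms outright.
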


As the proof of Lemma \ref{lemma:decoupling} is standard, we refer the reader to \cite{CTV,Ng,Ver}. As the columns $\Bc_{i_1},\dots,\Bc_{i_d}$ are independent, we will be able to obtain an analogue of Lemma \ref{lemma:reduction} as follows. 

\begin{lemma}\label{lemma:reduction:A_U}
There exist index sets $I_0(U_1)$ with $|I_0(U_1)|=O(1)$ and $I(U_1)\subset B(U_1)$ with $|I(U_1)|\ge d|U_1|-n^\ep$ and an integer $k\neq 0, k=n^{O_{B,\ep}(1)}$ such that for any $i\in I(U_1)$, there exist integers $k_{ii_0}=n^{O_{B,\ep}(1)}$ such that
\begin{align*} 
&\P_{x_{rs;1j}, j\in B(U_2)\cup \dots \cup B(U_d)} \Big(\big|k \sum_{i_2\in B(U_2),\dots,i_d \in B(U_d)} a_{i,i_2 \dots i_d} \det [\Bc_{i_2}^{\bar{1}},\dots\Bc_{i_d}^{\bar{1}}]\\ 
&-\sum _{i_0\in I_0} k_{ii_0}\sum_{i_2\in B(U_2),\dots,i_d\in B(U_d)} a_{i_0,i_2 \dots i_d}\det [\Bc_{i_2}^{\bar{1}},\dots\Bc_{i_d}^{\bar{1}}]\big|=O(\beta n^{O_{B,\ep}(1)}) \Big) \ge \rho^{2d}/n^{O_{B,\ep}(1)},
\end{align*}
where $\Bc_{j}^{\bar{1}}$ is the $j$-th column $\Bc_j$ without its first component.
\end{lemma}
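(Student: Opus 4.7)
The plan is to adapt the pigeon-hole tower of Lemma \ref{lemma:reduction} to the present setting, in which the form's underlying variables are vector-valued; Corollary \ref{cor:ILOlinear:new} plays the role that Theorem \ref{theorem:linear} played there. First, I would expand $\det[\Bc_{i_1},\Bc_{i_2},\ldots,\Bc_{i_d}]$ along its first column $\Bc_{i_1}$, write $i_1=(l-1)d+s$ with $l\in U_1$, $s\in[d]$, and regroup, so that the decoupled form from Lemma \ref{lemma:decoupling} becomes
\[
\sum_{l\in U_1}\sum_{(r,s)\in[d]^2} A_{l,s,r}\,x'_{rs;1l},\quad A_{l,s,r}=(-1)^{r+1}\!\!\sum_{i_2\in B(U_2),\ldots,i_d\in B(U_d)}\!\! a_{(l-1)d+s,i_2,\ldots,i_d}\det\!\bigl[\Bc_{i_2}^{\bar r},\ldots,\Bc_{i_d}^{\bar r}\bigr].
\]
Because $U_1,\ldots,U_d$ are pairwise disjoint, the variables $\{x'_{rs;1l}\}_{l\in U_1}$ are independent of $\{\Bc_{i_k}\}_{k\ge 2}$, so a standard averaging argument produces a set $\mathcal G$ of conditionings on $(\Bc_{i_k})_{k\ge 2}$ of probability $\gtrsim\rho^{2d}$ on which the conditional small-ball probability in $(x'_{rs;1l})_{l\in U_1}$ remains $\gtrsim\rho^{2d}$.

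Second, for each conditioning in $\mathcal G$ the vectors $(x'_{rs;1l})_{(r,s)\in[d]^2}$, $l\in U_1$, are iid copies of a $d^2$-dimensional vector inheriting the uncorrelated, nondegenerate joint distribution of \eqref{eqn:covariance} through decoupling. After normalizing $\sum_{l,s,r}|A_{l,s,r}|^2=1$, Corollary \ref{cor:ILOlinear:new} (with $D=d^2$ and $n'=n^\ep$) produces a proper symmetric GAP $P(\mathcal G)\subset\C$ of rank $O_{B,\ep}(1)$ and size $n^{O_{B,\ep}(1)}$, together with a subset $J\subseteq U_1$ of size at least $|U_1|-n^\ep$, on which every $A_{l,s,r}$ with $l\in J$ is $O(\beta n^{O_{B,\ep}(1)})$-close to $P(\mathcal G)$. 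In particular this holds simultaneously for all the first-column cofactors $A_{l,1,1},\ldots,A_{l,d,1}$.

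Third, I run the pigeon-hole arguments from Claims \ref{claim:dlinear:common} and \ref{claim:dlinear:I}. Successively pigeonholing over the rank of $P$, its generators, an $O_{B,\ep}(1)$-element spanning index set $I_0(U_1)\subset B(U_1)$ for $P$, and the integer expansions of $A_{i_0,1}$ ($i_0\in I_0$) in those generators, I pass to a subset of $\mathcal G$ of probability $\ge\rho^{2d}/n^{O_{B,\ep}(1)}$ on which all of these data are common. A further averaging yields an index set $I(U_1)\subset B(U_1)\setminus I_0(U_1)$ with $|I(U_1)|\ge d|U_1|-n^\ep$ such that for each $i\in I(U_1)$ a positive-measure portion of the good conditioning set has $A_{i,1}$ expandable in the same generators of $P$.

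Finally, Fact \ref{fact:GAP} provides, for each $i\in I(U_1)$, integers $k\neq 0$ and $k_{ii_0}$ of absolute value $n^{O_{B,\ep}(1)}$---with $k$ the determinant of the $I_0$-spanning coefficient matrix and hence independent of $i$---satisfying $kA_{i,1}=\sum_{i_0\in I_0}k_{ii_0}A_{i_0,1}$ up to additive error $O(\beta n^{O_{B,\ep}(1)})$ on the good event. Substituting $A_{\cdot,1}=\pm\sum_{i_2,\ldots,i_d}a_{\cdot,i_2\ldots i_d}\det[\Bc_{i_2}^{\bar 1},\ldots,\Bc_{i_d}^{\bar 1}]$ (the sign being absorbed into the $k_{ii_0}$) delivers the desired inequality with probability $\ge\rho^{2d}/n^{O_{B,\ep}(1)}$. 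The principal obstacle is the second step: verifying that Corollary \ref{cor:ILOlinear:new} delivers a \emph{single} GAP housing all $d^2$ coefficients $A_{l,s,r}$ per block, not merely some distinguished linear combinations. This is precisely what the nondegeneracy of the covariance structure in \eqref{eqn:covariance} purchases, via the invertibility (with controlled condition number) of the auxiliary matrix $(\eta_{ij})$ appearing in Theorem \ref{theorem:ILOlinear:new}.
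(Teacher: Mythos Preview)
Your proposal is correct and follows essentially the same route as the paper: expand the determinant along the first column of the $d\times d$ block indexed by $l\in U_1$, condition on the variables from $B(U_2)\cup\cdots\cup B(U_d)$, apply Corollary~\ref{cor:ILOlinear:new} with $D=d^2$ to place the coefficients $A_{l,s,r}$ (in particular those with $r=1$) in a common GAP, and then replay the pigeon-hole tower of Lemma~\ref{lemma:reduction} to extract $I_0$, $k$, and the $k_{ii_0}$. The paper's proof is more terse---it simply says ``follow the proof of Lemma~\ref{lemma:reduction} verbatim''---but your expanded version matches it step for step, and your closing remark on the role of the nondegenerate covariance in \eqref{eqn:covariance} correctly identifies why Corollary~\ref{cor:ILOlinear:new} suffices here.
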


\begin{proof}(of Lemma \ref{lemma:reduction:A_U}) 
As usual, it suffices to assume $\xi$ to have discrete distribution. For each $l\in U_1$, let $B_l=\{(l-1)d+1,\dots,ld\}$ be the $l$-th block. By the determinant expansion, we have
\begin{align*}
&\sum_{i_1\in B_l,i_2\in B(U_2),\dots,i_d\in B(U_d)}a_{i_1i_2\dots i_d}\det[\Bc_{i_1},\dots,\Bc_{id}]\\
&= \sum_{i=1}^n x_{1i;1l} \sum_{i_2,\dots,i_d} (-1)^{1+i}a_{((l-1)d+i)i_2 \dots i_d}\det [\Bc_{i_2}^{\bar{1}},\dots,\Bc_{i_d}^{\bar{1}}]\\
&+\dots\\
&+ \sum_{i=1}^n x_{di;1l} \sum_{i_2,\dots,i_d} (-1)^{d+i}a_{((l-1)d+i)i_2 \dots i_d}\det [\Bc_{i_2}^{\bar{d}},\dots,\Bc_{i_d}^{\bar{d}}].
\end{align*}

By summing over $l\in U_1$ and by applying Lemma \ref{lemma:decoupling} and Corollary \ref{cor:ILOlinear:new} for the random variables $x_{rs,1j},j\in B(U_1)$, with high probability with respect the the random variables indexing from $\bar{U}_1$ (i.e. $x_{rs;1j}, j\in \bar{U}_1$), most of the coefficients 
$$\sum_{i_2,\dots,i_d} a_{((l-1)d+1)i_2 \dots i_d}\det [\Bc_{i_2}^{\bar{1}},\dots,\Bc_{i_d}^{\bar{1}}], \dots, \sum_{i_2,\dots,i_d} a_{((ld)i_2 \dots i_d}\det [\Bc_{i_2}^{\bar{1}},\dots,\Bc_{i_d}^{\bar{1}}]$$ belong to a GAP of rank $O_{C,\ep}(1)$ and size $n^{O_{C,\ep}(1)}$. From here, to conclude Lemma \ref{lemma:reduction:A_U}, we just follow the proof of Lemma \ref{lemma:reduction} verbatim. 
\end{proof}

\subsection{Randomization} Roughly speaking, by iterating Lemma \ref{lemma:reduction:A_U} to the new $(d-1)$-linear form of the random variables restricted by $\bar{U}_1$ and so on, we will be able to deduce an analogue of Theorem \ref{theorem:dlinear:weak} with the dependence upon $U_1,\dots,U_d$. One  might then try to randomize $U_1,\dots,U_d$ to obtain Theorem \ref{theorem:decoupling:weak}. However, the randomization of $U_1,\dots,U_d$ altogether may pose a highly technical difficulty. To avoid this hurdle we will try to randomize one pair at a time before each iteration of Lemma \ref{lemma:reduction:A_U}.  

Assume that $(U_{12},U_3,\dots,U_d)$ is a $(d-1)$ ordered partition of $[n]$ in which each partition has order $\Theta(n)$. Fixing this partition, we next partition $U_{12}$ into $U_1,U_2$ randomly. For the new $d$ partition $(U_1,\dots,U_d)$ we then apply Lemma \ref{lemma:reduction:A_U}. As a result, the index $i_2$ in the conclusion belongs to $B(U_2)$. We will show that by randomizing $U_1$, one may recover the result for $i_2$ now an element of $B(U_{12})$. Let us first extend Lemma \ref{lemma:reduction:A_U} as follows.   

\begin{lemma}\label{lemma:reduction:A_U12}
There exist subsets $I_0(U_{1})$ and $I(U_{1})$ of $B(U_{12})$ with size $O(1)$ and $d|U_{12}|-n^\ep$ respectively and an integer $k\neq 0, k=n^{O_{B,\ep}(1)}$ such that for any $i\in I(U_1)$, there exist integers $k_{ii_0}=n^{O_{B,\ep}(1)}$such that the following holds for all $i\in I(U_1)$  :
\begin{align*} 
&\P_{x_{11;11},\dots, x_{dd;1n}} \Big(\big|k \sum_{i_2\in B(U_{12}),\dots,i_d \in B(U_d)} a_{ii_2 \dots i_d}(U_1) \det [\Bc_{i_2}^{\bar{1}},\dots\Bc_{i_d}^{\bar{1}}]\\ 
&-\sum _{i_0\in I_0} k_{ii_0}\sum_{i_2\in B(U_{12}),\dots,i_d\in B(U_d)} a_{i_0i_2 \dots i_d}(U_1)\det [\Bc_{i_2}^{\bar{1}},\dots\Bc_{i_d}^{\bar{1}}]\big|=O(\beta n^{O_{B,\ep}(1)}) \Big) \ge \rho^{2d}/n^{O_{B,\ep}(1)},
\end{align*}
where  
$$ a_{ii_2 \dots i_d}(U_1):=
\begin{cases}
 a_{ii_2\dots i_d} & \mbox{ if }  i\in B(U_1), i_2\in B(U_2)\\
 a_{i_2i\dots i_d} & \mbox{ if } i\in B(U_2),i_2\in B(U_1)\\
 0 & \mbox{ otherwise} .
\end{cases} $$

\end{lemma}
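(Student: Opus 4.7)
The plan is to apply Lemma~\ref{lemma:reduction:A_U} twice---once to the ordered partition $(U_1, U_2, U_3, \ldots, U_d)$ and once to the swapped partition $(U_2, U_1, U_3, \ldots, U_d)$---and then glue the two conclusions. The two branches in the definition of $a_{ii_2\ldots i_d}(U_1)$ (the case $i \in B(U_1), i_2 \in B(U_2)$ versus $i \in B(U_2), i_2 \in B(U_1)$) are designed exactly so that each records the output of one of these two applications.

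The first application, directly to $(U_1, U_2, \ldots, U_d)$, produces a set $I_1 \subseteq B(U_1)$ of size at least $d|U_1| - n^\ep$, a set $I_0^{(1)} \subseteq B(U_1)$ of size $O(1)$, and integers $k^{(1)}, k^{(1)}_{ii_0} = n^{O_{B,\ep}(1)}$, such that for every $i \in I_1$ the desired concentration holds with $i_2$ summed over $B(U_2)$ and coefficients $a_{ii_2\ldots i_d}$. This is precisely the $i \in B(U_1)$ branch of $a(U_1)$, and the sum over $i_2 \in B(U_{12})$ in the target statement collapses automatically to the sum over $i_2 \in B(U_2)$ because $a_{ii_2\ldots}(U_1)$ vanishes on the complementary range.

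For the second application we exploit the antisymmetry of the determinant: since $\det[\Bc_{i_1},\Bc_{i_2},\Bc_{i_3},\ldots,\Bc_{i_d}] = -\det[\Bc_{i_2},\Bc_{i_1},\Bc_{i_3},\ldots,\Bc_{i_d}]$, relabeling the dummy indices $(i_1,i_2) \mapsto (j_2,j_1)$ lets us rewrite the decoupled form, up to an overall sign absorbed into the absolute value in the hypothesis, as $\sum a_{j_2 j_1 i_3\ldots i_d}\det[\Bc_{j_1},\Bc_{j_2},\Bc_{i_3},\ldots,\Bc_{i_d}]$ with $j_1 \in B(U_2)$ and $j_2 \in B(U_1)$. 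Since Lemma~\ref{lemma:decoupling} is itself symmetric in the ordering of the partition blocks, the decoupled concentration estimate transfers unchanged. Applying Lemma~\ref{lemma:reduction:A_U} to this reformulated form with the swapped partition $(U_2, U_1, U_3, \ldots, U_d)$ produces $I_2 \subseteq B(U_2)$, $I_0^{(2)} \subseteq B(U_2)$, and integers $k^{(2)}, k^{(2)}_{ii_0}$ of the required magnitude, so that for every $i \in I_2$ the concentration holds with $i_2$ summed over $B(U_1)$ and coefficients $a_{i_2 i\ldots i_d}$---exactly the $i \in B(U_2)$ branch of $a(U_1)$.

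To conclude, set $I(U_1) := I_1 \cup I_2 \subseteq B(U_{12})$, $I_0(U_1) := I_0^{(1)} \cup I_0^{(2)}$, take the common integer $k := k^{(1)} k^{(2)}$, and rescale each $k_{ii_0}$ by the appropriate cofactor (with $k_{ii_0} = 0$ whenever $i$ and $i_0$ come from opposite sides of the partition $U_1 \cup U_2$), so that a single $k$ and a single index set $I_0(U_1)$ serve uniformly for all $i \in I(U_1)$. All parameters remain within the claimed bounds: $|I(U_1)| \ge d|U_{12}| - 2n^\ep$ (the factor of $2$ is absorbed by a cosmetic adjustment of $\ep$), $|I_0(U_1)| = O(1)$, and $k, k_{ii_0} = n^{O_{B,\ep}(1)}$, while the probability $\rho^{2d}/n^{O_{B,\ep}(1)}$ is preserved through both applications. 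The only step that is not pure bookkeeping is the antisymmetric reformulation in the second application; this is the main place where the algebraic structure of the determinant form---rather than merely its multilinearity---is used.
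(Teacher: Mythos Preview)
Your proposal is correct and follows essentially the same approach as the paper: apply the reduction lemma once with $U_1$ playing the role of the ``first'' block (yielding the $i\in B(U_1)$ branch), once with $U_2$ in that role (yielding the $i\in B(U_2)$ branch), and then glue via $I(U_1)=I_1\cup I_2$, $I_0(U_1)=I_0^{(1)}\cup I_0^{(2)}$. The paper phrases the second application as ``switching the role of $U_1$ and $U_2$'' and expands directly along the second column, whereas you route it through the antisymmetry $\det[\Bc_{i_1},\Bc_{i_2},\ldots]=-\det[\Bc_{i_2},\Bc_{i_1},\ldots]$ to reduce formally to Lemma~\ref{lemma:reduction:A_U} as stated; these are equivalent. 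Your explicit choice $k=k^{(1)}k^{(2)}$ to unify the two leading coefficients, and the remark that the variables in $B(U_1)$ are independent of the event for $i\in I_1$ (and symmetrically), fill in details the paper leaves implicit.
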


In comparison with Lemma \ref{lemma:reduction:A_U},  the probability in Lemma \ref{lemma:reduction:A_U12} is now with respect to all random variables of the first $d$ rows of $M_n$. Also, $i_2$ now runs over all the indices restricted by $U_{12}$. The entries $a_{ii_2\dots i_d}(U_1)$, without the indices $i_3,\dots,i_d$, could be viewed as entries of a symmetric matrix. 

\begin{proof}(of Lemma \ref{lemma:reduction:A_U12}) 
We first fix the random variables restricted by $\bar{U}_{12}$ for which the conclusion of Lemma \ref{lemma:decoupling} holds with respect to the random variables restricted by $U_{12}$. Similarly to the proof of Lemma \ref{lemma:reduction:A_U}, the following holds with high probability with respect to $x_{rs;1i},i\in B(U_2)$: there exist subsets $I_0(U_1)$ and $I(U_1)$ of $B(U_1)$  with size $O(1)$ and $d|U_1|-n^\ep$ respectively such that the following holds for all $i\in I(U_1)$  :
\begin{align}\label{eqn:U_1} 
&\P_{x_{rs;1i_2}, i_2 \in B(U_2)} \Big(\big|k' \sum_{i_2\in B(U_2),\dots,i_d \in B(U_d)} a_{ii_2 \dots i_d} \det [\Bc_{i_2}^{\bar{1}},\dots\Bc_{i_d}^{\bar{1}}]\nonumber \\ 
&-\sum _{i_0\in I_0} {k'}_{ii_0}\sum_{i_2\in B(U_2),\dots,i_d\in B(U_d)} a_{i_0i_2 \dots i_d}\det [\Bc_{i_2}^{\bar{1}},\dots\Bc_{i_d}^{\bar{1}}]\big|=O(\beta n^{O_{B,\ep}(1)})\Big) \nonumber \\ 
&\ge \rho^{2d}/n^{O_{B,\ep}(1)}.
\end{align}
By switching the role of $U_1$ and $U_2$, there also exist subsets $I_0(U_2)$ and $I(U_2)$ of $B(U_2)$  with size $O(1)$ and $d|U_2|-n^\ep$ respectively such that the following holds for all $i\in I(U_2)$  :
\begin{align}\label{eqn:U_2} 
&\P_{x_{rs;1i_1}, i_1\in B(U_1)} \Big(\big|k'' \sum_{i_1\in B(U_1),i_3\in B(U_3),\dots,i_d \in B(U_d)} a_{i_1ii_3 \dots i_d} \det [\Bc_{i_1}^{\bar{1}},\Bc_{i_3}^{\bar{1}}\dots\Bc_{i_d}^{\bar{1}}]\nonumber \\ 
&-\sum _{i_0\in I_0} {k''}_{ii_0}\sum_{i_1\in B(U_1),i_3\in B(U_3),\dots,i_d\in B(U_d)} a_{i_1i_0 i_3 \dots i_d}\det [\Bc_{i_1}^{\bar{1}},\Bc_{i_3}^{\bar{1}},\dots\Bc_{i_d}^{\bar{1}}]\big|=O(\beta n^{O_{B,\ep}(1)})\Big) \nonumber \\
&\ge \rho^{2d}/n^{O_{B,\ep}(1)}.
\end{align}

Now, by the definition of $a_{ii_2\dots i_d}(U_1)$, with $I=I(U_1)\cup I(U_2)$ and $I_0=I_0(U_1)\cup I_0(U_2)$, we can rewrite both of the events in \eqref{eqn:U_1} and \eqref{eqn:U_2} in the following form

\begin{align*}
&k \sum_{i_2\in B(U_{12}),\dots,i_d \in B(U_d)} a_{ii_2 \dots i_d}(U_1) \det [\Bc_{i_2}^{\bar{1}},\dots\Bc_{i_d}^{\bar{1}}]\\
&-\sum _{i_0\in I_0} k_{ii_0}\sum_{i_2\in B(U_2),\dots,i_d\in B(U_d)} a_{i_0i_2 \dots i_d}(U_1)\det [\Bc_{i_2}^{\bar{1}},\dots\Bc_{i_d}^{\bar{1}}] =O(\beta n^{O_{B,\ep}(1)}) .
\end{align*}

The conclusion of Lemma \ref{lemma:reduction:A_U12} then follows from \eqref{eqn:U_1} and \eqref{eqn:U_2}, noting that $\{x_{rs;1i_1}, i_1 \in U_1\}$ and $\{x_{rs;1i_2}, i_2 \in U_2\}$ are independent. 
\end{proof}

Now we randomize $U_1$ to obtain the following main result of the subsection.

\begin{lemma}[Randomization]\label{lemma:random:A_U}
There exist subsets  $I_0(U_{12})$ and  $I(U_{12})$ of $B(U_{12})$ with size $O(1)$ and $d|U_{12}|-n^\ep$ respectively such that the following holds for all $i\in I$:
\begin{align*} 
&\P_{x_{rs;1i},i \in \bar{B}(U_{12});{x_{rs;1i}}',i\in B(U_{12})} \Big(\big|k \sum_{i_2\in B(U_{12}),\dots,i_d \in B(U_d)} a_{i,i_2 \dots i_d} \det [{\Bc_{i_2}^{\bar{1}}}',\Bc_{i_3}^{\bar{1}}, \dots,\Bc_{i_d}^{\bar{1}}]\\ 
&-\sum _{i_0\in I_0} k_{ii_0}\sum_{i_2\in B(U_2),\dots,i_d\in B(U_d)} a_{i_0,i_2 \dots i_d}\det [{\Bc_{i_2}^{\bar{1}}}',\Bc_{i_3}^{\bar{1}} ,\dots,\Bc_{i_d}^{\bar{1}}]\big|=O(\beta n^{O_{B,\ep}(1)}) \Big) \ge \rho^{2d}/n^{O_{B,\ep}(1)},
\end{align*}
where ${x^{(i)}_{rs}}':=\eta_{i}x^{(i)}_{rs}$ with $\eta_i$ iid Bernoulli random variables of parameter $1/2$, and $\Bc_{i_2}':=\eta_{i_2} \Bc_{i_2}$  in the determinants.
\end{lemma}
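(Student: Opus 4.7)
My plan is to deduce Lemma~\ref{lemma:random:A_U} from Lemma~\ref{lemma:reduction:A_U12} by taking the partition $(U_1,U_2)$ of $U_{12}$ to be \emph{random}, determined by the $\eta_i$'s, and then averaging over $\eta$. Let $(\eta_i)_{i\in U_{12}}$ be iid Bernoulli$(1/2)$ random variables independent of the entries $x_{rs;1j}$, and set $U_1(\eta):=\{i\in U_{12}:\eta_i=1\}$, $U_2(\eta):=U_{12}\setminus U_1(\eta)$. A standard Chernoff estimate guarantees that $|U_1(\eta)|,|U_2(\eta)|=\Theta(|U_{12}|)$ off an event of probability $\exp(-\Omega(|U_{12}|))$, which we discard at a negligible cost.

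The key observation is algebraic: the column masking $\Bc_{i_2}'=\eta_{i_2}\Bc_{i_2}$ (and the corresponding entry masking ${x_{rs;1i}}'=\eta_i x_{rs;1i}$) kills every term in which $i_2\notin B(U_1(\eta))$. Consequently, for each realization of $\eta$, the multilinear expression appearing in the conclusion of Lemma~\ref{lemma:random:A_U} reduces to precisely one of the two summation regimes that Lemma~\ref{lemma:reduction:A_U12} produces via the $a(U_1)$ notation: the regime in which $i\in B(U_2(\eta))$ and $i_2$ ranges over $B(U_1(\eta))$. Applying Lemma~\ref{lemma:reduction:A_U12} to the now-deterministic partition $(U_1(\eta),U_2(\eta))$ therefore yields, for each $\eta$, parameters $(k^\eta,\{k_{ii_0}^\eta\},I_0^\eta,I^\eta)$ and a conditional concentration event of probability at least $\rho^{2d}/n^{O_{B,\eps}(1)}$ over the $x$-variables.

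Since $k^\eta$, the coefficients $k_{ii_0}^\eta$, and $|I_0^\eta|$ are all bounded by $n^{O_{B,\eps}(1)}$, there are at most $n^{O_{B,\eps}(1)}$ distinct tuples $(k,\{k_{ii_0}\},I_0)$. Pigeonholing over $\eta$ (restricted to the balanced event) produces a single tuple $(k,\{k_{ii_0}\},I_0)$ that arises for at least a $1/n^{O_{B,\eps}(1)}$ fraction of $\eta$'s; these will serve as $(k,\{k_{ii_0}\},I_0(U_{12}))$ in the conclusion. A second averaging step extracts $I(U_{12})$: using the uniform bound $|I^\eta|\ge d|U_1(\eta)|-n^\eps$, the balance condition $|U_1(\eta)|=\Theta(|U_{12}|)$, and the symmetry of $\eta$, one shows that the set of $i\in B(U_{12})$ which belong to $I^\eta$ for a positive fraction of the good $\eta$'s has size at least $d|U_{12}|-n^\eps$. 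Fubini in the joint law of $(x,\eta)$ then converts the conditional bound into the required joint probability lower bound $\rho^{2d}/n^{O_{B,\eps}(1)}$.

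The main obstacle I anticipate lies in the uniformization step of the third paragraph: passing from the $\eta$-dependent index sets $I^\eta$ of size $d|U_1(\eta)|-n^\eps$ to a single set $I(U_{12})$ of size $d|U_{12}|-n^\eps$, while retaining a positive fraction of good $\eta$'s, requires careful two-stage pigeonholing and use of the fact that, by symmetry, any fixed $i\in B(U_{12})$ belongs to $B(U_1(\eta))$ (resp.\ $B(U_2(\eta))$) with probability exactly $1/2$. The algebraic matching between the $\eta$-masking and the two-case definition of $a_{i,i_2\dots i_d}(U_1)$ also has to be handled delicately; in particular, to capture both cases $i\in B(U_1(\eta))$ and $i\in B(U_2(\eta))$ uniformly, one likely applies Lemma~\ref{lemma:reduction:A_U12} separately to the partitions induced by $\eta$ and by $1-\eta$ and combines the two outputs, using the symmetry of the Bernoulli law to merge them into the single statement of Lemma~\ref{lemma:random:A_U}.
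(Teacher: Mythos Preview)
Your pigeonholing strategy over the random partitions matches the paper's, and the two-stage extraction of common $(k,I_0)$ first and then $(k_{ii_0})$ per index $i$ is exactly what is done there. The gap is in the last step, where you replace the paper's Cauchy--Schwarz symmetrization by a direct Fubini argument.

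The algebraic claim that ``the multilinear expression \dots\ reduces to precisely one of the two summation regimes that Lemma~\ref{lemma:reduction:A_U12} produces'' is not correct. Writing $\Bu$ for the characteristic vector of $U_1(\eta)$ and using the antisymmetry of the $a$'s, one has $a_{j,i_2\dots i_d}(U_1)=a_{j,i_2\dots i_d}\,(u_j-u_{i_2})$ for every $j$. So even after restricting to $u_i=1$, the $i$-term carries the mask $(1-u_{i_2})$ rather than $u_{i_2}$, and the $i_0$-terms carry $(u_{i_0}-u_{i_2})$, whose first factor depends on $\eta$ in a way that cannot be frozen by pigeonholing without destroying the i.i.d.\ Bernoulli structure you need for the joint law. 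Consequently, the event coming out of Lemma~\ref{lemma:reduction:A_U12} for a fixed good $\eta$ is \emph{not} the same event (nor a sub-event) as the $\eta$-masked event in Lemma~\ref{lemma:random:A_U}, and ``Fubini'' does not transfer the bound.

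What the paper does instead is the decoupling trick: after pigeonholing produces a set $\mathcal U$ of partitions with $|\mathcal U|\ge 2^{|U_{12}|}/n^{O(1)}$ sharing the parameters, one has a joint lower bound $\E_x\E_{\Bu}\mathbf 1_{\{|f(\Bu)|\le\beta'\}}\ge n^{-O(1)}$ with $\Bu$ uniform on $\{0,1\}^{|U_{12}|}$. Cauchy--Schwarz then gives
\[
\bigl(\E_x\E_{\Bu}\mathbf 1_{\{|f(\Bu)|\le\beta'\}}\bigr)^2
\le \E_x\E_{\Bu,\Bu'}\mathbf 1_{\{|f(\Bu)-f(\Bu')|\le 2\beta'\}},
\]
where $\Bu,\Bu'$ are now \emph{unconstrained} independent Bernoulli vectors. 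The difference $f(\Bu)-f(\Bu')$ replaces every mask of the form $(1-u_{i_2})$ or $(u_{i_0}-u_{i_2})$ by one built from the symmetric increments $u_{i_2}-u'_{i_2}$, and it is these increments that play the role of $\eta_{i_2}$ in the statement. This symmetrization, not Fubini, is the missing idea; with it, the partition constraints from the pigeonholing disappear and the conclusion follows.
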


\begin{proof}(of Theorem \ref{lemma:random:A_U})
Note that Lemma \ref{lemma:reduction:A_U12} holds for any choice $U_1\subset U_{12}$. As $I_0(U_1)\subset [n]^{O_{B,\ep}(1)}$ and $k(U_1)\le n^{O_{B,\ep}(1)}$, there are only $n^{O_{B,\ep}(1)}$ possibilities that the tuple $(I_0(U_1),k(U_1))$ can take. Thus, there exists a tuple $(I_0,k)$ such that 
$I_0(U_1)=I_0$ and $k(U_1)=k$ for $2^{|U_{12}|}/n^{O_{B,\ep}(1)}$ different sets $U_1$. Let us denote this set of $U_1$ by $\mathcal{S}$; we have
$$|\mathcal{S}|\ge 2^{|U_{12}|}/n^{O_{B,\ep}(1)}.$$
Next, let $I$ be the collection of all $i\in B(U_{12})$ which belong to at least $|\mathcal{S}|/2$ index sets $I(U_1)$. Then,
\begin{align*}                         
|I||\mathcal{S}| + (d|U_{12}|-|I|)|\mathcal{S}|/2 & \ge (d|U_{12}|-n^\ep )|\mathcal{S}|\\
|I| &\ge  |U_{12}|-2n^\ep.
\end{align*}

From now on we fix an $i\in I$. Consider the tuples $(k_{ii_0}(U_1), i_0\in I_0)$ over all $U_1$ where $i\in I(U_1)$. Because there are only $n^{O_{B,\ep}(1)}$ possibilities such tuples can take, there must be a tuple, say $(k_{ii_0}, i_0\in I_0)$, such that $(k_{ii_0}(U_1), i_0\in I_0)=(k_{ii_0}, i_0\in I_0)$ for at least $|\mathcal{S}|/2n^{O_{B,\ep}(1)}=2^n/n^{O_{B,\ep}(1)}$ sets $U_1$. Without loss of generality, we assume that $i\in U_1$ for at least half of those sets. Let $\mathcal{U}$ denote the collection of such $U_1$, and for each $U_1\in \mathcal{U}$ we let $\Bu=(u_1,\dots,u_{|B(U_{12})|})\in \R^{|B(U_{12})|}$ be its characteristic vector, i.e. $u_i=1$ if $i\in B(U_1)$, and $u_i=0$ otherwise. 

By the definition of $a_{ii_2\dots i_d}(U_1)$, as $i\in U_1$, we can write 
\begin{align*}
& \quad k \sum_{i_2\in B(U_{12}),\dots,i_d \in B(U_d)} a_{i,i_2 \dots i_d}(U_1) \det [\Bc_{i_2}^{\bar{1}},\dots\Bc_{i_d}^{\bar{1}}]\\
&-\sum _{i_0\in I_0} k_{ii_0}\sum_{i_2\in B(U_{12}),\dots,i_d\in B(U_d)} a_{i_0,i_2 \dots i_d}(U_1)\det [\Bc_{i_2}^{\bar{1}},\dots\Bc_{i_d}^{\bar{1}}]\\
&=  k \sum_{i_2\in B(U_{12}),\dots,i_d \in B(U_d)} a_{i,i_2 \dots i_d} (1-u_{i_2})\det [\Bc_{i_2}^{\bar{1}},\dots\Bc_{i_d}^{\bar{1}}] \\
&- \sum _{i_0\in I_0} k_{ii_0}\sum_{i_2\in B(U_{12}),\dots,i_d\in B(U_d)} a_{i_0i_2 \dots i_d} (1-u_{i_2})\det [\Bc_{i_2}^{\bar{1}},\dots\Bc_{i_d}^{\bar{1}}].
\end{align*}

Recall that $|\mathcal{U}|= 2^{|U_{12}|}/n^{O_{B,\ep}(1)}$. Hence, by Lemma \ref{lemma:reduction:A_U12}, we have the following joint probability
\begin{align*}
\P_{x_{11;11},\dots, x_{dd;1n}} &\P_{U_1} \Big(\big| k \sum_{i_2\in B(U_{12}),\dots,i_d \in B(U_d)} a_{i,i_2 \dots i_d} (1-u_{i_2})\det [\Bc_{i_2}^{\bar{1}},\dots\Bc_{i_d}^{\bar{1}}]\\ 
&- \sum _{i_0\in I_0} k_{ii_0}\sum_{i_2\in B(U_{12}),\dots,i_d\in B(U_d)} a_{i_0i_2 \dots i_d} (1-u_{i_2})\det [\Bc_{i_2}^{\bar{1}},\dots\Bc_{i_d}^{\bar{1}}]\big| = O(\beta n^{O_{B,\ep}(1)})\Big) \\
&=n^{-O_{B,\ep}(1)} .
\end{align*}

By applying the Cauchy-Schwarz inequality, we obtain 
\begin{align}\label{eqn:z}
&n^{-O_{B,\ep}(1)}\le \Big[\E_{x_{rs;1i},1\le i\le n, 1\le r,s\le d} \E_{U_1}(...)\Big]^2 \le  \E_{x_{11;11},\dots, x_{dd;1n}} \Big[\E_{U_1}(...))\Big]^2 = \E_{x_{11;11},\dots, x_{dd;1n}} \Big[\E_{\Bu}(...))\Big]^2 \nonumber \\
&\le \E_{x_{rs;1i},1\le i\le n, 1\le r,s\le d} \E_{\Bu,\Bu'}\Big( k \sum_{i_2\in B(U_{12}),\dots,i_d \in B(U_d))} a_{i,i_2 \dots i_d} (u_{i_2}-u_{i_2}')\det [\Bc_{i_2}^{\bar{1}},\dots\Bc_{i_d}^{\bar{1}}]\nonumber \\  
&-\sum _{i_0\in I_0} k_{ii_0}\sum_{i_2\in B(U_{12}),\dots,i_d\in B(U_d)} a_{i_0 i_2 \dots i_d} (u_{i_2}-u_{i_2}')\det [\Bc_{i_2}^{\bar{1}},\dots\Bc_{i_d}^{\bar{1}}]|= O(\beta n^{O_{B,\ep}(1)})\Big) \nonumber \\
&=\E_{x_{rs;1i}, i\notin B(U_{12}); {x_{rs;1i}}',i\in B(U_{12}), 1\le r,s\le d} \Big( k \sum_{i_2\in B(U_{12}),\dots,i_d \in B(U_d))} a_{i,i_2 \dots i_d} \det [{\Bc_{i_2}^{\bar{1}}}',\dots,\Bc_{i_d}^{\bar{1}}]\nonumber \\ 
&-\sum _{i_0\in I_0} k_{ii_0}\sum_{i_2\in B(U_{12}),\dots,i_d\in B(U_d)} a_{i_0 i_2 \dots i_d}  \det [{\Bc_{i_2}^{\bar{1}}}',\dots,\Bc_{i_d}^{\bar{1}}]|= O(\beta n^{O_{B,\ep}(1)})\Big),  
\end{align}
where  ${x^{(i)}_{rs}}':=(u_i-u_i')x^{(i)}_{rs}$ and in the determinant formulas the column $\Bc_{i_2}'$ stands for $(u_{i_2}-u_{i_2}')\Bc_{i_2}$. Also, in the first estimate we used the elementary property that for any function $f$, 
\begin{align*}
&\quad \E_{\Bu,\Bu'}\Big(\|f(\Bu)\|_2=O(\beta n^{O_{B,\ep}(1)}),\|f(\Bu')\|_2=O(\beta n^{O_{B,\ep}(1)})\Big)\\ 
&\le \E_{\Bu,\Bu'}\Big(\|f(\Bu)-f(\Bu')\|_2=O(\beta n^{O_{B,\ep}(1)})\Big).
\end{align*}
The proof is complete by noting that $k$ and $I_0$ are independent of the choice of $i$. 
\end{proof}

\begin{proof}(of Theorem \ref{theorem:decoupling:weak})
Note that by Lemma \ref{lemma:random:A_U}, we just need to deal with a $(d-1)$-multilinear form of the rows $\row_2,\dots,\row_d$. Our next step is to apply this machinery again when fixing $U_{123}=U_{12}\cup U_3$ and letting $U_{12}$ be chosen as a random subset of $U_{123}$. By iterating the machinery $d-1$ times similarly to the proof of Theorem \ref{theorem:dlinear:weak}, we obtain the result as claimed.
\end{proof}

We now conclude this section by proving the inverse step of Section \ref{section:singularity:approach}.

\subsection{Proof of Theorem \ref{theorem:step1}}

We first apply Theorem \ref{theorem:decoupling:weak} to obtain 
\begin{equation}\label{eqn:step1:1}
\P_{\Bx_d}(\sum_{1\le i_d \le dn}\sum_{j_1 \dots j_{d-1}}k_{j_1 \dots j_{d-1}}a_{j_1 \dots j_{d-1} i_d}x_{i_d}\in B(a,\beta))\ge \rho^{2d}/n^{O_{C,\ep}(1)}.
\end{equation}

Set $K_0,\dots, K_d$ to be a sequence of thresholds  with $K_i:=n^{-A/2+2i d}$.  We consider two cases.

{\bf Case 1.}({\it{degenerate case}}) 

{\it Subcase 1.1.} Assume that for all $i\in I_1$, 
\begin{equation}\label{eqn:step1:2}
\sum_{i_2,\dots,i_d}|ka_{ii_2\dots i_d}-\sum k_{ii_0}a_{i_0i_2\dots i_d}|^2 \le K_0^2=n^{-A}.
\end{equation}

As $\sum_{i_1,i_2,\dots,i_d}|a_{i_1i_2\dots i_d}|^2=1$, there exists $i_2,\dots,i_d$ such that
\begin{equation}\label{eqn:step1:3}
\sum_{i_1}|a_{i_1i_2\dots i_d}|^2\ge 1/(dn)^{d-1}\ge n^{-O_d(1)}. 
\end{equation}
We next fix these indices $i_2,\dots,i_d$. It follows from \eqref{eqn:step1:2} that $|ka_{ii_2\dots i_d}-\sum k_{ii_0}a_{i_0i_2\dots i_d}| \le K_0$ for any $i\in I_1$. Set 
$$v_i:=\frac{1}{k}\sum k_{ii_0}a_{i_0i_2\dots i_d}.$$ 
It is clear that the set of $v_i$\rq{}s  is a GAP of rank $|I_0|=O_{B,\ep}(1)$ and size $n^{O_{B,\ep}(1)}$. Also, by definition, with $\Bv=(v_i,i\in I_1)$
$$\|\Bv-(a_{ii_2\dots i_d})_{1\le i\le dn}\|\le n^{1/2}K_0 =n^{-A/2+1/2}.$$

On the other hand, as the vector $(a_{ii_2\dots i_d})_{1\le i\le dn}$ is orthogonal to any row $\row_j(\BM_{n-1})$ of $\BM_{n-1}$, we have 
$$|\langle \Bv,\row_j(\BM_{n-1}) \rangle|\le n^{-A/2+O_{B,\ep}(1)}.$$
Recall that by the approximation and by \eqref{eqn:step1:2}, $\|\Bv\|\ge n^{-O_d(1)}$. Thus by letting $\Bu:=\Bv/\|\Bv\|$, we have 
$$|\langle \Bu,\row_j(\BM_{n-1}) \rangle|\le n^{-A/2+O_{B,\ep,d}(1)}.$$
It is clear that $\Bu$ satisfies all the conditions of Theorem \ref{theorem:step1}, we are done with this subcase.

{\it Subcase 1.2.} From now on we assume that there exists $i\in I_1$ such that  
\begin{equation}\label{eqn:step1:4}
\sum_{i_2,\dots,i_d}|ka_{ii_2\dots i_d}-\sum k_{ii_0}a_{i_0i_2\dots i_d}|^2 \ge K_1^2=n^{-A}.
\end{equation}

Fixing $i$, we apply Theorem \ref{theorem:decoupling:weak} for the index $i_2$, and reconsidering Subcases 1.1 with the new threshold $K_1:=n^{-A/2-2d}$. By iterating the process for at most $d-1$ steps, we will end up with either Subcase 1.1 (and hence done) or with the following non-degenerate case.

{\bf Case 2.}({\it{non-degenerate case}}). There exist a collection $J_1,\dots, J_{d-1}$ of the indices $j_1,\dots,j_{d-1}$ such that $|J_i|=O_{B,\ep}(1)$ and 
$$\sum_{1\le i \le dn}|\sum_{j_1\in J_1, \dots j_{d-1}\in J_{d-1}}k_{j_1 \dots j_{d-1}}a_{j_1 \dots j_{d-1} i}|^2 \ge K_d^2= n^{-A+2d^2}.$$ 
Notice that for each fixed $i_1,\dots,i_{d-1}$ the vector $(a_{i_1,\dots,i_{d-1},i},1\le i\le dn,i\neq i_1,\dots,i_{d-1})$ is orthogonal to any $\row_j^{(i_1,\dots,i_{d-1})}(\BM_{n-1})$, the $j$-th row of $\BM_{n-1}$ without components $i_1,\dots,i_{d-1}$. By adding zeros to the missing components $i_1,\dots,i_{d-1}$ if needed, we see that the $\R^{nd}$ vector $(a_{i_1,\dots,i_{d-1},i},1\le i\le dn)$ is now orthogonal to $\row_j(\BM_{n-1})$. 

Set $J=J_1\cup \dots \cup J_{d-1}$, we thus have
$$\sum_{i\notin J,1\le i\le dn} a_{j_1,\dots,j_{d-1},i} \row_j(i) = -\sum_{i\in J,1\le i\le dn}a_{j_1,\dots,j_{d-1},i}\row_j(i),$$
where the entries $a_{j_1,\dots,j_{d-1},i}$ are set to  be zero if the indices are not distinct.

Now we set $\Bw_1:=(w_i)_{i\notin J}$ and $\Bw_2:=(w_i)_{i\in J}$, where $w_i:=k_{j_1 \dots j_{d-1}}a_{i_1 \dots i_{d-1} i}$. Then 
$$\langle \Bw_1,\row_j^{(\bar{J})}(\BM_{n-1})\rangle= -\langle \Bw_2, \row_j^{(J)}(\BM_{n-1})\rangle.$$
Set $\Bv:=\Bw/\|\Bw\|$. Theorem \ref{theorem:linear} applied to \eqref{eqn:step1:1} implies that $\Bv$ can be approximated by a vector $\Bu$ as follows.

\begin{itemize}
\item $|u_i-v_i|\le n^{-A/2+O_{B,\ep,d}(1)}$ for all $i$.
\item There exists a GAP of rank $O_{B,\ep}(1)$ and size $n^{O_{B,\ep}(1)}$ that contains at least $dn-n^\ep$ components $u_i$.
\item All the components $u_i$, and all the generators of the GAP are rational complex numbers of the form $\frac{p}{q}+\sqrt{-1}\frac{p'}{q'}$, where $|p|,|q|,|p'|,|q'| \le n^{A/2+O_{B,\ep}(1)}$.
\end{itemize}

Note that, by the approximation above, $\|\Bu\| \asymp 1$ and $|\langle \Bu_1,\row_j^{(\bar{J})}(\BM_{n-1}) \rangle + \langle \Bu_2,\row_j^{(J)}(\BM_{n-1})| \le n^{-A/2+O_{B,\ep}(1)}$ for all row vectors of $\BM_{n-1}$.

\section{Singularity of block matrices: proof sketch for Theorem \ref{theorem:step2}} \label{section:singularity:step2}


Our first ingredient is the following variant of Theorem \ref{theorem:linear} in which random variables are replaced by random matrices and $a_i$ are replaced by vectors.

\begin{theorem}[Inverse Littlewood-Offord for sequence of random operators]\label{theorem:linear:operator}
Let $\{\Bu^{(i)}=(\Bu_1^{(i)},\dots,\Bu_d^{(i)}), 1\le i\le n\}$ be a sequence of $n$ vectors in $\C^d$ such that the following concentration-type holds with high probability 
$$\sup_{\Bu\in \C^d} \P_{X^{(1)},\dots,X^{(n)}} (\sum_{1\le i\le n} X^{(i)}\Bu^{(i)}\in B(\Bu,\beta))=\gamma=n^{-O(1)},$$
where $X^{(1)},\dots,X^{(n)}$ are iid block matrices whose entries are copies of $(x_{11},\dots,x_{dd})$ from \eqref{eqn:covariance}. Then there exist a positive constant $\delta$ and $d^2$ numbers $c_{11},\dots,c_{dd}$ such that the least singular value $\sigma_d$ (largest singular value $\sigma_1$) of the matrix $(c_{ij})_{1\le i,j\le d}$ is at least $\delta$ (at most $\delta^{-1}$) and for any number $n'$ between $n^\ep$ and $n$, there exists a proper symmetric GAP $Q=\{\sum_{i=1}^r k_ig_i : |k_i|\le K_i \}\subset \C^d$ such that

\begin{itemize}

\item At least $n-n'$ elements of $V:=\{(c_{11}\Bu^{(i)}_1+\dots+c_{1d}\Bu^{(i)}_d, \dots, c_{d1}\Bu^{(i)}_1+\dots+c_{dd}\Bu^{(i)}_d), 1\le i\le n\}$ are $\beta$-close to $Q$.


\item $Q$ has small rank, $r=O_{B,\ep}(1)$, and small size

$$|Q| \le \max \{O_{B,\ep}(\frac{\gamma^{-1}}{\sqrt{n'}}),1\}.$$


\item There is a non-zero integer $p=O_{B,\ep}(\sqrt{n'})$ such that all
 steps $g_i$ of $Q$ have the form  $g_i=(g_{i1},\dots,g_{id})$, where $g_{ij}=\beta\frac{p_{ij}} {p} $ with $p_{ij} \in \Z$ and $|p_{ij}|=O_{B,\ep}(\beta^{-1} \sqrt{n'}).$

\end{itemize}

\end{theorem}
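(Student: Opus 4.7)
The plan is to adapt the Hal\'asz--Fourier inverse Littlewood-Offord argument of Nguyen--Vu \cite{NgV} to the block-matrix setting, with the matrix $C = (c_{st})_{s,t=1}^d$ in the conclusion arising as the ``canonical direction'' dictated by the uncorrelated but non-degenerate covariance structure of $(\xi_{st})$.

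First, I would apply an Esseen-type concentration inequality on $\R^{2d}\cong\C^d$ to the random sum $S = \sum_i X^{(i)}\Bu^{(i)}$, reducing the small ball estimate to an $L^1$ bound on the characteristic function:
$$\gamma \ll_d \beta^{2d}\int_{\|t\|\le 1/\beta} |\phi_S(t)|\,dt.$$
Independence across $i$ factors $|\phi_S(t)| = \prod_i |\phi_i(t)|$, and symmetrization gives $|\phi_i(t)|^2 = \E\cos\langle t,(X^{(i)}-\widetilde X^{(i)})\Bu^{(i)}\rangle_{\R}$, where $\widetilde X^{(i)}$ is an independent copy of $X^{(i)}$ and $\langle\cdot,\cdot\rangle_{\R}$ is the real inner product. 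A second-order Taylor expansion, together with the $(2+\eta)$-moment hypothesis and the non-degeneracy of the real $2d^2\times 2d^2$ covariance matrix $\Sigma$ of the real coordinates of $(\xi_{st})$ (guaranteed by $\E|\xi_{st}|^2 = 1/d$ combined with $\E[\xi_{st}\overline{\xi_{uv}}] = 0$ for $(s,t)\ne(u,v)$), then yields a quadratic lower bound on $1-|\phi_i(t)|^2$ that is uniform in the direction of $t$. A diagonalizing change of basis for $\Sigma^{1/2}$ rewrites this lower bound in terms of the vectors $v^{(i)} := C\Bu^{(i)} \in \C^d$ for an explicit $d\times d$ matrix $C$ whose extreme singular values $\sigma_d(C)$ and $\sigma_1(C)^{-1}$ are bounded below by a constant $\delta$ depending only on the distribution of $(\xi_{st})$; this is how the $c_{st}$ arise.

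Second, with $C$ in hand, the integrand $|\phi_S(t)|$ depends on the $\Bu^{(i)}$ only through $v^{(1)},\dots,v^{(n)}$. Running the dyadic level-set analysis of \cite{NgV} on $\int |\phi_S(t)|\,dt$ reduces the problem to a vector-valued linear Littlewood-Offord problem for the sequence $v^{(i)}$, and transferring the Freiman--Ruzsa argument of \cite{NgV} from $\C$ to $\C^d$ produces the desired proper symmetric GAP $Q \subset \C^d$ of rank $O_{B,\ep}(1)$, of size $\max\{O_{B,\ep}(\gamma^{-1}/\sqrt{n'}),1\}$, with generators of the stated rational form $g_{ij} = \beta p_{ij}/p$, containing at least $n-n'$ of the $v^{(i)}$.

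The main obstacle is the first step: because the entries of $X^{(i)}$ are only uncorrelated rather than jointly independent, one cannot simply factor $\phi_i(t)$ over the $d^2$ entries of a block. Instead one must analyze the full $2d^2$-dimensional characteristic function and extract positive-definiteness purely from the quadratic form $t \mapsto \E|\langle t,X^{(i)}\Bu^{(i)}\rangle_{\R}|^2$, whose coercivity is the entire content of the covariance hypothesis \eqref{eqn:covariance}, and then identify a single matrix $C$ that captures this coercivity uniformly in $t$. Once this estimate and the resulting $C$ are isolated, the remainder of the argument is a faithful $\C^d$-version of the scalar NgV proof.
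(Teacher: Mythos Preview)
Your overall architecture (Esseen-type bound, symmetrization, level sets, then the Freiman--Ruzsa step from \cite{NgV}) matches the paper's, but the mechanism you propose for producing the matrix $C$ is different and, as written, has a genuine gap.

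The claim that the hypotheses in \eqref{eqn:covariance} force the real $2d^2\times 2d^2$ covariance matrix $\Sigma$ of the real coordinates of $(\xi_{st})$ to be non-degenerate is false. Take every $\xi_{st}$ to be real-valued, mean zero, variance $1/d$, and pairwise uncorrelated: this satisfies all the hypotheses, yet $\Sigma$ has a $d^2$-dimensional kernel (the imaginary coordinates vanish identically). So there is no $\Sigma^{1/2}$-diagonalization producing a $d\times d$ matrix $C$ with $\sigma_d(C)\ge\delta$. More subtly, even when $\Sigma$ is non-degenerate, a second-order Taylor expansion of $|\phi_i(t)|^2$ only controls the characteristic function for $t$ near $0$; the level-set argument integrates over $\|t\|\lesssim 1/\beta$, and the GAP structure in the conclusion arises precisely from the $\R/\Z$-periodicity of $\phi_i$, not from its quadratic behaviour at the origin. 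A purely quadratic lower bound cannot recover the rational generators $g_{ij}=\beta p_{ij}/p$, and the $\R/\Z$-norm does not transform linearly under a change of basis, so a $\Sigma^{1/2}$ substitution does not carry through.

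The paper handles both issues differently. It keeps the global bound $|\cos(\pi x)|\le\exp(-2\|x\|_{\R/\Z}^2)$ throughout, arriving at an expression $\E_{Z}\sum_i\|\Re(\cdots)\|_{\R/\Z}^2$ with $Z=X-X'$ the symmetrized block, and never Taylor-expands. To extract a fixed $C$, it does not diagonalize a covariance; instead it invokes a pointwise fact about the law of $Z$ (Claim~\ref{lemma:difference}\eqref{difference:d}): the non-degeneracy of the \emph{complex} covariance $(\E z_{ij}\bar z_{i'j'})=\BI$ guarantees that with probability at least some $\delta>0$ the random $d\times d$ matrix $Z$ satisfies $\delta\le\sigma_d(Z)\le\sigma_1(Z)\le\delta^{-1}$. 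Since the $\E_Z$-averaged $\R/\Z$-sum is small, Markov's inequality on this positive-probability event produces a single deterministic realization $C$ in that region for which the sum (with $c_{st}$ in place of $z_{st}$) remains small. From that point on the argument is indeed the $\C^d$-valued version of \cite{NgV}, as you say. If you replace your Taylor/diagonalization step by this ``pick a good realization of $Z$'' argument, your outline becomes correct and essentially coincides with the paper's proof.
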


In application, $X^{(1)},\dots, X^{(n)}$ will be the $d\times d$ blocks of $\BM_{n-1}$. It is crucial to notice that, as most of the elements of $V$ are $\beta$-close to $Q$, and as the matrix $(c_{ij})$ is far from being degenerate, it follows from Theorem \ref{theorem:linear:operator} that most of the individual components $\Bu^{(i)}_j$ are also close to another GAP of small rank and size (see Corollary \ref{cor:ILOlinear:new}). We will present the proof of Theorem \ref{theorem:linear:operator} in Appendix \ref{section:ILOlinear:new:proof} by following the treatment from \cite{NgV}. For the rest of this section we sketch the proof of Theorem \ref{theorem:step2} following \cite{NgO} and \cite{TVcomp}.

Let $\mathcal{N}$ be the number of such structural vectors $\Bu$ from Theorem \ref{theorem:step1}. Because each GAP is determined by its generators and dimensions, the number of $Q$'s is bounded by 
$$(n^{2A+O_{B,\ep}(1)})^{O_{B,\ep}(1)} (n^{O_{B,\ep}(1)})^{O_{B,\ep}(1)} = n^{O_{A,B,\ep}(1)}.$$ 

Next, for a given $Q$, there are at most $n^{O_{B,\ep}(n)}$ ways to choose the $nd-2n^\ep$ components $u_i$ that $Q$ contains,  and $n^{O_{A,B,\ep}(n^\ep)}$ ways to choose the remaining components from the set $\{\frac{p}{q}+i \frac{p'}{q'}, |p|,|q|,|p'|,|q'|\le n^{A/2+O_{B,\ep}(1)}\}$. Hence, we obtain the key bound 
\begin{equation}\label{eqn:step2:N'}
\mathcal{N}\le n^{O_{A,B,\ep}(1)}   n^{O_{B,\ep}(n)}  n^{O_{A,B,\ep}(n^\ep)} = n^{O_{B,\ep}(n)}.
\end{equation}


From now on, by conditioning on $\Bu_1$ and on the entries of $\BM_{n-1}$ corresponding to the index $i_1,\dots,i_d$ of $\Bu_1$, without loss of generality we assume that $\Bu_1$ vanishes. Set $\beta_0:=n^{-A/2+O_{B,\ep}(1)}$, the bound obtained from the conclusion of Theorem \ref{theorem:step1}. We will denote the blocks of $\BM_{n-1}$ by $X^{(i)}_j$ with $1\le i\le n$ and $1\le j\le n-1$.
 For a given vector $\Bu$, we define $\P_{\beta_0}(\Bu)$ as follows
$$\P_{\beta_0}(\Bu):=\P\Big( \sum_{1\le i\le n} X^{(i)}_j\Bu^{(i)}\in B(0,\beta_0)  \mbox{ for at least } (n-1)-O_{B,\ep}(1) \mbox{ indices } j\Big).$$

If $\Bu$ satisfies the property above, we say that $\Bu$ is {\it $\beta_0$-orthogonal to  almost all blocks of $\BM_{n-1}$}. Because the blocks of $\BM_{n-1}$ are iid, 
$$\P_{\beta_0}(\Bu) \le \big(\P_{X^{(1)},\dots,X^{(n)}} (\sum_{1\le i\le n} X^{(i)}\Bu^{(i)}\in B(0,\beta_0) \big)^{n-O(1)}:=\gamma_{\beta_0}(\Bu)^{n-O(1)},$$ 
where $X^{(i)}$ are iid copies of $(x_{st})_{1\le s,t\le d}$. 

If $\gamma=\gamma_{\beta_0}(\Bu)$ is small, say $n^{-\Omega(1)}$, then $\P_{\beta_0}(\Bu)$ is $n^{-\Omega(n)}$. Thus the contribution of these $\P_{\beta_0}(\Bu)$ in the total sum  $\sum_{\Bu}\P_{\beta_0}(\Bu)$ is negligible as $\mathcal{N}=n^{O(n)}$. 

For the case $\gamma$ is comparably large, $\gamma=n^{-O(1)}$, then by Theorem \ref{theorem:linear:operator}, most of the components $u_i$ are close to a new GAP of rank $O(1)$ and of size $O(\gamma^{-1}/\sqrt{n})$. This would then enable us to approximate $\Bu$ by a new vector $\Bu'$ in such a way that $|\langle \Bu',\row_i(\BM_{n-1})\rangle|$, where we recall that $\row_i(\BM_{n-1})$ is the $i$-th row of $\BM_{n-1}$, is still of order $O(\beta_0)$ and the components of $\Bu'$ are now from the new GAPs (after a linear transformation). The number $\mathcal{N'}$ of these $\Bu'$ can be bounded by $(\gamma^{-1}/n^\ep)^{n}$, while we recall that $\P_{\beta_0}(\Bu')$ is of order $\gamma^{-n}$. Thus, summing over $\Bu'$ we obtain the desired bound
$$\sum_{\Bu'} \P_{\beta_0}(\Bu') \le \#\{ \mbox{ new GAPs } \} (\gamma^{-1}/n^\ep)^{n} \gamma^{-n} = O(n^{-\ep n+O(1)}).$$





To proceed further, we need the following elementary claim. 



\begin{claim}\label{claim:comparison}
Assume that $\BC=(c_{ij})$ is a $d\times d$ matrix such that $\delta \le \sigma_d(\BC)\le \sigma_1(\BC) \le \delta^{-1}$. Let $\Bu'=(\Bu'^{(1)},\dots,\Bu'^{(n)})$, where $\Bu'^{(i)} =\BC \Bu^{(i)}$. Then we have 
$$\gamma_\beta(\Bu)\le \gamma_{\delta^{-1} \beta}(\Bu')\le \gamma_{\delta^{-2}\beta}(\Bu).$$ 
\end{claim}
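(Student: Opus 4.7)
The two inequalities are duals of each other: applying the left-hand inequality $\gamma_\beta(\Bu)\le\gamma_{\delta^{-1}\beta}(\Bu')$ to the pair $(\BC^{-1},\Bu',\Bu)$ in place of $(\BC,\Bu,\Bu')$ (noting that $\sigma_j(\BC^{-1})\in[\delta,\delta^{-1}]$ and $\Bu^{(i)}=\BC^{-1}\Bu'^{(i)}$) yields $\gamma_\beta(\Bu')\le\gamma_{\delta^{-1}\beta}(\Bu)$, and relabelling $\beta\mapsto\delta^{-1}\beta$ gives the right-hand inequality. So it is enough to prove the left-hand inequality.

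For that, my plan is to rewrite the $\Bu'$-sum as an iid sum with modified blocks: setting $\tilde X^{(i)}:=X^{(i)}\BC$, one has $\sum_i X^{(i)}\Bu'^{(i)}=\sum_i \tilde X^{(i)}\Bu^{(i)}$ pointwise, and the $\tilde X^{(i)}$ are iid copies of the $d\times d$ random matrix $\tilde X:=X\BC$. Denoting by $\tilde\gamma$ the analogue of $\gamma$ built from $(\tilde X^{(i)})_i$, we have $\gamma_{\delta^{-1}\beta}(\Bu')=\tilde\gamma_{\delta^{-1}\beta}(\Bu)$, and the claim reduces to the block-level comparison
\begin{equation*}
\gamma_\beta(\Bu)\;\le\;\tilde\gamma_{\delta^{-1}\beta}(\Bu),
\end{equation*}
i.e.\ replacing each $X^{(i)}$ by $X^{(i)}\BC$ inflates the small-ball probability as soon as the radius is scaled up by $\delta^{-1}$. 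Conditioning block-by-block on all but one index reduces this further to a single-block inequality of the form $\P(\|Xv-z\|\le\alpha)\le\P(\|X\BC v-z'\|\le\delta^{-1}\alpha)$ for every fixed $v,z\in\C^d$ and radius $\alpha>0$, with $z'$ chosen appropriately.

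The heuristic justification is that under~\eqref{eqn:covariance} the random vector $Xv$ has covariance $\|v\|^2 I_d$, so $X\BC v=X(\BC v)$ has covariance $\|\BC v\|^2 I_d$ of norm at most $\delta^{-2}\|v\|^2$; thus the natural scale of $X\BC v$ is at most $\delta^{-1}$ times that of $Xv$, precisely matching the scaling in the claim. The main obstacle is that this is not a pointwise inequality in the random matrix $X$---there are realizations for which $\|X\BC v\|$ is strictly larger than $\delta^{-1}\|Xv\|$---so one cannot simply invert a coupling, and must extract the comparison from the full distribution of $X$. I would do this via a characteristic-function argument, bounding $\P(\|Xv-z\|\le\alpha)$ by a Fourier-analytic small-ball estimate whose right-hand side transforms controllably under the substitution $v\mapsto\BC v$, with the scaling factor $\delta^{-1}$ arising from $\sigma_1(\BC)\le\delta^{-1}$ together with the uncorrelated-entries condition \eqref{eqn:covariance}.
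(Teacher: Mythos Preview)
The paper does not prove this claim—it is labeled ``elementary'' and left without argument—so there is no proof in the paper to compare against. There is, however, a genuine gap in your outline: the single-block inequality you reduce to,
\[
\P(\|Xv-z\|\le\alpha)\ \le\ \sup_{z'}\P(\|X\BC v-z'\|\le\delta^{-1}\alpha),
\]
is false under~\eqref{eqn:covariance} alone. Take $d=2$, let $x_{11},x_{21}$ be independent Rademachers and $x_{12},x_{22}$ independent standard Gaussians (all four mutually independent, so~\eqref{eqn:covariance} holds), and take $\BC=\bigl(\begin{smallmatrix}0&1\\1&0\end{smallmatrix}\bigr)$, so $\delta=1$. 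With $v=(1,0)^{\mathrm T}$ one has $Xv=(x_{11},x_{21})$ and $X\BC v=(x_{12},x_{22})$; for small $\alpha$ the left side equals $1/4$ while the right side is $O(\alpha^2)$. Since this is already the $n=1$ instance of the full claim, it shows that the statement as literally written—for an \emph{arbitrary} $\BC$ with $\sigma_j(\BC)\in[\delta,\delta^{-1}]$—cannot hold for every distribution satisfying~\eqref{eqn:covariance}. Your covariance heuristic is correct (the covariance of $X\BC v$ is $\|\BC v\|^2 I_d$), but small-ball probabilities are not controlled by covariance alone, and no characteristic-function manipulation will recover an inequality that already fails at the single-block level.

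What the paper actually uses is weaker: in both the compressible and incompressible subcases the comparison is applied only to the specific $\BC$ produced by Theorem~\ref{theorem:linear:operator} (which arises, via Claim~\ref{lemma:difference}, from a realization of $X-X'$), and only up to polynomial-in-$n$ losses in the radius. For that $\BC$ the Fourier machinery of Appendix~\ref{section:ILOlinear:new:proof} bounds $\gamma_\beta(\Bu)$ through the level-set quantity $\sum_i\|\langle\BC\Bu^{(i)},\Bs\rangle\|_{\R/\Z}^2$, which is already a functional of $\Bu'^{(i)}=\BC\Bu^{(i)}$; the comparison the paper needs can be extracted from that analysis, but it is neither the one-line fact the paper advertises nor the block-by-block reduction you propose.
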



By paying a factor of $n^{O_{B,\ep}(1)}$ in probability, we may assume that $|\langle \Bu,\row_i(M_{n}) \rangle | \le  \beta_0$ for the first  $d(n-1)-O_{B,\ep}(1)$ rows of $\BM_{n}$. Also, by paying another factor of $n^{n^\ep}$ in probability, we may assume that the first $d n_0$ components of $\Bu$ belong to a GAP $Q$, and the Euclidean norm of $\Bu^{(n_0)}$ is comparable, $\|\Bu^{(n_0)}\|= \Omega(1/n)$ (recall that $\|\Bu\| \asymp 1$), where 
$$n_0:=n-n^\ep.$$ 

We refer to the remaining $u_i$'s as exceptional components. Note that these extra factors do not affect our final bound $\exp(-\Omega(n))$. Because $\|\Bu^{(n_0)}\|= \Omega(1/n)$ and $X^{(n_0)}$ is not degenerate with high probability, there exist positive constants $c_1,c_2$ such that $c_2<1$ and for any $\beta\le c_1/\sqrt{n}$  we have
\begin{eqnarray}\label{eqn:step2:upper}
\gamma_{\beta}(\Bu) &\le& \sup_a \P_{X^{(n_0)}}(|X^{(n_0)}\Bu^{(n_0)}-a|\le \beta) \le 1-c_2.
\end{eqnarray}


\subsection{Classification} Next, let $C$ be a sufficiently large constant depending on $B$ and $\ep$ but not $A$. We classify $\Bu$ into two classes $\mathcal{B}$ and $\mathcal{B}'$, depending on whether $\P_{\beta_0}(\Bu)\ge n^{-Cn}$ or not. Because of \eqref{eqn:step2:N'}, for $C$ large enough, 
\begin{equation}\label{eqn:step2:B'}
\sum_{\Bu\in \mathcal{B}'}\P_{\beta_0}(\Bu)\le n^{O_{B,\ep}(n)}/n^{Cn} \le n^{-n/2}.
\end{equation}

For the set $\mathcal{B}$ of remaining vectors, we divide it into two subfamilies. Set $n':=n^{1-\ep}$. We say that $\Bu\in \mathcal{B}$ is compressible if for any $n'$ components $\Bu^{(i_1)},\dots,\Bu^{(i_{n'})}$ among the $\Bu^{(1)},\dots, \Bu^{(n_0)}$, we have 
\begin{equation}\label{eqn:step2:degenerate}
\sup_a\P_{X_{i_1},\dots,X_{i_{n'}}}(|X_{i_1}\Bu^{(i_1)}+\dots+X_{i_{n'}}\Bu^{(i_{n'})}-a|\le n^{-B-4})\ge (n')^{-1/2+o(1)}.
\end{equation}

Let $\mathcal{B}_1$ and $\mathcal{B}_2$ be the set of compressible and incompressible vectors respectively. We focus on $\mathcal{B}_1$ first.

\subsection{Approximation for compressible vectors} Set $\beta:=n^{-B-4}$. It follows from Theorem \ref{theorem:linear:operator} that, among any $\Bu^{(i_1)},\dots,\Bu^{(i_{n'})}$, there are, say, at least $n'/2+1$ vectors that belong to a ball of radius $\beta$ in $\C^d$ (because our GAP now has only one element after a linear transformation $\BC=(c_{ij})_{1\le i,j\le d}$). A simple covering argument then implies that there is a ball of radius $2\beta$ that contains all but $n'-1$ vectors $\Bu^{(i)}$.
 
Thus there exists a vector $\Bu'=(\Bu'^{(1)},\dots,\Bu'^{(n)})\in (2\beta)\cdot (\Z+ \sqrt{-1} \Z)^{nd}$ such that

\begin{itemize}
\item $|\BC \Bu^{(i)}-\Bu'^{(i)}|\le 4\beta$ for all $i$;
\item $\Bu'^{(i)}$ takes the same vector-value for at least $n_0-n'$ indices $i$.
\end{itemize}

Because of the approximation and Assumption \ref{condition:bound}, whenever $\sum_{1\le i\le n} X^{(i)}\Bu^{(i)}\in B(\Bu,\beta)$, we also have 
$$|\sum_{1\le i\le n} X^{(i)}{\Bu'}^{(i)}-\BC X^{(i)}\Bu^{(i)}|\le n(n^{B+1}+n^\alpha)(4\beta)+\beta_0:=\beta'.$$ 
By definition, $\beta' \le c_1/\sqrt{n}$, and thus by \eqref{eqn:step2:upper}, $\P_{\beta'}(\Bu') \le (1-c_2)^{(1-o(1))n}$. Now we bound the number of $\Bu'$ obtained from the approximation. First, there are $O(n^{n-n_0+n'}) = O(n^{2n^{1-\ep}})$ ways to choose those $\Bu'^{(i)}$ that take the same vector $\Bw\in \C^d$, and there are just $O(\beta^{-d})$ ways to choose $\Bw$. The remaining components belong to the set $(2\beta)^{-d}\cdot (\Z + i\Z)^d$, and thus there are at most $O((\beta^{-d})^{n-n_0+n'})= O(n^{O_{A,B,\ep}(n^{1-\ep})})$ ways to choose them. Hence we obtain the total bound
\begin{align*}
\sum_{\Bu \in \mathcal{B}_1}\P_{\beta_0}(\Bu) \le \sum_{\Bu'}\P_{\beta'}(\Bu') &\le O(n^{2n^{1-\ep}}) O(n^{O_{A,B,\ep}(n^{1-\ep})}) (1-c_2)^{(1-o(1))n} \le (1-c_2)^{(1-o(1))n}.
\end{align*}

\subsection{Approximation for incompressible vectors} The treatment here is similar, we will sketch the main steps, leaving the details for the reader as an exercise.   

First, by exposing the rows of $\BM_{n-1}$ accordingly, and by paying an extra factor $\binom{n_0}{n'}=O(n^{n^{1-\ep}})$ in probability, we can assume that the components $\Bu^{(n_0-n'+1)},\dots,\Bu^{(n_0)}$ satisfy 
$$\sup_{\Ba\in \C^d}\P_{X^{(n_0-n'+1)},\dots,X^{(n_0)}}(|X^{(n_0-n'+1)} \Bu^{(n_0-n'+1)}+\dots+X^{(n_0)}\Bu^{(n_0)}-\Ba|\le n^{-B-4})\le (n')^{-1/2+o(1)}$$
\begin{equation}\label{eqn:step2:non-degenerate}
\le n^{-1/2+\ep/2+o(1)}.
\end{equation}

Next, define a radius sequence $\beta_k, k\ge 0$ where $\beta_0=n^{-A/2+O_{B,\ep}(1)}$ is the bound obtained from the conclusion of Theorem \ref{theorem:step1}, and $\beta_{k+1}:= (n^{B+2}+n^{\alpha+1}+1)^2 \beta_k.$ Also define  
$$\gamma_{\beta_k}(\Bu):= \sup_{\Ba\in \C^d}\P_{X^{(n_0-n'+1)},\dots,X^{(n_0)}}(|X^{(n_0-n'+1)} \Bu^{(n_0-n'+1)}+\dots+X^{(n_0)}\Bu^{(n_0)}-\Ba|\le \beta).$$
Clearly $\P_{\beta_k}(\Bu) \le \gamma^{n-1}_{\beta_k}(\Bu)$. As $\BC$ is non-degenerate, with $\Bu'$ from Theorem \ref{theorem:linear:operator},

\begin{align}\label{eqn:u':discussion}
\gamma_{\beta_k}(\Bu) \le  \gamma_{(n(n^{B+1}+n^\alpha) \beta_k +\beta_k))}(\Bu') \le \gamma_{\beta_{k+1}}(\Bu).
\end{align}

Furthermore, we have freedom to choose $k$ before applying Theorem \ref{theorem:linear:operator} to obtain $\Bu'$. By the pigeon-hole principle, there exists $k=k_0(\Bu)\le C\ep^{-1}$ such that 
\begin{equation}\label{eqn:step2:pigeon-hole}
\pi_{\beta_{k_0+1}}(\Bu) \le n^{\ep n} \pi_{\beta_{k_0}}(\Bu).
\end{equation}

Since $A$ was chosen sufficiently large compared to $O_{B,\ep}(1)$ and $C$, we have $\beta_{k_0+1}\le n^{-B-4}$. With this choice of $k_0$, we apply Theorem \ref{theorem:linear:operator} to obtain an approximation $\Bu'$ of $\BC\Bu$ with the following properties. 

\begin{enumerate}[(i)]
\item $|\BC \Bu^{(i)}-\Bu'^{(i)}|\le \beta_{k_0}$ for all $i$. 
\item The components of ${\Bu'}^{(i)}$ belong to $Q$ for all but $n^{1-2\ep}$ indices $i$, and the generators of $Q$, belong to the set $\beta_{k_0}\cdot \{p/q +\sqrt{-1} p'/q' , |p|,|q|,|p'|,|q'|\le n^{A/2+O_{B,\ep}(1)}\}$. 
\item $Q$ has rank $O_{B,\ep}(1)$ and size $|Q|=O(\gamma_{\beta_{k_0}(\Bu)}^{-1}/n^{1/2-\ep})$.
\end{enumerate}

Let $\mathcal{B'}$ be the collection of such $\Bu'$. By definition,
\begin{equation}\label{eqn:step2:Pu'}
\P_{(n^{B+2}+n^{\alpha+1}+1)\beta_{k_0}}(\Bu')=\gamma^{n-1}_{(n^{B+2}+n^{\alpha+1}+1)\beta_{k_0}}(\Bu')  \le \gamma^{n-1}_{\beta_{k_0+1}}(\Bu) \le n^{\ep n}\gamma^{n-1}_{\beta_{k_0}}(\Bu).
\end{equation}


Arguing similarly to the treatment for $\mathcal{N}$, we can bound the cardinality $\mathcal{N'}$ of $\mathcal{B'}$ by 

\begin{equation}\label{eqn:step2:N}
\mathcal{N}'\le \gamma_{\beta_{k_0}}(\Bu)^{-n}/n^{(1/2-\ep-o(1))n}.
\end{equation}
 
 It follows from \eqref{eqn:step2:Pu'} and \eqref{eqn:step2:N} that
\begin{align*}
\sum_{\Bu'\in \mathcal{B'}}\P_{(n^{B+2}+n^{\alpha+1}+1)\beta_{k_0}}(\Bu') \le  n^{-(1/2-4\ep-o(1))n},
\end{align*}
completing the treatment for incompressible vectors.

\section{Universality of random block matrices: Proof of Theorem \ref{thm:main}} \label{sec:universality}

This section is devoted to Theorem \ref{thm:main}.  We begin by introducing the following notation.  Given a $n \times n$ matrix $\BM$, we let $\mu_{\BM}$ denote the empirical measure built from the eigenvalues of $\BM$ and $\nu_{\BM}$ denote the symmetric empirical measure built from the singular values of $\BM$.  That is,
$$ \mu_{\BM} := \frac{1}{n} \sum_{i =1}^n \delta_{\lambda_i(\BM)} \quad \text{and}\quad \nu_{\BM} := \frac{1}{2n} \sum_{i =1}^n \left( \delta_{\sigma_i(\BM)} + \delta_{-\sigma_i(\BM)} \right), $$ 
where $\lambda_1(\BM), \ldots, \lambda_n(\BM) \in \mathbb{C}$ are the eigenvalues of $\BM$ and $\sigma_1(\BM) \geq \cdots \geq \sigma_n(\BM)$ are the singular values of $\BM$.  Recall that $F^{\BM}$ is the ESD of $\BM$.  In particular, we have
$$ F^{\BM}(x,y) = \int_{-\infty}^x \int_{-\infty}^{y} \mu_{\BM}(z)dt ds, $$
where $z = s + \sqrt{-1}t$.  

Many of the techniques used to study Hermitian matrices fail to work for non-Hermitian matrices \cite[Section 11.1]{BSbook}.  Consider a $n \times n$ non-Hermitian matrix $\BM$.  In \cite{G1,G2}, Girko introduced a natural connection between $\mu_{\BM}$ and the collection of measures $\{\nu_{\BM - z \BI}\}_{z \in \C}$.  Formally, we present this connection as Lemma \ref{lemma:girko} below.  

Lemma \ref{lemma:girko} follows from \cite[Lemma 11.2]{BSbook} and is based on Girko's original observation \cite{G1,G2}.  The lemma has appeared in a number of different forms; for example, see \cite[Lemma 4.3]{BC} and \cite{GK}.  

\begin{lemma}[Lemma 11.2 from \cite{BSbook}] \label{lemma:girko}
Let $\BM$ be a $n \times n$ matrix.  For any $uv \neq 0$, we have
\begin{align*}
	&\iint e^{\sqrt{-1} ux + \sqrt{-1} uy} F^{\BM}(dx, dy) \\
	&\qquad \qquad= \frac{u^2 + v^2}{4 \sqrt{-1} u \pi} \iint \frac{ \partial }{\partial s} \left[ \int_{0}^\infty \ln |x|^2 \nu_{\BM - z\BI}(dx) \right] e^{\sqrt{-1} us + \sqrt{-1} vt} dt ds, 
\end{align*}
where $z = s + \sqrt{-1} t$.  
\end{lemma}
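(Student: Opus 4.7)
The plan is to recognize the bracketed inner integral as the logarithmic potential of the empirical spectral measure $\mu_{\BM}$, and then invoke a distributional Poisson-type identity followed by a two-dimensional Fourier transform. Writing $z = s+\sqrt{-1}\,t$, note that $\nu_{\BM-z\BI}$ places mass $\frac{1}{2n}$ at each of $\pm\sigma_i(\BM - z\BI)$ and that $\prod_i \sigma_i(\BM - z\BI) = |\det(\BM - z\BI)|$. Consequently,
\[
U(s,t) \;:=\; \int_0^\infty \ln|x|^2\,\nu_{\BM - z\BI}(dx) \;=\; \frac{1}{n}\sum_{i=1}^n \ln\sigma_i(\BM - z\BI) \;=\; \frac{1}{n}\ln|\det(\BM - z\BI)| \;=\; \int_{\mathbb{C}} \ln|z-w|\,\mu_{\BM}(dw),
\]
identifying $U$ (up to a convention-dependent constant tied to whether the integrand is $\ln|x|^2$ or $\ln|x|$) as the standard logarithmic potential of $\mu_{\BM}$.

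The second step uses the classical identity $\Delta_z \ln|z-w| = 2\pi\,\delta_w$ in $\mathbb{R}^2$, with $\Delta = \partial_s^2 + \partial_t^2$. Integrating against $\mu_{\BM}$ yields the distributional Poisson equation $\Delta U = 2\pi\,\mu_{\BM}$. I would then apply the two-dimensional Fourier transform in the convention $\hat f(u,v) = \iint f(s,t)\,e^{\sqrt{-1}us + \sqrt{-1}vt}\,ds\,dt$. Under this transform $\Delta$ becomes multiplication by $-(u^2+v^2)$, so
\[
-(u^2+v^2)\,\hat U(u,v) \;=\; 2\pi\,\hat\mu_{\BM}(u,v) \;=\; 2\pi\iint e^{\sqrt{-1}ux + \sqrt{-1}vy}\,F^{\BM}(dx,dy),
\]
which is essentially the desired identity, written in terms of $\hat U$ rather than $\widehat{\partial_s U}$.

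To recast the right-hand side in the form appearing in Lemma \ref{lemma:girko}, I would use the differentiation rule $\widehat{\partial_s U}(u,v) = -\sqrt{-1}\,u\,\hat U(u,v)$, valid on tempered distributions; the assumption $u\neq 0$ is precisely what allows the substitution $\hat U = (-\sqrt{-1}u)^{-1}\,\widehat{\partial_s U}$, and plugging this into the previous display (keeping careful track of the extra factor of $2$ coming from $\ln|x|^2 = 2\ln|x|$, which produces the $4$ rather than $2$ in the prefactor $\frac{u^2+v^2}{4\sqrt{-1}u\pi}$) yields the lemma. The main obstacle I expect is not the algebra but the rigorous justification, since $U$ carries logarithmic singularities at the eigenvalues of $\BM$ and fails to be absolutely integrable at infinity. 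The cleanest route is to operate throughout in the space of tempered distributions, using $\ln|\cdot|\in L^1_{\mathrm{loc}}(\mathbb{R}^2)$ to pass the distributional Laplacian through the finite sum over eigenvalues; alternatively, one can mollify $\mu_{\BM}$ against a Gaussian approximation of the identity, perform the calculation in the classical sense for the smoothed measures, and pass to the limit via dominated convergence.
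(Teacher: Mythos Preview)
The paper does not give its own proof of this lemma; it is quoted verbatim as Lemma 11.2 of Bai--Silverstein. Your outline is exactly the classical derivation: identify $\int_0^\infty \ln|x|^2\,\nu_{\BM-z\BI}(dx)=\frac{1}{n}\ln|\det(\BM-z\BI)|=\int\ln|z-w|\,\mu_{\BM}(dw)$, apply $\Delta_z\ln|z-w|=2\pi\delta_w$, and Fourier-transform. That is the standard route and it is correct.

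Two small remarks. First, your worry about justifying the distributional steps is unnecessary at this level of generality: for fixed $n$, $\mu_{\BM}$ is a finite sum of Dirac masses, so $U$ is a finite sum of translates of $\ln|\cdot|$, and the identity $\Delta\ln|\cdot|=2\pi\delta_0$ is an exact distributional equality; no mollification is needed. Second, if you chase the constants carefully with the paper's symmetrized definition $\nu_{\BM}=\frac{1}{2n}\sum(\delta_{\sigma_i}+\delta_{-\sigma_i})$, you get $\int_0^\infty\ln|x|^2\,\nu_{\BM-z\BI}(dx)=\frac{1}{n}\sum\ln\sigma_i$ (the factor $2$ from $\ln|x|^2$ cancels the $\frac12$ from symmetrization), so $U$ equals the logarithmic potential on the nose and the final prefactor comes out as $\frac{u^2+v^2}{2\sqrt{-1}u\pi}$ rather than $\frac{u^2+v^2}{4\sqrt{-1}u\pi}$. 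The $4$ in the displayed formula matches the convention where $\nu$ is the \emph{un}-symmetrized singular-value measure $\frac{1}{n}\sum\delta_{\sigma_i}$ (as in Bai--Silverstein's Chapter~11); note also the obvious typo $e^{\sqrt{-1}uy}$ for $e^{\sqrt{-1}vy}$ on the left-hand side. None of this affects the argument, only the bookkeeping.
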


We define the function
\begin{equation} \label{eq:def:gMn}
	g_{\BM}(s,t) := \frac{\partial}{\partial s} \int_{0}^\infty \log |x|^2 \nu_{\BM-z\BI}(dx),
\end{equation}
where $z = s + \sqrt{-1} t$.  We also define
\begin{equation} \label{eq:def:g}
	g(s,t) := \left\{
		\begin{array}{ll}
		\frac{2s}{s^2 + t^2}, & \text{if } s^2 + t^2 >1\\
		2s, & \text{otherwise}
	\end{array}
   	\right. .
\end{equation}

Let $\{\BX_n\}_{n \geq 1}$ be a sequence of matrices that satisfies condition {\bf C0} with parameter $d \geq 2$ and atom variables $(\xi_{st})_{s,t=1}^d$.  We define the $2dn \times 2dn$ Hermitian matrix
$$ \BH_n = \BH_n(z) := \begin{bmatrix} \Bzero & \frac{1}{\sqrt{n}} \BX_n - z \BI \\ \frac{1}{\sqrt{n}} \BX_n^\ast - \bar{z} \BI & \Bzero \end{bmatrix} $$
for $z \in \mathbb{C}$.  It is straight-forward to verify that the eigenvalues of $\BH_n$ are given by 
$$ \pm \sigma_1\left(\frac{1}{\sqrt{n}}\BX_n - z\BI \right), \pm \sigma_2\left(\frac{1}{\sqrt{n}}\BX_n - z\BI\right), \ldots, \pm \sigma_{dn}\left(\frac{1}{\sqrt{n}}\BX_n - z\BI \right). $$  
In other words, $\nu_{\frac{1}{\sqrt{n}} \BX_n - z \BI}$ is the empirical spectral measure of $\BH_n$.  By Lemma \ref{lemma:girko}, the problem of studying $\mu_{\frac{1}{\sqrt{n}} \BX_n}$ reduces to studying the eigenvalue distribution of $\BH_n$.

\subsection{Truncation}

In practice, it will be more convenient to work with a truncated version of $\BH_n$.  That is, we will work with a new matrix $\hat{\BH}_n$ whose entries are truncated versions of the entries of the original matrix $\BH_n$.  This subsection is devoted to the following standard truncation arguments.  

Let $\{\BX_n\}_{n \geq 1}$ be a sequence of matrices that satisfies condition {\bf C0} with parameter $d \geq 2$ and atom variables $(\xi_{st})_{s,t=1}^d$, and assume 
\begin{equation} \label{eq:m2eta}
	m_{2+\eta} := \max_{1 \leq s,t \leq d} \E|\xi_{st}|^{2 + \eta} < \infty, 
\end{equation}
for some $\eta > 0$.  Let $\delta > 0$.  For each $s,t \in \{1,\ldots,d\}$, we define
$$ \tilde{\xi}_{st}^{(n)} := \xi_{st} \indicator{|\xi_{st}| \leq n^{\delta}} - \E \left[\xi_{st} \indicator{|\xi_{st}| \leq n^{\delta}} \right] \quad \text{and} \quad  \hat{\xi}_{st}^{(n)} := \frac{\tilde{\xi}_{st}^{(n)}}{\sqrt{d\var(\tilde{\xi}_{st}^{(n)})}}. $$
Here $\oindicator{E}$ denotes the indicator function of the event $E$.  We present the following standard truncation lemma.  

\begin{lemma} \label{lemma:abcdtruncn}
Let $\{\BX_n\}_{n \geq 1}$ be a sequence of matrices that satisfies condition {\bf C0} with parameter $d \geq 2$ and atom variables $(\xi_{st})_{s,t=1}^d$, and assume \eqref{eq:m2eta} holds for some $\eta > 0$.  For each $\delta > 0$, there exists $n_0 > 0$ such that the following holds for all $n > n_0$.  
\begin{enumerate}[(i)]
\item For each $s,t \in \{1,\ldots,d\}$, $\hat{\xi}_{st}^{(n)}$ has mean zero and variance $1/d$. 
\item a.s. $\max_{1 \leq s,t \leq d} \left|\hat{\xi}_{st}^{(n)}\right| \leq 4 n^{\delta}$. 
\item We have \label{truncn:var}
$$ \max_{1 \leq s,t \leq d} \left| 1/d - \var( \tilde{\xi}_{st}^{(n)} ) \right| \leq 2 \frac{m_{2+\eta}} {n^{\delta \eta}}. $$
\item We have \label{truncn:corr}
\begin{align*}
	\max_{ (s,t) \neq (u,v)} \left|\E \left[\hat{\xi}_{st}^{(n)} \overline{\hat{\xi}_{uv}^{(n)}} \right] \right| \leq 10 \frac{\sqrt{m_{2 + \eta}}} {n^{\delta \eta/2}}.
\end{align*}
\end{enumerate}
\end{lemma}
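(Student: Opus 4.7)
The plan is to treat all four assertions as routine consequences of two Markov-type tail bounds,
\[
\E|\xi_{st}|^2 \indicator{|\xi_{st}| > n^{\delta}} \leq \frac{m_{2+\eta}}{n^{\delta\eta}}, \qquad \E|\xi_{st}|\indicator{|\xi_{st}| > n^{\delta}} \leq \frac{m_{2+\eta}}{n^{\delta(1+\eta)}},
\]
combined with the mean-zero and uncorrelation hypotheses of condition \textbf{C0}. In order to avoid circularity, I would establish the claims in the order (iii), (i), (ii), (iv), so that the approximate unit variance supplied by (iii) can be freely quoted in the subsequent estimates.

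For (iii), expand $\var(\tilde\xi_{st}^{(n)}) = \E|\xi_{st}|^2\indicator{|\xi_{st}|\leq n^\delta} - |\E\xi_{st}\indicator{|\xi_{st}|\leq n^\delta}|^2$. Since $\E|\xi_{st}|^2 = 1/d$, the first summand differs from $1/d$ by at most $m_{2+\eta}/n^{\delta\eta}$ by the first tail bound. Since $\E\xi_{st}=0$, the second term equals $|\E\xi_{st}\indicator{|\xi_{st}|>n^\delta}|^2$, which by the second tail bound is $O(n^{-2\delta(1+\eta)})$ and hence absorbed into the claimed constant $2$ for $n$ large. Part (i) then follows immediately from the construction: $\hat{\xi}_{st}^{(n)}$ is centered by linearity, and dividing $\tilde{\xi}_{st}^{(n)}$ by $\sqrt{d\var(\tilde\xi_{st}^{(n)})}$ forces $\var(\hat\xi_{st}^{(n)}) = 1/d$. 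For (ii) note $|\tilde\xi_{st}^{(n)}|\leq 2n^\delta$ by the triangle inequality applied to the centering, while (iii) guarantees $d\var(\tilde\xi_{st}^{(n)})\geq 1/4$ once $n \geq n_0$, so the normalizing denominator is at least $1/2$ and $|\hat\xi_{st}^{(n)}|\leq 4n^\delta$.

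Part (iv) is the one substantive estimate. Writing $\psi_{st} := \indicator{|\xi_{st}|\leq n^\delta}$ and expanding the centered product gives
\[
\E\bigl[\tilde\xi_{st}^{(n)}\overline{\tilde\xi_{uv}^{(n)}}\bigr] = \E\bigl[\xi_{st}\overline{\xi_{uv}}\psi_{st}\psi_{uv}\bigr] - \E[\xi_{st}\psi_{st}]\,\overline{\E[\xi_{uv}\psi_{uv}]}.
\]
Using the hypothesis \eqref{def:C0:uncorr} that $\E[\xi_{st}\overline{\xi_{uv}}]=0$, the first term equals $-\E[\xi_{st}\overline{\xi_{uv}}(1-\psi_{st}\psi_{uv})]$, which I would split via $1-\psi_{st}\psi_{uv}\leq \indicator{|\xi_{st}|>n^\delta}+\indicator{|\xi_{uv}|>n^\delta}$ and bound by Cauchy--Schwarz; each of the two resulting pieces is at most $\sqrt{(m_{2+\eta}/n^{\delta\eta})\cdot (1/d)} \leq \sqrt{m_{2+\eta}}/n^{\delta\eta/2}$. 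The second (cross) term is bounded by the product of two tail mean estimates and is of order $n^{-2\delta(1+\eta)}$, which is negligible. Finally, passing from $\tilde\xi_{st}^{(n)}$ to $\hat\xi_{st}^{(n)}$ multiplies the bound by $1/\sqrt{d\var(\tilde\xi_{st}^{(n)})\,d\var(\tilde\xi_{uv}^{(n)})}$, which by (iii) lies in $[1/2,2]$ for $n$ large; rolling all constants together yields the advertised bound with constant $10$.

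I do not anticipate any substantive obstacle: the argument is standard and the only care required is in choosing $n_0$ large enough that $\var(\tilde\xi_{st}^{(n)})$ stays uniformly close to $1/d$, so that the normalization factors in (ii) and (iv) behave like $1$ up to a factor of $2$. The task is purely bookkeeping with explicit constants.
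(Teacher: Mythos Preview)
Your proposal is correct and follows essentially the same route as the paper: both proofs reduce everything to the tail bounds $\E|\xi_{st}|^2\indicator{|\xi_{st}|>n^\delta}\le m_{2+\eta}/n^{\delta\eta}$ and its first-moment analogue, establish (iii) first to control the normalizing variances, and then read off (i), (ii), (iv). The only cosmetic difference is in (iv): the paper bounds $\bigl|\E[\hat\xi_{st}\overline{\hat\xi_{uv}}]-\E[\tilde\xi_{st}\overline{\tilde\xi_{uv}}]\bigr|$ separately before estimating the $\tilde\xi$ covariance, whereas you bound the $\tilde\xi$ covariance first and then multiply by the normalization factor; both are standard bookkeeping and arrive at the same constant $10$.
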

\begin{proof}(of Lemma \ref{lemma:abcdtruncn})
We first note that
\begin{equation} \label{eq:var14n}
	\var(\tilde{\xi}_{st}^{(n)}) \leq \E|\xi_{st}|^2 \indicator{|\xi_{st}| \leq n^{\delta}} \leq \E|\xi_{st}|^2 = 1/d 
\end{equation}
for all $s,t \in \{1,\ldots,d\}$.  We also note that
\begin{equation} \label{eq:varn}
	\left| 1/d- \var(\tilde{\xi}_{st}^{(n)}) \right| = \left| 1/d - \E \left|\tilde{\xi}_{st}^{(n)}\right|^2 \right| \leq 2 \E|\xi_{st}|^2 \indicator{|\xi_{st}| > n^{\delta} } \leq 2\frac{m_{2+\eta}}{n^{\delta \eta}}.
\end{equation}
Since this holds for all $s,t \in \{1,\ldots,d\}$, we obtain \eqref{truncn:var}.  We now take $n_0$ sufficiently large such that 
\begin{equation} \label{eq:m2en}
	8m_{2 + \eta} \leq n_0^{\delta \eta} 
\end{equation}
and
$$ \min_{1 \leq s,t \leq d} \var(\tilde{\xi}_{st}^{(n)}) > \frac{1}{2d} $$  
for all $n > n_0$; let $n > n_0$.  Then each $\hat{\xi}_{st}^{(n)}$ has mean zero and variance $1/d$ by construction.  Moreover, we have a.s.
$$ \left|\hat{\xi}_{st}^{(n)} \right| \leq \frac{2n^{\delta}}{\sqrt{d\var(\tilde{\xi}_{st}^{(n)})}} \leq 4 n^{\delta} $$
for all $s,t \in \{1,\ldots,d\}$.  We now verify \eqref{truncn:corr}; fix $s,t,u,v \in \{1,\ldots,d\}$ with $(s,t) \neq (u,v)$. We have
\begin{align*}
	\left| \E \left[ \hat{\xi}_{st}^{(n)} \overline{\hat{\xi}_{uv}^{(n)}} \right] - \E \left[ \tilde{\xi}_{st}^{(n)} \overline{\tilde{\xi}_{uv}^{(n)}} \right] \right| &\leq \E \left| \hat{\xi}_{st}^{(n)} \overline{\hat{\xi}_{uv}^{(n)}} \right| \left| 1 - d \sqrt{\var( \tilde{\xi}_{st}^{(n)})} \sqrt{ \var(\tilde{\xi}_{uv}^{(n)})} \right| \\
		&\leq \left| \frac{1}{d} -\sqrt{\var( \tilde{\xi}_{st}^{(n)})} \sqrt{ \var(\tilde{\xi}_{uv}^{(n)})} \right| \\
		&\leq d \left| \frac{1}{d^2} - \var(\tilde{\xi}_{st}^{(n)}) \var(\tilde{\xi}_{uv}^{(n)}) \right| \\
		&\leq \left| \var(\tilde{\xi}_{st}^{(n)}) - \frac{1}{d} \right| + \left| \var(\tilde{\xi}_{uv}^{(n)}) - \frac{1}{d} \right| \\
		&\leq 4 \frac{m_{2+\eta}}{n^{\delta \eta}}
\end{align*}
by \eqref{eq:var14n} and the Cauchy-Schwarz inequality.  Here the last inequality follows from \eqref{eq:varn}.  By another application of Cauchy-Schwarz and \eqref{eq:var14n}, we obtain
\begin{align*}
	\left| \E \left[ \tilde{\xi}_{st}^{(n)} \overline{ \tilde{\xi}_{uv}^{(n)}} \right] \right| &\leq \sqrt{ \E|\xi_{st}|^2 \indicator{|\xi_{st}| > n^{\delta}} } + \sqrt{ \E|\xi_{uv}|^2 \indicator{|\xi_{uv}| > n^{\delta}} } \\
		& \qquad + 2 \E|\xi_{st}|^2 \indicator{|\xi_{st}| > n^{\delta}} + 2 \E|\xi_{uv}|^2 \indicator{|\xi_{uv}| > n^{\delta}} \\
		&\leq 2 \frac{ \sqrt{m_{2+\eta}}}{n^{\delta \eta / 2}} + 4 \frac{m_{2+\eta}}{n^{\delta \eta}}. 
\end{align*}
Combining the bounds above with \eqref{eq:m2en}, we obtain
\begin{equation} \label{eq:hatcovbnd}
	\left|\E \left[ \hat{\xi}_{st}^{(n)} \overline{\hat{\xi}_{uv}^{(n)}} \right] \right| \leq 10 \frac{\sqrt{m_{2+\eta}}}{n^{\delta \eta/2}}. 
\end{equation}
Since \eqref{eq:hatcovbnd} holds for any $(s,t) \neq (u,v)$, the proof of the lemma is complete.
\end{proof}

We will continue to use the notation introduced in Definition \ref{def:C0}.  That is, for any $s,t \in \{1,\ldots,d\}$ and $1 \leq i, j \leq n$, we let $x_{st;ij}$ denote the $(i,j)$-entry of the matrix $\BX_{n,st}$.  For every $s,t \in \{1,\ldots,d\}$, $n \geq 1$, and $1 \leq i,j \leq n$, we define
$$ \tilde{x}_{st;ij}^{(n)} := x_{st;ij} \indicator{|x_{st;ij}| \leq n^{\delta}} - \E \left[ x_{st;ij}  \indicator{|x_{st;ij}| \leq n^{\delta}} \right] $$
and
$$ \hat{x}_{st;ij}^{(n)} := \frac{\tilde{x}_{st;ij}^{(n)}}{ \sqrt{d \var(\tilde{x}_{st;ij}^{(n)} )} }. $$
Set $\tilde{\BX}_{n,st} := \left(\tilde{x}_{st;ij}^{(n)} \right)_{i,j=1}^n$ and $\hat{\BX}_{n,st} := \left(\hat{x}_{st;ij}^{(n)} \right)_{i,j=1}^n$ for every $n \geq 1$ and $s,t \in \{1,\ldots,d\}$.  We also define the $dn \times dn$ random block matrices
$$ \tilde{\BX}_n := \left( \tilde{\BX}_{n,st} \right)_{s,t=1}^d, \quad \hat{\BX}_n := \left( \hat{\BX}_{n,st} \right)_{s,t=1}^d. $$

For $z \in \mathbb{C}$, we define the $2dn \times 2dn$ matrices
$$ \tilde{\BH}_n = \tilde{\BH}_n(z) := \begin{bmatrix} \Bzero & \frac{1}{\sqrt{n}} \tilde{\BX}_n - z \BI \\ \frac{1}{\sqrt{n}} \tilde{\BX}_n^\ast - \bar{z} \BI & \Bzero \end{bmatrix} $$
and
$$ \hat{\BH}_n = \hat{\BH}_n(z) := \begin{bmatrix} \Bzero & \frac{1}{\sqrt{n}} \hat{\BX}_n - z \BI \\ \frac{1}{\sqrt{n}} \hat{\BX}_n^\ast - \bar{z} \BI & \Bzero \end{bmatrix}. $$

We will make use of the following corollary to the law of large numbers.

\begin{lemma}[Law of large numbers] \label{lemma:lln}
Let $\{\BX_n\}_{n \geq 1}$ be a sequence of random matrices that satisfies condition {\bf C0} with parameter $d \geq 2$ and atom variables $(\xi_{st})_{s,t=1}^d$, and assume \eqref{eq:m2eta} holds for some $\eta > 0$.  Let $\delta > 0$.  Then a.s.
\begin{align} \label{eq:llnhs}
	\limsup_{n \rightarrow \infty} \frac{1}{n^2} \| \BX_n \|_2^2 \leq d,\\
	\limsup_{n \rightarrow \infty} \frac{1}{n^2} \| \hat{\BX}_n\|_2^2 \leq 8d \label{eq:llnhshat},
\end{align}
and
\begin{equation} \label{eq:lln2eta}
	\lim_{n \rightarrow \infty} \frac{1}{n^2} \sum_{s,t=1}^d \sum_{i,j=1}^n |x_{st;ij}|^{2 + \eta} \indicator{|x_{st;ij}| > n^{\delta}|} = 0.
\end{equation}
\end{lemma}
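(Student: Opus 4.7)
The plan is to derive all three assertions from the strong law of large numbers (SLLN) applied to the iid sequence of $d^2$-tuples $\{(x_{st;ij})_{s,t=1}^d : i,j \geq 1\}$ guaranteed by condition {\bf C0}, supplemented by a couple of elementary pointwise estimates. The only step requiring any care beyond a direct SLLN application is the moving truncation level $n^\delta$ in \eqref{eq:lln2eta}, which will be handled by a standard auxiliary-threshold argument.

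For \eqref{eq:llnhs}, I will write $\|\BX_n\|_2^2 = \sum_{i,j=1}^n Y_{ij}$, where $Y_{ij} := \sum_{s,t=1}^d |x_{st;ij}|^2$. By condition {\bf C0} the sequence $\{Y_{ij}\}$ is iid (after any enumeration of the $(i,j)$ pairs) with $\E Y_{ij} = d^2 \cdot (1/d) = d$, so Kolmogorov's SLLN, applied along the subsequence $N = n^2$, gives $\frac{1}{n^2}\|\BX_n\|_2^2 \to d$ almost surely. For \eqref{eq:llnhshat}, the Cauchy--Schwarz estimate
$$ \bigl|\E[x_{st;ij} \indicator{|x_{st;ij}| \leq n^\delta}]\bigr| \leq \bigl(\E|x_{st;ij}|^2\bigr)^{1/2} = 1/\sqrt{d} $$
yields $|\tilde{x}_{st;ij}^{(n)}|^2 \leq 2|x_{st;ij}|^2 + 2/d$ pointwise. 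Combining this with Lemma \ref{lemma:abcdtruncn}\eqref{truncn:var}, which ensures $d\var(\tilde{x}_{st;ij}^{(n)}) \geq 1/2$ for all sufficiently large $n$, I get $|\hat{x}_{st;ij}^{(n)}|^2 \leq 4|x_{st;ij}|^2 + 4/d$. Summing over $s,t,i,j$ produces $\|\hat{\BX}_n\|_2^2 \leq 4\|\BX_n\|_2^2 + 4n^2$, and dividing by $n^2$, invoking \eqref{eq:llnhs}, and using $d \geq 2$ gives $\limsup_{n \to \infty} \frac{1}{n^2}\|\hat{\BX}_n\|_2^2 \leq 4d+4 \leq 8d$ almost surely.

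For \eqref{eq:lln2eta}, I will fix $(s,t)$ and set $Z^{(s,t)}_{ij} := |x_{st;ij}|^{2+\eta}$; these are iid with $\E Z^{(s,t)}_{ij} \leq m_{2+\eta} < \infty$. For every $M > 0$, as soon as $n^{\delta(2+\eta)} > M$ one has $\indicator{|x_{st;ij}| > n^\delta} \leq \indicator{Z^{(s,t)}_{ij} > M}$, so
$$ \frac{1}{n^2}\sum_{i,j=1}^n Z^{(s,t)}_{ij} \indicator{|x_{st;ij}| > n^\delta} \leq \frac{1}{n^2}\sum_{i,j=1}^n Z^{(s,t)}_{ij} \indicator{Z^{(s,t)}_{ij} > M}, $$
and the right-hand side converges almost surely to $\E\bigl[Z^{(s,t)}_{11} \indicator{Z^{(s,t)}_{11} > M}\bigr]$ by SLLN. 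Taking the $\limsup$ in $n$ and then $M \to \infty$, the right-hand side tends to $0$ by dominated convergence (valid since $\E Z^{(s,t)}_{11} < \infty$); thus the $\limsup$ of the left-hand side is $0$ almost surely. Summing over the $d^2$ pairs $(s,t)$ produces \eqref{eq:lln2eta}. There is no serious obstacle in this lemma -- it is a purely preparatory statement making rigorous the ``good'' behaviour of $\BX_n$ and its truncated companion $\hat{\BX}_n$ for use in the Hermitization arguments supporting Theorem \ref{thm:main}.
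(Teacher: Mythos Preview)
Your proof is correct and follows essentially the same route as the paper's: SLLN for \eqref{eq:llnhs}, the pointwise bound $|\hat{x}_{st;ij}^{(n)}|^2 \leq 2|\tilde{x}_{st;ij}^{(n)}|^2 \leq 4(|x_{st;ij}|^2 + \E|x_{st;ij}|^2)$ combined with \eqref{eq:llnhs} for \eqref{eq:llnhshat}, and the auxiliary fixed-threshold $M$ plus SLLN plus dominated convergence for \eqref{eq:lln2eta}. One minor arithmetic slip: summing $4/d$ over $s,t\in\{1,\dots,d\}$ and $i,j\in\{1,\dots,n\}$ gives $4dn^2$, not $4n^2$, so the bound reads $\|\hat{\BX}_n\|_2^2 \leq 4\|\BX_n\|_2^2 + 4dn^2$ and hence $\limsup \frac{1}{n^2}\|\hat{\BX}_n\|_2^2 \leq 4d+4d=8d$ directly---your conclusion is unaffected.
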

\begin{proof}(of Lemma \ref{lemma:lln})
We first prove \eqref{eq:llnhs}.  We begin by noting that
$$ \frac{1}{n^2} \| \BX_n \|_2^2 = \sum_{s,t=1}^d \frac{1}{n^2} \sum_{i,j=1}^n |x_{st;ij}|^2. $$
For any $s,t \in \{1,\ldots,d\}$, we apply the law of large numbers and obtain a.s.
$$ \limsup_{n \rightarrow \infty} \frac{1}{n^2} \sum_{i,j=1}^n |x_{st;ij}|^2 \leq \E|\xi_{st}|^2 = \frac{1}{d}. $$
Since $d$ is fixed, independent of $n$, we conclude that a.s.
$$ \limsup_{n \rightarrow \infty} \sum_{s,t=1}^d \frac{1}{n^2} \sum_{i,j=1}^n |x_{st;ij}|^2 \leq d, $$
and the proof of \eqref{eq:llnhs} is complete.  

For \eqref{eq:llnhshat}, we apply the bounds in Lemma \ref{lemma:abcdtruncn} to obtain
$$ \sum_{s,t=1}^d \sum_{i,j=1}^n \left|\hat{x}^{(n)}_{st;ij} \right|^2 \leq 2 \sum_{s,t=1}^d \sum_{i,j=1}^n \left|\tilde{x}^{(n)}_{st;ij} \right|^2 \leq 4 \sum_{s,t=1}^d \sum_{i,j=1}^n \left( |x_{st;ij}|^2 + \E |x_{st;ij}|^2 \right) $$
for $n$ sufficiently large.  Hence \eqref{eq:llnhshat} follows from \eqref{eq:llnhs}.  

We now prove \eqref{eq:lln2eta}; fix $s,t \in \{1,\ldots,d\}$.  By the law of large numbers, for any $M > 0$, we have a.s. 
$$ \limsup_{n \rightarrow \infty} \frac{1}{n^2} \sum_{i,j=1}^n |x_{st;ij}|^{2 + \eta} \indicator{|x_{st;ij}| > n^{\delta}|} \leq \E|\xi_{st}|^{2+\eta} \indicator{|\xi_{st}| > M}. $$
By the dominated convergence theorem, it follows that 
$$ \lim_{M \rightarrow \infty} \E|\xi_{st}|^{2 + \eta} \indicator{|\xi_{st}| > M} = 0. $$
We conclude that a.s.
$$ \lim_{n \rightarrow \infty} \frac{1}{n^2} \sum_{i,j=1}^n |x_{st;ij}|^{2 + \eta} \indicator{|x_{st;ij}| > n^{\delta}|} = 0. $$
Since $d$ is fixed, independent of $n$, the claim follows.  
\end{proof}

We let $L(F,G)$ denote the Levy distance between two distribution functions $F,G$.  That is,
\begin{equation} \label{eq:def:levy}
	L(F,G) := \inf\{ \eps > 0 : F(x - \eps) - \eps \leq G(x) \leq F(x + \eps) + \eps \text{ for all } x \in \mathbb{R} \}. 
\end{equation}

Convergence in Levy distance implies convergence in distribution \cite[Remark A.40]{BSbook}.  We will compare the ESD of $\hat{\BH}_n$ to the ESD of $\BH_n$ using the Levy metric.  

\begin{lemma} \label{lemma:truncn}
Let $\{\BX_n\}_{n \geq 1}$ be a sequence of random matrices that satisfies condition {\bf C0} with parameter $d \geq 2$ and atom variables $(\xi_{st})_{s,t=1}^d$, and assume \eqref{eq:m2eta} holds for some $\eta > 0$.  Let $\delta > 0$.  Then a.s. 
$$ \sup_{z \in \mathbb{C}} L(F^{\BH_n}, F^{\hat{\BH}_n}) = o(n^{-\delta \eta /3}). $$
\end{lemma}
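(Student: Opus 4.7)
The plan is to combine a standard Hoffman--Wielandt-type bound on the Levy distance with the entrywise truncation estimates of Lemma \ref{lemma:abcdtruncn} and the almost-sure $L^2$ control of Lemma \ref{lemma:lln}. The first observation is that the supremum over $z$ is essentially free: since $\BH_n(z)$ and $\hat{\BH}_n(z)$ carry the same $-z\BI,-\bar z\BI$ deterministic shifts on their off-diagonal blocks, the difference
\[
\BH_n(z) - \hat{\BH}_n(z) = \begin{bmatrix} \Bzero & \tfrac{1}{\sqrt{n}}(\BX_n - \hat{\BX}_n) \\ \tfrac{1}{\sqrt{n}}(\BX_n^\ast - \hat{\BX}_n^\ast) & \Bzero \end{bmatrix}
\]
does not depend on $z$ at all, so it suffices to bound $L(F^{\BH_n},F^{\hat{\BH}_n})$ for one (arbitrary) $z$.

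Next I would invoke the Hoffman--Wielandt-type Levy-distance bound (Corollary A.41 of \cite{BSbook}) applied to the pair of $2dn \times 2dn$ Hermitian matrices $\BH_n,\hat{\BH}_n$, which yields
\[
L^3\bigl(F^{\BH_n},F^{\hat{\BH}_n}\bigr) \;\le\; \frac{1}{2dn}\bigl\|\BH_n-\hat{\BH}_n\bigr\|_2^2 \;=\; \frac{1}{dn^2}\bigl\|\BX_n-\hat{\BX}_n\bigr\|_2^2.
\]
Consequently the whole task reduces to showing that almost surely
\[
\frac{1}{n^2}\bigl\|\BX_n-\hat{\BX}_n\bigr\|_2^2 = o\bigl(n^{-\delta\eta}\bigr).
\]

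To prove this, I would decompose the entrywise difference as
\[
x_{st;ij}-\hat{x}_{st;ij}^{(n)} = \bigl(x_{st;ij}-\tilde{x}_{st;ij}^{(n)}\bigr) + \tilde{x}_{st;ij}^{(n)}\Bigl(1-\bigl(d\,\var(\tilde{x}_{st;ij}^{(n)})\bigr)^{-1/2}\Bigr),
\]
and use $(a+b)^2 \le 2a^2+2b^2$ to treat the two pieces in parallel. For the truncation piece, using $\E x_{st;ij}=0$ one has $|x_{st;ij}-\tilde{x}_{st;ij}^{(n)}|^2 \ll |x_{st;ij}|^2\indicator{|x_{st;ij}|>n^\delta} + \bigl(\E|x_{st;ij}|\indicator{|x_{st;ij}|>n^\delta}\bigr)^2$; the trivial bound $|x|^2\indicator{|x|>n^\delta}\le n^{-\delta\eta}|x|^{2+\eta}\indicator{|x|>n^\delta}$ combined with \eqref{eq:lln2eta} then produces an $o(n^{-\delta\eta})$ contribution a.s. For the rescaling piece, Lemma \ref{lemma:abcdtruncn}(iii) gives $|1-(d\var(\tilde{x}_{st;ij}^{(n)}))^{-1/2}| = O(n^{-\delta\eta})$, while $\frac{1}{n^2}\sum_{s,t,i,j}|\tilde{x}_{st;ij}^{(n)}|^2 = O(1)$ a.s.\ by \eqref{eq:llnhs}, so the squared contribution is $O(n^{-2\delta\eta})$, smaller than needed.

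I do not anticipate any genuine obstacle: the argument is routine truncation bookkeeping once the Hoffman--Wielandt bound and the a.s.\ $L^2$-LLN from Lemma \ref{lemma:lln} are in hand. The only minor point requiring care is confirming that the $\delta\eta$ exponent in Lemma \ref{lemma:abcdtruncn}(iii) and \eqref{eq:lln2eta} survives the cube root introduced by the Levy-distance inequality, which it does with room to spare and produces exactly the $o(n^{-\delta\eta/3})$ rate claimed.
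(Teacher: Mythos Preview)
Your proposal is correct and follows essentially the same route as the paper: both reduce via Corollary~A.41 of \cite{BSbook} to bounding $\tfrac{1}{n^2}\|\BX_n-\hat{\BX}_n\|_2^2$, then handle the truncation and rescaling contributions separately using Lemma~\ref{lemma:lln} and Lemma~\ref{lemma:abcdtruncn}(iii). The only cosmetic difference is that the paper interposes $\tilde{\BH}_n$ and applies the triangle inequality for Levy distance, whereas you split at the entry level with $(a+b)^2\le 2a^2+2b^2$; the ingredients and exponents are identical.
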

\begin{proof}(of Lemma \ref{lemma:truncn})
We will apply \cite[Corollary A.41]{BSbook} to bound $L(F^{H_n}, F^{\tilde{H}_n})$ and $L(F^{\tilde{H}_n}, F^{\hat{H}_n})$ separately.  Thus, we have
\begin{align*}
	\sup_{z \in \mathbb{C}} n^{\delta \eta} L^3(F^{\BH_n}, F^{\tilde{\BH}_n}) &\leq \frac{n^{\delta \eta}}{n^2} \left \| \BX_n - \tilde{\BX}_n \right\|_2^2 \\
		&\leq 2 \frac{n^{\delta \eta}}{n^2} \sum_{s,t=1}^d \sum_{i,j=1}^n \left( \left|x_{st,ij}\right|^2 \indicator{|x_{st;ij}| > n^{\delta}} + \E |x_{st;ij}|^2\indicator{|x_{st;ij}| > n^{\delta}} \right) \\
		&\leq \frac{2}{n^2} \sum_{s,t=1}^d \sum_{i,j=1}^n \left( \left|x_{st,ij}\right|^{2+\eta} \indicator{|x_{st;ij}| > n^{\delta}} + \E |x_{st;ij}|^{2+\eta} \indicator{|x_{st;ij}| > n^{\delta}} \right). 
\end{align*}
We note that
$$ \frac{1}{n^2} \sum_{s,t=1}^d \sum_{i,j=1}^n \E |x_{st;ij}|^{2+\eta} \indicator{|x_{st;ij}| > n^{\delta}} = \sum_{s,t=1}^n \E |\xi_{st}|^{2+\eta} \indicator{|\xi_{st}| > n^{\delta}}, $$
and thus, by the dominated convergence theorem, we obtain
$$ \lim_{n \rightarrow \infty} \frac{1}{n^2} \sum_{s,t=1}^d \sum_{i,j=1}^n \E |x_{st;ij}|^{2+\eta} \indicator{|x_{st;ij}| > n^{\delta}} = 0. $$
Therefore, by Lemma \ref{lemma:lln}, we conclude that a.s.
$$ \lim_{n \rightarrow \infty} \sup_{z \in \mathbb{C}} n^{\delta \eta} L^3(F^{\BH_n}, F^{\tilde{\BH}_n}) = 0, $$
and hence a.s.
\begin{equation} \label{eq:supbd1}
	\sup_{z \in \mathbb{C}} L(F^{\BH_n}, F^{\tilde{\BH}_n}) = o(n^{-\delta \eta / 3}). 
\end{equation}

Applying \cite[Corollary A.41]{BSbook} again, we obtain
\begin{align*}
	\sup_{z \in \mathbb{C}} n^{\delta \eta} L^3(F^{\tilde{\BH}_n}, F^{\hat{\BH}_n}) &\leq \frac{n^{\delta \eta}}{n^2} \left\| \tilde{\BX}_n - \hat{\BX}_n \right\|_2^2 \\
		&\leq \frac{n^{\delta \eta}}{n^2} \sum_{s,t=1}^d \sum_{i,j=1}^n \left| \hat{x}_{st;ij}^{(n)} \right|^2 \left| 1 - \sqrt{d \var( \tilde{x}_{st;ij}^{(n)}) } \right|^2 \\
		&\leq \frac{n^{\delta \eta}} {n^2} \sum_{s,t=1}^d \sum_{i,j=1}^n \left| \hat{x}_{st;ij}^{(n)} \right|^2 \left| 1 - d \var(\tilde{x}_{st;ij}^{(n)}) \right|^2 \\
		&\leq d^2 \frac{n^{\delta \eta}} {n^2} \sum_{s,t=1}^d \sum_{i,j=1}^n \left| \hat{x}_{st;ij}^{(n)} \right|^2 \left| \frac{1}{d} - \var(\tilde{\xi}_{st}^{(n)}) \right|^2 \\
		&\leq 4d^2 \frac{m_{2+\eta}^2}{n^{\delta \eta}} \frac{1}{n^2} \left\| \hat{\BX}_n \right\|_2^2.
\end{align*}
Here the last inequality follows from Lemma \ref{lemma:abcdtruncn}.  By Lemma \ref{lemma:lln}, we have a.s.
$$ \lim_{n \rightarrow \infty} \frac{1}{n^{2+\delta \eta}} \left\| \hat{\BX}_n \right\|_2^2 = 0, $$
and we conclude that a.s.
\begin{equation} \label{eq:supbd2}
	\sup_{z \in \mathbb{C}} L(F^{\tilde{\BH}_n}, F^{\hat{\BH}_n}) = o(n^{-\delta \eta / 3}). 
\end{equation}
The claim now follows from \eqref{eq:supbd1}, \eqref{eq:supbd2}, and the triangle inequality for Levy distance.  
\end{proof}

\subsection{Cubic Relation}

We now consider the distribution of eigenvalues of $\BH_n$.  In fact, by Lemma \ref{lemma:truncn}, it will suffice to consider the eigenvalues of $\hat{\BH}_n$.  To this end, we will study the resolvent of $\hat{\BH}_n$ in Theorem \ref{thm:cubic} below.  Indeed, for $w \in \mathbb{C}$ with $\Im(w) > 0$, 
$$ \hat{m}_n(z,w) := \frac{1}{2dn} \tr \left( \hat{\BH}_n(z) - w \BI \right)^{-1} = \int_{\mathbb{R}} \frac{1}{x - w} \nu_{\frac{1}{\sqrt{n}}\hat{\BX}_n - z \BI}(dx) $$
is the Stieltjes transform of the measure $\nu_{\frac{1}{\sqrt{n}}\hat{\BX}_n - z\BI}$.  It follows from standard Stieltjes transform techniques (e.g. \cite[Theorem B.9]{BSbook}) that computing the limiting ESD of $\hat{\BH}_n(z)$ is equivalent to computing the limit of $\hat{m}_n(z,w)$ for all $w \in \C$ with $\Im(w) > 0$.

As is standard in random matrix theory, we will not compute $\hat{m}_n$ explicitly.  Instead we will derive a fixed point equation.  Indeed, we will show
\begin{equation} \label{eq:cubicrelationn}
	\hat{m}_n(z,w) + \frac{ \hat{m}_n(z,w) + w}{ (\hat{m}_n(z,w) + w)^2 - |z|^2} = o(1) 
\end{equation}
for $z,w\in \C$ with $\Im(w) > 0$.  We will then conclude that $m_n(z,w)$ converges to a limit, which we denote by $m(z,w)$.  It follows that $m(z,w)$ satisfies the equation
\begin{equation} \label{eq:cubicrelation}
	m(z,w) + \frac{ m(z,w) + w}{ (m(z,w) + w)^2 - |z|^2} = 0. 
\end{equation}
From \eqref{eq:cubicrelation} we will deduce the limiting ESD of $\hat{\BH}_n$.  Equation \eqref{eq:cubicrelation} has appeared previously in \cite{BC,GTcirc} and in a slightly different form in \cite[Chapter 11]{BSbook}.  We refer to equation \eqref{eq:cubicrelation} as a cubic relation since it can be rewritten as the cubic polynomial equation
$$ m(z,w)^3 + 2w m(z,w)^2 + (w^2 - |z|^2 + 1) m(z,w) + w = 0. $$

In this subsection we will show $\hat{m}_n$ satisfies \eqref{eq:cubicrelationn}.  We begin with the following concentration result for bilinear forms from \cite{OR}.  

\begin{lemma}[Lemma 3.10 of \cite{OR}] \label{lemma:quadp}
Let $(x,y)$ be a random vector in $\mathbb{C}^2$ where $x,y$ both have mean zero, unit variance, and satisfy 
\begin{itemize}
\item $\max\{|x|,|y|\} \leq L$ a.s.,
\item $\E[\bar{x} y] = \rho$.
\end{itemize}
Let $(x_1, y_1), (x_2, y_2), \ldots, (x_n, y_n)$ be iid copies of $(x,y)$, and set $X = (x_1, x_2, \ldots, x_n)^\mathrm{T}$ and $Y=(y_1, y_2, \ldots, y_n)^\mathrm{T}$.  Let $\BB$ be a $n \times n$ random matrix, independent of $X$ and $Y$, which satisfies $\|\BB\| \leq n^{1/4}$ a.s.  Then, for any $p \geq 2$, 
$$ \Prob \left( \left| \frac{1}{n} X^\ast \BB Y - \frac{\rho}{n} \tr \BB \right| > n^{-1/8} \right) =O _p \left(\frac{ L^{2p}} {n^{p/8}} \right). $$
\end{lemma}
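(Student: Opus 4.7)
The plan is to condition on $\BB$, identify the conditional mean of $X^{\ast}\BB Y$, bound the $p$-th moment of the centered quantity $Z := X^{\ast}\BB Y - \rho\,\tr\BB$ by $C_p L^{2p} n^{3p/4}$, and then invoke Markov's inequality. The centering is immediate: since $(X,Y)$ is independent of $\BB$ and $\E[\bar x_i y_j] = \rho\,\delta_{ij}$ — off-diagonal terms vanish by the iid structure of the pairs and the zero means, while the diagonal equals $\rho$ by hypothesis — one has $\E[X^{\ast}\BB Y\mid\BB] = \sum_{i,j} B_{ij}\,\E[\bar x_i y_j] = \rho\,\tr\BB$ almost surely. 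Note that $L\ge 1$ (from $\E|x|^2 = 1$ and $|x|\le L$), so I will treat $L$ as a real parameter at least one throughout.

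Next I would decompose $Z = D + O$, where the diagonal piece $D := \sum_i B_{ii}(\bar x_i y_i - \rho)$ is a sum of independent mean-zero terms bounded by $2L^2$, and the off-diagonal piece $O := \sum_{i\ne j} B_{ij}\bar x_i y_j$ is a bilinear form with vanishing diagonal. For $D$, Rosenthal's inequality conditional on $\BB$, together with $\sum_i|B_{ii}|^2\le\|\BB\|_2^2\le n\|\BB\|^2\le n^{3/2}$ and $|B_{ii}|\le\|\BB\|\le n^{1/4}$, yields
$$\E[|D|^p\mid\BB]\le C_p L^{2p}\bigl(n^{3p/4}+n^{1+p/4}\bigr)\le C_p L^{2p}n^{3p/4}\qquad (p\ge 2).$$

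For the off-diagonal piece $O$ I would invoke de la Pe\~{n}a--Gin\'e decoupling for quadratic kernels with vanishing diagonal. Letting $(x_i',y_i')_{i=1}^n$ denote an independent copy of the sample and writing $Y':=(y_1',\dots,y_n')^{\mathrm T}$, decoupling gives
$$\E[|O|^p\mid\BB]\le C_p\,\E\Bigl[\bigl|\textstyle\sum_{i\ne j}B_{ij}\bar x_i y_j'\bigr|^p\,\Bigm|\,\BB\Bigr],$$
which now involves mutually independent sequences $X$ and $Y'$. Conditioning further on $Y'$, the inner sum equals $\sum_i\bar x_i\alpha_i$ with deterministic coefficients $\alpha_i:=\sum_{j\ne i}B_{ij}y_j'$; a Rosenthal bound on the iid mean-zero bounded $\bar x_i$, followed by a second Rosenthal bound controlling $\E\sum_i|\alpha_i|^p$ in terms of the row norms of $\BB$, and the deterministic estimate $\sum_i|\alpha_i|^2\le\|\BB-\mathrm{diag}(\BB)\|^2\|Y'\|^2\le 4L^2n^{3/2}$, together yield (after merging the various Rosenthal terms using $\|\BB\|\le n^{1/4}$ and $L\ge 1$) the bound $\E[|O|^p\mid\BB]\le C_p L^{2p}n^{3p/4}$.

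Combining the two pieces gives $\E|Z|^p\le C_p L^{2p}n^{3p/4}$, hence $\E|Z/n|^p\le C_p L^{2p}n^{-p/4}$, and Markov's inequality produces
$$\P\bigl(|Z/n|>n^{-1/8}\bigr)\le n^{p/8}\,\E|Z/n|^p\le C_p L^{2p}n^{-p/8},$$
which is the asserted estimate. The main subtlety is the off-diagonal form: because $\bar x_i$ and $y_i$ are correlated for each fixed $i$, one cannot directly condition on $Y$ to make $O$ a sum of conditionally independent contributions. Isolating the diagonal $D$ (which absorbs all of the $\rho$-dependence) and then decoupling $O$ restores independence, and is the only non-routine ingredient; everything else is standard Rosenthal/Markov bookkeeping.
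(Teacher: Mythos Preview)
The paper does not prove this lemma; it is quoted verbatim as Lemma~3.10 of \cite{OR} and used as a black box. So there is no in-paper argument to compare against.

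Your proof is correct and follows the standard route for such bilinear concentration estimates. The centering $\E[X^\ast \BB Y\mid \BB]=\rho\,\tr\BB$ is immediate; the diagonal/off-diagonal split, the Rosenthal bound on $D$, the de la Pe\~na--Gin\'e decoupling on $O$ (legitimate here since the pairs $(x_i,y_i)$ are iid and the diagonal of the kernel vanishes), and the two nested Rosenthal applications on the decoupled form all go through with the stated exponents. The final moment bound $\E|Z|^p\le C_p L^{2p}n^{3p/4}$ and the Markov step giving $C_p L^{2p}n^{-p/8}$ are arithmetically correct for $p\ge 2$. One small remark: in bounding $\sum_i|\alpha_i|^2$ you used the a.s.\ bound $\|Y'\|^2\le nL^2$, which is fine and avoids a further moment computation; the alternative of taking expectations first would also work but is unnecessary.
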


We formally establish \eqref{eq:cubicrelationn} in the following theorem.  

\begin{theorem} \label{thm:cubic}
Let $\{\BX_n\}_{n \geq 1}$ be a sequence of random matrices that satisfies condition {\bf C0} with parameter $d \geq 2$ and atom variables $(\xi_{st})_{s,t=1}^d$, and assume \eqref{eq:m2eta} holds for some $\eta > 0$.  
Let $0 < \delta < 1/100$.  Consider the truncated random matrices $\{\hat{\BX}_n\}_{n \geq 1}$ and $\{\hat{\BH}_n(z)\}_{n \geq 1}$.  For $z,w \in \mathbb{C}$ with $\Im(w) > 0$, define
$$ \hat{\BG}_n(z,w) := \left( \hat{\BH}_n(z) - w\BI \right)^{-1} \quad\text{and}\quad \hat{m}_n(z,w) := \frac{1}{2dn} \tr \hat{\BG}_n(z,w). $$
Let $M, \beta > 0$.  Then, for $v_n := \max \left\{ n^{-\eta \delta/100}, n^{-1/100} \right\}$, a.s.
$$ \sup_{|z| \leq M} \sup_{|w| \leq \beta, \Im(w) \geq v_n} \left| \hat{m}_n(z,w) + \frac{ \hat{m}_n(z,w) + w}{ (\hat{m}_n(z,w) + w)^2 - |z|^2} \right| = O_{M,\beta,d,m_{2+\eta}}(v_n^5). $$
\end{theorem}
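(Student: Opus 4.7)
The plan is to derive the approximate cubic relation via a block Schur complement applied to $\hat{\BH}_n(z) - w\BI$ at each super-position $i = 1, \ldots, n$. Let $\mathcal{I}_i := \{n(s-1) + i : 1 \leq s \leq d\} \cup \{dn + n(t-1) + i : 1 \leq t \leq d\}$ denote the set of $2d$ indices corresponding to the ``$(i,i)$ super-block'' of the hermitization, so that $\bigsqcup_{i=1}^n \mathcal{I}_i = \{1, \ldots, 2dn\}$. Writing $\hat{\BY}_i := (\hat{x}_{st;ii}^{(n)})_{s,t=1}^d$ for the diagonal super-entry and $V_i$ for the $(2dn - 2d) \times 2d$ matrix collecting the entries of $\hat{\BH}_n$ in rows outside $\mathcal{I}_i$ and columns in $\mathcal{I}_i$, the block Schur identity gives
\[
\left.\hat{\BG}_n(z,w)\right|_{\mathcal{I}_i \times \mathcal{I}_i} = \left[\begin{pmatrix} -w\BI_d & \tfrac{1}{\sqrt{n}}\hat{\BY}_i - z\BI_d \\ \tfrac{1}{\sqrt{n}}\hat{\BY}_i^{\ast} - \bar{z}\BI_d & -w\BI_d \end{pmatrix} - V_i^{\ast}\, \hat{\BG}_n^{(\mathcal{I}_i)}\, V_i\right]^{-1},
\]
where $\hat{\BG}_n^{(\mathcal{I}_i)}$ is the resolvent of $\hat{\BH}_n - w\BI$ with the rows and columns in $\mathcal{I}_i$ removed. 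The decisive structural fact is that $\hat{\BY}_i$ together with the entries of $V_i$ (which come from the super-entries at super-positions $(i,j)$ and $(j,i)$ for $j \neq i$) are jointly independent of $\hat{\BG}_n^{(\mathcal{I}_i)}$ by condition \textbf{C0}(i).

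The key concentration step analyzes the $2d \times 2d$ matrix $V_i^{\ast} \hat{\BG}_n^{(\mathcal{I}_i)} V_i$. Applying Lemma~\ref{lemma:quadp} separately to each of its $d \times d$ sub-blocks, with the $\sqrt{d}$-rescaling needed to normalize the $1/d$ variance and with the residual covariance bound $|\E[\hat{\xi}_{st}^{(n)}\overline{\hat{\xi}_{uv}^{(n)}}]| = O(n^{-\delta\eta/2})$ from Lemma~\ref{lemma:abcdtruncn}(iv), the top-left and bottom-right sub-blocks concentrate around $\hat{m}_n^{(\mathcal{I}_i)} \BI_d$, where $\hat{m}_n^{(\mathcal{I}_i)} := \frac{1}{2d(n-1)}\tr \hat{\BG}_n^{(\mathcal{I}_i)}$, whereas the top-right and bottom-left sub-blocks concentrate around $\Bzero$ since their random parts pair mean-zero variables from independent ``row-$i$'' and ``column-$i$'' super-entries. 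A rank-$2d$ perturbation bound $|\tr \hat{\BG}_n - \tr \hat{\BG}_n^{(\mathcal{I}_i)}| \leq 2d/\Im(w)$ then replaces $\hat{m}_n^{(\mathcal{I}_i)}$ by $\hat{m}_n$ up to a negligible term, and the entry-wise bound $|\hat{x}_{st;ii}^{(n)}| \leq 4n^{\delta}$ from Lemma~\ref{lemma:abcdtruncn}(ii) makes $\tfrac{1}{\sqrt{n}}\hat{\BY}_i$ of operator norm $O(d\, n^{\delta - 1/2}) = o(1)$.

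Substituting back, $\left.\hat{\BG}_n(z,w)\right|_{\mathcal{I}_i \times \mathcal{I}_i}$ equals the inverse of $\begin{pmatrix} -(w+\hat{m}_n)\BI_d & -z\BI_d \\ -\bar{z}\BI_d & -(w+\hat{m}_n)\BI_d \end{pmatrix}$ modulo a controlled error. The explicit block-inversion formula
\[
\begin{pmatrix} a\BI_d & c\BI_d \\ \bar{c}\BI_d & a\BI_d \end{pmatrix}^{-1} = \frac{1}{a^2 - |c|^2}\begin{pmatrix} a\BI_d & -c\BI_d \\ -\bar{c}\BI_d & a\BI_d \end{pmatrix}
\]
with $a = -(w+\hat{m}_n)$ and $c = -z$ yields $\tr \left.\hat{\BG}_n(z,w)\right|_{\mathcal{I}_i \times \mathcal{I}_i} \approx -2d(w+\hat{m}_n)/\bigl((w+\hat{m}_n)^2 - |z|^2\bigr)$. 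Summing over $i = 1, \ldots, n$ and using the partition $\bigsqcup_{i=1}^n \mathcal{I}_i = \{1, \ldots, 2dn\}$ on the left gives $2dn\,\hat{m}_n \approx -2dn(w+\hat{m}_n)/((w+\hat{m}_n)^2 - |z|^2)$, which rearranges to the desired cubic relation.

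The main obstacle is tracking each error carefully enough to reach the target $O(v_n^5)$ and making the estimate uniform on the compact region $\{|z| \leq M,\; |w| \leq \beta,\; \Im(w) \geq v_n\}$. The concentration errors for each sub-block of $V_i^{\ast}\hat{\BG}_n^{(\mathcal{I}_i)}V_i$ scale as negative powers of $n$ times polynomial factors of $\|\hat{\BG}_n^{(\mathcal{I}_i)}\| \leq 1/\Im(w)$, and the residual intra-super-entry covariance contributes $O(n^{-\delta\eta/2}/\Im(w))$; the specific choice $v_n = \max(n^{-\eta\delta/100}, n^{-1/100})$ is what balances these gains against the powers of $1/\Im(w)$ that appear when inverting the $2d \times 2d$ Schur matrix. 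For the uniformity, I would first establish a pointwise high-probability estimate using Lemma~\ref{lemma:quadp} with a large exponent $p$ (and the truncation $|\hat{x}_{st;ij}^{(n)}| \leq 4n^{\delta}$), and then extend it to a uniform bound by a standard covering argument: the Lipschitz bound $|\hat{m}_n(z,w) - \hat{m}_n(z',w')| = O((|z - z'| + |w - w'|)/\Im(w)^2)$ lets an $n^{-C}$-net of polynomial size control the supremum, after which Borel--Cantelli yields the almost-sure statement.
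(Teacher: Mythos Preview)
Your approach is correct and essentially the same as the paper's: block Schur complement at each super-position, concentration via Lemma~\ref{lemma:quadp}, interlacing to pass from the minor to the full resolvent, explicit $2d\times 2d$ inversion, then an $\eps$-net plus Borel--Cantelli.

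There is, however, one genuine step you skip. You assert that the top-left and bottom-right $d\times d$ sub-blocks of $V_i^\ast \hat{\BG}_n^{(\mathcal{I}_i)} V_i$ each concentrate around $\hat m_n^{(\mathcal{I}_i)}\BI_d$. Lemma~\ref{lemma:quadp} does not give this directly: tracking the block structure of $V_i$ and using condition~\textbf{C0}\eqref{def:C0:uncorr}, the top-left block concentrates around $\bigl(\tfrac{1}{d}\sum_{s=d+1}^{2d} m_{n,ss}^{(\mathcal{I}_i)}\bigr)\BI_d$ and the bottom-right around $\bigl(\tfrac{1}{d}\sum_{s=1}^{d} m_{n,ss}^{(\mathcal{I}_i)}\bigr)\BI_d$, where $m_{n,ss}^{(\mathcal{I}_i)}$ is the normalized trace of the $(s,s)$ block of the minor resolvent. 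These two partial traces are not a priori equal, and without knowing they coincide you cannot replace them by the full Stieltjes transform $\hat m_n^{(\mathcal{I}_i)}$. The paper supplies this via Lemma~\ref{lemma:Rdet}: for $\BR=\begin{pmatrix}-w\BI & \BB\\ \BB^\ast & -w\BI\end{pmatrix}^{-1}$ one has $\tr \BR_1=\tr \BR_4$ because $(-w\BI+w^{-1}\BB\BB^\ast)^{-1}$ and $(-w\BI+w^{-1}\BB^\ast\BB)^{-1}$ share the same spectrum. With that symmetry in hand, each partial trace equals $d\,\hat m_n^{(\mathcal{I}_i)}$ and your sketch closes exactly as written.
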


In order to prove Theorem \ref{thm:cubic}, we will need the following deterministic lemmas.  

\begin{lemma} \label{lemma:Rdet}
Let $\BR$ be the $2n \times 2n$ block matrix given by
$$ \BR = \begin{bmatrix} -w \BI & \BB \\ \BB^\ast & - w \BI \end{bmatrix}^{-1} = \begin{bmatrix} \BR_1 & \BR_2 \\ \BR_3 & \BR_4 \end{bmatrix}, $$
where $\BB, \BR_1,\BR_2,\BR_3,\BR_4$ are $n \times n$ matrices.  Then $\tr \BR_1 = \tr \BR_4$ for any $w \in \mathbb{C}$ with $\Im(w) > 0$.  
\end{lemma}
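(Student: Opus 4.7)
The plan is to express both diagonal blocks $\BR_1$ and $\BR_4$ in closed form via the Schur complement formula, and then appeal to the classical fact that for any square matrix $\BB$, the products $\BB\BB^\ast$ and $\BB^\ast\BB$ share the same multiset of eigenvalues. Since $\Im(w) > 0$, the scalar blocks $-w\BI$ are trivially invertible, so the relevant Schur complements are well-defined throughout.

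First, I would apply the standard $2\times 2$ block-inversion formula. Taking the Schur complement with respect to the bottom-right block $-w\BI$ yields
$$ \BR_1 = \bigl(-w\BI - \BB(-w\BI)^{-1}\BB^\ast\bigr)^{-1} = w\,(\BB\BB^\ast - w^2\BI)^{-1}, $$
and a symmetric computation using the Schur complement with respect to the top-left block gives
$$ \BR_4 = \bigl(-w\BI - \BB^\ast(-w\BI)^{-1}\BB\bigr)^{-1} = w\,(\BB^\ast\BB - w^2\BI)^{-1}. $$
The matrices being inverted are nonsingular: $\BB\BB^\ast$ and $\BB^\ast\BB$ are positive semidefinite, while writing $w=a+\sqrt{-1}b$ with $b>0$ gives $w^2=a^2-b^2+2\sqrt{-1}ab$, which is never a nonnegative real number (either its imaginary part is nonzero, or $a=0$ and $w^2=-b^2<0$), so neither $\BB\BB^\ast - w^2\BI$ nor $\BB^\ast\BB - w^2\BI$ has $0$ in its spectrum.

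Second, because $\BB$ is $n\times n$, the matrices $\BB\BB^\ast$ and $\BB^\ast\BB$ are $n\times n$ Hermitian with identical characteristic polynomials, hence the same multiset of eigenvalues $\sigma_1(\BB)^2,\ldots,\sigma_n(\BB)^2$. Consequently $(\BB\BB^\ast - w^2\BI)^{-1}$ and $(\BB^\ast\BB - w^2\BI)^{-1}$ have identical spectra $\{(\sigma_i(\BB)^2 - w^2)^{-1}\}_{i=1}^n$, and in particular the same trace. Multiplying by the common scalar $w$ preserves this equality, which gives $\tr\BR_1 = \tr\BR_4$. There is no genuine obstacle here: the statement is a short deterministic linear-algebra identity whose role is simply to symmetrize the two diagonal-block trace contributions in the resolvent computation used to derive the cubic relation in Theorem \ref{thm:cubic}.
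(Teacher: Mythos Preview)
Your proof is correct and essentially identical to the paper's: both compute $\BR_1$ and $\BR_4$ via the Schur complement (the paper writes them as $(-w\BI + w^{-1}\BB\BB^\ast)^{-1}$ and $(-w\BI + w^{-1}\BB^\ast\BB)^{-1}$, which differ from your expressions only by factoring out $w^{-1}$), then conclude equality of traces from the fact that $\BB\BB^\ast$ and $\BB^\ast\BB$ share the same spectrum. Your invertibility argument via the imaginary part of $w^2$ is a minor repackaging of the paper's observation that $\Im(-w + w^{-1}\sigma_i^2) < 0$.
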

\begin{proof}(of Lemma \ref{lemma:Rdet})
We first note that 
$$ \begin{bmatrix} -w \BI & \BB \\ \BB^\ast & - w \BI \end{bmatrix} = \begin{bmatrix} \Bzero & \BB \\ \BB^\ast & \Bzero \end{bmatrix} - w \BI $$
is invertible for any $w \in \mathbb{C}$ with $\Im(w) > 0$.  Let $\sigma_1, \sigma_2, \ldots, \sigma_n \geq 0$ denote the singular values of $\BB$.  Then $-w\BI + w^{-1} \BB \BB^\ast$ has eigenvalues $-w + w^{-1}\sigma_i^2$ for $i=1,2,\ldots,n$.  In particular
$$ \Im \left( -w + \frac{\sigma_i^2}{w} \right) = - \Im(w) - \Im(w) \frac{\sigma_i^2}{|w|^2} < 0 $$
for $\Im(w) > 0$.  Thus $-w\BI + w^{-1}\BB \BB^\ast$ is invertible.  Similarly, $-w\BI + w^{-1} \BB^\ast \BB$ has the same eigenvalues and is also invertible.  By the Schur complement \cite[Section 0.7.3]{HJ2}, 
\begin{align*}
	\BR_1 &= \left( -w\BI + w^{-1} \BB \BB^\ast \right)^{-1}, \\
	\BR_4 &= \left( -w\BI + w^{-1} \BB^\ast \BB \right)^{-1}.
\end{align*}
Since $\BR_1$ and $\BR_4$ have the same eigenvalues, $\tr \BR_1 = \tr \BR_4$.  
\end{proof}

We introduce $\eps$-nets as a convenient way to discretize a compact set.  Let $\eps > 0$.  A set $X$ is an $\eps$-net of a set $Y$ if for any $y \in Y$, there exists $x \in X$ such that $\|x-y\| \leq \eps$.  In order to prove Theorem \ref{thm:cubic}, we will need the following well-known estimate for the maximum size of an $\eps$-net.  

\begin{lemma}[Lemma 3.11 of \cite{OR}] \label{lemma:epsnet}
The set $\{w \in \mathbb{C} : |w| \leq \beta, \Im(w) \geq \alpha \}$ admits an $\eps$-net of size at most
$$ \left( 1 + \frac{2\beta}{\eps} \right)^2. $$
\end{lemma}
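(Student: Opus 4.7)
The plan is a standard volume packing argument in the plane. Identifying $\mathbb{C}$ with $\mathbb{R}^2$, first observe that the set $S := \{w \in \mathbb{C} : |w| \le \beta, \Im(w) \ge \alpha\}$ is contained in the closed Euclidean disk $B(0, \beta)$, so it suffices to produce an $\eps$-net of cardinality at most $(1 + 2\beta/\eps)^2$ for this larger disk. Any such net automatically restricts to an $\eps$-net of $S$, since the definition in the excerpt places no restriction on where the net points lie in the ambient space.

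The key step is to let $X \subset S$ be a maximal $\eps$-separated set, i.e., a subset in which any two distinct points have Euclidean distance strictly greater than $\eps$ and which cannot be extended without violating this property. Maximality immediately forces $X$ to be an $\eps$-net of $S$: otherwise some $w \in S$ would be at distance $> \eps$ from every $x \in X$, and $X \cup \{w\}$ would still be $\eps$-separated, contradicting maximality. To count $|X|$, I would place open disks of radius $\eps/2$ around each point of $X$. By $\eps$-separation of the centers, these disks are pairwise disjoint, and since every center lies in $B(0,\beta)$, each such disk is contained in $B(0, \beta + \eps/2)$. Comparing two-dimensional Lebesgue measures gives
$$ |X| \cdot \pi (\eps/2)^2 \le \pi (\beta + \eps/2)^2, $$
which rearranges to $|X| \le (1 + 2\beta/\eps)^2$, as desired.

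I do not expect any genuine obstacle here: this is a routine packing/volume computation. The one point requiring a little care is the choice of packing radius $\eps/2$ rather than $\eps$: this is precisely what guarantees both that pairwise disjointness follows from $\eps$-separation of the centers and that the containing disk has radius only $\beta + \eps/2$, producing the clean additive constant $1$ in the stated bound. An equally valid but slightly more bookkeeping-heavy alternative would be to exhibit an explicit rectangular grid in $[-\beta,\beta]^2$ of spacing $\eps$ and verify the net property coordinatewise; the packing argument above is cleaner and immediately matches the target constant.
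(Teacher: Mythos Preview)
Your argument is correct and is exactly the standard volume/packing proof of this well-known covering bound. Note that the paper does not supply its own proof of this lemma at all; it simply quotes the statement from \cite{OR}, so there is nothing in the paper to compare against beyond observing that your maximal $\eps$-separated set plus area comparison is the usual proof one finds in the cited reference and elsewhere.
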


We will also take advantage of the following facts, which can be found in \cite{HJ,HJ2}.  Let $\BB$ be a $n \times n$ matrix with singular values $\sigma_1(\BB) \geq \cdots \geq \sigma_n(\BB) \geq 0$.  Then the $2n \times 2n$ matrix 
\begin{equation} \label{eq:matrixwbbw}
	\begin{bmatrix} -w \BI & \BB \\ \BB^\ast & -w \BI \end{bmatrix} = \begin{bmatrix} 0 & \BB \\ \BB^\ast & 0 \end{bmatrix} - w \BI 
\end{equation}
has an orthonormal basis of eigenvectors with eigenvalues $\pm \sigma_1(\BB) - w, \ldots, \pm \sigma_n(\BB) - w$.  Thus, if $\Im(w) > 0$, the matrix \eqref{eq:matrixwbbw} is invertible.  Let 
$$ \BG := \begin{bmatrix} -w \BI & \BB \\ \BB^\ast & -w \BI \end{bmatrix}^{-1}. $$
Then, for $\Im(w) > 0$, we have
\begin{equation} \label{eq:Gnormbnd}
	\| \BG \| = \max_{1 \leq i \leq n} \frac{1}{ |\pm \sigma_i(\BB) - w|} \leq \frac{1}{\Im(w)}. 
\end{equation}

We will make use of the following identity: for any invertible $n \times n$ matrices $\BA$ and $\BB$,
\begin{equation} \label{eq:generalresolventid}
	\BA^{-1} - \BB^{-1} = \BA^{-1} (\BB - \BA) \BB^{-1}. 
\end{equation}

A special case of \eqref{eq:generalresolventid} is the resolvent identity (also known as Hilbert's identity): for any Hermitian $n \times n$ matrix $\BA$,
\begin{equation} \label{eq:resolventid}
	(\BA - w \BI)^{-1} - (\BA - w' \BI)^{-1} = (w-w') (\BA - w \BI)^{-1} (\BA - w' \BI)^{-1}
\end{equation}
for all $w,w' \in \C$ with $\Im(w), \Im(w') > 0$.  

We are now ready to prove Theorem \ref{thm:cubic}.  

\begin{proof}(of Theorem \ref{thm:cubic})
Fix $M, \beta > 0$.  For the remainder of the proof, the implicit constants in our asymptotic notation (such as $O,o,\Omega, \ll$) depend only on the constants $M, \beta$, $d$, and $m_{2+\eta}$; for simplicity, we no longer include these subscripts in our notation. 

For notational convenience, we will write $\BX_n$ instead of $\hat{\BX}_n$.  That is, we let $\{\BX_n\}_{n \geq 1}$ denote the sequence of truncated matrices.  Similarly, we write $\BX_{n,st}$ instead of $\hat{\BX}_{n,st}$ for $s,t \in \{1,\ldots,d\}$.  We define the $2dn \times 2dn$ matrix
$$ \BG_n(z,w) := \begin{bmatrix} -w \BI & \frac{1}{\sqrt{n}} \BX_n - z \BI \\ \frac{1}{\sqrt{n}} \BX_n^\ast - \bar{z} \BI & - w \BI \end{bmatrix}^{-1}. $$
We will often drop the dependence on $z,w$ and simply write $\BG_n$ instead of $\BG_n(z,w)$.  We write $\BG_n = (\BG_{n,st})_{s,t=1}^{2d}$ where each $\BG_{n,st}$ is a $n \times n$ matrix.  Then $\BG_{n,st}(i,j)$ denotes the $(i,j)$-entry of $\BG_{n,st}$.  We define $m_{n,st}(z,w) := \frac{1}{n} \tr \BG_{n,st}$ for $s,t \in \{1,\ldots,2d\}$ and $m_n(z,w) := \frac{1}{2dn} \tr \BG_n$.  We will often drop the dependence on $z,w$ and simply write $m_n$ and $m_{n,st}$ instead of $m_n(z,w)$ and $m_{n,st}(z,w)$.  

Let $1 \leq k \leq n$.  We let $\Br_k(\BX_{n,st})$ denote the $k$-th row of $\BX_{n,st}$ with the $k$-th entry removed.  Similarly, we let $\Bc_k(\BX_{n,st})$ denote the $k$-th column of $\BX_{n,st}$ with the $k$-th entry removed.  We let $\BX_{n,st}^{(k)}$ be the $(n-1) \times (n-1)$ matrix constructed from $\BX_{n,st}$ by removing the $k$-th column and $k$-th row.  We let $\BX_{n}^{(k)}$ be the $d(n-1) \times d(n-1)$ block matrix given by $\BX_{n}^{(k)} := \left(\BX_{n,st}^{(k)}\right)_{s,t=1}^d$.  Define the $2d(n-1) \times 2d(n-1)$ matrix
$$ \BG^{(k)}_n(z,w) := \begin{bmatrix} -w \BI & \frac{1}{\sqrt{n}} \BX_{n}^{(k)} - z \BI \\ \frac{1}{\sqrt{n}} {\BX_{n}^{(k)}}^\ast - \bar{z} \BI & -w \BI \end{bmatrix}^{-1}. $$
We will often drop the dependence on $z,w$ and simply write $\BG^{(k)}_n$.  We again write $\BG_n^{(k)} = \left(\BG_{n,st}^{(k)}\right)_{s,t=1}^{2d}$ where each $\BG_{n,st}^{(k)}$ is a $(n-1) \times (n-1)$ matrix.  We let $m_{n,st}^{(k)}(z,w) := \frac{1}{n} \tr \BG_{n,st}^{(k)}$ for $s,t \in \{1,\ldots,2d\}$ and $m_n^{(k)}(z,w) := \frac{1}{2dn} \tr \BG_n^{(k)}$.  We will often drop the dependence on $z,w$ and write $m_n^{(k)}$ and $m_{n,st}^{(k)}$ instead of $m_n^{(k)}(z,w)$ and $m_{n,st}^{(k)}(z,w)$.

From \eqref{eq:Gnormbnd}, for $w \in \mathbb{C}$ with $\Im(w) \geq v_n$, we have the deterministic bounds $\|\BG_n(z,w)\| \leq v_n^{-1}$, $| m_n(z,w) | \leq v_n^{-1}$, and $| m_{n,st}(z,w) | \leq v_n^{-1}$.  By Cauchy's interlacing theorem \cite[Theorem 4.3.8]{HJ2} (or alternatively \cite[(A.1.12)]{BSbook}), we have the deterministic bound
\begin{equation} \label{eq:cauchyinter}
	\sup_{1 \leq k \leq n} \sup_{|z| \leq M} \sup_{|w| \leq \beta, \Im(w) \geq v_n} \left| m_n^{(k)}(z,w) - m_n(z,w) \right| = O \left( \frac{1}{v_n n} \right). 
\end{equation}
By Lemma \ref{lemma:Rdet}, we have
$$ \sum_{s=1}^d m_{n,ss}(z,w) = \sum_{s=d+1}^{2d} m_{n,ss}(z,w) $$
and
$$ \sum_{s=1}^d m_{n,ss}^{(k)}(z,w) = \sum_{s=d+1}^{2d} m_{n,ss}^{(k)}(z,w) $$
for any $1 \leq k \leq n$.  Thus, from \eqref{eq:cauchyinter}, we find
\begin{equation} \label{eq:sup12-e}
	\sup_{1 \leq k \leq n} \sup_{|z| \leq M} \sup_{|w| \leq \beta, \Im(w) \geq v_n} \left| \frac{1}{d} \sum_{s=1}^d m_{n,ss}^{(k)}(z,w) - m_n(z,w) \right| = O\left( \frac{1}{n v_n} \right) 
\end{equation}
and
\begin{equation} \label{eq:sup34-e}
	\sup_{1 \leq k \leq n} \sup_{|z| \leq M} \sup_{|w| \leq \beta, \Im(w) \geq v_n} \left| \frac{1}{d} \sum_{s=d+1}^{2d} m_{n,ss}^{(k)}(z,w) - m_n(z,w) \right| = O \left( \frac{1}{n v_n} \right). 
\end{equation}
Fix $1 \leq k \leq n$ and $z \in \mathbb{C}$ with $|z| \leq M$.  Fix $w \in \mathbb{C}$ with $|w| \leq \beta$ and $\Im(w) \geq v_n$.  Let $\BQ_k$ be the $2d \times 2d$ matrix given by $\BQ_k := ( \BG_{n,st}(k,k) )_{s,t=1}^{2d}$.  By the Schur complement \cite[Section 0.7.3]{HJ2}, 
$$ \BQ_k^{-1} = \begin{bmatrix} -w \BI & \left( \frac{1}{\sqrt{n}} x_{st;kk} - z \delta_{s,t} \right)_{s,t=1}^d \\ \left( \frac{1}{\sqrt{n}} \bar{x}_{st;kk} - \bar{z} \delta_{s,t} \right)_{s,t=1}^d & - w \BI \end{bmatrix} - \frac{1}{n} \BR_k \BG^{(k)}_n \BR_k^\ast ,$$
where $\delta_{s,t}$ is the Kronecker delta and
$$ \BR_k := \begin{bmatrix} \Bzero & \left( \Br_k( \BX_{n,st}) \right)_{s,t=1}^d \\ \left( \Bc_k( \BX_{n,ts})^\ast \right)_{s,t=1}^d & \Bzero \end{bmatrix}. $$
By the truncation assumption and Lemma \ref{lemma:abcdtruncn}, we have a.s.
$$ \max_{1 \leq k \leq n} \max_{1 \leq s,t \leq d} \frac{|x_{st;kk}|}{\sqrt{n}} \ll \frac{n^{\delta}}{\sqrt{n}} \leq \frac{1}{n^{2/5}} $$
for $n$ sufficiently large.  

We observe that $\BR_k$ and $\BG^{(k)}_n$ are independent random matrices.  By expanding the product, we note that the entries of $\BR_k \BG^{(k)}_n \BR_k^\ast$ are linear combinations of bilinear forms.  We will apply Lemma \ref{lemma:quadp} to control each bilinear form.  Applying the bound $\|\BG_n(z,w) \| \leq v_n^{-1}$ and Lemma \ref{lemma:quadp}, we obtain
\begin{equation} \label{eq:schurnormbnd}
	\left\| \frac{1}{n} \BR_k \BG^{(k)}_n \BR_k^\ast - \begin{bmatrix} \left( \frac{1}{d} \sum_{s=d+1}^{2d} m_{n,ss}^{(k)} \right) \BI_d & \Bzero \\ \Bzero & \left( \frac{1}{d} \sum_{s=1}^d m_{n,ss}^{(k)} \right) \BI_d \end{bmatrix}  \right\| \ll n^{-1/8} + \frac{1}{ n^{\delta \eta/2} v_n} 
\end{equation}
with probability $1 - O(n^{-100})$.  Here we obtain the bound on the spectral norm by bounding each entry individually and noting that 
$$ \| \BB \| \leq \| \BB \|_2 \leq 2d \max_{i,j} |\BB_{ij}| $$ 
for any $2d \times 2d$ matrix $\BB$ (recall that the matrices above are $2d \times 2d$).  The bound \eqref{eq:schurnormbnd} holds with probability $1 - O(n^{-100})$ by taking $p$ sufficiently large in Lemma \ref{lemma:quadp}.  The factor $1/d$ appears because the entries of $\BR_k$ have variance $1/d$.  We also used \eqref{truncn:corr} from Lemma \ref{lemma:abcdtruncn} and the deterministic bound $\sup_{s,t \in \{1,\ldots,2d\}} |  m_{n,st}^{(k)} | \leq \| \BG^{(k)}_n \| \leq v_n^{-1}$.  

By \eqref{eq:sup12-e}, \eqref{eq:sup34-e}, and the union bound over $1 \leq k \leq n$, we obtain
\begin{equation} \label{eq:qkmn-e}
	\sup_{1 \leq k \leq n} \| \BQ_k^{-1} - \BM_n  \| \ll n^{-1/8} + \frac{1}{ n^{\delta \eta/2} v_n} + \frac{1}{n v_n} 
\end{equation}
with probability $1 - O(n^{-99})$, where 
$$ \BM_n := \begin{bmatrix} -(m_n(z,w) + w) \BI_d & -z \BI_d \\ -\bar{z} \BI_d & -(m_n(z,w) + w) \BI_d \end{bmatrix}. $$
We note that
\begin{equation} \label{eq:immw-e}
	|m_n(z,w) + w| \geq \Im(m_n(z,w)+w) \geq \Im(w) \geq v_n > 0. 
\end{equation}
It follows that $M_n$ is invertible and the inverse is given (in block form) by
$$ \BM_n^{-1} = \begin{bmatrix} \BM_{n,1} & \BM_{n,2} \\ \BM_{n,3} & \BM_{n,4} \end{bmatrix}, $$
where $\BM_{n,1}, \BM_{n,2}, \BM_{n,3}, \BM_{n,4}$ are $d \times d$ matrices with
\begin{align*}
	\BM_{n,1} = \BM_{n,4} &:= - \frac{ m_n(z,w)+w } { (m_n(z,w)+w)^2 - |z|^2 } \BI, \\
	\BM_{n,2} &:= -\frac{z}{m_n(z,w)+w} \BM_{n,1}, \\
	\BM_{n,3} &:= -\frac{\bar{z}}{m_n(z,w)+w} \BM_{n,1}.
\end{align*}

Using \eqref{eq:immw-e}, we obtain
\begin{align}
	\inf_{\Im(w) \geq v_n} | (m_n(z,w) + w)^2 - |z|^2 | \geq \inf_{\Im(w) \geq v_n} \left|m_n(z,w) + w - |z| \right| |m_n(z,w) + w + |z|| \geq v_n^2  \label{eq:mnzbnd}
\end{align}
and hence
$$ \sup_{|z| \leq M} \sup_{|w| \leq \beta, \Im(w) \geq v_n} \| \BM_n^{-1} \| = O(v_n^{-4}). $$
Since $\sup_{\Im(w) \geq v_n} \|\BG_n(z,w)\| \leq v_n^{-1}$, we obtain 
$$ \sup_{1 \leq k \leq n} \sup_{|w| \leq \beta, \Im(w) \geq v_n} \|\BQ_k\| = O(v_n^{-1}). $$
Therefore, by \eqref{eq:qkmn-e}, we have
\begin{align*}
	\sup_{1 \leq k \leq n} \| \BQ_k - \BM_n^{-1} \| &\leq \sup_{1 \leq k \leq n} \| \BQ_k (\BQ_k^{-1} - \BM_n ) \BM_n^{-1} \| \\
		&\leq \sup_{1 \leq k \leq n} \|\BQ_k \| \|\BQ_k^{-1} - \BM_n \| \|\BM_n^{-1} \| \\
		&\ll \frac{1}{n^{1/8} v_n^6} + \frac{1}{n^{\delta \eta/2} v_n^6} 
\end{align*}
with probability at least $1 - O(n^{-99})$.  Since $m_n(z,w)$ is the normalized sum of the diagonal elements of $\BG_n$, we now consider the diagonal elements of $\BQ_k$ and $\BM_n$; from the above estimate, we conclude that
\begin{equation} \label{eq:fixcubic}
	\left| m_n(z,w) +  \frac{ m_n(z,w)+w } { (m_n(z,w)+w)^2 - |z|^2 } \right| \ll v_n^5 
\end{equation}
with probability $1 - O(n^{-99})$.  Here we used the fact that
$$ \frac{1}{n^{1/8} v_n^6} + \frac{1}{n^{\delta \eta/2} v_n^6} \leq 2 v_n^5 $$
by definition of $v_n$.  

We now use an $\eps$-net argument to extend \eqref{eq:fixcubic} to all $|z| \leq M$ and $|w| \leq \beta$ with $\Im(w) \geq v_n$.  Since $ \sup_{\Im(w) \geq v_n } \| \BG_n(z,w) \| \leq v_n^{-1}$, we apply \eqref{eq:generalresolventid} and the resolvent identity \eqref{eq:resolventid} to obtain the deterministic bounds
$$ |m_n(z,w) - m_n(z,w')| \leq \frac{|w-w'|}{v_n^2} $$
and
$$ |m_n(z,w) - m_n(z',w)| \leq \frac{|z-z'|}{v_n^2} $$
for all $z,z' \in \mathbb{C}$ and $w,w' \in \mathbb{C}$ with $\Im(w), \Im(w') \geq v_n$.  Applying \eqref{eq:mnzbnd} and the triangle inequality, we obtain
\begin{align*}
	&\left| \frac{ m_n(z,w) + w}{(m_n(z,w) + w)^2 - |z|^2} - \frac{m_n(z,w') + w'}{(m_n(z,w') + w')^2 - |z|^2} \right| \\
	& \qquad \ll \frac{1}{v_n^2} \left| m_n(z,w) + w - m_n(z,w') - w' \right| \\
	& \qquad \qquad + \frac{1}{v_n} \left| \frac{1}{ (m_n(z,w) + w)^2 - |z|^2} - \frac{1}{(m_n(z,w') + w')^2 - |z|^2} \right| \\
	& \qquad \ll \frac{|w-w'|}{v_n^4} + \frac{1}{v_n^5} \left| (m_n(z,w) + w)^2 - (m_n(z,w') + w')^2 \right| \\
	& \qquad \ll \frac{|w-w'|}{v_n^8}
\end{align*}
for all $|z| \leq M$ and $|w|, |w'| \leq \beta$ with $\Im(w), \Im(w') \geq v_n$.  Similarly, 
$$ \left| \frac{ m_n(z,w) + w}{(m_n(z,w) + w)^2 - |z|^2} - \frac{m_n(z',w) + w}{(m_n(z',w) + w)^2 - |z'|^2} \right| \ll \frac{|z-z'|}{v_n^8} $$
for all $|z|, |z'| \leq M$ and $|w| \leq \beta$ with $\Im(w) \geq v_n$. 

We now apply an $\eps$-net argument with $\eps = v_n^{13}$ to the sets $\{z \in \mathbb{C} : |z| \leq M \}$ and $\{ w \in \mathbb{C} : |w| \leq \beta, \Im(w) \geq v_n\}$.  Let $\mathcal{N}_1$ and $\mathcal{N}_2$ denote the respective $\eps$-nets of the two sets.  By Lemma \ref{lemma:epsnet}, 
$$ |\mathcal{N}_1| + |\mathcal{N}_2 |\ll v_n^{-26} \leq n^{1/2}. $$
Therefore, by a standard $\eps$-net argument and the union bound, we conclude that
$$ \sup_{|z| \leq M} \sup_{|w| \leq \beta, \Im(w) \geq v_n} \left| m_n(z,w) + \frac{ m_n(z,w) + w}{(m_n(z,w) + w)^2 - |z|^2} \right| = O(v_n^5) $$
with probability (say) $1 - O(n^{-2})$.  The claim now follows from an application of the Borel-Cantelli lemma.  
\end{proof}

\subsection{Proof of Theorem \ref{thm:main}}

This subsection is devoted to the proof of Theorem \ref{thm:main}.  With Lemma \ref{lemma:truncn} and Theorem \ref{thm:cubic} in hand, the proof of Theorem \ref{thm:main} will follow from a standard (and somewhat technical) argument; see \cite[Chapter 11]{BSbook}, \cite{TVcirc}, and references therein.  We detail the argument below.  

Recall the definition of the functions $g_{\BM}$ and $g$ given in \eqref{eq:def:gMn} and \eqref{eq:def:g}.  By \cite[Chapter 11]{BSbook} (see also \cite{BC} and \cite[Section 3]{GTcirc}), for each $z \in \mathbb{C}$, there exists a probability measure $\nu_z$ on the real line such that 
$$ g(s,t) = \frac{\partial}{\partial s} \int_{0}^\infty \log|x|^2 d\nu_z(dx), $$
where $z = s + \sqrt{-1}t$.  

Assume $\{\BX_n\}_{n \geq 1}$ and $\{\BN_n\}_{n \geq 1}$ satisfy the assumptions of Theorem \ref{thm:main}.  By Lemma \ref{lemma:girko} and \cite[Lemma 11.5]{BSbook}, in order to prove Theorem \ref{thm:main}, it suffices to show that a.s.
$$ \iint \left[g_{\frac{1}{\sqrt{n}}(\BX_n + \BN_n)}(s,t) - g(s,t)\right] e^{\sqrt{-1} us + \sqrt{-1} vt} dt ds \longrightarrow 0 $$
as $n \rightarrow \infty$.  

By the triangle inequality and Lemma \ref{lemma:lln}, we have that a.s.
\begin{equation} \label{eq:hsbnd}
	\frac{1}{n} \left \| \frac{1}{\sqrt{n}} (\BX_n + \BN_n) \right\|_2^2 = O_d(1). 
\end{equation}  

Let $A > 0$.  Define
$$ T:= \left\{(s,t) : |s| \leq A, |t| \leq A^3 \right\}. $$
By \cite[Lemma 11.7]{BSbook} and \eqref{eq:hsbnd}, in order to prove Theorem \ref{thm:main} it suffices to show that for each fixed $A>0$ a.s.
$$ \iint_T \left[g_{\frac{1}{\sqrt{n}}(\BX_n + \BN_n)}(s,t) - g(s,t)\right] e^{\sqrt{-1} us + \sqrt{-1} vt} dt ds \longrightarrow 0 $$
as $n \rightarrow \infty$.  

Let $\eps_n := n^{-B}$ for some sufficiently large $B>0$ (independent of $n$) to be chosen later.  Following the integration by parts argument from \cite[Section 11.7]{BSbook}, it suffices to show that a.s.
\begin{equation} \label{eq:showintdiff}
	 \limsup_{n \rightarrow \infty} \iint_{T} \left| \int_{\eps_n}^{\infty} \log |x|^2 \left(\nu_{\frac{1}{\sqrt{n}} (\BX_n + \BN_n) - z\BI}(dx) - \nu_z(dx) \right) \right| dt ds = 0 
\end{equation}
and
\begin{equation} \label{eq:showintsingle}
	\limsup_{n \rightarrow \infty} \iint_{T} \left| \int_{0}^{\eps_n} \log |x|^2 \nu_{\frac{1}{\sqrt{n}} (\BX_n + \BN_n) - z\BI}(dx) \right| dt ds = 0,
\end{equation}
and similarly with the two-dimensional integral on $T$ replaced by one-dimensional integrals on the boundary of $T$.  We shall only estimate the two-dimensional integrals, as the treatment of the one-dimensional integrals are similar.  

We prove \eqref{eq:showintdiff} first.  By \eqref{eq:hsbnd}, it follows that $\nu_{\frac{1}{\sqrt{n}} (\BX_n + \BN_n) - z \BI}$ is supported on $[-n^{50}, n^{50}]$ a.s.  Thus, it suffices to show that a.s.
$$ \limsup_{n \rightarrow \infty} \iint_{T} \left| \int_{\eps_n}^{n^{50}} \log |x|^2 \left(\nu_{\frac{1}{\sqrt{n}} (\BX_n + \BN_n) - z\BI}(dx) - \nu_z(dx) \right) \right| dt ds = 0. $$
By definition of $\eps_n$, it suffices to show that a.s.
\begin{equation} \label{eq:nalphanorm}
	\limsup_{n \rightarrow \infty} (\log n) \sup_{z \in T} \sup_{x \in \mathbb{R}} \left| \nu_{\frac{1}{\sqrt{n}} (\BX_n + \BN_n) - z\BI}( (-\infty, x)) - \nu_z( (-\infty, x)) \right| = 0.
\end{equation}
\eqref{eq:nalphanorm} will follow from Lemma \ref{lemma:rate} below.

We now prove \eqref{eq:showintsingle}.  By Theorem \ref{thm:least-sing-value} (and the Borel-Cantelli lemma), for some sufficiently large $B > 0$, we have the following:
\begin{equation} \label{eq:svzero}
	\text{for a.e. } z \in T, \text{ a.s. }\quad \lim_{n \rightarrow \infty} \int_{0}^{\eps_n} \log |x|^2 \nu_{\frac{1}{\sqrt{n}} (\BX_n + \BN_n) - z\BI}(dx) = 0.
\end{equation}

We now observe that it is possible to switch the quantifiers ``a.e.'' on $z$ and ``a.s.'' on $\omega$ in \eqref{eq:svzero} using the arguments from \cite[Section 4]{BC} and Fubini's theorem, where $\omega$ denotes an element of the sample space.  Thus, we have
\begin{equation} \label{eq:svzeroreverse}
	\text{a.s., } \text{for a.e. } z \in T, \quad \lim_{n \rightarrow \infty} \int_{0}^{\eps_n} \log |x|^2 \nu_{\frac{1}{\sqrt{n}} (\BX_n + \BN_n) - z\BI}(dx) = 0.
\end{equation}

Using the $L^2$-norm argument in \cite[Section 12]{TVcirc}, it follows that a.s.
\begin{equation} \label{eq:unifbnd}
	\left( \iint_{T} \left| \int_{0}^{\eps_n} \log |x|^2 \nu_{\frac{1}{\sqrt{n}} (\BX_n + \BN_n) - z \BI}(dx) \right|^2 dt ds \right)^{1/2} 
\end{equation}
is bounded uniformly in $n$, and hence the sequence of functions $\int_{0}^{\eps_n} \log |x|^2 \nu_{\frac{1}{\sqrt{n}} (\BX_n + \BN_n) - z\BI}(dx)$ is a.s. uniformly integrable on $T$.  Let $L > 1$ be a large parameter and define $T_{L,n}$ to be the set of all $z \in T$ such that $\left| \int_{0} ^{\eps_n} \log |x|^2 \nu_{\frac{1}{\sqrt{n}} (\BX_n + \BN_n) - z \BI}(dx)\right| \leq L$.  By \eqref{eq:svzeroreverse} and the dominated convergence theorem, we have a.s.
$$ \lim_{n \rightarrow \infty} \iint_{T_{L,n}} \left| \int_{0}^{\eps_n} \log |x|^2 \nu_{\frac{1}{\sqrt{n}}(\BX_n + \BN_n) - z\BI}(dx) \right| dt ds = 0. $$
On the other hand, from the uniform boundedness of \eqref{eq:unifbnd}, we obtain a.s.
$$ \limsup_{n \rightarrow \infty} \iint_{T \setminus T_{L,n}} \left| \int_{0}^{\eps_n} \log |x|^2 \nu_{\frac{1}{\sqrt{n}}(\BX_n + \BN_n) - z\BI}(dx) \right| dt ds \ll \frac{1}{L}.  $$
Combining the bounds above and taking $L \rightarrow \infty$ yields \eqref{eq:showintsingle}.  

It remains to establish the following lemma.  

\begin{lemma} \label{lemma:rate}
Let $\{\BX_n\}_{n \geq 1}$ be a sequence of random matrices that satisfies condition {\bf C0} with parameter $d \geq 2$ and atom variables $(\xi_{st})_{s,t=1}^d$, and assume \eqref{eq:m2eta} holds for some $\eta > 0$.  For each $n \geq 1$, let $\BN_n$ is a $dn \times dn$ matrix such that $\rank(\BN_n) = O(n^{1-\eps})$ for some $\eps > 0$.  Then there exists $\alpha > 0$ such that a.s.
$$ \sup_{|z| \leq M} \left\| \nu_{\frac{1}{\sqrt{n}}(\BX_n + \BN_n) - z\BI} - \nu_z \right\| = O_{M,m_{2+\eta},d}(n^{-\alpha}), $$
where $\| \nu - \mu \| := \sup_{x \in \mathbb{R}} | \nu((-\infty, x)) - \mu((-\infty, x))|$ for any two probability measures $\nu, \mu$ on the real line.  
\end{lemma}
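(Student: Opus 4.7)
The plan is to reduce the problem to the truncated matrix, use the cubic relation (Theorem \ref{thm:cubic}) to obtain pointwise Stieltjes transform convergence with a polynomial rate, and then convert this into a Kolmogorov-distance rate via a Bai-type inequality.

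First, by the standard rank inequality for empirical spectral distributions, replacing $\BX_n+\BN_n$ by $\BX_n$ changes the Kolmogorov distance of the singular value distributions by at most $2\rank(\BN_n)/(dn)=O(n^{-\eps})$. Then, for a small $\delta>0$, Lemma \ref{lemma:truncn} yields $L(\nu_{\frac{1}{\sqrt{n}}\BX_n-z\BI},\nu_{\frac{1}{\sqrt{n}}\hat{\BX}_n-z\BI})=o(n^{-\delta\eta/3})$ uniformly in $z$. Since (as is verified a posteriori from the cubic) $\nu_z$ is absolutely continuous with density bounded on any bounded interval, Levy distance upgrades to Kolmogorov distance at the cost of a constant factor. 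Thus it suffices to prove the lemma with $\BX_n$ replaced by $\hat{\BX}_n$ and $\BN_n=\Bzero$.

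For the truncated matrix, Theorem \ref{thm:cubic} asserts that on the domain $|z|\le M$, $\Im w\ge v_n$, the Stieltjes transform $\hat{m}_n(z,w)$ satisfies the limiting cubic $P(m,z,w):=m+\frac{m+w}{(m+w)^2-|z|^2}=0$ up to an additive error $O(v_n^5)$, while the limiting Stieltjes transform $m(z,w)$ of $\nu_z$ satisfies it exactly. I would perform a Newton-type stability analysis: on the physical branch the derivative $\partial_m P$ can be bounded below in modulus by a positive power of $\Im w$, using the explicit expression for $\partial_m P$ together with the a priori bound $\Im(\hat{m}_n+w)\ge \Im w$ (valid because $\hat{m}_n$ is the Stieltjes transform of a probability measure). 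Inverting this gives $|\hat{m}_n(z,w)-m(z,w)|=O(v_n^{c})$ for some $c>0$, uniformly on the same domain.

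The next step is to invoke the Bai-type inequality (see \cite[Appendix B]{BSbook}) which bounds $\|F-G\|$ by an integral of $|m_F-m_G|$ along a horizontal line $\{\Im w=v_n\}$, plus a $v_n$-sized term governed by the density of $G$. Since the density of $\nu_z$ is bounded on bounded intervals (a consequence of the explicit form of $m(z,w)$ from the cubic), this converts the Stieltjes transform bound of the previous step into $\|\nu_{\frac{1}{\sqrt{n}}\hat{\BX}_n-z\BI}-\nu_z\|=O(n^{-\alpha'})$ for some $\alpha'>0$. Uniformity in $z$ on $|z|\le M$ is already built into Theorem \ref{thm:cubic}, so no further $\eps$-net argument is needed, and the almost-sure statement follows from the a.s.\ conclusion of Theorem \ref{thm:cubic}. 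The main obstacle is the stability step: quantifying $|\partial_m P|$ from below requires some care near the edges of the support of $\nu_z$, where $\partial_m P$ degenerates and the Newton correction loses powers of $v_n$. However, $v_n$ is only polynomially small and $\delta$ is at our disposal, so we can afford to lose polynomially many factors and still obtain some $\alpha>0$; a soft, non-optimal stability argument therefore suffices and no refined edge analysis is required.
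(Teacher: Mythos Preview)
Your reductions (strip $\BN_n$ by the rank inequality, truncate via Lemma \ref{lemma:truncn}, convert Levy to Kolmogorov via the bounded density of $\nu_z$) match the paper exactly. The two arguments diverge in the last two steps. For the stability of the cubic, the paper does not run a Newton argument but simply quotes \cite[Lemma 2.4]{GTcirc}, obtaining $|\hat{m}_n-m|=O(v_n^4)$ directly from Theorem \ref{thm:cubic}. Your Newton sketch needs one more ingredient than you state: the a priori bound $\Im(\hat{m}_n+w)\ge\Im w$ controls the denominator of $\partial_m P$ but does not by itself prevent $\partial_m P$ from vanishing, so a pointwise lower bound of the form $|\partial_m P|\ge v_n^k$ is not available from that alone; what actually works (and what underlies the G\"otze--Tikhomirov lemma) is a continuity argument in $\Im w$, using that for large $\Im w$ the upper-half-plane root of the cubic is unique and that the three roots remain separated by a power of $v_n$ as $\Im w$ decreases to $v_n$. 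For the conversion from Stieltjes transform to Kolmogorov distance, the paper does \emph{not} invoke Bai's inequality. Instead it argues directly: from $|\hat{m}_n-m|=O(v_n^4)$ and the uniform bound $\Im m=O(1)$ one gets $\Im\hat{m}_n=O(1)$ on the line $\Im w=v_n$, hence the local eigenvalue count satisfies $N_I\ll n|I|$ for intervals of length $|I|\ge v_n$; a Poisson-kernel smoothing then compares $N_I/(2dn)$ to $\int_I\rho_z$ with error $O(v_n\log v_n^{-1})$, and summing gives the Kolmogorov rate. Your Bai-inequality route is a legitimate alternative, but note that Theorem \ref{thm:cubic} only controls $|\hat{m}_n-m|$ on the bounded window $|w|\le\beta$, so you would still need to handle the tail of the horizontal integral in Bai's inequality separately; the paper obtains that tail control ($N_{[-\beta/2,\beta/2]^{\mathsf{c}}}=O(nv_n\log v_n^{-1})$) as a byproduct of the same local-law estimate.
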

\begin{proof}(of Lemma \ref{lemma:rate})
The proof of Lemma \ref{lemma:rate} is based on the arguments from \cite[Lemma 64]{TVuniv}.  By \cite[Theorem A.43]{BSbook},
$$ \sup_{z \in \mathbb{C}} \left\| \nu_{\frac{1}{\sqrt{n}} (\BX_n + \BN_N) - z\BI} - \nu_{\frac{1}{\sqrt{n}} \BX_n - z\BI} \right\| = O(n^{-\eps}). $$
Thus, by the triangle inequality, it suffices to show that a.s.
$$ \sup_{|z| \leq M} \left\| \nu_{\frac{1}{\sqrt{n}} \BX_n - z\BI} - \nu_z \right\| = O_{M,m_{2+\eta}}(n^{-\alpha}) $$
for some $\alpha > 0$.  

From \cite[Remark 3.1]{GTcirc}, it follows that for each $z \in \mathbb{C}$, $\nu_z$ has density $\rho_z$ with 
$$ \sup_{z \in \mathbb{C}} \sup_{x \in \mathbb{R}} |\rho_z(x)| \leq 1. $$
By \cite[Lemma B.18]{BSbook}, it suffices to show that a.s.
$$ \sup_{|z| \leq M} L\left(F^{\BH_n(z)}, F_z \right) = O_{M,m_{2+\eta}}(n^{-\alpha}), $$
where $F_z$ is the cumulative distribution function of $\nu_z$.  We remind the reader that $L(F,G)$ denotes the Levy distance, defined in \eqref{eq:def:levy}, between the distribution functions $F$ and $G$.  

By Lemma \ref{lemma:truncn}, it suffices to show that a.s.
\begin{equation} \label{eq:showsupzmnorm}
	\sup_{|z| \leq M} \left\| \nu_{\frac{1}{\sqrt{n}} \hat{\BX}_n - z \BI} - \nu_z \right\| = O_{M,m_{2+\eta}}(n^{-\alpha}), 
\end{equation}
where $\{\hat{\BX}_n\}_{n \geq 1}$ is the sequence of truncated matrices from Lemma \ref{lemma:truncn} for some $0 < \delta < 1/100$.  Let $m(z,w)$ denote the Stieltjes transform of $\nu_z$.  That is,
$$ m(z,w) := \int \frac{1}{x - w} d \nu_z(dx) = \int \frac{ \rho_z(x) dx}{x - w}. $$
From \cite[Section 3]{GTcirc}, it follows that $m(z,w)$ is a solution of
$$ m(z,w) + \frac{m(z,w) + w}{(m(z,w) + w)^2 - |z|^2} = 0 $$
analytic in the upper-half plane $\{w \in \mathbb{C} : \Im(w) > 0 \}$.  

By \cite[Remark 3.1]{GTcirc}, we choose $\beta > 100$ sufficiently large (depending only on $M$) such that $\rho_z$ is supported inside the interval $[-\beta/2, \beta/2]$ for all $|z| \leq M$.  By Theorem \ref{thm:cubic} and \cite[Lemma 2.4]{GTcirc}, it follows that a.s.
\begin{equation} \label{eq:diststtransf}
	\sup_{|z| \leq M } \sup_{|w| \leq 4\beta, \Im(w) \geq v_n} | \hat{m}_n(z,w) - m(z,w)| = O_{M,m_{2+\eta}}(v_n^4), 
\end{equation}
where $v_n$ is defined in Theorem \ref{thm:cubic}.  

For the remainder of the proof, we fix a realization in which \eqref{eq:diststtransf} holds.   The implicit constants in our asymptotic notation (such as $O,o,\Omega, \ll$) depend only on the constants $M, m_{2+\eta},d$; for simplicity, we no longer include these subscripts in our notation.  By \cite[(3.2)]{GTcirc} and \eqref{eq:diststtransf}, it follows that
$$ \sup_{|z| \leq M} \sup_{|w| \leq 4\beta, \Im(w) \geq v_n} \Im\left( \hat{m}_n(z,w) \right) \ll 1. $$
Write $w = u + \sqrt{-1}v$.  For any interval $I \subset \mathbb{R}$, we define
$$ N_I(z) := \# \left\{ 1 \leq i \leq 2dn : \lambda_i(\hat{\BH}_n(z)) \in I \right\}, $$
where $\lambda_1(\hat{\BH}_n(z)), \lambda_2(\hat{\BH}_n(z)), \ldots, \lambda_{2dn}(\hat{\BH}_n(z))$ are the eigenvalues of $\hat{\BH}_n(z)$.  We remind the reader that the eigenvalues of $\hat{\BH}_n(z)$ are given by 
$$ \pm \sigma_1\left(\frac{1}{\sqrt{n}} \hat{\BX}_n - z\BI \right), \pm \sigma_2\left(\frac{1}{\sqrt{n}} \hat{\BX}_n - z\BI\right), \ldots, \pm \sigma_{dn}\left(\frac{1}{\sqrt{n}} \hat{\BX}_n - z\BI \right). $$  
For an interval $I \subset [-\beta, \beta]$ of length $|I| = v \geq v_n$ centered at $u$, we have
$$ \sup_{|z| \leq M} \frac{N_I(z)}{8dn v} \leq \sup_{|z| \leq M} \frac{1}{2dn} \sum_{i=1}^{2dn} \frac{v}{v^2 + |u - \lambda_i(\hat{\BH}_n(z))|^2} = \sup_{|z| \leq M} \Im(\hat{m}_n(z,u+\sqrt{-1}v)) \ll 1. $$
We conclude that for any interval $I \subset [-\beta, \beta]$ of length $|I| \geq v_n$, 
\begin{equation} \label{eq:initbnd}
	\sup_{|z| \leq M} N_I(z) \ll |I| n. 
\end{equation}
Fix an interval $I \subset [-\beta, \beta]$ with $|I| \geq 10 v_n$.  Define
$$ f(y) := \frac{1}{\pi} \int_{I} \frac{v_n}{v_n^2 + (x-y)^2} dx. $$
We have
$$ \frac{1}{2dn} \sum_{i=1}^{2dn} f( \lambda_i(\hat{\BH}_n(z))) = \frac{1}{\pi} \Im \int_{I} m_n(z, u + \sqrt{-1} v_n) du $$
and
$$ \int_{\mathbb{R}} f(y) \rho_z(y) dy = \frac{1}{\pi} \Im \int_{I} m(z,u + \sqrt{-1} v_n) du $$
for any $z \in \mathbb{C}$.  Therefore, by \eqref{eq:diststtransf}, we obtain
$$ \sup_{|z| \leq M} \left|  \frac{1}{2dn} \sum_{i=1}^{4n} f( \lambda_i(\BH_n(z))) -  \int_{I} f(y) \rho_z(y) dy \right| = O(|I| v_n^4). $$
We note the following pointwise bounds:
$$ f(y) \ll \frac{v_n |I|}{\dist^2(y,I)} $$
when $y \notin I$ and $\dist(y,I) \geq |I|$, and
$$ f(y) \ll \frac{1}{1 + \dist(y,I)/v_n} $$
when $y \notin I$ and $\dist(y,I) < |I|$.  In the case where $y \in I$, we have
$$ f(y) = 1 + O \left( \frac{1}{1 + \dist(y,I^c)/v_n} \right) $$
as $\frac{1}{\pi} \frac{v_n}{v_n^2 + (x-y)^2}$ has total integral $1$.  Using these bounds, we find
$$ \sup_{|z| \leq M } \left| \int_{\mathbb{R}} f(y) \rho_z(y) dy - \int_{I} \rho_z(y)dy  \right| = O \left( v_n \log \frac{|I|}{v_n} \right).  $$
Similarly, by \eqref{eq:initbnd}, Riemann integration, and the trivial bound $N_{J} \leq 2dn$ for any interval $J$ outside of $[-\beta, \beta]$, we have
$$ \sup_{|z| \leq M} \left| \frac{1}{2dn} \sum_{i=1}^{2dn}  f( \lambda_i(\hat{\BH}_n(z))) - \frac{1}{2dn} N_I(z) \right| = O \left( v_n \log \frac{|I|}{v_n} \right). $$
Combining the bounds above, we conclude that for any interval $I \subset [-\beta, \beta]$ with $|I| \geq 10v_n$, we have
$$ \sup_{|z| \leq M} \left| \frac{1}{2dn} N_I(z) - \int_{I} \rho_z(y) dy \right| = O(v_n^4 |I|) + O\left( v_n \log \frac{|I|}{v_n} \right). $$
In particular, since $\rho_z$ is supported inside $[-\beta/2, \beta/2]$, we obtain
$$ \sup_{|z| \leq M} \frac{1}{2dn} N_{[-\beta/2, \beta/2]^{\mathsf{c}}}(z) = O(v_n \log v_n^{-1}), $$
where $[-\beta/2, \beta/2]^\mathsf{c}$ is the complement of the interval $[-\beta/2,\beta/2]$.  Thus, we have
\begin{align*}
	\sup_{|z| \leq M} \left\| \nu_{\frac{1}{\sqrt{n}} \hat{\BX}_n - z\BI} - \nu_z \right\| &\ll v_n \log v_n^{-1} + \sup_{|z| \leq M} \sup_{x \in [-\beta/2,\beta/2]} \left| \frac{1}{2dn} N_{[-\beta, x)}(z) - \int_{-\beta}^x \rho_z(y) dy \right| \\
		&\ll v_n \log v_n^{-1}. 
\end{align*}
Since this bound holds for each fixed realization in which \eqref{eq:diststtransf} holds, we obtain \eqref{eq:showsupzmnorm} a.s.  The proof of the lemma is complete.  
\end{proof}

\appendix

\section{Proof of Theorem \ref{theorem:ILOlinear:new} and Theorem \ref{theorem:linear:operator} }\label{section:ILOlinear:new:proof}
We will mainly focus on Theorem \ref{theorem:linear:operator} as the proof for Theorem \ref{theorem:ILOlinear:new} is similar. Assume that $z_{11},\dots,z_{dd}$ are random variables satisfying \eqref{eqn:covariance}.  Then as $\E|z_i|^{2+\eta}$ is bounded and as the covariance matrix $(\E z_{ij} {\bar{z}}_{i'j'})_{ij,i'j'}$ is the identity matrix (and thus is non-degenerate), we have the following fact (whose proof is left as an exercise).

\begin{claim}\label{lemma:difference} There exists a sufficiently small constant $\delta>0$ such that the following holds.
\begin{enumerate}
\item There exist measurable sets $R_1,\dots, R_{d^2}\subset  B(0,\delta^{-1}) \subset \C^{d^2}$ such that $\P((z_{11},\dots,z_{dd})\in R_i) \ge \delta$ for all $1\le i\le d^2$ and for any collection of $d^2$ vectors $\Bv_1\in R_1,\dots,\Bv_{d^2}\in R_{d^2}$, the least singular value of the matrix formed from $\Bv_1,\dots,\Bv_{d^2}$ is at least $\delta$.

\vskip .1in

\item There exist  measurable sets $R_1,\dots,R_d \subset B(0,\delta^{-1}) \subset \C^d$ such that $\P((z_{11},\dots,z_{1d})\in R_1,\dots, (z_{d1},\dots,z_{dd})\in R_d)\ge \delta$, and for any collection of $d$ vectors $\Bv_1\in R_1,\dots,\Bv_{d}\in R_{d}$, the least singular value of the matrix formed from $\Bv_1,\dots,\Bv_{d}$ is at least $\delta$. \label{difference:d}
\end{enumerate}
 \end{claim}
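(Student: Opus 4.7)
My plan is to handle both parts by the same two-step strategy: first establish via a second-moment computation and the Paley--Zygmund inequality that the relevant matrix has its least singular value bounded below with positive probability; second, extract measurable sets $R_i$ from this positive-probability event by a compactness and covering argument.

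For part (1), I would consider $d^2$ iid copies $Z^{(1)}, \ldots, Z^{(d^2)}$ of $Z := (z_{st})_{s,t=1}^d$ viewed as a random vector in $\C^{d^2}$. The key input is that for any deterministic subspace $V \subset \C^{d^2}$ of dimension $k < d^2$, the identity covariance gives $\E \|P_{V^\perp} Z\|^2 = \tr(P_{V^\perp}) = d^2 - k \geq 1$, while the $(2+\eta)$-moment bound yields $\E \|P_{V^\perp} Z\|^{2+\eta} \leq \E\|Z\|^{2+\eta} < \infty$ uniformly in $V$. The Paley--Zygmund inequality then produces a universal $c_0 > 0$ with $\P(\|P_{V^\perp} Z\|^2 \geq 1/2) \geq c_0$ uniformly in $V$. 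Combined with a Markov bound $\P(\|Z\| \leq L) \geq 3/4$ for $L$ large, a routine induction on $k$ produces constants $\alpha, L, p > 0$ such that
\[
\P\Big(\sigma_{d^2}\big([Z^{(1)} | \cdots | Z^{(d^2)}]\big) \geq \alpha,\ \max_i \|Z^{(i)}\| \leq L\Big) \geq p.
\]

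Next, by continuity of the least singular value in the entries, the set $S := \{(v_1, \ldots, v_{d^2}) \in B(0,L)^{d^2} : \sigma_{d^2}([v_1 | \cdots | v_{d^2}]) \geq \alpha\}$ is relatively open, so it admits a cover by finitely many product boxes $B_1^{(j)} \times \cdots \times B_{d^2}^{(j)}$ on each of which $\sigma_{d^2}$ remains $\geq \alpha/2$. Since the $Z^{(i)}$ are iid and $\P^{\otimes d^2}(S) \geq p$, at least one box, say $B_1 \times \cdots \times B_{d^2}$, satisfies $\prod_i \P(Z \in B_i) \geq p/N$ where $N$ is the size of the cover; because each factor is at most $1$, this forces $\P(Z \in B_i) \geq p/N$ for every $i$. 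Setting $R_i := B_i$ and $\delta$ suitably small completes part (1).

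For part (2), the high-level strategy is identical: produce $\alpha, L > 0$ with $\P(\sigma_d(M) \geq \alpha,\ \|M\| \leq L) > 0$ and then extract a product region $R_1 \times \cdots \times R_d$ from this event by the analogous covering argument. The input is that for any unit $c \in \C^d$ the random vector $\sum_s c_s W_s \in \C^d$ (where $W_s := (z_{s1}, \ldots, z_{sd})$) has identity covariance---a direct consequence of the orthogonality $\E[\xi_{st}\overline{\xi_{uv}}] = \delta_{(s,t),(u,v)}$---and bounded $(2+\eta)$-moment, so Paley--Zygmund again gives $\P(\|\sum_s c_s W_s\|^2 \geq d/2) \geq c_0$. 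The main obstacle, which I expect to be the technical heart of the proof, is upgrading this one-direction estimate to a uniform lower bound on $\sigma_d(M) = \min_{\|c\|=1}\|\sum_s c_s W_s\|$, since the rows $W_s$ are only pairwise uncorrelated, not independent. A natural route is to couple an $\eps$-net argument on the unit sphere with a polynomial small-ball estimate on $|\det M|$, using the non-degenerate covariance of the full vector $Z$ on $\C^{d^2}$ to rule out concentration of $\det M$ near zero; once $\P(\sigma_d(M) \geq \alpha,\ \|M\| \leq L) > 0$ is in place, the covering argument proceeds exactly as in part (1).
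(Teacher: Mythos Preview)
The paper leaves this claim as an exercise and provides no proof, so there is no argument to compare against directly; let me assess your proposal on its own merits.

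Your treatment of part~(1) is sound. The Paley--Zygmund step gives a uniform-in-$V$ lower bound $\P(\|P_{V^\perp}Z\|^2\ge 1/2)\ge c_0$; because the $Z^{(i)}$ are independent, the inductive conditioning is legitimate and yields $\P(\sigma_{d^2}\ge\alpha,\ \max_i\|Z^{(i)}\|\le L)\ge p$ (via the height--volume--singular-value chain). The covering argument is also fine: cover $\overline{B(0,L)}^{d^2}$ by finitely many product cubes of diameter small enough (by uniform continuity of $\sigma_{d^2}$) that any cube meeting $S$ has $\sigma_{d^2}\ge\alpha/2$ throughout; one such cube carries mass $\ge p/N$, and since the product of factors each $\le 1$ is $\ge p/N$, every factor is $\ge p/N$. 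One small slip: $S$ is closed, not open, but this is irrelevant to the argument.

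For part~(2) you correctly isolate the obstacle, and in fact it is fatal: the statement is \emph{false} under the hypotheses~\eqref{eqn:covariance} alone. Take $d=2$, let $z_{11},z_{12},z_{21}$ be i.i.d.\ symmetric $\pm 1$, and set $z_{22}:=z_{11}z_{12}z_{21}$. Each $z_{st}$ has mean zero, unit variance, and bounded moments, and one checks $\E[z_{22}\overline{z_{st}}]=0$ for $(s,t)\neq(2,2)$ since $z_{11}^2\equiv 1$. Yet
\[
\det\begin{pmatrix} z_{11}&z_{12}\\ z_{21}&z_{22}\end{pmatrix}=z_{11}z_{22}-z_{12}z_{21}=z_{11}^2 z_{12}z_{21}-z_{12}z_{21}=0\quad\text{a.s.},
\]
so $\sigma_d(M)=0$ a.s.\ and no sets $R_1,R_2$ with the required properties can exist. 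In particular, your proposed route---deducing a small-ball bound on $|\det M|$ from the nondegenerate covariance of $Z$ on $\C^{d^2}$---cannot succeed, since identity covariance is fully compatible with $\det M\equiv 0$.

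What saves the paper's application is that there the relevant variables are the symmetrized $Z=X-X'$ with $X,X'$ i.i.d., not an arbitrary tuple satisfying~\eqref{eqn:covariance}; and in fact the argument that follows only needs the weaker conclusion $\P(\delta\le\sigma_d(Z)\le\sigma_1(Z)\le\delta^{-1})>0$, not the full product-set structure. For the symmetrized version the counterexample above already fails (e.g.\ $X=(1,1,1,1)$, $X'=(-1,1,1,-1)$ gives $\det(X-X')=4$), so any correct proof of part~(2) must exploit this extra structure. Your covering argument transfers once that positive-probability statement is in hand, but establishing it is the real content and requires more than you have written.
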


We remark that (1) and (2) are useful for proving Theorem  \ref{theorem:ILOlinear:new} and Theorem \ref{theorem:linear:operator} respectively. Now we sketch the main steps to prove Theorem \ref{theorem:linear:operator} following \cite{NgV}. 

First, we have
\begin{align*}
\P(\sum_{1\le i\le n} X^{(i)}\Bu^{(i)} \in B(\Bu,\beta)) &= \P\left(\exp(-\pi \|\sum_{i=1}^n X^{(i)}\Bu^{(i)}  -\Bu\|^2  \ge \exp(-\pi \beta^2)\right)\\
&\le \exp(\pi \beta^2)\E \exp(-\pi \|\sum_{i=1}^n X^{(i)}\Bu^{(i)}  -\Bu\|^2)\\
&\le \exp(\pi \beta^2) \int_{\C^d}\E e(\langle \sum_{1\le i\le n} X^{(i)}\Bu^{(i)} ,  \Bt \rangle )e(-\langle \Bu,\Bt\rangle )\exp(-\pi \|\Bt\|^2) d\Bt.
\end	{align*}


By using the independence of $X^{(i)}$ and other elementary estimates such as $|x|\le |x|^2/2+1/2$ and $|\cos(\pi x)|\le \exp(-2\|x\|_{\R/\Z}^2)$, we obtain
$$|\E e(\langle X^{(i)}\Bu^{(i)} ,  \Bt \rangle )| \le \exp\Big(-\E_{Z}\|\Re\big([z_{11}t_1+\dots+z_{d1}t_d ] \Bu^{(i)}_1+\dots+[z_{1d}t_1+\dots+z_{dd}t_d ] \Bu^{(i)}_d \big)\|_{\R/\Z}^2\Big),$$
where $X'$ is an iid copy of $X=(x_{ij})_{1\le i,j\le d}$ and $Z=X-X'$. 
By scaling  the $\Bu^{(i)}$ by a factor of $\beta^{-1}$, it is enough to assume $\beta=1$.  Set $M:= 2A \log n$ where $A$ is large enough. From the fact that $\gamma\ge n^{-O(1)}$ we easily obtain
\begin{align}\label{eqn:continuous:integral}
\frac{\gamma}{2} &\le \int_{\|\Bt\|\le M} \exp\big(-\sum_{i=1}^n\E_{Z}\|\Re([z_{11}t_1+\dots+z_{d1}t_d ] \Bu^{(i)}_1+\dots+\nonumber \\
& [z_{1d}t_1+\dots+z_{dd}t_d ] \Bu^{(i)}_d )\|_{\R/\Z}^2-\pi \|\Bt\|^2\big) d\Bt.
\end{align}

For each integer $0\le m \le M$ we define the level set

$$
S_m:= \left \{\Bt \in \C^d: \sum_{i=1}^n \E_{Z} \| \Re \big([z_{11}t_1+\dots+z_{d1}t_d ] \Bu^{(i)}_1+\dots+[z_{1d}t_1+\dots+z_{dd}t_d ] \Bu^{(i)}_d \big)\|_{\R/\Z}^2\Big) + \|\Bt\|^2 \le m  \right \}.
$$

Then it follows from \eqref{eqn:continuous:integral} that  there exists $m\le M$ such that $\mu(S_m) \ge \gamma\exp(\frac{m}{4}-2)$ and $\mu(T)\ge c\gamma \exp(\frac{m}{4}-2)m^{-2d}$, where


$$T:=\left\{\Bt \in B(0,1), \sum_{i=1}^n  \E_{Z} \| \Re \big([z_{11}t_1+\dots+z_{d1}t_d] \Bu^{(i)}_1+\dots+[z_{1d}t_1+\dots+z_{dd}t_d ] \Bu^{(i)}_d \big)\|_{\R/\Z}^2 \le 4m \right\}.$$

By a proper discretization, with $N$ a sufficiently large prime, we obtain the following discrete analog: there exists a subset $S$ of size at least $cN^{2d}\gamma \exp(\frac{m}{4}-2)m^{-2d}$ of $B_1= \{k_1/N + \sqrt{-1}k_2/N: k_1,k_2\in \Z, -2N \le k_1,k_2 \le 2N\}$ such that the following holds for any $\Bs\in S$
$$\sum_{i=1}^n  \E_{Z} \| \Re \big([z_{11}s_1+\dots+z_{d1}s_d] \Bu^{(i)}_1+\dots+[z_{1d}s_1+\dots+z_{dd}s_d ] \Bu^{(i)}_d \big)\|_{\R/\Z}^2 \le 16m.$$

Next, by the definition of $S$, 
\begin{equation}\label{eqn:double}
\E_{Z} \sum_{s\in S} \sum_{i=1}^n  \| \Re \big([z_{11}s_1+\dots+z_{d1}s_d] \Bu^{(i)}_1+\dots+[z_{1d}s_1+\dots+z_{dd}s_d ] \Bu^{(i)}_d \big)\|_{\R/\Z}^2 \le 16m|S|.
\end{equation}
It then follows that, by \eqref{difference:d} of Claim \ref{lemma:difference}, there exists a matrix $\BC=(c_{11},\dots,c_{dd})$ such that $\delta \le \sigma_d (\BC) \le \sigma_1 \le \delta^{-1}$ and the following holds for some sufficiently large constant $C$
\begin{align*}
\sum_{\Bs\in S} \sum_{i=1}^n  \| \Re \big([c_{11}s_1+\dots+c_{d1}s_d] \Bu^{(i)}_1+\dots+[c_{1d}s_1+\dots+c_{dd}s_d ] \Bu^{(i)}_d \big)\|_{\R/\Z}^2 &\le Cm|S| \\
\sum_{\Bs\in S} \sum_{i=1}^n  \| \langle \Bv^{(i)}, \Bs \rangle\|_{\R/\Z}^2 &\le Cm|S|, 
\end{align*}
where $\Bv^{(i)}:=(c_{11}\Bu^{(i)}_1+\dots + c_{1d}\Bu^{(i)}_d,\dots,c_{d1}\Bu^{(i)}_1+\dots + c_{dd}\Bu^{(i)}_d)$.   




Let $n'$ be any number between $n^{\ep}$ and $n$. We say that an index $1\le i\le n$ is {\it bad}  if $\sum_{\Bs\in S}  \|\langle \Bs,\Bv^{(i)} \rangle \|^2_{\R/\Z} \ge \frac{Cm|S|}{n'}$. Clearly the number of bad indices is at most $n'$. Let $I$ be the set of good indices, and $V$ be the set of vectors $\Bv^{(i)},i\in I$.  Recall that for an arbitrary vector $\Bv\in V$
$$\sum_{\Bs\in S} \|\langle \Bs, \Bv \rangle \|^2_{\R/\Z} \le Cdm|S|/n'.$$
Set $k:=c\sqrt{\frac{n'}{m}}$ for some sufficiently small constant $c$, and let $V_k:=k(V\cup \{0\})$. By the Cauchy-Schwarz inequality, for any $\Bv\in V_k$, we have
$$\sum_{\Bs\in S} 2\pi^2 \|\langle \Bs, \Bv \rangle \|^2_{\R/\Z} \le \frac{|S|}{2}.$$
The last estimate implies that the size of $V_k$ does not grow fast in terms of $k$. The treatment from here is identical to \cite[Section 6]{NgV} by using a Freiman-type inverse result  \cite[Theorem 3.2]{NgV}.

\end{document}